\documentclass[a4paper,reqno,11pt]{amsart}

\usepackage{amsmath,amsfonts,amssymb,amsthm,mathtools,mathrsfs}
\usepackage[T1]{fontenc}
\usepackage{lmodern}
\usepackage{microtype}
\usepackage{needspace}
\usepackage{etoolbox}
\usepackage[a4paper,scale={0.72,0.74},marginratio={1:1},
  footskip=7mm,headsep=10mm]{geometry}
\usepackage[hidelinks]{hyperref}
\usepackage{xcolor}
\usepackage{cancel}
\usepackage[normalem]{ulem}
\usepackage{tikz}
\usepackage{graphicx}
\usepackage{scalerel,stackengine}
\usepackage{mathabx}

\makeatletter
\patchcmd{\@setauthors}{\footnotesize}{\small}{}{}
\patchcmd{\@setauthors}{\MakeUppercase{\authors}}{%
  \patchcmd{\authors}{Alexander Dunlap}{\\Alexander Dunlap}{}{}%
  \patchcmd{\authors}{Rongfeng Sun}{\\Rongfeng Sun}{}{}%
  \authors}{}{}
\patchcmd{\@setaddresses}{\nobreak\addvspace\bigskipamount}
  {\addvspace\bigskipamount}{}{}
\def\@secnumfont{\bfseries}
\def\section{\@startsection{section}{1}%
  \z@{.9\linespacing\@plus\linespacing}{.5\linespacing}%
  {\normalfont\large\bfseries\centering}}
\def\subsection{\@startsection{subsection}{2}%
  \z@{.5\linespacing\@plus.7\linespacing}{-.5em}%
  {\normalfont\bfseries}}
\def\l@subsection{\@tocline{2}{0pt}{2.5pc}{3pc}{}}
\renewcommand{\tocsection}[3]{\indentlabel{\@ifnotempty{#2}{#2.\quad}}#3}
\renewenvironment{proof}[1][\proofname]{\par
\pushQED{\qed}%
\normalfont \topsep4\p@\@plus4\p@\relax
\trivlist
\item[\hskip\labelsep\bfseries #1\@addpunct{.}]\ignorespaces
}{%
\popQED\endtrivlist\@endpefalse
}
\makeatother

\DeclareFontFamily{U}{mathx}{\hyphenchar\font45}
\DeclareFontShape{U}{mathx}{m}{n}{
      <5> <6> <7> <8> <9> <10>
      <10.95> <12> <14.4> <17.28> <20.74> <24.88> mathx10
      }{}
\DeclareSymbolFont{mathx}{U}{mathx}{m}{n}
\DeclareFontSubstitution{U}{mathx}{m}{n}
\DeclareMathAccent{\widecheck}{0}{mathx}{"71}

\numberwithin{equation}{section}
\newtheoremstyle{mytheorem}
  {6pt plus 2pt minus 1pt}{6pt plus 2pt minus 1pt}
  {\itshape}{}{\bfseries}{.}{0.5em}{}
\theoremstyle{mytheorem}
\newtheorem{theorem}{Theorem}[section]
\newtheorem{theoremA}{Theorem}

\newtheorem{proposition}[theorem]{Proposition}
\newtheorem{lemma}[theorem]{Lemma}
\newtheorem{corollary}[theorem]{Corollary}
\newtheorem{remark}[theorem]{Remark}
\newtheorem{claim}[theorem]{Claim}
\theoremstyle{definition}
\newtheorem{definition}[theorem]{Definition}

\newcommand{\bbP}{\mathbb{P}}
\newcommand{\bbE}{\mathbb{E}}
\newcommand{\rmP}{\mathrm{P}}
\newcommand{\rmE}{\mathrm{E}}
\newcommand{\R}{\mathbb R}
\newcommand{\Z}{\mathbb Z}
\newcommand{\one}{\mathbf 1}
\newcommand{\zero}{\mathbf 0}
\newcommand{\e}{\mathrm e}
\newcommand{\Var}{\operatorname{Var}}
\newcommand{\Cov}{\operatorname{Cov}}
\newcommand{\cF}{\mathcal F}

\newcommand{\cZ}{\mathcal Z}
\newcommand{\cH}{\mathcal H}

\newcommand{\cL}{\mathcal L}
\newcommand{\cG}{\mathcal G}
\newcommand{\dd}{\mathrm d}
\newcommand{\gep}{\varepsilon}

\newcommand{\N}{\mathbb{N}}

\newcommand{\scrH}{\mathscr H}
\newcommand{\scrZ}{\mathscr Z}

\newcommand{\cst}{\mathsf c_0}

\newcommand{\lact}{\mathbin{\triangleleft}}
\newcommand{\ract}{\mathbin{\triangleright}}

\newcommand\pig[1]{\scalerel*[5pt]{\big#1}{%
  \ensurestackMath{\addstackgap[1.5pt]{\big#1}}}}
\newcommand\pigl[1]{\mathopen{\pig{#1}}}
\newcommand\pigr[1]{\mathclose{\pig{#1}}}

\title[Critical 2D KPZ]{The Critical 2D KPZ Process}
\author[]{Quentin Berger}
\address[Quentin Berger]{Laboratoire d'Analyse, G\'eom\'etrie et Applications, CNRS UMR 7539, Universit\'e Sorbonne Paris Nord, 99 avenue J.-B. Cl\'ement, 93436 Villetaneuse, France, and Institut Universitaire de France}
\email{quentin.berger@math.univ-paris13.fr}

\author[]{Francesco Caravenna}
\address[Francesco Caravenna]{Dipartimento di Matematica e Applicazioni, Universit\`a degli Studi di Milano-Bicocca, via Cozzi 55, 20125 Milano, Italy}
\email{francesco.caravenna@unimib.it}

\author[]{Jeremy Clark}
\address[Jeremy Clark]{Department of Mathematics, University of Mississippi, University, MS 38677, USA}
\email{jeremy@olemiss.edu}

\author[]{Alexander Dunlap}
\address[Alexander Dunlap]{Department of Mathematics, Duke University, Durham, NC 27708, USA}
\email{alexander.dunlap@duke.edu}

\author[]{Yu Gu}
\address[Yu Gu]{Department of Mathematics, University of Maryland, College Park, MD 20742, USA}
\email{ygu7@umd.edu}

\author[]{Ziyang Liu}
\address[Ziyang Liu]{The Hong Kong University of Science and Technology, Hong Kong}
\email{mazyliu@ust.hk}

\author[]{Shuta Nakajima}
\address[Shuta Nakajima]{Department of Mathematics, Keio University, 3-14-1 Hiyoshi, Yokohama 2238522, Kanagawa, Japan}
\email{njima@keio.jp}

\author[]{Rongfeng Sun}
\address[Rongfeng Sun]{Department of Mathematics, National University of Singapore, 10 Lower Kent Ridge Road, Singapore 119076}
\email{matsr@nus.edu.sg}

\author[]{Li-Cheng Tsai}
\address[Li-Cheng Tsai]{Department of Mathematics, University of Utah, 155 S 1400 E, Salt Lake City, UT 84112, USA}
\email{licheng.tsai@utah.edu}

\author[]{Zuodi Xie}
\address[Zuodi Xie]{Department of Mathematics, University of Warwick, Coventry CV4 7AL, UK}
\email{zuodi-xie.xie@warwick.ac.uk}

\author[]{Nikos Zygouras}
\address[Nikos Zygouras]{Department of Mathematics, University of Warwick, Coventry CV4 7AL, UK}
\email{N.Zygouras@warwick.ac.uk}

\date{\today}
\subjclass[2020]{Primary: 60H17; Secondary: 60F17, 60G60, 82D60}
\keywords{KPZ Equation, Stochastic Heat Flow,
Directed Polymer in Random Environment, Critical Window,
Scaling Limit, Non-Gaussian Random Field}
\hypersetup{pdftitle={The Critical 2D KPZ Process},pdfauthor={}}

\begin{document}
\begin{abstract}
This work constructs a non-trivial solution to the ill-defined two-dimensional Kardar-Parisi-Zhang (KPZ) equation at criticality:
a distribution-valued stochastic process on the plane, which we call the \emph{Critical 2D KPZ Process}.
It arises as the limit in law of centred log partition functions of two-dimensional Gaussian directed polymers at criticality, viewed as discretised KPZ solutions.
It is also a measurable function of the Critical 2D Stochastic Heat Flow (SHF) via a renormalised Cole--Hopf transformation, obtained as the limit of the centred logarithm of the spatially mollified SHF as the smoothing is removed.
The Critical 2D KPZ process is \emph{non-Gaussian} and its covariance displays a log-log blow-up near the diagonal.
Gaussian fluctuations emerge when averaging on shrinking spatial scales.
\end{abstract}

\maketitle

\subsection*{AI statement.}

The results presented here were obtained through extensive interactions with generative AI (ChatGPT~5.5, 5.6 and~6); see Appendix~\ref{app:AI-declaration} for more details.
The proof builds on key recent and older advances in our field, combined in novel ways with substantial AI input. We view this manuscript as a ``community project’’ that represents more than a decade’s collective effort.
We have worked on making the presentation as accessible as possible, highlighting the main ideas and their relations to the literature.

\setcounter{tocdepth}{1}
\tableofcontents

\section{Introduction and main results}\label{sec:model}

The Kardar--Parisi--Zhang (KPZ) equation~\cite{KPZ86} describes the
evolution of a randomly growing interface. Formally, it can be written as
\begin{equation}\label{eq:formal-KPZ}
 \partial_t h(t,x)=\frac12\Delta h(t,x)
       +\frac12|\nabla h(t,x)|^2+\beta\xi(t,x),
 \qquad t>0,\quad x\in\R^d.
\end{equation}
Here $h$ is the interface height, $\xi$ is a space-time white noise, and
$\beta>0$ is the noise strength. The equation is a central model in the
KPZ universality class, which includes random growth models, interacting
particle systems and directed polymers. Its one-dimensional theory has
been studied extensively, with precise results on fluctuation laws and
scaling limits; see the surveys~\cite{Corwin12,QS15}.

The Cole--Hopf transformation $h=\log u$ connects \eqref{eq:formal-KPZ}
to the multiplicative stochastic heat equation (SHE)
\begin{equation}\label{eq:formal-SHE}
 \partial_t u(t,x)=\frac12\Delta u(t,x)+\beta u(t,x)\xi(t,x).
\end{equation}
The roughness of white noise makes these equations singular: even in
dimension one, the height is not differentiable and $|\nabla h|^2$ has
no classical meaning, although the multiplicative SHE admits an
It\^o solution. Major advances in singular stochastic partial
differential equations (SPDEs) include Hairer's construction of the
one-dimensional KPZ equation~\cite{Hairer13} and theory of regularity
structures~\cite{Hairer14}, paracontrolled distributions~\cite{GIP15},
and the renormalisation-group approaches of Kupiainen~\cite{Kup16}
and Duch~\cite{Duch25}. Energy solutions provide another approach
for one-dimensional stochastic Burgers/KPZ equations in stationary
settings~\cite{GJ14,GP18}.
Dimension two is critical for both KPZ and the multiplicative SHE,
and lies outside the scope of these general subcritical constructions.

In recent years, there has been significant progress in making sense of the
solution of the SHE in the critical dimension \(d=2\). Bertini and Cancrini~\cite{BC98}
first computed the limiting covariance of the solution of mollified 2D SHE in what would
later be called the critical window. Motivated by the study of the directed polymer model
and its phase transition, Caravenna, Sun, and Zygouras~\cite{CSZ17b} showed that the
regularised 2D SHE exhibits a phase transition on an
\emph{intermediate-disorder scale}. Below the critical point, the one-point distribution of the
solution converges to a log-normal limit, while at and above the critical
point, it converges to 0. Below the same critical point, the solutions of the regularised
SHE and the KPZ equation, regarded as random fields, were shown to have Gaussian fluctuations
that belong to the Edwards-Wilkinson universality class~\cite{CSZ17b,CD20,CSZ20,Gu20}.
In a finer critical window around the critical point, \cite{CSZ19b} showed that third moments of
the regularised SHE converge to finite limits, and Gu, Quastel, and Tsai~\cite{GQT21}
subsequently extended it to all finite moments. However, these moments diverge too fast to
show that the solution of the regularised SHE in the critical window converges to a unique limit.

The convergence to a unique limit, named the {\em Critical 2D Stochastic Heat Flow} (SHF),
was finally achieved in \cite{CSZ23} for the field of directed polymer partition functions, which
is the solution of the 2D SHE regularised through space-time discretisation. Thus the SHF gives
a meaning to the critical 2D SHE. Subsequently, Tsai~\cite{Tsai24} gave an axiomatic characterisation of
the SHF and showed that it is also the limit of the noise-mollified 2D SHE. Recently, there have also been
interesting universality results for the SHF~\cite{DHP26}.

There has also been recent progress on other critical singular SPDEs,
including Gaussian scaling limits for the two-dimensional anisotropic KPZ
and stochastic Burgers equations~\cite{CET23,CGT24,CMT25,CKM26}.
Gaussian fluctuation results have also been obtained for the weakly coupled
four-dimensional Anderson model~\cite{GR26,DS26}.

Making sense of the solution of 2D KPZ at criticality,
however, remains a challenge.
One cannot simply take the logarithm of the SHF because it is
a singular random measure~\cite{CSZ25}.
A recent result of Gu and Tsai~\cite{GT26}
created a significant opening towards defining the Critical 2D KPZ Process.
They showed
that the logarithm of the spatial average of the SHF on scale $\varepsilon$,
after suitable deterministic centring, has asymptotically Gaussian
fluctuations of order $\sqrt{\log\log\varepsilon^{-1}}$.

We focus here on the directed polymer partition functions with Gaussian disorder,
which solve a space-time discretised stochastic heat equation, while their logarithms are corresponding discrete KPZ solutions.
Our main result is to construct the \emph{Critical 2D KPZ Process} as the unique limit of
the logarithms of these partition functions,
after centring and
without any further scaling.
We also show that the centred
logarithm of the mollified SHF converges to the same field.
These convergences hold jointly at finitely many fixed
positive times, hence they define a \textit{limit process},
which is a measurable function of the SHF.

Two inputs play a central role in the construction.
Liu and Xie~\cite{LX26} recently made a crucial
observation that a Gaussian convexity property~\cite{AC15,Chen26}
can be applied to the directed
polymer with Gaussian disorder and hence to the SHF.
This yields sharp control on negative moments
of the polymer partition functions as well as of the mass of the SHF
on any ball.
Another fundamental ingredient
is Gu and Tsai's approximation and expansion of the SHF
as a product of independent contributions
over a geometric sequence of disjoint time intervals~\cite{GT26},
which can be transferred to polymer partition functions.

A key novel ingredient in the proof
is to introduce a factorisation at two times of the partition functions.
This allows us to control sharply the effect of erasing the disorder in a small time interval, first at a local level
(by covariance bounds derived from Gaussian convexity coupled to sharp moment estimates),
then at a global level (by using the approximation from \cite{GT26} to control how
discrepancy in the initial condition is propagated in time through the SHF).
A sketch of the proof strategy is given in
Section~\ref{sec:decomposition}.

We now define the relevant objects and the critical window, and we state the main results.
An overview of earlier work
and connections with the present paper is given in Section~\ref{subsec:related-work}.

\subsection{Directed polymers and the Critical 2D Stochastic Heat Flow}
Let us introduce the directed polymer model,
focusing on its partition functions:
we refer to Comets~\cite{Com17} and the recent
survey~\cite{Zyg24} for general background.

Let $\omega=(\omega(j,z))_{j\in\mathbb N,z\in\Z^2}$ be independent
standard Gaussian random variables, with law denoted $\bbP$ and expectation $\bbE$.
These variables play
the role of a random environment, or \emph{disorder}.
Let also $S$ be the simple symmetric
random walk on $\Z^2$.
We write
$\rmP_{a,z}$ and $\rmE_{a,z}$ for its law and expectation from
$S_a=z$, and we abbreviate $\rmP_z\coloneqq \rmP_{0,z}$, $\rmE_z\coloneqq \rmE_{0,z}$
and $\rmP = \rmP_{0,0}$, $\rmE \coloneqq  \rmE_{0,0}$.

For integers $0\le a\le b$ and $w,z\in\Z^2$,
let us define the directed polymer
\emph{point-to-point partition functions} by
\begin{equation}\label{eq:kernel}
 Z_{a,b}^{\beta}(w,z)
 \coloneqq \rmE_{a,w}\!\bigg[
 \exp\bigg\{\sum_{j=a+1}^{b}
 \bigl(\beta\omega(j,S_j)-\tfrac12\beta^2\bigr)\bigg\}
 \one_{\{S_b=z\}}  \bigg] \,,
\end{equation}
with $Z_{a,a}^{\beta}(w,z)=\one_{\{w=z\}}$.
\emph{Point-to-plane partition functions} are
obtained by removing the final
constraint $\one_{\{S_b=z\}}$, that is
\begin{equation}\label{eq:Zplane0}
      Z_{a,b}^{\beta}(w)
      \coloneqq \rmE_{a,w}\!\bigg[
 \exp\bigg\{\sum_{j=a+1}^{b}
 \bigl(\beta\omega(j,S_j)-\tfrac12\beta^2\bigr)\bigg\}
 \bigg] =\sum_{z\in\Z^2} Z_{a,b}^{\beta}(w,z)  \,.
\end{equation}
Note that $Z_{a,b}^{\beta}(\cdot)$ are \emph{random} functions,
depending on the realisation~$\omega$ of the disorder.
These partition functions can be seen as a
discretised version of the stochastic heat equation~\eqref{eq:formal-SHE}, see~\eqref{eq:ZN-solution} below;
we refer to \cite[Section~1.2]{CSZ20} and \cite[Sections~1.1--1.3]{CSZ26}.

The expected replica overlap between two independent
random walks $S, S'$ is
\begin{equation}\label{eq:RN}
      R_N\coloneqq \sum_{n=1}^N \rmP(S_n = S'_n)
      =\sum_{n=1}^N \rmP(S_{2n}=0)
      =\frac1\pi\log N+O(1) \,.
\end{equation}
We set the disorder strength $\beta=\beta_N$ in the critical window
(and omit it from notation):
\begin{equation}\label{eq:critical-tuning}
 \sigma_N^2\coloneqq \e^{\beta_N^2}-1
 \coloneqq \frac1{R_N}\Bigl(1+\frac{\vartheta}{\log N}\Bigr),
\end{equation}
where $\vartheta\in\R$ is fixed (or can be replaced by a deterministic
$\vartheta_N\to\vartheta$).
Under diffusive rescaling of space-time,
partition functions in the critical window
have a universal scaling limit:
a process $\scrZ^\vartheta = (\scrZ^\vartheta_{s,t}(\dd x, \dd y))_{0 \le s < t}$ of random measures
on $\R^2 \times \R^2$ called the
\emph{Critical 2D Stochastic Heat Flow (SHF)},
see \cite{CSZ23}.
Let us recall this convergence.
We normalise the SHF mean by the standard heat kernel:
\begin{equation*}
	\bbE[\scrZ^\vartheta_{s,t}(\dd x, \dd y)] = g_{t-s}(y-x) \, \dd x \, \dd y \,,
	\qquad \text{with} \qquad
	g_u(z)\coloneqq \frac1{2\pi u}\exp\Bigl(-\frac{|z|^2}{2u}\Bigr) \,.
\end{equation*}

\smallskip

\newcommand{\cellC}{\mathtt{C}}
For $t \geq 0$,
denote by $[t] \coloneqq  2 \lceil t/2 \rceil$ an even integer approximation,
so that $t \le [t] < t+2$.
To account for the periodicity of the simple random walk,
define the even lattice
\begin{equation*}
	\Z^2_{\rm even}\coloneqq \{z = (z_1, z_2) \in\Z^2:
      z_1 + z_2 \in 2\Z \}
\end{equation*}
and, for $x\in\R^2$, let $[x]$ be a nearest point of
$\Z^2_{\rm even}$ (with a fixed choice to
break ties).
Given $N\in\mathbb{N}$, we partition $\R^2$ into
cells of diameter $O(N^{-1/2})$ defined by
\begin{equation}\label{eq:lattice-cells}
 \cellC_N(z)\coloneqq \{x\in\R^2:[\sqrt{N/2}\,x]=z\},
 \qquad z\in\Z^2_{\rm even} \,.
\end{equation}
Note that each cell $\cellC_N(z)$ is a twisted square
with area $2 \cdot (\sqrt{2/N})^2 = 4/N$.

For continuous space-time parameters
$0 \le s < t$ and $x,y \in \R^2$,
we define the
\emph{diffusively rescaled point-to-point partition functions} by
\begin{equation}\label{eq:ZNxy}
      \cZ^{\vartheta,(N)}_{s,t}(x,y) \coloneqq  \frac N4 Z_{[Ns],[Nt]}^{\beta_N}
 \bigl([\sqrt{N/2}\,x],[\sqrt{N/2}\,y]\bigr)  \,,
\end{equation}
where the scale $\sqrt{N/2}$ accounts for the
coordinate variance $1/2$ of the simple random walk.
Similarly, recalling \eqref{eq:Zplane0}, the
\emph{diffusively rescaled point-to-plane partition functions} are
\begin{equation} \label{eq:ZNx}
      \cZ_{s,t}^{\vartheta,(N)}(x)
      \coloneqq  Z_{[Ns],[Nt]}^{\beta_N}([\sqrt{N/2}\,x])
      = \int_{\R^2} \cZ^{\vartheta,(N)}_{s,t}(x,y) \, \dd y \,.
\end{equation}

Note that $\cZ^{\vartheta,(N)}_{s,t}(x,y)$ and
$\cZ_{s,t}^{\vartheta,(N)}(x)$ are
random functions on $\R^2 \times \R^2$ and $\R^2$,
piecewise constant on cells~$\cellC_N(z)$. As $N\to\infty$
they become very \emph{intermittent} and one cannot expect
to obtain a non-trivial limit in a space of functions.
However, they do have a limit as \emph{generalised functions}
or \emph{distributions},
after integration against suitable test functions.
Since partition functions are nonnegative, one can actually
work in the simpler space of
locally finite \emph{measures}.
This is the main result of \cite{CSZ23}, which we recall as follows\footnote{Note that in \cite{CSZ23} the convention for the partition functions is slightly different, since it does not include the disorder variable at the terminal time.
This however does not affect the result.}.

\begin{theoremA}[\cite{CSZ23}, Theorems 1.1 and 9.1]
\label{thm:SHF-convergence}
In the critical window \eqref{eq:critical-tuning}, jointly for any
finite collection of pairs $0\le s<t$, we have the convergence
in distribution
\begin{equation}\label{eq:SHF-input}
 \cZ^{\vartheta,(N)}_{s,t}(x,y) \,\dd x\,\dd y
 \ \xrightarrow[\ N\to\infty \ ]{d}\
 \scrZ_{s,t}^{\vartheta}(\dd x,\dd y) \,, \qquad
  \cZ^{\vartheta,(N)}_{s,t}(x) \,\dd x
 \ \xrightarrow[\ N\to\infty \ ]{d}\
 \scrZ_{s,t}^{\vartheta}(\dd x) \,,
\end{equation}
in the vague topology of measures 
on $\R^2\times\R^2$ and $\R^2$
(i.e., integrating against compactly supported test
functions).
\end{theoremA}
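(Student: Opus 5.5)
This result is quoted from~\cite{CSZ23}. To prove it one would follow that paper: establish tightness, then a coarse-grained approximation that is uniform in $N$, and then uniqueness of the limit through a Cauchy-type argument.

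\textbf{Tightness.} Tightness in the vague topology follows from the first moment. The mean of $\cZ^{\vartheta,(N)}_{s,t}(x,y)\,\dd x\,\dd y$ converges to $g_{t-s}(y-x)\,\dd x\,\dd y$ by the local CLT, so the masses of compact sets are tight. Second moments are also controlled, uniformly in $N$, by the renewal computation of~\cite{CSZ19b} in the critical window~\eqref{eq:critical-tuning}. In particular, every subsequential limit is a random measure with the correct mean.

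\textbf{Coarse-grained model.} To identify the limit, introduce a coarse-graining scale $\varepsilon>0$ and partition space-time into mesoscopic boxes of side $\varepsilon N$ in time and $\sqrt{\varepsilon N}$ in space. Then expand the partition function averaged against test functions $\varphi,\psi$ in polynomial chaos. One shows that, up to an $L^2$ error that vanishes as $\varepsilon\to0$ uniformly in $N$, this average equals a \emph{coarse-grained model} $\mathscr Z^{\rm cg}_\varepsilon(\varphi,\psi\,|\,\Theta_{N,\varepsilon})$. This model is a fixed multilinear polynomial in coarse-grained disorder variables $\Theta_{N,\varepsilon}$, which are local partition-function increments inside boxes and pairs of boxes.

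The main obstacle is the $L^2$ control of this approximation. It requires sharp second-moment estimates for the discarded terms, namely configurations visiting too many boxes or having long gaps, exploiting the Dickman-type renewal structure of~\cite{CSZ19b}. One also needs uniform higher (fourth) moment bounds on the $\Theta_{N,\varepsilon}$, proved via functional inequalities for the operator-based moment representation in the spirit of~\cite{GQT21}.

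\textbf{Uniqueness and joint convergence.} As $N\to\infty$ with $\varepsilon$ fixed, the $\Theta_{N,\varepsilon}$ are asymptotically uncorrelated with converging second moments. A Lindeberg principle for multilinear polynomials, using the fourth-moment bounds, then shows that the law of the coarse-grained model is asymptotically insensitive to the precise law of $\Theta$. Hence any two values $N,M$ give close laws, uniformly, and the averaged partition functions form a Cauchy sequence in distribution. This yields a unique limit $\scrZ^\vartheta_{s,t}$.

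Joint convergence over finitely many pairs $(s,t)$ follows by applying the same coarse-graining to vectors of test functions and times simultaneously. The point-to-plane statement follows by integrating out $y$, using the first-moment bounds on $\cZ^{\vartheta,(N)}_{s,t}(x,y)$ to justify replacing compactly supported $\psi$ by $\psi\equiv1$. Finally, the change of convention at the terminal time only multiplies the partition functions by independent factors $\e^{\beta_N\omega-\beta_N^2/2}$ with $\beta_N\to0$, which does not affect the limit.
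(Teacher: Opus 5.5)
Your outline is correct and matches what the paper does. The paper gives no proof of its own: it imports the result from \cite{CSZ23}, whose argument is the scheme you sketch (first-moment tightness, $L^2$ coarse-graining uniform in $N$, fourth-moment bounds on the coarse-grained disorder, and a Lindeberg principle giving a Cauchy property in law), and its footnote likewise notes, as you do, that including the disorder variable at the terminal time does not affect the limit.
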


The diffusively rescaled partition functions $\cZ_{t}^{\vartheta,(N)}$ from~\eqref{eq:ZNxy}
provide a space-time discretisation
of the stochastic heat equation \eqref{eq:formal-SHE}
on a lattice of mesh $O(N^{-1})$ in time
and $O(N^{-1/2})$ in space.
Indeed, a discretised solution of \eqref{eq:formal-SHE}
with initial condition $u_0: \R^2 \to \R^+$
is given by the ``averaged-to-point'' partition function
\begin{equation} \label{eq:ZN-solution}
      \cZ_{t}^{\vartheta,(N)}(u_0, x) \coloneqq  \int_{\R^2}
      \dd y \, u_0(y) \,
      \cZ_{0,t}^{\vartheta,(N)}(y,x) \,.
\end{equation}
For this reason, its limit in law as $N\to\infty$,
\begin{equation} \label{eq:SHF-solution}
      \scrZ_{t}^{\vartheta}(u_0, \dd x) \coloneqq  \int_{\R^2} u_0(y) \,
      \scrZ_{0,t}^{\vartheta}(\dd y, \dd x) ,
\end{equation}
provides a non-trivial ``candidate solution'' to the
2D stochastic heat equation \eqref{eq:formal-SHE}.

\begin{remark}[Noise-mollified SHE]
      \label{rem:noise-mollified-she-0}
Instead of discretising the stochastic heat equation by polymer partition functions, 
one can also regularise~\eqref{eq:formal-SHE} by mollifying the noise in space,
i.e.\ replacing $\xi$ by $\xi_\varepsilon=\rho_\varepsilon*\xi$ with
$\rho_\varepsilon(x)=\varepsilon^{-2}\rho(x/\varepsilon)$, 
where~$\rho$ is a smooth compactly supported probability density.
This equation is well-posed in the It\^o sense
and, for $\vartheta\in\R$,
we can consider its fundamental solutions
$u_{s,t}(x,y)
= u_{s,t}^{\vartheta,(\varepsilon)}(x,y)$
with noise strength
in the critical
window from \cite{BC98}, more
precisely \(\beta^2 = \beta^2_{\gep}(\vartheta)
\coloneqq  \frac{2\pi}{|\log \varepsilon|}(1+\frac{\theta + c_\rho}{2 |\log \varepsilon|})\) for an explicit constant $c_\rho$ \cite[Equation~(1.6)]{Tsai24}
and $\theta=\vartheta-\gamma$ with $\gamma$ the
Euler--Mascheroni constant:
\begin{equation}\label{eq:noise-mollified-she}
 \partial_t u_{s,t}(x,y)
 =\tfrac12\Delta_y u_{s,t}(x,y)
 +\beta_\varepsilon(\vartheta)
 \, \xi_\varepsilon(t,y) \, u_{s,t}(x,y),
 \qquad u_{s,s}(x, \cdot)=\delta_x( \cdot) \,.
\end{equation}
 It was proved by Tsai in \cite[Proposition~1.5]{Tsai24} that the Critical 2D SHF \((\scrZ_{s,t}^{\vartheta}(\dd x, \dd y))_{0\le s\le t}\) is also the limit of the noise-mollified SHE solution \((u_{s,t}^{\vartheta,(\varepsilon)}(x, y) \, \dd x \, \dd y )_{0\le s \le t }\) as \(\gep\downarrow 0\).
\end{remark}

\subsection{Main result: the Critical 2D KPZ Process}

Recall that the Cole--Hopf transformation $h = \log u$
formally maps the KPZ equation~\eqref{eq:formal-KPZ}
to the stochastic heat equation~\eqref{eq:formal-SHE}.
Then, in view of~\eqref{eq:ZN-solution} and~\eqref{eq:SHF-solution} above, there are at least two possible ways
to construct a ``candidate solution'' of the 2D
KPZ equation \eqref{eq:formal-KPZ}.

A first approximation of the sought-after KPZ solution
is the logarithm of partition functions \eqref{eq:ZN-solution},
viewed as discretised solutions of \eqref{eq:formal-SHE},
\emph{with suitable centring}:
for an initial condition $h_0: \R^2 \to \R$, we define
\begin{equation}\label{eq:HN}
\begin{split}
      \cH_t^{\vartheta,(N)}(h_0, x)
 \,\coloneqq \, & \log \cZ_{t}^{\vartheta,(N)}(\e^{h_0}, x)
 \,-\, c_t^{\vartheta,(N)} \\
 &\ \text{with} \quad
 c_t^{\vartheta,(N)} \coloneqq 
 \bbE\log \cZ_{t}^{\vartheta,(N)}(\one , x) \,,
\end{split}
\end{equation}
where the centring \(c_t^{\vartheta,(N)}\) does not depend on \(x\), by translation invariance.
Subtracting~$c_t^{\vartheta,(N)}$ is a form of \emph{renormalisation} 
and amounts to centring $\log \cZ_{t}^{\vartheta,(N)}(\e^{h_0}, x)$
in the special case $h_0 = \zero $.
We stress that this is uniformly close
to centring for general $h_0 \in L^\infty$: one can easily obtain that 
\begin{equation}
      \label{eq:compare-centrings}
 \bigl| c_t^{\vartheta,(N)} - \bbE\bigl[\log \cZ_{t}^{\vartheta,(N)}(\e^{h_0} , x)\bigr] \bigr| \leq \|h_0\|_{\infty} \,,
\end{equation}
see Section~\ref{sec:centering} for details.
Let us also stress that we show in Lemma~\ref{lem:centering} (and the following discussion) that
\begin{equation}
      \label{eq:centering-constant}
      c_t^{\vartheta,(N)} =
      \bbE \log \cZ_{t}^{\vartheta,(N)}(\e^{h_0}, x) + O(1)
      = -\tfrac12\log\log N + O(1) \qquad \text{as } N\to\infty \,.
\end{equation}

A second natural way to make sense of the KPZ solution
is to apply the Cole--Hopf transformation $h = \log u$
to the continuum
``SHF solution'' $\scrZ_{t}^{\vartheta}(u_0, \dd x)$ in
\eqref{eq:SHF-solution}. This is
a measure, but we can regularise it by removing the noise
in a small time interval of size $\delta > 0$
(we call this a temporal mollification),
or equivalently convolving with the heat kernel
\begin{equation}
      \label{eq:heat}
      g_v^{(x)}(y)
      \coloneqq  g_v(y-x)
      \coloneqq  \frac{1}{2\pi v} \, \e^{-\frac{|y-x|^2}{2v}} \,.
\end{equation}
We thus define an approximate renormalised logarithm of the SHF by\footnote{One could replace the time $t-\delta$ by $t$ in \eqref{eq:regularised-H}, which amounts to (spatially)
regularising the SHF solution $\scrZ_t^\vartheta(u_0,\cdot)$
by convolving with the heat kernel $g_\delta$.
This does not affect our main result, see Corollary~\ref{prop:spatial-identification}.}
\begin{equation}\label{eq:regularised-H}
\begin{split}
 {\scrH}_{t}^{\vartheta,(\delta)}(h_0, x)
\,\coloneqq \,
& \log \scrZ_{t-\delta}^\vartheta \bigl( \e^{h_0}, g_\delta^{(x)} \bigr)
\,-\, c_t^{\vartheta,(\delta)} \\
& \ \text{with} \quad
c_t^{\vartheta,(\delta)}
\coloneqq  \bbE \big[ \log \scrZ_{t-\delta}^\vartheta
\bigl( \one, g_\delta^{(x)} \bigr)
 \big] \,.
\end{split}
\end{equation}
We will justify in Section~\ref{sec:FC} that this is
a locally integrable function of~$x$.
As in the discrete case, $c_t^{\vartheta,(\delta)}$ provides an
approximate centring for $h_0\in L^\infty$: replacing
$\one$ with~$\e^{h_0}$ changes the expected logarithm by at most
$\|h_0\|_\infty$, uniformly in $\delta$ and $x$, exactly as in~\eqref{eq:compare-centrings}.

\smallskip

Our main result proves that both approximations
\eqref{eq:HN} and \eqref{eq:regularised-H} converge to the
same limiting stochastic process, which gives a non-trivial
notion of solution to the ill-defined two-dimensional
KPZ equation \eqref{eq:formal-KPZ} at criticality.
Convergence does not take place in a space
of functions but of genuine distributions.
These can no
longer be identified with measures, so we use local versions
of negative Sobolev spaces $H^{-a}_{\rm loc}(\R^2)$,
defined in \eqref{prop:eq-sobolev-convention}.

\begin{theorem}[Critical 2D KPZ Process]\label{thm:main}
Fix $\vartheta\in\R$.
There exists a distribution-valued process on $\R^2$,
labeled by initial condition
$h_0 \in L^\infty(\R^2) \cap C(\R^2)$,
which we denote by
\begin{equation*}
      \scrH^\vartheta
      = (\scrH_t^\vartheta(h_0, \cdot))_{t\ge 0}
\end{equation*}
and call the \emph{Critical 2D KPZ Process}.
It is a measurable function of the SHF,
arising as the limit in probability of regularised
log-SHF, see \eqref{eq:regularised-H}:
jointly at finitely many $t > 0$ and~$h_0$,
\begin{equation}\label{eq:continuous-limit}
\forall \, a>0\,, \qquad
{\scrH}_{t}^{\vartheta,(\delta)}(h_0,\cdot\,)
 \ \xrightarrow[\ \delta\downarrow 0\ ]{\bbP}\
 \scrH_{t}^\vartheta(h_0,\cdot\,)
 \qquad\text{in } H^{-a}_{\rm loc}(\R^2) \,.
\end{equation}
Convergence holds in $L^p(\bbP)$, \(p\in [1,\infty)\), when integrating
both sides against a test function $\varphi\in C_c^\infty(\R^2)$.

The same process $\scrH^\vartheta$
arises as the limit in law of
renormalised
log-partition functions, see \eqref{eq:HN},
with Gaussian disorder and critical tuning
\eqref{eq:critical-tuning}:
jointly at finitely many~$t>0$ and $h_0$,
\begin{equation}\label{eq:main-convergence}
 \forall \, a>0\,, \qquad
 \cH_{t}^{\vartheta,(N)}(h_0,\cdot\,)
 \ \xrightarrow[\ N\to\infty \ ]{d} \
 \scrH_{t}^\vartheta(h_0,\cdot\,)
 \qquad \text{in }
 H^{-a}_{\rm loc}(\R^2) \, .
\end{equation}
\end{theorem}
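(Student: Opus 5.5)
The plan is a Cauchy-type argument in the temporal mollification parameter $\delta$, carried out first at the level of the SHF. Then we transfer it to the polymer partition functions through Theorem~\ref{thm:SHF-convergence}. Fix $\varphi\in C_c^\infty(\R^2)$. The core estimate to aim for is
\begin{equation*}
 \lim_{\delta_0\downarrow0}\ \sup_{0<\delta'<\delta\le\delta_0}
 \bbE\Bigl[\Bigl|\int \varphi(x)\,\bigl(\scrH_t^{\vartheta,(\delta)}(h_0,x)-\scrH_t^{\vartheta,(\delta')}(h_0,x)\bigr)\,\dd x\Bigr|^2\Bigr]=0 ,
\end{equation*}
together with uniform $L^p$ bounds on $\int\varphi\,\scrH^{(\delta)}_t$. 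The $L^p$ bounds come from Gaussian convexity (Liu--Xie): negative moments of $\scrZ_{t-\delta}(\e^{h_0},g^{(x)}_\delta)$ are controlled, and positive moments follow from the SHF moment bounds. Together these give $\bbE|\scrH^{(\delta)}_t(h_0,x)|^p\lesssim(\log\log\delta^{-1})^{p/2}$ pointwise. The gain must therefore come from averaging in $x$, as in the Gu--Tsai Gaussian picture.

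\textbf{Step 1: two-time factorisation.} Write
\begin{equation*}
\scrZ_{t-\delta'}(\e^{h_0},g_{\delta'}^{(x)})=\int \scrZ_{t-\delta}(\e^{h_0},\dd y)\,K_{\delta,\delta'}(y,x),
\end{equation*}
where $K$ is the SHF on $[t-\delta,t-\delta']$ convolved with $g_{\delta'}$. Compare this with the mollified version in which the disorder on $[t-\delta,t-\delta']$ is erased, so that $K$ is replaced by $g_\delta$. The difference of logarithms is then $\log$ of a ratio that is a weighted average of $K/g$, with weights given by the polymer endpoint measure. Because of the independence of the noise on disjoint time intervals, the relevant increment $D_\delta(x)=\log\scrZ_{t-\delta'}(\cdot)-\log\scrZ_{t-\delta}(\cdot)$ is, to leading order, a functional of the noise in a strip of width $\delta$ near $t$ and in a spatial window of size $\sqrt\delta$ around $x$.

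\textbf{Step 2: local decorrelation.} Show that
\begin{equation*}
\Cov\bigl(D_\delta(x),D_\delta(x')\bigr)\to0 \quad\text{for } |x-x'|\gg\sqrt\delta,
\end{equation*}
with a rate that is integrable against $\varphi\otimes\varphi$. The tools are Gaussian covariance identities and bounds from the convexity/Poincaré-type inequality, which express $\Cov$ through Malliavin derivatives. The Malliavin derivatives are controlled by negative moments times sharp SHF moment estimates. Near the diagonal, the variance $\bbE[D_\delta^2]=O(\log\log)$ contributes only on a set of measure $\delta$, so it is negligible. After this step, $\int\varphi\,D_\delta$ is small in $L^2$.

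\textbf{Step 3: global propagation.} Handle the effect of the fact that the earlier SHF, $\scrZ_{t-\delta}$ versus $\scrZ_{t-\delta'}$, also differs in its initial data when viewed backward. To do this, use the Gu--Tsai product expansion over geometric time intervals to show that a local perturbation of the "initial condition" at time $t-\delta$ is propagated through the SHF with vanishing effect on macroscopic averages. This yields the Cauchy property, hence a limit $\scrH_t(h_0,\cdot)$ in $L^p$ against each $\varphi$. Convergence in $H^{-a}_{\rm loc}$ follows by summing over a wavelet/Fourier basis, using the uniform bounds and a Kolmogorov-type tightness estimate. Measurability in the SHF is automatic.

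\textbf{Step 4: discrete statement.} Run the same factorisation at the discrete level. We show
\begin{equation*}
\cH_t^{(N)}-\cH_t^{(N,\delta)}\to0 \quad\text{in } L^2 \text{ against } \varphi, \text{ uniformly in large } N,
\end{equation*}
where $\cH_t^{(N,\delta)}$ is the discrete quantity with disorder erased in the last $\delta N$ steps. The discrete Gaussian convexity and moment bounds are uniform in $N$. For fixed $\delta$, $\cH^{(N,\delta)}_t$ is a continuous functional of $\cZ^{(N)}_{0,t-\delta}$ smoothed by the heat kernel. It therefore converges in law to $\scrH^{(\delta)}_t$ by Theorem~\ref{thm:SHF-convergence}. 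We also need the centrings $c^{(N)}_t$ versus $c^{(\delta)}_t$ to match: their difference converges, since both are of the form $-\tfrac12\log\log+O(1)$ and the $O(1)$ terms are tied by Step 2. A triangle argument in $\delta$ and $N$ then gives \eqref{eq:main-convergence}.

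The main obstacle is Step 2/3: obtaining a covariance decay for $D_\delta$ sharp enough to beat the $\log\log$ pointwise variance. First I would try a Clark–Ocone/interpolation covariance formula combined with Liu--Xie negative moments and Gu--Tsai's asymptotic independence across scales.
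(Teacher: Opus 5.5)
Your architecture is sound in outline: regularise in time, split the increment into a spatially local piece that averaging should kill and a piece handled by the Gu--Tsai block expansion. The gap is Step~2, which carries the whole argument and has no working mechanism. What you need is a bound on $\Cov(\log\widehat Z_{s,n}(x),\log\widehat Z_{s,n}(y))$, uniform in large $N$, of the form "integrable kernel plus an additive error that \emph{vanishes} as $\delta\downarrow0$". An $O(1)$ error, integrated against $\varphi\otimes\varphi$, never becomes small.

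A Malliavin/Clark--Ocone representation cannot give this. On the SHF there is no Gaussian noise to differentiate; convexity survives there only as the Ehrhard-type property of laws. On the lattice, $\partial_{\omega(j,z)}\log Z$ is $\beta_N$ times the quenched probability of visiting $(j,z)$. The covariance therefore becomes an integrated quenched overlap of two polymers in correlated environments. Bounding it by H\"older and negative moments brings in point-to-point partition functions, whose second moments at criticality are not comparable to their squared means.

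The idea missing from your plan is the convexity covariance bound \eqref{eq:two-point-general}:
$\Cov(F_x,F_y)\le\log\bbE[\e^{F_x+F_y}]+\Delta(F_x)+\Delta(F_y)$.
It follows from applying $\Var H\le-2\bbE H$ to the normalised $F_x+F_y$. It involves only the averaged two-point moment, which satisfies $\bbE[\widehat Z_{s,n}(x)\widehat Z_{s,n}(y)]-1\le C\mathsf G_n(x-y)/|\log\delta|$.

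This must be combined with Proposition~\ref{prop:VD}: for $n=[\sqrt\delta N]$, the Gaussianity defect satisfies $\Delta(\log\widehat Z_{s,n})\le\log(\bbE[\widehat Z_{s,n}^2]/\bbE[\widehat Z_{s,n}^{-1}])=O(|\log\delta|^{-1/4})+o_N(1)$. That sharp ratio is proved in three steps:
\begin{itemize}
\item factorise $\widehat Z_{s,n}$ into independent block masses times $\mathscr R$;
\item control $\mathscr R$ under the law reweighted by the inverse block masses;
\item match with the second-moment asymptotics $\bbE[\widehat Z_{s,t}^2]\approx\vartheta_N(s)/\vartheta_N(t)$.
\end{itemize}
``Negative moments times moment bounds'' only gives bounded defects, not vanishing ones.

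The order of your argument also has to be reversed. The $\delta$-uniform estimate can only be proved on the lattice: $\limsup_N\|\cH^{(N)}(\varphi)-\cH^{(N;\delta)}(\varphi)\|_{L^2}\le C|\log\delta|^{-1/40}$. The SHF Cauchy property then comes for free, from joint convergence in law at fixed $(\delta,\delta')$ and Fatou, as in \eqref{eq:L2-Cauchy}. So your Step~4 is where all the work lies.

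Your Step~3 is only a gesture. The paper cuts at $n=[\sqrt\delta N]\gg m$, so that $\mu_n^{(z)}$ and $\widehat\mu_{m,n}^{(z)}$ pass through common disorder on $(m,n]$. It bounds their difference in the seminorm $\|\eta\|^2_{\cH_{n,N}}=\bbE[Z_{n,N}(\eta)^2\mid\cF_n]$ by $O(1/|\log\delta|)$ on a good event; the first-order block correctors cancel up to $(2\varepsilon_\rho)^J$. It then transfers this to logarithms using conditional negative moments from Gaussian convexity and the left-tail bound of Lemma~\ref{lem:log-tail}. The global error is then small pointwise, not merely after averaging.

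Three smaller points:
\begin{itemize}
\item The centrings do not ``match''. For fixed $\delta$, $c_t^{(\delta)}$ is a fixed finite number, while $c_t^{(N)}=-\frac12\log\log N+O(1)\to-\infty$. Each field must be centred by its own mean. Convergence of $\bbE\log\widehat Z_{m,N}(0)$ to $\bbE\log\scrZ_{\delta,1}(g_\delta^{(0)})$ at fixed $\delta$ then follows from uniform integrability.
\item Tightness in $H^{-a}_{\rm loc}$ needs the $\delta$-uniform kernel bound $\Cov\le C[1+\log(1+\log^+\frac1{|x-y|})]$ of Lemma~\ref{lem:cutoff-covariance}, which again comes from the convexity bound with bounded defect. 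Pointwise moments that grow with $\delta^{-1}$ do not suffice.
\item The $L^p$ statement needs a quantitative $L^2$ Cauchy rate to beat $L^{2p}$ norms that grow like $\log|\log\delta|$. Your pointwise bounds are not uniform in $\delta$.
\end{itemize}
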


Let us stress that Theorem~\ref{thm:main} builds \textit{jointly} the Critical 2D KPZ Process
with different (bounded) initial conditions~\(h_0\)
from the \textit{same} mollified Critical 2D SHF,
with a common centring \(c_{t}^{\vartheta,(\delta)}\) that does not depend on the initial condition, see~\eqref{eq:regularised-H}.
We also show below in Remark~\ref{rem:initial-condition}
that if \(h_0,h_0'\) are bounded Lipschitz initial conditions, then \emph{deterministically}, the approximating fields \(\scrH_t^{\vartheta,(\delta)}\) verify, for any~\(\delta>0\) and any \(\psi\in C_c^{\infty}\),
\[
\Bigl| \scrH_t^{\vartheta,(\delta)}(h_0, \psi) - \scrH_t^{\vartheta,(\delta)}(h_0', \psi) \Bigr| \leq \|\psi\|_1 \,\bigl\|h_0'-h_0\bigr\|_{\infty} \,.
\]      
This can be thought of as a reflection of the comparison principle for Hamilton--Jacobi equations in this singular setting.
This property passes to the limit thanks to Theorem~\ref{thm:main}, and in particular we have for any test function \(\psi\in C_c^{\infty}\) that
\[
|\scrH_t^{\vartheta}(h_0, \psi) - \scrH_t^{\vartheta}(h_0', \psi)| \leq \|\psi\|_1\, \|h_0'-h_0\|_{\infty}\,.
\] 
We thus deduce that \(\scrH_t^{\vartheta}(h_0, \cdot)\)
is continuous w.r.t.\ the (bounded Lipschitz) initial condition,
and additionally
that all the 2D KPZ processes \(\scrH_t^{\vartheta}(h_0, \cdot )\) with different bounded initial conditions~\(h_0\)
remain \textit{deterministically} within \(O(1)\) distance.

\begin{remark}[Critical 2D KPZ Process
      from noise-mollified SHE]\label{rem:noise-mollified-she}
Another approximation of the KPZ solution
can be obtained from the fundamental
solutions $u_{s,t}^{\vartheta,(\varepsilon)}(x,y)$
of the space-mollified stochastic heat equation
\eqref{eq:noise-mollified-she} in the critical window,
which converge to the SHF,
see Remark~\ref{rem:noise-mollified-she-0}.
Writing
\begin{equation*}
      u^{\vartheta,(\varepsilon)}_{s,t}(u_0, x) \coloneqq 
      \int_{\R^2} u_0(y) \,
      u_{s,t}^{\vartheta,(\varepsilon)}(y,x)\,\dd y \,,
\end{equation*}
taking logarithm and a suitable centring, we define,
for an initial condition $h_0 \in L^\infty(\R^2)$,
\begin{equation}\label{eq:heps}
\begin{split}
      h^{\vartheta,(\varepsilon)}_t(h_0, x)
      \coloneqq  & \log u^{\vartheta,(\varepsilon)}_{t}(\e^{h_0}, x)
      - c_t^{\vartheta,(\varepsilon)} \\
      & \ \text{with} \quad
      c_t^{\vartheta,(\varepsilon)} \coloneqq 
      \bbE \log u^{\vartheta,(\varepsilon)}_{t}(\one, x) \,.
\end{split}
\end{equation}
By Itô's formula,
$h(t,x) = h^{\vartheta,(\varepsilon)}_t(h_0, x)$
solves the following
\emph{renormalised 2D KPZ equation}:
\begin{equation}\label{eq:renormalised-KPZ}
\begin{split}
\partial_t h(t,x) \,=\,
      & \frac12\Delta h(t,x)
      +\frac12|\nabla h(t,x)|^2+\beta_\varepsilon(\vartheta)  \, \xi_{\varepsilon}(t,x)
      \,-\, C_{t}^{\vartheta,(\varepsilon)} \\
      &\ \text{with} \qquad
      C_{t}^{\vartheta,(\varepsilon)} \coloneqq 
      \frac{\beta_\varepsilon(\vartheta)^2}{2} \varepsilon^{-2} \,
      \|\rho\|_{L^2}^2 \,+\,
      \frac{\dd}{\dd t} c_t^{\vartheta,(\varepsilon)}
       \,.
\end{split}
 \end{equation}
With minor technical adaptations
to the proof, we expect that the Critical 2D KPZ Process
$\scrH^\vartheta
= (\scrH^\vartheta_t(h_0,\cdot))_{t\ge 0}$ is also
the limit in distribution
of $(h^{\vartheta,(\varepsilon)}_t(h_0, \cdot))_{t \ge 0}$
as $\varepsilon \downarrow 0$, jointly at finitely
many $t > 0$.
We omit these adaptations for brevity.
\end{remark}

\subsection{First properties: local structure and non-Gaussianity}

In this section, we focus for simplicity on the flat initial condition \(h_0 =\zero\) and we denote \(\scrH_t^\vartheta(\cdot) \coloneqq \scrH_t^\vartheta(\zero,\cdot)\).
We next examine the law of the limit and the spatial singularities
revealed by averaging it on small scales.
For a nonnegative $\varphi\in C_c^\infty(\R^2)$ let us put
\begin{equation*}
 \varphi_\varepsilon^{(x)}(y)
 \coloneqq \varepsilon^{-2}\varphi((y-x)/\varepsilon).
\end{equation*}

\begin{theorem}[Local structure]\label{thm:properties}
Fix $t>0$ and $\vartheta\in\R$. The random distribution $\scrH_t^\vartheta$ of
Theorem~\ref{thm:main}, with flat initial condition $h_0=\zero$, has the following properties.
\begin{enumerate}
\item \label{i:small-scale}
For every fixed $x\in\R^2$ and nonnegative $\varphi\in C_c^\infty(\R^2)$, we have
\begin{equation}\label{eq:main-local-clt}
 \frac{\scrH_t^\vartheta(\varphi_\varepsilon^{(x)})}
      {\sqrt{\log\log(1/\varepsilon)}}\ \Longrightarrow\
 \mathcal N\bigl(0,({\textstyle \int}\varphi)^2\bigr)
 \qquad \text{ as } \gep\downarrow 0\,.
\end{equation}

\item \label{iii:covariance}
There is a nonnegative radial covariance kernel
$\mathsf C_t^\vartheta\in L^2_{\rm loc}(\R^2)$, continuous off zero, such that
for real $f,g\in C_c^\infty(\R^2)$,
\begin{equation}\label{eq:main-covariance}
 \Cov\bigl(\scrH_t^\vartheta(f),\scrH_t^\vartheta(g)\bigr)
 =\int_{\R^2 \times \R^2} f(x)g(y) \mathsf C_t^\vartheta(x-y)\,\dd x\,\dd y \,.
\end{equation}
Moreover, the covariance kernel has a \(\log\log\) blow-up near the diagonal:
\begin{equation}\label{eq:main-diagonal}
 \lim_{|z|\downarrow0}
 \frac{\mathsf C_t^\vartheta(z)}{\log\log(1/|z|)}=1.
\end{equation}
\end{enumerate}
\end{theorem}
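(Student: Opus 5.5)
Both parts rest on the same link between the averaged field and a logarithm of a mollified SHF. By Theorem~\ref{thm:main}, $\scrH_t^\vartheta(\varphi)$ is the $L^p(\bbP)$-limit as $\delta\downarrow0$ of $\scrH_t^{\vartheta,(\delta)}(\varphi)=\int\varphi(y)\bigl[\log\scrZ^\vartheta_{t-\delta}(\one,g^{(y)}_\delta)-c^{\vartheta,(\delta)}_t\bigr]\dd y$. The plan is to compare $\scrH_t^\vartheta(\varphi^{(x)}_\varepsilon)$ with $(\int\varphi)\,\scrH^{\vartheta,(\varepsilon^2)}_t(x)$, that is, with the centred log of the SHF averaged on scale~$\varepsilon$. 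For the latter, Gu--Tsai~\cite{GT26} give asymptotic Gaussianity with variance $\log\log(1/\varepsilon)\,(1+o(1))$, provided that:
\begin{itemize}
\item the centring $c^{\vartheta,(\delta)}_t$ matches $\bbE\log$ up to $O(1)$, as in \eqref{eq:centering-constant};
\item the heat-kernel mollifier is interchangeable with a ball average. This can be controlled by Gaussian-convexity negative-moment bounds, which make logs of comparable averages differ by $O(1)$ in $L^2$.
\end{itemize}

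For part~\eqref{i:small-scale}, the key step is an $L^2$ estimate showing that
\[
\scrH_t^\vartheta(\varphi^{(x)}_\varepsilon)-(\textstyle\int\varphi)\,\scrH^{\vartheta,(\varepsilon^2)}_t(x)=O(1)
\]
uniformly in $\varepsilon$. I would use the geometric time-interval decomposition of~\cite{GT26}, as transferred in the proof of Theorem~\ref{thm:main}. The logarithm then splits into a sum of roughly $\log\log(1/\varepsilon)$ nearly independent increments from time blocks at scales coarser than $\varepsilon$, plus a remainder from finer scales.
\begin{itemize}
\item On the coarse blocks, $\log\scrZ(\cdot,g^{(y)}_\delta)$ is essentially constant for $|y-x|\lesssim\varepsilon$, so averaging against $\varphi^{(x)}_\varepsilon$ reproduces $(\int\varphi)$ times the one-point quantity.
\item The fine-scale contribution is a centred, $O(1)$-variance quantity, uniformly in $\delta$, by the same two-time factorisation and covariance bounds used for Theorem~\ref{thm:main}.
\end{itemize}
Dividing by $\sqrt{\log\log(1/\varepsilon)}$ kills the $O(1)$ error, and Slutsky gives \eqref{eq:main-local-clt}.

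For part~\eqref{iii:covariance}, define $\mathsf C^{\vartheta,(\delta)}_t(x-y)=\Cov\bigl(\scrH^{\vartheta,(\delta)}_t(x),\scrH^{\vartheta,(\delta)}_t(y)\bigr)$, which is translation and rotation invariant by the symmetries of the SHF. I would then proceed in four steps.
\begin{enumerate}
\item Show that it is nonnegative. Each $\log\scrZ$ is a coordinatewise increasing function of the Gaussian disorder at the discrete level, so FKG/Harris applies; pass to the limit via \eqref{eq:main-convergence} together with uniform integrability.
\item For fixed $z\neq0$, show that $\mathsf C^{\vartheta,(\delta)}_t(z)$ converges as $\delta\downarrow0$, locally uniformly off zero. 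This uses the same Cauchy-in-$\delta$ estimates as in Theorem~\ref{thm:main}, but at two separated points. Separation means the fine-scale parts near $x$ and near $y$ decorrelate, which is exactly the local covariance bound of the two-time factorisation.
\item Obtain the bound $\mathsf C^{\vartheta,(\delta)}_t(z)\le\log\log(1/(|z|\vee\sqrt\delta))+O(1)$ uniformly in $\delta$. This gives domination by an $L^2_{\rm loc}$ function, so dominated convergence yields \eqref{eq:main-covariance}, and the limit $\mathsf C^\vartheta_t$ is continuous off zero.
\item For \eqref{eq:main-diagonal}, in the decomposition of step~3 the two points share the coarse time blocks down to spatial scale $|z|$. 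Those blocks contribute $\log\log(1/|z|)(1+o(1))$ of common variance by the Gu--Tsai variance asymptotics, while the blocks below scale $|z|$ contribute $O(1)$ to the covariance.
\end{enumerate}

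The main obstacle is the uniform $O(1)$ control of the fine-scale (below $\varepsilon$, or below $|z|$) contributions. The logarithm is nonlinear, so errors in the partition function must be converted into errors in its log. I would first try to do this with the Liu--Xie negative-moment bounds combined with $|\log a-\log b|\le|a-b|(a^{-1}+b^{-1})$ and H\"older. The sharp constant~1 in \eqref{eq:main-diagonal} additionally requires the Gu--Tsai variance asymptotics to hold exactly at leading order for the ball-averaged quantity, not just up to constants.
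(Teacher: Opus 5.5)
\textbf{Part (ii): steps 2 and 4 have a genuine gap.} Your steps 1 and 3 match the paper (FKG, and the domination in Lemma~\ref{lem:cutoff-covariance}). Steps 2 and 4, however, need \emph{pointwise} decorrelation at two fixed separated points, uniformly in $\delta$, and none of the available estimates provide it.
\begin{itemize}
\item The Cauchy-in-$\delta$ estimates behind Theorem~\ref{thm:main} only control spatial averages. The local term $L_{m,n}(x)$ is not small at a fixed point, since its variance grows with $N$. It becomes small only after integration against a test function, because its covariance lives on the scale $\sqrt n\ll\sqrt N$.
\item At two points you would instead need something like $\Cov(L_{m,n}(0),\log\widehat Z_{m,N}(z))\to0$. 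But \eqref{eq:two-point-general} is only an upper bound, valid up to $\Delta(F_x)+\Delta(F_y)$. For the full field $\log\widehat Z_{m,N}(z)$ this defect is only known to be $O(1)$, by \eqref{prop:eq-heat-defect}; the vanishing bound \eqref{target:VD} needs horizon $[\sqrt\delta N]\ll N$.
\item Since $L_{m,n}$ is a difference of increasing functionals, FKG does not even give a sign.
\item Step 4 has the same problem, compounded. The cross terms between the coarse part at $0$ and the fine part at $z$ have $L^2$ norms of order $\sqrt{\log\log(1/|z|)}$ and $\sqrt{\log\log(1/\delta)-\log\log(1/|z|)}$, so Cauchy--Schwarz is useless.
\item ``Sharing the coarse blocks'' also requires $\Var(F_{|z|^2}(z)-F_{|z|^2}(0))=o(\log\log(1/|z|))$ at fixed separation. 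That is essentially the diagonal claim itself at cutoff $|z|^2$.
\end{itemize}

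The paper never needs pointwise information.
\begin{itemize}
\item $\mathsf C$ is obtained as the weak $L^2_{\rm loc}$ limit of $\mathsf C_\delta$, by Banach--Alaoglu, with uniqueness from \eqref{eq:limit-covariances}.
\item Positive-definiteness gives, via Bochner--Schwartz, a nonnegative radial spectral measure $\nu$.
\item Proposition~\ref{prop:shrinking-variance} gives $\int|\widehat\varphi(\xi/r)|^2\nu(\dd\xi)=\Var(\scrH(\varphi_{1/r}))\sim\log\log r$. The upper bound comes from Jensen and the Gaussian-convexity bound $\Var F_{\gep^2}\le\log\log(1/\gep)+C$; the lower bound comes from part (i) and Portmanteau. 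This also removes your worry about the constant $1$.
\item Monotonicity of $R\mapsto\nu(B_R)$ (closed balls) upgrades this to $\nu(B_R)\sim\log\log R$.
\item The Bessel representation $\mathsf C(z)=C_0+\int\{J_0(\rho|z|)-\one_{\{\rho\le1\}}\}\nu_{\rm rad}(\dd\rho)$, with slowly-varying integral asymptotics, then gives both continuity off zero and \eqref{eq:main-diagonal}.
\end{itemize}

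\textbf{Part (i): the fine-scale step is fine, the coarse averaging step is not.} Your fine-scale step is exactly what the paper does. Apply \eqref{eq:limsup-local-L} and \eqref{eq:limsup-global-G} to $\varphi_\gep$ with $\delta\le\gep^2$; the bound stays $O(1)$ because $\mathcal E_\gep(\varphi_\gep)=\mathcal E_{1/\gep}(\varphi)\lesssim|\log\gep|$.

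The coarse averaging step, however, is only asserted, and your proposed tool cannot give $O(1)$. Writing $a=\scrZ(g^{(x)}_{\gep^2})$, Jensen gives $\bbE[a^{-1}]\ge\e^{-\bbE\log a}\asymp(\log(1/\gep))^{1/2}$, and higher negative moments grow like larger powers. So $|\log a-\log b|\le|a-b|(a^{-1}+b^{-1})$ combined with H\"older loses a power of $\log(1/\gep)$.

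The missing idea is a one-sided Jensen bound.
\begin{itemize}
\item Write $F_{\gep^2}(y)-F_{\gep^2}(0)=-\frac{|y|^2}{2\gep^2}+\log\int\e^{y\cdot w/\gep^2}Q_\gep(\dd w)$, where $Q_\gep$ is a random probability measure.
\item Jensen bounds this from below by a function affine in $y$.
\item Integrating against $\varphi_\gep$, taken with zero barycentre, gives the deterministic lower bound $-\frac12\int\varphi(y)|y|^2\dd y$.
\item The average has mean zero, so $\bbE|X|=2\bbE[X^-]$ yields the $L^1$ bound.
\end{itemize}
No ball-average comparison is needed, since the Gu--Tsai CLT is applied to $F_{\gep^2}$ directly.

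For the centring, \eqref{eq:centering-constant} concerns the lattice constant, while Gu--Tsai's $b_\gep$ is only known to satisfy $b_\gep\sim-\frac12\log\log(1/\gep)$. The paper closes this gap as follows. Set $Y_\gep=(F_{\gep^2}(x)-b_\gep)/\sqrt{\log\log(1/\gep)}$. Then \eqref{eq:asymp-ElogZ-VarlogZ} gives $\sup_\gep\bbE[Y_\gep^2]<\infty$, and tightness gives $\bbE[Y_\gep]\to0$.
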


As a corollary of Theorem~\ref{thm:properties}-\eqref{i:small-scale}, we
show that the Critical 2D KPZ Process cannot
locally be a signed measure.

\begin{proposition}
\label{prop:no-local-representative}
Almost surely, $\scrH_t^\vartheta$ cannot be represented as a locally finite
signed Radon-measure.
In particular, it cannot be represented by a locally integrable function
and it cannot be spatially constant.
\end{proposition}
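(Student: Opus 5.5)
We argue by contradiction, using the small-scale CLT of Theorem~\ref{thm:properties}-\eqref{i:small-scale}. Suppose that, with positive probability, $\scrH_t^\vartheta$ coincides on some ball with a locally finite signed Radon measure $\mu$. The idea is that a measure must have bounded averages $\mu(\varphi_\varepsilon^{(x)})$ at Lebesgue-typical points, whereas the CLT forces these averages to grow like $\sqrt{\log\log(1/\varepsilon)}$.

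First we localise the event. Countable additivity over rational balls shows that, if the event has positive probability, then for some fixed ball $B$ and some $M<\infty$ the following event $E$ has positive probability: $\scrH_t^\vartheta$ restricted to $B$ is a signed measure $\mu$ with $|\mu|(B)\le M$. Measurability of $E$ needs a little care. We express $E$ through the countable family of pairings $\scrH_t^\vartheta(\psi)$, with $\psi$ ranging over a countable dense subset of $C_c^\infty(B)$, via the bound $|\scrH_t^\vartheta(\psi)|\le M\|\psi\|_\infty$.

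Next we use a Fubini argument. Fix a nonnegative $\varphi\in C_c^\infty$ with $\int\varphi=1$, and work on $E$ with $x$ in a smaller ball $B'$ and $\varepsilon$ small. Then
\[
\int_{B'} |\scrH_t^\vartheta(\varphi_\varepsilon^{(x)})|\,\dd x \le \int\!\!\int \varphi_\varepsilon^{(x)}(y)\,\dd x\,|\mu|(\dd y)\le M .
\]
Hence
\[
\bbE\Bigl[\one_E\int_{B'}\min\Bigl(1,\tfrac{|\scrH_t^\vartheta(\varphi_\varepsilon^{(x)})|}{\sqrt{\log\log(1/\varepsilon)}}\Bigr)\dd x\Bigr]\;\le\; \frac{M\,\bbP(E)}{\sqrt{\log\log (1/\varepsilon)}}\;\longrightarrow\; 0 .
\]
On the other hand, by translation invariance of the law of $\scrH_t^\vartheta$ for flat initial data, and by Fubini, the left side equals $\int_{B'}\bbE[\one_E\, F_\varepsilon(x)]\,\dd x$, where $F_\varepsilon(x)$ denotes the truncated ratio. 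For each fixed $x$, \eqref{eq:main-local-clt} says that the normalised variable converges in law to a standard Gaussian. Therefore, for any $\eta>0$, the probability that $F_\varepsilon(x)<\eta$ tends to $\bbP(|\mathcal N(0,1)|<\eta)\le c\eta$.

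It remains to compare the two bounds. We have $\bbE[\one_E F_\varepsilon(x)]\ge \eta\,\bigl(\bbP(E)-\bbP(F_\varepsilon(x)<\eta)\bigr)$. Taking $\liminf_{\varepsilon\to0}$ gives at least $\eta(\bbP(E)-c\eta)$, which is positive once $\eta$ is small. This bound is uniform in $x$, because by translation invariance the law of $F_\varepsilon(x)$ does not depend on $x$. Integrating over $B'$ and applying Fatou's lemma contradicts the vanishing upper bound, so $\bbP(E)=0$.

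The "in particular" claims then follow immediately. A locally integrable function defines a locally finite signed measure, and so does a constant. Alternatively, for a constant $c$ one has $\scrH(\varphi_\varepsilon^{(x)})=c$, which is bounded, again contradicting the CLT.

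The main obstacle is the measurability and localisation step: the event "is a locally finite signed measure" must be written as a countable condition on the random distribution in $H^{-a}_{\rm loc}$, uniformly in a mass bound $M$. Compared with that, the Fubini step only needs joint measurability of $(x,\omega)\mapsto\scrH_t^\vartheta(\varphi_\varepsilon^{(x)})$, which holds because this map is continuous in $x$.
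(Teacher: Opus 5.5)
Your proof is correct, but it reaches the contradiction by a different route from the paper.

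\textbf{The paper's route.} The paper picks $\varepsilon_k\downarrow 0$ with $\bbP(|\scrH(\varphi_{\varepsilon_k}^{(x)})|\le k)\le 2^{-k}$ uniformly in $x$. Tonelli and a Borel--Cantelli argument then give a pathwise statement: almost surely, for Lebesgue-a.e.\ $x$, $|\scrH(\varphi_{\varepsilon_k}^{(x)})|\to\infty$. The paper then argues realisation by realisation. If $\scrH$ were given by a locally finite signed measure $\mu$ on some (random) ball, the Lebesgue differentiation theorem would make $\varepsilon^{-2}|\mu|(B(x,\varepsilon))$ bounded for a.e.\ $x$ in a smaller ball, which is incompatible with the pathwise divergence.

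\textbf{Your route.} You fix a deterministic rational ball $B$ and a mass bound $M$. You replace the differentiation theorem by the elementary Tonelli bound $\int_{B'}|\mu(\varphi_\varepsilon^{(x)})|\,\dd x\le|\mu|(B)$. You play this off against a lower bound on $\bbE[\one_E\,\cdot\,]$ that comes from the CLT and translation invariance.

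\textbf{What each buys.} Your approach has to make $E$ measurable and reduce to deterministic $B$ and $M$; you do this via countably many pairings. The paper never needs measurability of the event: it shows the event is contained in a null set, so the random ball and random measure cause no difficulty. In exchange, you avoid the subsequence extraction, Borel--Cantelli and the differentiation theorem, and the contradiction becomes a single expectation estimate. Both arguments use only that the law of $\scrH(\varphi_\varepsilon^{(x)})$ escapes to infinity uniformly in $x$, not its precise Gaussian form.
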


More importantly, the next result shows that the
Critical 2D KPZ Process cannot be Gaussian,
in the stronger sense that no one-dimensional marginal obtained by the action
of~$\scrH_t^\vartheta$ on a nonnegative $C_c^\infty$ test function can be Gaussian.

\begin{theorem}[Non-Gaussianity]\label{thm:nongaussian}
For every $t>0$, $\vartheta\in\R$ and nonzero nonnegative
$\psi\in C_c^\infty(\R^2)$, the random variable
$\scrH_t^\vartheta(\psi)$ is non-Gaussian, including exclusion of
degenerate Gaussian laws.
\end{theorem}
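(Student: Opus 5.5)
The idea is to exploit the asymmetry between the two tails of $\scrH_t^\vartheta(\psi)$. A log-partition function has a lower tail controlled by Gaussian convexity: negative moments are bounded, so the left tail is at most Gaussian. The upper tail, by contrast, is governed by the SHF's positive moments, and these are known to grow super-exponentially: the $k$-th moment of the SHF on a ball grows like $\exp(\e^{ck^2})$, as in \cite{GQT21} and \cite{CSZ23}. I would show that $X\coloneqq\scrH_t^\vartheta(\psi)$, normalised so that $\int\psi=1$ without loss of generality, satisfies $\bbE[\e^{\lambda X}]=\infty$ for some finite $\lambda>0$, or at least that $\log\bbE[\e^{\lambda X}]$ grows faster than quadratically in $\lambda$. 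Either conclusion rules out any Gaussian law, degenerate or not, since a Gaussian has $\log\bbE\e^{\lambda X}=\lambda m+\lambda^2 s^2/2$.

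\textbf{Step 1: a Jensen lower bound.} For a probability density $\psi$, Jensen's inequality in $x$ gives
\[
\exp\bigl(\scrH_t^{\vartheta,(\delta)}(\psi)\bigr)\le \e^{-c_t^{\vartheta,(\delta)}}\int \psi(x)\,\scrZ^\vartheta_{t-\delta}\bigl(\one,g_\delta^{(x)}\bigr)\dd x .
\]
This inequality points the wrong way for a lower bound on the moment generating function. So instead I bound $X$ from below using the concave lower bound on the positive side. Take a small ball $B\subset\{\psi\ge\eta\}$ and use the Lipschitz comparison principle stated after Theorem~\ref{thm:main}, together with positivity, to compare $X$ with $\eta\,\scrH_t^\vartheta(\one_B\text{-mollified})$ plus a term that is bounded below in $L^p$.

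\textbf{Step 2: the alternative route via the variance.} A more robust option is to compare $\Var(X)$, which is finite by \eqref{eq:main-covariance} with $\mathsf C_t^\vartheta\in L^2_{\rm loc}$, against the deterministic relation $\bbE\,\e^{X}$. This relation follows from \eqref{eq:regularised-H} and uniform integrability in $\delta$. By Jensen's inequality in $x$ and Fatou's lemma,
\[
\bbE\,\e^{X}\le \liminf_{\delta\downarrow0}\,\e^{-c_t^{\vartheta,(\delta)}}.
\]
Since $c_t^{\vartheta,(\delta)}\to-\infty$ like $-\tfrac12\log\log\delta^{-1}$ by the analogue of \eqref{eq:centering-constant}, this bound alone is not informative. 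Instead I would use the two-time factorisation: write $X=Y+W$, where $Y$ is the log-mass contributed by an initial time layer $[0,u]$ and $W$ is independent of $Y$ given the layer. By Cramér's decomposition theorem, if $X$ were Gaussian then $Y$ and $W$ would both be Gaussian. Testing this on $Y$ and using the SHF's one-point law at macroscopic scale, which is non-log-normal, would give a contradiction. The independence is exact only asymptotically, so I would use the quantitative Gu--Tsai approximation \cite{GT26} to pass to the limit.

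\textbf{Main obstacle.} The hardest part is producing a genuinely non-Gaussian feature that survives the limit $\delta\downarrow0$. The moment-generating-function route needs the upper tail of $\log$ of a smeared SHF mass to be heavier than Gaussian, uniformly in the mollification. I would try first to prove $\bbP(X>r)\ge \e^{-Cr}$ for large $r$, that is, an exponential rather than Gaussian upper tail. This would come from the event that the SHF mass at macroscopic scale near $\mathrm{supp}\,\psi$ is of size $\e^{r}$. That event has probability at least $\e^{-Cr}$, using the $k$-th moment growth and Paley--Zygmund's inequality applied to $\scrZ^\vartheta_{t}(\one,\psi)^{k}$, with $k\asymp r$. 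The comparison principle then transfers this lower bound on the mass to a lower bound on $X$: on that event the regularised field is at least $r-O(1)$ on $\mathrm{supp}\,\psi$, via Jensen's inequality applied conditionally on the final layer. An exponential upper tail is incompatible with a Gaussian law, and it also excludes degenerate laws because $\Var(X)>0$ by \eqref{eq:main-covariance}.
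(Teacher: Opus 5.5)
\textbf{There is a genuine gap: none of your routes actually produces a non-Gaussian feature of $\scrH_t^\vartheta(\psi)$.}

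\emph{The tail route.} Each of its steps fails.
\begin{itemize}
\item The bound $\bbP(\scrZ^\vartheta_t(\one,\psi)>\e^{r})\ge\e^{-Cr}$ is false. It would give $\bbE[\scrZ^\vartheta_t(\one,\psi)^k]\ge\e^{(k-C)r}\to\infty$ for every $k>C$. But the SHF has finite moments of all orders, as your own moment-growth premise presupposes.
\item Paley--Zygmund applied to $\scrZ^k$ only yields the ratio $\bbE[\scrZ^k]^2/\bbE[\scrZ^{2k}]$. Under super-exponential moment growth this is far below $\e^{-Cr}$.
\item More fundamentally, the transfer from a large macroscopic mass to a large value of the averaged log-field runs the wrong way. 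By concavity, $\int\psi(x)\log\scrZ(g_\delta^{(x)},\one)\,\dd x\le\log\int\psi(x)\scrZ(g_\delta^{(x)},\one)\,\dd x$. Because the SHF is singular, a large total mass is compatible with the local log-masses being very negative on most of $\operatorname{supp}\psi$; their mean is already $\approx-\frac12\log\log(1/\delta)$. Conditioning on a final layer does not reverse this.
\item In Step~1, $\scrH_t^\vartheta$ is a signed distribution, so $\psi\ge\eta\one_B$ gives no inequality between $\scrH_t^\vartheta(\psi)$ and $\eta\,\scrH_t^\vartheta(\one_B)$. Also, the comparison principle concerns initial conditions, not test functions.
\end{itemize}

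\emph{The Cram\'er route.} Its hypothesis is unavailable. Whichever way you split in time, one piece is again an averaged log-field of the same type, so its non-Gaussianity is the original question; it is not the log of a macroscopic SHF mass. The other piece depends on the endpoint distributions produced by the first piece. So $Y$ and $W$ are not independent, and ``independent given the layer'' is vacuous.

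\textbf{The missing idea is a structural, tail-free argument; this is how the paper proceeds.}
\begin{itemize}
\item Suppose $X=\scrH_t^\vartheta(\zero,\psi)$ were Gaussian, possibly degenerate. Take any bounded Lipschitz terminal weight $f$ with $0<c\le f\le C$, and let $Y$ be the corresponding field, centred by its own mean. Then $|Y-X|\le\log(C/c)$. Both $Y$ and $X+Y$ are of Ehrhard type, since they are convex in the Gaussian disorder and this property is closed under weak limits.
\item A Gaussian rigidity lemma then forces $Y=X$. It compares the slopes of the supporting lines of the convex nondecreasing representation $Y\stackrel{\mathrm d}=h(G)$ with the Gaussian tails of $X$.
\item Taking $f=\one+\varepsilon\varphi$ and letting $\varepsilon\downarrow0$ shows that the averaged endpoint measures $\mu^\psi_{t,r}$ have deterministic, nonzero subsequential limits as $r\downarrow0$.
\item Integrating out the SHF on $[t/3,2t/3]$ shows that such a limit is, with positive probability, not singular with respect to $\scrZ^\vartheta_{2t/3,t}(\one,\dd y)$.
\item This contradicts the a.s.\ singularity of the SHF \cite{CSZ25}, which makes every deterministic finite measure a.s.\ singular with respect to $\scrZ^\vartheta_{2t/3,t}(\one,\dd y)$.
\end{itemize}
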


\begin{remark}[Comparison with renormalisation group predictions]
\label{rem:rg-tuning}
Theorems~\ref{thm:main} and~\ref{thm:nongaussian} yield a
non-Gaussian limit in the critical window~\eqref{eq:critical-tuning}
with fixed $\vartheta$.
This may seem in contrast with the renormalisation group
calculation of Frey and T\"auber~\cite{FT94}: the cubic term in
their function $\beta_g$ in~(3.48) would predict that a non-trivial scaling limit requires
a negative shift of $\vartheta$ of order $\log\log N$.
However, that formula contains a misprint:
in dimension \(d=2\) the cubic terms  cancel out\footnote{ The coefficient of $g^2 F_\nu(d)$ inside the parentheses of~(3.48) should rather be $+3(d-1)/(8d)$, cf.\ equations~(3.33) and
(3.46)--(3.47) therein.},
leading to $\beta_g=-g^2+O(g^4)$.
This formula, already stated in~\cite[Eq.~(3.64)]{FT94}
and recovered in~\cite[Eq.~(44)]{Wie97}, shows that
the two-loop renormalisation group calculation is consistent with the
emergence of a non-Gaussian limit in the critical window, with no additional $\log\log$ tuning,
as in our main results.
\end{remark}

Let us end this section by observing that the Critical 2D KPZ Process
inherits the following rotation-translation invariance
and scaling covariance properties from the SHF.

\begin{proposition}\label{prop:invariance}
Let us fix any finite collection \((t_i,\varphi_i)_{1\leq i\leq k}\) of times and test functions.
The Critical 2D KPZ Process \(\scrH^\vartheta\), with flat initial condition $h_0=\zero$, has the following properties:
\begin{enumerate}
\item It is rotation-translation invariant in law: for any \(z\in \R^2\), \(\alpha \in \R\),
\begin{equation*}
      \pigl(\scrH_{t_i}^\vartheta(
      T_{z}^{\alpha}\varphi_i)\pigr)_{i=1}^k
      \stackrel{\rm dist}{=} \pigl(\scrH_{t_i}^\vartheta(\varphi_i)\pigr)_{i=1}^k \,,
\end{equation*}
where \(T_{z}^{\alpha} \varphi(\cdot) = \varphi(z +\e^{i\alpha} \cdot)\) denotes the rotated-translated version of~\(\varphi\).

\item It satisfies the following covariance relation:
for all \(\mathtt{a}>0\)
\begin{equation*}
      \pigl(\scrH_{\mathtt{a} t_i}^\vartheta(S_{\mathtt{a}}\varphi_i)\pigr)_{i=1}^k
      \stackrel{\rm dist}{=} \pigl(\scrH_{t_i}^{\vartheta + \log \mathtt{a}}(\varphi_i)\pigr)_{i=1}^k \,,
\end{equation*}
where \(S_{\mathtt{a}} \varphi(\cdot) = \frac{1}{\mathtt{a}} \varphi(\frac{\cdot}{\sqrt{\mathtt{a}}})\) denotes the
scaled version of~\(\varphi\).
\end{enumerate}
\end{proposition}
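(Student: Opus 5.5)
The plan is to derive both properties from the corresponding invariance properties of the SHF, passing them through the regularised approximation \eqref{eq:regularised-H} and then to the limit via \eqref{eq:continuous-limit} of Theorem~\ref{thm:main}. Since $\scrH^\vartheta$ is a measurable function of the SHF, obtained as the limit in probability of $\scrH^{\vartheta,(\delta)}_t$, it suffices to find, for each fixed $\delta>0$, an identity in law between the finite families of the approximations. Convergence in probability, applied jointly at the finitely many times $t_i$ and tested against the $\varphi_i$ (the $L^p$ convergence statement in Theorem~\ref{thm:main}), then transfers the identity to the limit.

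For (1), I will use that the SHF is invariant in law under the space–time isometries $x\mapsto z+\e^{i\alpha}x$ acting jointly on both spatial variables. This holds because it is the limit of the noise-mollified SHE (Remark~\ref{rem:noise-mollified-she-0}) with a rotation-invariant $\rho$; alternatively, it follows from the axiomatic characterisation of \cite{Tsai24}, which is rotation and translation invariant. With flat initial condition, $\scrZ^\vartheta_{t-\delta}(\one,g^{(x)}_\delta)$ is a functional of the SHF evaluated at the radial kernel $g_\delta$ centred at $x$. The map $x\mapsto z+\e^{i\alpha}x$ therefore maps the field $x\mapsto \scrH^{\vartheta,(\delta)}_t(x)$ to one with the same joint law over $t_i$. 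The centring $c^{\vartheta,(\delta)}_t$ is unchanged by this map because it does not depend on $x$. Pairing with $\varphi_i$ and changing variables (Jacobian $1$) gives the identity at level $\delta$. I should note that with $T^\alpha_z\varphi(y)=\varphi(z+\e^{i\alpha}y)$ the pairing corresponds to the inverse isometry, which is harmless.

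For (2), I will use the scaling covariance of the SHF: for $\mathtt a>0$,
\[
\bigl(\scrZ^{\vartheta}_{\mathtt a s,\mathtt a t}(\dd(\sqrt{\mathtt a}x),\dd(\sqrt{\mathtt a}y))\bigr)\stackrel{\rm dist}{=}\bigl(\mathtt a\,\scrZ^{\vartheta+\log\mathtt a}_{s,t}(\dd x,\dd y)\bigr),
\]
with normalisation fixed so that the means match the heat kernel. I will derive this from Theorem~\ref{thm:SHF-convergence}: the law at parameter $\vartheta$ with $N$ replaced by $\mathtt a N$ corresponds to a parameter $\vartheta+\log\mathtt a+o(1)$ at scale $N$, since $R_{\mathtt aN}=R_N+\frac1\pi\log\mathtt a+o(1)$ and $\vartheta_N\to\vartheta$ is allowed. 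The point-to-plane version then gives
\[
\scrZ^\vartheta_{\mathtt a t-\mathtt a\delta}\bigl(\one,g^{(\sqrt{\mathtt a}x)}_{\mathtt a\delta}\bigr)\stackrel{\rm dist}{=}\scrZ^{\vartheta+\log\mathtt a}_{t-\delta}\bigl(\one,g^{(x)}_\delta\bigr)
\]
jointly in $x$ and the $t_i$, using the scaling identity $g_{\mathtt a\delta}(\sqrt{\mathtt a}\,\cdot)=\mathtt a^{-1}g_\delta(\cdot)$ together with the Jacobian. Hence $\scrH^{\vartheta,(\mathtt a\delta)}_{\mathtt at}(\sqrt{\mathtt a}\,\cdot)\stackrel{\rm dist}{=}\scrH^{\vartheta+\log\mathtt a,(\delta)}_t(\cdot)$ as fields, with the centrings matching exactly because they are expectations of equal-in-law quantities. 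Pairing with $S_{\mathtt a}\varphi_i$ and substituting $y=\sqrt{\mathtt a}x$ produces $\scrH^{\vartheta+\log\mathtt a,(\delta)}_{t_i}(\varphi_i)$. Letting $\delta\downarrow0$ (so $\mathtt a\delta\downarrow0$ too) concludes.

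The main obstacle is the correct bookkeeping of the scaling in (2), namely the shift $\vartheta\mapsto\vartheta+\log\mathtt a$. The prefactor $\mathtt a$ in the SHF scaling is irrelevant after centring, since it only adds the constant $\log\mathtt a$, which cancels with the same shift in $c_t$. If the SHF scaling covariance is available in \cite{CSZ23}, I would quote it directly; otherwise I would check the parameter shift through the lattice derivation above, verifying that the discretisation conventions $[Ns]$ and $[\sqrt{N/2}\,x]$ only introduce negligible errors in the vague limit.
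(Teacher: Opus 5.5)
Your proposal is correct and follows essentially the same route as the paper: invariance of the SHF (translation invariance and scaling covariance from \cite[Theorem~1.2]{CSZ23}, rotation invariance via the radially mollified SHE and \cite[Proposition~1.5]{Tsai24}) is transferred to the regularised fields $\scrH^{\vartheta,(\delta)}_t$ using $g_{\mathtt a\delta}(\sqrt{\mathtt a}\,\cdot)=\mathtt a^{-1}g_\delta(\cdot)$, and then to the limit via the joint convergence of Theorem~\ref{thm:main}. Since the scaling covariance is already stated in \cite{CSZ23}, your lattice rederivation is unnecessary; note also that, as written, its parameter bookkeeping has the sign reversed ($\beta_{\mathtt aN}$ at parameter $\vartheta$ matches $\beta_N$ at $\vartheta-\log\mathtt a$; equivalently, $\beta_N$ at $\vartheta$ matches $\beta_{\mathtt aN}$ at $\vartheta+\log\mathtt a$), although the scaling relation you finally display is the correct one.
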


\begin{proof}
The distribution of the SHF
is translation invariant,
see \cite[Theorem~1.2]{CSZ23},
and it is also rotation invariant, as follows from the
rotational invariance of the mollified SHE
(with a radial mollifier \(\rho\))
which is preserved by taking the limit in distribution \cite[Proposition~1.5]{Tsai24}.
Then, the Critical 2D KPZ Process
\(\scrH^\vartheta\) inherits these
invariance properties, thanks to the (joint) convergence in distribution from Theorem~\ref{thm:main}.

As far as the second covariance property
is concerned, one can use the scaling
covariance relation of the SHF from~\cite[Theorem~1.2]{CSZ23}.
Recalling \({\scrH}_{t}^{\vartheta,(\delta)}(  \varphi)\) from \eqref{eq:regularised-H} (with flat initial condition \(h_0 = \zero\)),
and observing that
\(g_{\mathtt{a}\delta}^{(x)}(\cdot) = S_{\mathtt{a}} g_{\delta}^{(x/\sqrt{\mathtt{a}})}(\cdot)\),
this yields that
for any \(\delta>0\) and \(\mathtt{a}>0\),
\[
\pigl({\scrH}_{ \mathtt{a}t_i}^{\vartheta,(\mathtt{a}\delta)}(S_\mathtt{a} \varphi_i)\pigr)_{i=1}^k
\stackrel{\rm dist}{=} \pigl({\scrH}_{t_i}^{\vartheta +\log \mathtt{a},(\delta)}(\varphi_i)\pigr)_{i=1}^k \,.
\]
The desired covariance property then follows, thanks to the
(joint) convergence in distribution from Theorem~\ref{thm:main}.
\end{proof}

\subsection{Relation to earlier work}\label{subsec:related-work}

 In this section, we review contributions from the literature that played an important role towards the
  main results in this paper.

Our convergence proof in Theorem~\ref{thm:main} first applies a localisation argument that was used
in \cite[Section~2.1]{CSZ20} to prove Edwards-Wilkinson limit for the 2D KPZ
equation in the subcritical regime (see also \cite{Gu20}). The main idea is to approximate
the field by a local field, whose covariance has fast spatial decay. In the critical window
we consider here, such an approximation by a local field is no longer accurate. Nevertheless,
we use the decomposition~\eqref{eq:critical-decomposition} to separate a local
term, which is analogous to the field considered in the sub-critical regime, and a global term
that is estimated separately by adapting techniques from \cite{GT26} and applying
a Gaussian convexity property \cite{Chen26, LX26}.

A key ingredient in our analysis is the geometric block factorisation that had been used by Gu and Tsai~\cite{GT26}
to prove a central limit theorem for the average of the SHF on shrinking spatial scales. 
Such
decomposition on exponential time scales first appeared in the study of 2D SHE and directed polymer
in the subcritical regime \cite{CSZ17b}, and the idea of approximating by products of independent
block partition functions on different time scales was developed by Cosco and
Donadini~\cite{CD25} (see also \cite{CSZ25} for a different decomposition in the so-called
quasi-critical window). Gu and Tsai~\cite{GT26} extended the approximation idea from \cite{CD25} to
the critical window and developed an expansion to control the error of approximation, which was applied
to the SHF. We will adapt this expansion to lattice polymers and use it to track how
the partition function depends on the polymer endpoint distribution
at an intermediate time. In particular, we compare two such
distributions evolved through the same later disorder, obtaining
Proposition~\ref{prop:PF}. Together with negative-moment and
logarithmic-moment bounds, this comparison controls the global term
in~\eqref{eq:critical-decomposition}.

Passing from partition functions to their logarithms 
requires control of small values, which we obtain via 
negative moments. Liu and Xie~\cite{LX26} obtained sharp 
bounds, up to multiplicative constants, for positive and 
negative moments of spatial averages of the SHF over
shrinking balls, in terms of explicit powers of their corresponding second moments.
In our proof we need sharper 
moment estimates of this sort: Proposition~\ref{prop:VD} shows that, 
for a suitably averaged partition function over a 
short time interval,
the ratio of the second moment to the negative-first moment 
is close to one.
We also use bounds for the 
centred fourth moments of partition functions 
from~\cite{CSZ25}, based on the operator estimates of 
Caravenna, Cottini and Rossi~\cite{CCR25},
to control the errors in the factorisation 
and in the comparison of positive and negative moments.

As in \cite{LX26},  Gaussian convexity 
also plays an important role here, controlling lower tails at criticality,
as well as obtaining spatial decorrelations.
In addition, Gaussian convexity is also used to prove non-Gaussianity of the 
Critical 2D KPZ Process. This convexity structure is 
preserved, in a distributional sense, under weak 
limits~\cite{LX26}. Together with the representation of 
the resulting laws as convex functions of a standard 
Gaussian~\cite{FMP}, this yields a rigidity principle: 
Gaussianity would force certain limiting polymer endpoint 
measures to be deterministic. Using the flow property 
established by Clark and Mian~\cite{CM}, we show that this 
is incompatible with the singular spatial structure of 
the SHF~\cite{CSZ25}.

\subsubsection*{Notational conventions}

We distinguish the random-walk law and expectation $\rmP,\rmE$
from the disorder law and expectation $\bbP,\bbE$.
For real-valued random variables, the $L^p$ norm
is denoted by $\|X\|_{L^p}=(\bbE|X|^p)^{1/p}$.
For functions on $\R^2$ or $\Z^2$, we denote by
$\|f\|_p$ the $L^p$ norm with respect to the
Lebesgue measure or counting measure, respectively.

In the rest of the article, the precise values of the constants \(c,C\) may change from line to line; we may write \(c_{\vartheta},C_{\vartheta}\) if we want to highlight that the constants may depend on some parameter \(\vartheta\).
We also write \(f=O(g)\) if there is a constant \(C<+\infty\) such that \(f(x)\leq C g(x)\) and we write \(f\asymp g\) if \(f=O(g)\) and \(g=O(f)\); we write \(f=O_{\vartheta}(g)\) or \(f\asymp_{\vartheta} g \) if we want to highlight that the constants may depend on some parameter \(\vartheta\).

\section{Strategy of the proof and organisation of the paper}
\label{sec:decomposition}

In this section, we outline the strategy of the proof
of our main result Theorem~\ref{thm:main}
and we describe the organisation of the paper.
We first define some notation.

\subsection{Notation}

The transition probabilities of the simple random walk $S$
on $\Z^2$ are
\begin{equation}\label{eq:rwkernel}
\begin{gathered}
 q_n(z)\coloneqq \rmE_0[\one_{\{S_n=z\}}],\qquad
 q_n^{(w)}(z) \coloneqq 
 q_n(w,z)\coloneqq q_n(z-w)=\rmE_w[\one_{\{S_n=z\}}] \,.
\end{gathered}
\end{equation}
Let $\cF_{m,n}$ be the sigma field generated by the environment
$\omega = (\omega(j,z))$ on time $(m,n]$:
\begin{equation}\label{eq:cF}
 \cF_{m,n}\coloneqq \sigma\{\omega(j,z):m<j\le n,\ z\in\Z^2\} \,.
\end{equation}

Recall the point-to-point partition functions $Z_{a,b}^{\beta}(w,z)$ from \eqref{eq:kernel}, for points $w,z \in \Z^2$.
For an initial measure $\nu$ and a terminal function $f$ on $\Z^2$,
set
\begin{equation}\label{eq:Zav}
\begin{gathered}
	 Z_{a,b}^{\beta}(\nu,f) \coloneqq  \sum_{w,z}\nu(w) \, Z_{a,b}^{\beta}(w,z) \, f(z) \,, \\
       Z_{a,b}^{\beta}(\nu,z)\coloneqq \sum_w\nu(w)Z_{a,b}^{\beta}(w,z) \,,
       \qquad
       Z_{a,b}^{\beta}(w,f)\coloneqq \sum_zZ_{a,b}^{\beta}(w,z)f(z) \,.
\end{gathered}
\end{equation}
Point-to-plane partition functions are obtained for $f = \one$
(often omitted from notation):
\begin{equation}\label{eq:Zplane}
 Z_{a,b}^{\beta}(w)\coloneqq Z_{a,b}^{\beta}(w,\one)
      =\sum_zZ_{a,b}^{\beta}(w,z) \,, \quad\
       Z_{a,b}^{\beta}(\nu)\coloneqq Z_{a,b}^{\beta}(\nu,\one)
       =\sum_{w,z} \nu(w)\, Z_{a,b}^{\beta}(w,z) \,.
\end{equation}
\emph{We henceforth set $\beta = \beta_N$ in the critical window
\eqref{eq:critical-tuning} 
and omit it from notation.}

Combining the notation introduced so far,
we define \emph{a smoothed version $\widehat Z_{a,b}(x)$
of the point-to-plane partition function
$Z_{a,b}(x)$}, where we average out disorder until
time~$a$:
\begin{equation}\label{eq:Wab}
 \widehat Z_{a,b}(x)
 \coloneqq  \bbE[Z_{0,b}(x)\mid\cF_{a,b}]
 = Z_{a,b}(q_a^{(x)},\one)
 =\sum_y q_a(y-x)Z_{a,b}(y,\one) .
\end{equation}
Thus the walk evolves without disorder up to time $a$,
while later disorder remains.

The polymer endpoint distributions,
also called endpoint profiles,
starting from a point $w\in\Z^2$
or a probability~$\nu$ on~$\Z^2$ are
\begin{equation}\label{eq:endpoint}
	 \mu_{a,b}^{(w)}(z)\coloneqq \frac{Z^{\beta}_{a,b}(w,z)}{Z^{\beta}_{a,b}(w)}  \,, \qquad
 \mu_{a,b}^{(\nu)}(z)
 \coloneqq  \frac{Z^{\beta}_{a,b}(\nu,z)}{Z^{\beta}_{a,b}(\nu)} \,.
\end{equation}
Starting from a single point corresponds to $\nu=\delta_w=q_0^{(w)}$.

We will use functional notation for the SHF,
analogous to \eqref{eq:Zav} and \eqref{eq:Zplane}:
\begin{equation} \label{eq:averaged-SHF}
\begin{split}
	\scrZ^\vartheta_{s,t}(\varphi,\psi)
	& \coloneqq  \int_{\R^2\times\R^2} \varphi(x) \, \scrZ^\vartheta_{s,t}(\dd x,
	\dd y) \, \psi(y) \,, \\
	\scrZ^\vartheta_{s,t}(\varphi)
	& \coloneqq  \scrZ^\vartheta_{s,t}(\varphi, \one)
	= \int_{\R^2\times\R^2} \varphi(x) \, \scrZ^\vartheta_{s,t}(\dd x, \dd y) \,.
\end{split}
\end{equation}

When the starting time is~$0$, we often omit it from notation and simply write
\begin{equation*}
	\cF_n \coloneqq  \cF_{0,n} \,, \qquad
	Z_{n}^{\beta} \coloneqq  Z_{0,n}^{\beta} \,, \qquad
	\mu_n^{(w)}\coloneqq \mu_{0,n}^{(w)} \,, \qquad\scrZ^\vartheta_{t}
      \coloneqq  \scrZ^\vartheta_{0,t} \,.
\end{equation*}

\subsection{Strategy of the proof of Theorem~\ref{thm:main}}
\label{subsec:strategy-approximation}

We describe the main ideas in the
proof of the convergence results \eqref{eq:continuous-limit}
and \eqref{eq:main-convergence},
with both sides
evaluated on a test function $\varphi \in C_c(\R^2)$.
We focus for simplicity on time $t=1$ and
initial condition \(h_0 = \zero\);
the extension to finitely many times and test functions and to an initial condition \(h_0 \in L^{\infty}(\R^2) \cap C(\R^2)\) follows the same strategy and is detailed in Section~\ref{sec:joint-convergence}.

Recall
${\scrH}_{t}^{\vartheta,(\delta)}$ from \eqref{eq:regularised-H}
and $\cH_t^{\vartheta,(N)}$ from \eqref{eq:HN},
and let us omit the dependence on \(\vartheta\) in the notation from now on.
We actually prefer to work with \emph{time-reversed versions}
(the distribution is unchanged\footnote{Strictly speaking, this is not true in the discrete setting \textit{i.e.}\ for the polymer partition functions defined in \eqref{eq:kernel}: time-reversal changes the convention of the absence or presence of a disorder variable at the starting or end point.
This however does not affect the results.}): 
let us call ${\scrH}^{(\delta)}(\varphi)$
the $\delta$-regularised log-SHF averaged over the
starting point, with flat terminal condition:
\begin{equation}\label{eq:regularised-Hbis}
\begin{split}
 {\scrH}^{(\delta)}(\varphi)
\,\coloneqq \,
\int_{\R^2} \varphi(x) \,
\pig\{ \log \scrZ_{\delta,1} \bigl( g_\delta^{(x)} \bigr)
\,-\, \bbE \big[ \log \scrZ_{\delta,1}
\bigl( g_\delta^{(0)} \bigr)
 \big] \pig\} \, \dd x \,.
\end{split}
\end{equation}
We similarly let $\cH^{(N)}(\varphi)$ be
the renormalised point-to-plane log-partition
function averaged over the starting point:
restricting for simplicity to even values of $N$, we define
\begin{equation}\label{eq:HNbis}
\begin{split}
      \cH^{(N)}(\varphi)
      &\,\coloneqq \,
      \int_{\R^2}
      \varphi(x) \, \pig\{
      \log \cZ_{1}^{(N)}(x)
       \,-\,
       \bbE\big[ \log \cZ_{1}^{(N)}(0)\big]
       \pig\} \, \dd x \\
       & \,=\,
       \sum_{z\in\Z^2_{\rm even}}
       \varphi^{(N)}(z) \,
       \pig\{
      \log Z_{N}(z)
       \,-\,
       \bbE\big[ \log Z_{N}(0) \big]\,
       \pig\} \,,
\end{split}
\end{equation}
where in the last equality,
recalling the cells $\cellC_N(z)$ from \eqref{eq:lattice-cells},
we identify $\cZ_{1}^{(N)}(x)
= Z_{N}(z)$ for $x \in \cellC_N(z)$
(see \eqref{eq:ZNx} and \eqref{eq:Zplane0})
and we define
\begin{equation}
\label{def:varphi-N}
\varphi^{(N)}(z)
\,\coloneqq\,  \int_{\cellC_N(z)} \varphi(x)\,\dd x
\,\approx\, \tfrac{4}{N} \varphi\pig(\tfrac{z}{\sqrt{N/2}}\pig) \,.
\end{equation}

We now state a preliminary version of Theorem~\ref{thm:main}
which retains the essential challenges.

\begin{theorem}[Critical 2D KPZ Process, preliminary version]
\label{thm:toy}
For any test function $\varphi \in C_c(\R^2)$
there exists a random variable $\scrH(\varphi) \in L^2$
such that,
in the critical scaling~\eqref{eq:critical-tuning}
\begin{equation}\label{eq:toy}
      \scrH^{(\delta)}(\varphi)
      \ \xrightarrow[\ \delta \downarrow 0 \ ]{L^2} \
      \scrH(\varphi) \,, \qquad
      \cH^{(N)}(\varphi)
      \ \xrightarrow[\ N\to \infty \ ]{d} \
      \scrH(\varphi) \,.
\end{equation}
\end{theorem}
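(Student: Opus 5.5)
The plan is to prove both convergences in \eqref{eq:toy} through a Cauchy-type argument built on a two-time factorisation. I treat the discrete and continuum sides in parallel. For $0<\delta'<\delta$, the goal for the continuum side is
\[
\lim_{\delta\downarrow0}\sup_{\delta'<\delta}\bigl\|\scrH^{(\delta)}(\varphi)-\scrH^{(\delta')}(\varphi)\bigr\|_{L^2}=0 .
\]
On the discrete side, I compare $\cH^{(N)}(\varphi)$ with its temporally smoothed version $\widehat\cH^{(N,\delta)}(\varphi)$. This is obtained by replacing $Z_N(z)$ with $\widehat Z_{\delta N,N}(z)$ from \eqref{eq:Wab}, which erases the disorder on $[0,\delta N]$, and centring accordingly. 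I then aim to show
\[
\limsup_{N}\bigl\|\cH^{(N)}(\varphi)-\widehat\cH^{(N,\delta)}(\varphi)\bigr\|_{L^2}\to0 \quad\text{as } \delta\downarrow0 .
\]
For fixed $\delta$, the field $\widehat\cH^{(N,\delta)}(\varphi)$ is an integral of $\log$ of a heat-kernel-smoothed point-to-plane partition function. By Theorem~\ref{thm:SHF-convergence}, it converges in law to $\scrH^{(\delta)}(\varphi)$. Converting the vague convergence into convergence of logarithms needs uniform integrability of $\log$, and I get this from negative-moment bounds obtained by Gaussian convexity \cite{LX26}. A standard triangular argument then identifies the limit $\scrH(\varphi)$ in both statements. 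The continuum Cauchy property is itself the $N\to\infty$ limit of the discrete comparison between $\widehat Z_{\delta N,N}$ and $\widehat Z_{\delta' N,N}$.

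The core estimate compares $\log Z_{a',b}(q_{a'}^{(x)})$ with $\log Z_{a,b}(q_a^{(x)})$ for $a'<a$. The difference is the effect of erasing the disorder on $(a',a]$. I factorise at the two times $a'$ and $a$:
\[
Z_{a',N}(q_{a'}^{(x)})=Z_{a',a}(q_{a'}^{(x)})\cdot Z_{a,N}\bigl(\mu^{(x)}_{a',a}\bigr),
\]
where $\mu^{(x)}_{a',a}$ is the endpoint distribution \eqref{eq:endpoint} started from $q_{a'}^{(x)}$. This gives a decomposition \eqref{eq:critical-decomposition} of the logarithmic difference into two terms:
\[
\underbrace{\log Z_{a',a}(q_{a'}^{(x)})}_{\text{local}}
\;+\;
\underbrace{\log\frac{Z_{a,N}(\mu^{(x)}_{a',a})}{Z_{a,N}(q^{(x)}_{a})}}_{\text{global}} .
\]
For the local term, after centring and integrating against $\varphi$, I bound the variance through spatial decorrelation. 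The field is built from disorder in a short time window of order $\delta N$, so its covariance should decay on spatial scale $\sqrt{\delta N}$. I control this with Gaussian-convexity covariance bounds, namely the Chen/Liu--Xie inequality for $\Cov(\log Z(x),\log Z(y))$ in terms of overlaps. The variance of the integrated local term is then $O(\sqrt\delta$-type$)\cdot\mathrm{polylog}$, which tends to $0$.

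The global term measures how two different initial profiles are propagated by the same later disorder. The profiles are $\mu^{(x)}_{a',a}$ and $q^{(x)}_a$, and both have the same mean and spread on scale $\sqrt a$. To handle it, I use the Gu--Tsai geometric-block expansion \cite{GT26}, transferred to the lattice, on the time window $(a,N]$. The first blocks after time $a$, on scales $a,2a,4a,\dots$, carry the dependence on the initial profile. Later blocks see both profiles smeared to nearly the same heat-kernel profile. This should yield an estimate of the form
\[
\bbE\Bigl[\Bigl(\frac{Z_{a,N}(\mu)}{Z_{a,N}(\nu)}-1\Bigr)^2\Bigr]\lesssim \text{(discrepancy of $\mu,\nu$ after smoothing)}+\text{(expansion error)},
\]
which is the content of Proposition~\ref{prop:PF}. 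I then pass from ratios to logarithms using negative moments. The sharp comparison of second and negative-first moments (Proposition~\ref{prop:VD}) ensures that the centring errors are small rather than merely $O(1)$. The fourth-moment bounds of \cite{CSZ25,CCR25} control the remainder terms of the expansion.

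I expect the global term to be the main obstacle. Its variance is not small pointwise: a single block contributes variance of order $1/\log\log$ only after summation, and the $\log\log$ growth of \eqref{eq:centering-constant} shows that crude bounds lose too much. I would first try to prove the ratio estimate in $L^2$ conditional on $\cF_{a',a}$, averaging over $\mu$. I would then show that the discrepancy between $\mu^{(x)}_{a',a}$ and $q_a^{(x)}$, once tested against the heat kernel at scale $a$, has small second moment uniformly in $N$, with smallness in $a'/a$. Finally, I would telescope along a geometric sequence $\delta_k=2^{-k}$ and sum the resulting bounds.
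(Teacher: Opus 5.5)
Your overall architecture matches the paper: you interpolate through the fields with disorder erased on $(0,\delta N]$, get convergence at fixed $\delta$ from Theorem~\ref{thm:SHF-convergence} plus negative moments, and close with a Cauchy/triangle argument. The core comparison, Proposition~\ref{prop:smo}, has a genuine gap, though. You factorise only at $a$, the end of the erased window, and the global term this leaves cannot be closed.

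That term compares the polymer endpoint law $\mu^{(x)}_{a',a}$ with the bare heat kernel $q^{(x)}_a$. Both descend from the same $q_{a'}$: one evolves through the disorder on $(a',a]$, the other through none. So there is no common evolution to wash anything out. The discrepancy is produced by the last geometric blocks before $a$, and it is not small in $a'/a$.

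In the expansion \eqref{eq:profile-interval-expansion}, $\mu_{a',a}-q_a$ is dominated by the uncancelled leading corrector $\Delta\mu^{\mathrm{open}}$. Its $\cH_{a,N}$-norm is of order $|\log\delta|^{-1/2}$, since those blocks have variance $\asymp1/|\log\delta|$. But $\bbE[Z_{a,N}(q_a)^2]\asymp|\log\delta|$, so, as in Lemma~\ref{lem:centering}, the typical size $\e^{\bbE\log Z_{a,N}(q_a)}$ is itself $\asymp|\log\delta|^{-1/2}$. An $L^1$ bound of that order on $Z_{a,N}(\mu_{a',a})-Z_{a,N}(q_a)$ therefore says nothing about the log-ratio.

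Concretely, the H\"older transfer with the negative moments \eqref{eq:one-point-negative} yields $|\log\delta|^{-\alpha/2}\,|\log\delta|^{\alpha/(2(1-\alpha))}$. This is a positive power $\alpha^2/(2(1-\alpha))$ of $|\log\delta|$ for every $\alpha\in(0,1)$. Your proposed $L^2$ bound on the ratio hits the same wall through $\bbE[Z_{a,N}(\nu)^{-2}]$. This estimate is also not what Proposition~\ref{prop:PF} states.

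The missing idea is a second intermediate time $n=[\sqrt\delta N]$ with $m\ll n\ll N$, at which one factorises as in \eqref{eq:critical-decomposition}. The global term then compares two polymer endpoint laws, $\mu^{(z)}_n$ and $\widehat\mu^{(z)}_{m,n}$, which have both evolved through the same disorder on $(m,n]$, i.e.\ through $J\asymp|\log\delta|$ blocks. Their leading correctors $\Delta\mu^{\mathrm{open}}_{1,b}$ and $\Delta\mu^{\mathrm{open}}_{a+1,b}$ differ only by $\sum_{u\le a}\mathscr D^{\mathrm{open}}_{u,b}=O((2\varepsilon_\rho)^J)$. This leaves the $O(1/J)$ bound \eqref{target:PF}, which does beat the negative-moment loss: $\alpha=1/3$ gives $|\log\delta|^{-1/12}$.

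Any repair of your global term leads back here. Indeed, $\log\frac{Z_{a,N}(\mu_{a',a})}{Z_{a,N}(q_a)}$ equals $\log\frac{Z_{a,n}(\mu_{a',a})}{Z_{a,n}(q_a)}$ plus a term of the form \eqref{eq:G}. The price is that the local term becomes $\log Z_n-\log\widehat Z_{m,n}$, localised on scale $\sqrt n\ll\sqrt N$.

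Two further corrections are needed.
\begin{itemize}
\item In the covariance bound \eqref{eq:two-point-general}, the defects $\Delta(F_x)+\Delta(F_y)$ do not decay in $|x-y|$. The local variance is small only because Proposition~\ref{prop:VD} makes them $O(|\log\delta|^{-1/4})$. That proposition is a sharp second versus negative-first moment comparison, proved via the block factorisation under the tilted law $\mathbb Q$. The resulting rate is not $O(\sqrt\delta)$ times a polylog.
\item All the available rates are small powers of $|\log\delta|$, so telescoping over $\delta_k=2^{-k}$ and summing the bounds diverges. Consecutive steps have $a'/a=1/2$, so they give no extra gain. Instead, compare each $\delta$ directly with the unregularised field and use the triangle inequality as in \eqref{eq:L2-Cauchy}.
\end{itemize}
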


\emph{We will prove the two convergences
in \eqref{eq:toy} simultaneously.}
The idea is to interpolate between
the $\delta$-regularised log-SHF
$\scrH^{(\delta)}(\varphi)$ and
the log-partition function $\cH^{(N)}(\varphi)$
by means of a $\delta$-regularised log-partition function:
we thus define, recalling $\widehat Z$ from \eqref{eq:Wab},
\begin{equation}\label{eq:discrete-regularised-H}
\begin{split}
      \cH^{(N;\delta)}(\varphi)
 &\coloneqq  \sum_{z\in\Z^2_{\rm even}}\!\!\!
 \varphi^{(N)}(z) \,
 \pig\{\log\widehat Z_{m,N}(z)-
 \bbE \big[ \log\widehat
 Z_{m,N}(0) \big] \pig\} \\
 & = \sum_{z\in\Z^2_{\rm even}} \!\!\!
 \varphi^{(N)}(z) \,
 \pig\{\log Z_{m,N}(q_{m}^{(z)})-
 \bbE \big[ \log
 Z_{m,N}(q_{m}^{(0)}) \big] \pig\} \quad
 \text{with } \, m \coloneqq  [\delta N] \,.
\end{split}
\end{equation}
Here disorder is removed until time
$m = [\delta N]$, while the distribution
$q_{m}^{(z)}$ converges to $g_\delta^{(x)}$
after spatial rescaling.
Intuitively, we will show that
\begin{equation}\label{eq:interpo}
      \scrH^{(\delta)}(\varphi)
       \ \underset{\rule{0em}{1.2em}\substack{\text{as }N \to \infty\\
       \rule{0em}{.65em}\text{for \emph{fixed}} \; \delta}}{\overset{\rule[-.3em]{0em}{.2em}d}{\approx}} \
       \cH^{(N;\delta)}(\varphi)
       \ \underset{\rule{0em}{1.2em}\substack{\text{as }\delta \downarrow 0\\
       \rule{0em}{.65em}\text{\emph{uniformly}
       for large} \; N}}{\overset{\rule[-.3em]{0em}{.2em}L^2}{\approx}} \
       \cH^{(N)}(\varphi) \,,
\end{equation}
as formulated precisely in the next two results.

\begin{proposition}[Log-polymer to log-SHF]
\label{prop:FC}
Jointly for finitely many $\delta \in (0,1)$, we have
\begin{equation}\label{target:fixed-regularisation}
 \cH^{(N;\delta)}(\varphi)
 \ \xrightarrow[\ N \to \infty\ ]{d}\ \scrH^{(\delta)}(\varphi) \,,
\end{equation}
and the limiting variable $\scrH^{(\delta)}(\varphi)$
belongs to $L^2$.
\end{proposition}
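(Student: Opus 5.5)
The plan is to show that, for fixed $\delta$, the random field $z\mapsto \log\widehat Z_{m,N}(z)$ converges jointly (over finitely many $\delta$) to $x\mapsto\log\scrZ_{\delta,1}(g_\delta^{(x)})$. This follows from Theorem~\ref{thm:SHF-convergence} and a continuous mapping argument, once truncation and uniform integrability are under control. Write $\widehat Z_{m,N}(z)=\sum_y q_m(y-z)Z_{m,N}(y)$. After diffusive rescaling, this is the integral of the rescaled point-to-plane partition function $\cZ^{(N)}_{\delta,1}(\cdot)$ against $q_m^{(z)}$. The kernel $\frac N4 q_m(\sqrt{N/2}(\cdot)-z)$ converges uniformly to $g_\delta^{(x)}$, for $x$ in a compact set, with Gaussian tails; here we use the local CLT, restricted to the even sublattice.

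\textbf{Step 1: joint convergence of the smoothed fields.} Consider the continuous field $x\mapsto \cZ^{(N)}_{\delta,1}(g_\delta^{(x)})$, indexed by $x\in\mathrm{supp}\,\varphi$. Theorem~\ref{thm:SHF-convergence}, jointly over the finitely many pairs $(\delta,1)$, gives finite-dimensional convergence to $\scrZ_{\delta,1}(g_\delta^{(x)})$. Vague convergence alone does not suffice for non-compactly supported $g_\delta$, so we truncate $g_\delta$ to a large ball and bound the remainder using first moments, since $\bbE\cZ$ is the heat kernel. Tightness in $C(K)$ follows from the equicontinuity of $x\mapsto g_\delta^{(x)}$, which is deterministic, combined with the same first-moment bound. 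The local CLT error for replacing $q_m$ by $g_\delta$ is also handled in $L^1$ by first moments. The result is convergence in law in $C(K)$ of $x\mapsto\widehat Z_{m,N}([\sqrt{N/2}x])$ to $x\mapsto\scrZ_{\delta,1}(g_\delta^{(x)})$.

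\textbf{Step 2: passing to logarithms.} The map $f\mapsto\int\varphi\log f$ is continuous on positive functions in $C(K)$. The limit is a.s.\ strictly positive, which follows from the negative moment bounds of Liu--Xie~\cite{LX26} via Gaussian convexity. We truncate the logarithm as $\log(f\vee\eta)\wedge \eta^{-1}$ and control the error by uniform-in-$N$ bounds: $\sup_N \bbE[\widehat Z_{m,N}(0)^{-p}]<\infty$ by Gaussian convexity for Gaussian disorder, and $\sup_N\bbE[\widehat Z_{m,N}(0)^{2}]<\infty$ by the second moment computation in the critical window, with $\delta$ fixed. Together these give uniform integrability of $|\log \widehat Z_{m,N}(z)|^{2+\epsilon}$ uniformly in $z$. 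It follows that both the random sum and the centring $\bbE\log\widehat Z_{m,N}(0)$ converge, the latter to $\bbE\log\scrZ_{\delta,1}(g_\delta^{(0)})$. By Fatou, the same bounds also give $\scrH^{(\delta)}(\varphi)\in L^2$, since $\bbE\log^2$ is bounded by positive and negative moments. Replacing the Riemann sum weights $\varphi^{(N)}(z)$ by the integral is harmless by continuity in Step 1.

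\textbf{Main obstacle.} The hardest step is the uniform-in-$N$ negative moment bound for $\widehat Z_{m,N}(0)$ at fixed $\delta$. We take this from~\cite{LX26} and expect it to transfer directly, because the averaging over $q_m^{(0)}$ on spatial scale $\sqrt{\delta N}$ keeps the second moment bounded. A secondary subtlety is controlling the tails of $g_\delta$ under mere vague convergence, which the first-moment truncation handles.
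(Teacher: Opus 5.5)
Your proposal is correct and follows essentially the paper's route: truncate $g_\delta$ and use first-moment $L^1$ bounds plus the local limit theorem to get (joint) convergence of $\widehat Z_{m,N}(x_N)$, then pass to a truncated logarithm and remove the truncation. The removal uses $\bbE[\widehat Z_{m,N}^{-1}]\le\bbE[\widehat Z_{m,N}^2]\le 1+C\log(1/\delta)$ from Lemma~\ref{lem:one-point} and \eqref{eq:FC-moments}, so your ``main obstacle'' is in fact immediate.

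The only difference is how the spatial integral is handled. You use tightness in $C(K)$ and the continuous mapping theorem. The paper partitions $\operatorname{supp}\varphi$ into small cells and uses $\|g_\delta^{(x)}-g_\delta^{(x_i)}\|_1\le C\varepsilon/\delta$ together with finite-dimensional convergence. Both work. Note, however, that the piecewise-constant field $x\mapsto\widehat Z_{m,N}(x_N)$ is not itself in $C(K)$. You should therefore compare it to the continuous field $Z_{m,N}((g_\delta^{(x)})^{(N)})$ after truncating the logarithm, where the $L^1$ bounds suffice.
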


\begin{proposition}[Log-polymer smoothing]
\label{prop:smo}
We have
\begin{equation}\label{eq:strategy-goal}
 \varepsilon_\delta\coloneqq \limsup_{N\to\infty}
 \big\| \cH^{(N)}(\varphi)
       -\cH^{(N;\delta)}(\varphi) \big\|_{L^2}
       \ \xrightarrow[\ \delta\downarrow 0 \ ]{} \ 0 \,.
\end{equation}
More precisely there is $C_\varphi < \infty$ such that,
for all sufficiently small $\delta > 0$,
\begin{equation}
      \label{eq:strategy-goal-L2}
      \varepsilon_\delta
      \leq \frac{C_{\varphi}}{|\log \delta|^{1/40}}  \,.
\end{equation}
\end{proposition}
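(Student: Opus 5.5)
We must prove Proposition~\ref{prop:smo}: the $L^2$ distance between $\cH^{(N)}(\varphi)$ and $\cH^{(N;\delta)}(\varphi)$ vanishes as $\delta\downarrow0$, uniformly in large $N$. The plan is to write the difference as a sum of local and global terms via a factorisation at two times, and then control its variance by a covariance sum over pairs of starting points.

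\textbf{Step 1 (two-time factorisation).} Fix an intermediate time $k=[\delta^{1-\eta} N]\gg m=[\delta N]$, or more generally $m\ll k\ll N$ with $\log(k/m)$ and $\log(N/k)$ both of order $|\log\delta|$. Write
\[
Z_N(z)=Z_{0,k}(z)\,Z_{k,N}\bigl(\mu^{(z)}_{0,k}\bigr),\qquad
\widehat Z_{m,N}(z)=Z_{m,k}(q_m^{(z)})\,Z_{k,N}\bigl(\mu^{(q_m^{(z)})}_{m,k}\bigr).
\]
Taking logarithms, the difference $\log Z_N(z)-\log\widehat Z_{m,N}(z)$ splits into two parts:
\begin{itemize}
\item a \emph{local term} $\log Z_{0,k}(z)-\log Z_{m,k}(q_m^{(z)})$, which is $\cF_{0,k}$-measurable and depends only on disorder in a box of radius about $\sqrt k$ around $z$;
\item a \emph{global term} $\log Z_{k,N}(\mu^{(z)}_{0,k})-\log Z_{k,N}(\mu^{(q_m^{(z)})}_{m,k})$, which compares two endpoint distributions at time $k$ propagated through the same later disorder.
\end{itemize}
This is the decomposition \eqref{eq:critical-decomposition} referred to earlier. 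We bound the $L^2$ norm of each term, tested against $\varphi^{(N)}$, separately.

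\textbf{Step 2 (local term).} Its centred version, summed against $\varphi^{(N)}$, has variance
\[
\sum_{z,z'}\varphi^{(N)}(z)\,\varphi^{(N)}(z')\,\Cov(\cdot,\cdot).
\]
For $|z-z'|\gg\sqrt k$ the covariance decays fast, by Gaussian convexity decorrelation bounds (Chen, Liu--Xie) or by the chaos expansion. For nearby points, Cauchy--Schwarz gives a bound by the single-site variance. The number of nearby pairs carries weight $O(k/N)=O(\delta^{1-\eta})$, while the one-point variance of $\log Z_{0,k}$ is only $O(\log\log k)$. This uses logarithmic and negative moment bounds, namely the sharp negative-moment estimates from Gaussian convexity and Proposition~\ref{prop:VD}. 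The contribution is therefore $\delta^{c}\log\log N$, which is bad as $N\to\infty$ with $\delta$ fixed. To fix this, the local term should instead be compared in $L^2$ directly: the difference between $\log Z_{0,k}(z)$ and $\log Z_{m,k}(q_m^{(z)})$ is dominated by the disorder on $[0,m]$. Its single-site $L^2$ size is controlled via the Gu--Tsai geometric block expansion, which shows the two logs share the blocks on $[m,k]$ and differ by an $O(\sqrt{\log\log(N)/\dots})$ quantity. The covariance between distant sites is then made small by the $k/N$ factor.

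\textbf{Step 3 (global term).} Here we use Proposition~\ref{prop:PF}, which compares $Z_{k,N}(\nu)$ and $Z_{k,N}(\nu')$ for two probability measures $\nu,\nu'$ at time $k$ and gives a bound in terms of how close $\nu,\nu'$ are at scale $\sqrt k$ relative to $\sqrt N$. Both $\mu^{(z)}_{0,k}$ and $\mu^{(q_m^{(z)})}_{m,k}$ are spread over the scale $\sqrt k$, much smaller than the later scale. Through the Gu--Tsai product approximation, the effect of this discrepancy is damped by a factor that is a power of $\log(N/k)^{-1}$, roughly $|\log\delta|^{-c}$. To pass from ratios of partition functions to their logarithms, we use $|\log x|\le x+x^{-1}$-type bounds together with the negative moments from Gaussian convexity. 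Hölder's inequality then loses powers, which produces the small exponent $1/40$.

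\textbf{Main obstacle.} The hardest step is the global term. We must show that the logarithm of $Z_{k,N}$ is insensitive to the initial endpoint measure uniformly in $N$, despite the $\log\log$-size fluctuations. This requires the full Gu--Tsai expansion, transferred to the lattice, combined with fourth-moment bounds from \cite{CSZ25,CCR25} and the negative-moment control.
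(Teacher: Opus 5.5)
Your two-time decomposition (the paper takes $k=n=[\sqrt\delta N]$) and your use of Proposition~\ref{prop:PF} for the global term follow the paper. However, Step~2 does not close, and your proposed fix rests on a false premise.

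The single-site difference $L_{m,n}(z)=\log Z_n(z)-\log\widehat Z_{m,n}(z)$ is \emph{not} small. Gaussian convexity together with Proposition~\ref{prop:VD} gives $\Var(\log Z_n(z))\ge\log\bbE[Z_n(z)^2]-O(1)=\log\log N-\log|\log\delta|+O(1)$. On the other hand, $\Var(\log\widehat Z_{m,n}(z))\le\log\bbE[\widehat Z_{m,n}(z)^2]=O(1)$ by \eqref{eq:rough-bound}. So a site-by-site comparison gains nothing. Your near-diagonal estimate (weight $k/N$ times a one-point variance) still yields $(k/N)\log\log N\to\infty$ at fixed $\delta$.

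Your far-field claim is also unsupported. Gaussian convexity does not make $\Cov(\log\widehat Z_{s,n}(x),\log\widehat Z_{s,n}(y))$ decay in $|x-y|$. Lemma~\ref{lem:two-point} bounds it by $\log\bbE[\widehat Z_{s,n}(x)\widehat Z_{s,n}(y)]+\Delta(\log\widehat Z_{s,n}(x))+\Delta(\log\widehat Z_{s,n}(y))$, and the defect terms do not depend on the distance at all. If the defects are merely bounded, the resulting bound for the distant pairs, which carry weight of order $\|\varphi\|_1^2$, does not vanish.

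The missing idea is that no cancellation between the two local logarithms is needed. The paper bounds the variance of $\sum_x\varphi^{(N)}(x)\log\widehat Z_{s,n}(x)$ separately for $s=0$ and $s=m$. It applies the pointwise bound \eqref{eq:polymer-covariance-convexity} to every pair $(x,y)$ instead of Cauchy--Schwarz.

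\begin{itemize}
\item The first term of that bound is at most $C\mathsf G_n(y-x)/|\log\delta|$, by \eqref{eq:local-replica-sum} and $\log u\le u-1$. This has only a logarithmic singularity on the diagonal and total mass $O(n/N)=O(\sqrt\delta)$ against $\varphi^{(N)}\otimes\varphi^{(N)}$. It replaces your divergent near-field estimate.
\item The defect terms are all that remains for distant pairs. They are made uniformly small by \eqref{target:VD}: $\Delta\le C|\log\delta|^{-1/4}+o_N(1)$, uniformly in $x$ and $s$. You cite Proposition~\ref{prop:VD}, but only for one-point moments, not for this purpose.
\end{itemize}
This yields Proposition~\ref{prop:local}.

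In Step~3, Proposition~\ref{prop:PF} is not a comparison for arbitrary measures spread on scale $\sqrt k$. The closeness of $\mu_n^{(x)}$ and $\widehat\mu_{m,n}^{(x)}$ comes from their common evolution through $(m,n]$, and it holds only on a good event. On the complement you need the logarithmic moments of Proposition~\ref{prop:log-tails}. On the good event, a difference of logarithms is not controlled by $|\log x|\le x+x^{-1}$; it needs \eqref{eq:log-holder-NM} plus H\"older. Interpolating the resulting $L^1$ bound with an $L^4$ bound produces the exponent $1/40$ of Proposition~\ref{prop:global}.
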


Proposition~\ref{prop:FC} is not surprising:
the convergence of polymer partition functions to the SHF,
see Theorem~\ref{thm:SHF-convergence}, suggests
that for fixed $\delta > 0$ we should have
convergence in distribution of $\cH^{(N;\delta)}(\varphi)$
to $\scrH^{(\delta)}(\varphi)$ as $N\to\infty$.
Turning this intuition into a proof
will require us to control the \emph{lower
tail} of partition functions,
in order to deal with the singularity
of the logarithm at zero.
The details are provided in Subsection~\ref{sec:FC}.

Proposition~\ref{prop:smo} is much more challenging:
the strategy for its proof, which is the technical core of this paper, is described in the next Subsection~\ref{sec:factorization}.

Combining both propositions, we can now
easily prove Theorem~\ref{thm:toy}.

\begin{proof}[Proof of Theorem~\ref{thm:toy}]
For fixed $\delta,\delta'>0$, combining
\eqref{target:fixed-regularisation} and Portmanteau
we get
\begin{equation}\label{eq:L2-Cauchy}
 \big\| \scrH^{(\delta)}(\varphi)
       -\scrH^{(\delta')}(\varphi) \big\|_{L^2}
       \,\le\, \limsup_{N\to\infty}
       \big\| \cH^{(N;\delta)}(\varphi)
       - \cH^{(N;\delta')}(\varphi) \big\|_{L^2}
      \,\le\, \varepsilon_\delta+\varepsilon_{\delta'}    \ ,
\end{equation}
where we used the triangle inequality and the definition
of $\varepsilon_\delta$, see \eqref{eq:strategy-goal}.
Since $\varepsilon_\delta \to 0$, this shows that the random variables
$(\scrH^{(\delta)}(\varphi))_{\delta \in (0,1)}$ are
Cauchy in $L^2$ as $\delta \downarrow 0$,
hence they converge in $L^2$ to a limiting random variable
$\scrH(\varphi)$, which must be a measurable
function of the SHF.
This proves the first convergence in \eqref{eq:toy}.

We next define $d(X,Y) \coloneqq  \sup | \bbE[\psi(X)] - \bbE[\psi(Y)]|$
where the supremum runs over $1$-bounded and $1$-Lipschitz
functions
$\psi: \R \to \R$, namely
$|\psi(x)-\psi(y)| \le |x-y|$ and
$|\psi(x)| \le 1$ for all $x,y\in\R$.
Note that $d(X,Y)$ is a distance between the \emph{laws}
of random variables $X, Y$ which metrises convergence in
distribution and we can write
\begin{equation*}
      d\big(\cH^{(N)}(\varphi), \scrH(\varphi)\big)
      \le d\big(\cH^{(N)}(\varphi), \cH^{(N;\delta)}(\varphi)\big)
      + d\big(\cH^{(N;\delta)}(\varphi),
      \scrH^{(\delta)}(\varphi)\big)
      + d\big(\scrH^{(\delta)}(\varphi), \scrH(\varphi)\big) \,.
\end{equation*}
We have $d\big(\cH^{(N;\delta)}(\varphi),
      \scrH^{(\delta)}(\varphi)\big) \to 0$
as $N\to\infty$ for any fixed $\delta > 0$,
by Proposition~\ref{prop:FC}. Since
$d(X,Y) \le \bbE[|X-Y|] \le \| X-Y \|_{L^2}$ for $X=\cH^{(N)}(\varphi)$ and $Y=\cH^{(N;\delta)}(\varphi)$,
it follows by the definition \eqref{eq:strategy-goal}
of $\varepsilon_\delta$ that
\begin{equation*}
      \limsup_{N\to\infty}
      d\big(\cH^{(N)}(\varphi), \scrH(\varphi)\big)
      \,\le \,
      \varepsilon_\delta
      \,+\, d\big(\scrH^{(\delta)}(\varphi), \scrH(\varphi)\big) \,.
\end{equation*}
Both terms in the right hand side vanish as $\delta \downarrow 0$,
by Proposition~\ref{prop:smo} and the first part of the proof.
This proves the second convergence in \eqref{eq:toy}.
\end{proof}

\subsection{Strategy of the proof of Proposition~\ref{prop:smo}}
\label{sec:factorization}

We need to show that $\cH^{(N)}(\varphi)$
and $\cH^{(N;\delta)}(\varphi)$ are close in $L^2$
for small $\delta > 0$, uniformly over large~$N\in\N$.

\smallskip

We first give some intuition.
By \eqref{eq:HNbis} and \eqref{eq:discrete-regularised-H},
we should compare the partition function $Z_N(z)$ and
its smoothed version $\widehat Z_{m,N}(z)$
for $m = [\delta N]$. By definition
\begin{equation}
      \widehat Z_{m,N}(z) = Z_{m,N}(q_m^{(z)}) \,,
\end{equation}
and we can express $Z_N(x)$ similarly:
by \eqref{eq:endpoint},
a Chapman-Kolmogorov decomposition gives
\begin{equation} \label{eq:CK}
\begin{split}
      Z_N(z) = \sum_{w} Z_{m}(z,w) \, Z_{m,N}(w)
      = Z_m(z) \, \sum_{w} \mu_{m}^{(z)}(w) \, Z_{m,N}(w)
      = Z_m(z) \, Z_{m,N}(\mu_m^{(z)}) \,.
\end{split}
\end{equation}
The problem is then to compare two partition functions
$Z_{m,N}(\mu_m^{(z)})$ and $Z_{m,N}(q_m^{(z)})$, which are
over the same time interval $(m,N]$ but started with
different initial conditions at time~$m$, namely, 
the polymer endpoint distribution $\mu_m^{(z)}$ and
the random walk endpoint distribution~$q_m^{(z)}$.

\smallskip
The key idea is to introduce an intermediate time $n$ with $m\ll n \ll N$. We will choose
\begin{equation}\label{eq:entrance-times}
 m=[\delta N] \,,\qquad n=[\sqrt\delta\,N] \,,
\end{equation}
so that $1 \ll m \ll n \ll N$ as $N\to\infty$ and then
$\delta \downarrow 0$. Let us decompose the two partition functions $Z_N(z) $
and $\widehat Z_{m,N}(z)$ at the same time $n$, as in  \eqref{eq:CK}:
\begin{eqnarray} \label{eq:CKn}
      Z_N(z) &=& Z_n(z) \, Z_{n,N}(\mu_n^{(z)}) \,, \\
      \widehat Z_{m,N}(z) &=& \widehat Z_{m,n}(z)
      \, Z_{n,N}(\widehat\mu_{m,n}^{(z)}) \,, \label{eq:CKnhat}
\end{eqnarray}
where $\mu_n^{(z)}$ and $\widehat\mu_{m,n}^{(z)}$ are the polymer endpoint distributions associated with
the partition functions $Z_n(z)$ and $\widehat Z_{m,n}(z)=Z_{m,n}(q_m^{(z)})$:
\begin{equation}\label{eq:hatmu}
  \mu_n^{(z)}(w)
  =\frac{Z_{0, n}(z, w)}
           {Z_n(z)}
 \qquad \text{ and } \qquad
 \widehat\mu_{m,n}^{(z)}(w)
 =\frac{Z_{m,n}(q_m^{(z)},w)}{\widehat Z_{m,n}(z)}
\end{equation}

To prove Proposition~\ref{prop:smo}
(recalling $\cH^{(N)}(\varphi)$
and $\cH^{(N;\delta)}(\varphi)$
from \eqref{eq:HNbis} and
\eqref{eq:discrete-regularised-H}), we need to control the fluctuations of the field of the difference of the logarithms
\begin{equation}\label{eq:critical-decomposition}
 \log Z_N(z) - \log\widehat Z_{m,N}(z)
                  \,=\, L_{m,n}(z)+G_{m,n,N}(z),
\end{equation}
where we used the decompositions \eqref{eq:CKn} and \eqref{eq:CKnhat} to separate the difference of the local terms
\begin{align}\label{eq:L}
 L_{m,n}(z)
 &\,\coloneqq \, 
 \log Z_{n}(z)-\log\widehat Z_{m,n}(z) \,,
 \end{align}
 and the difference of the global terms
\begin{align} \label{eq:G}
 G_{m,n,N}(z)
 &\,\coloneqq \,\log\frac{Z_{n,N}(\mu_n^{(z)})}
            {Z_{n,N}(\widehat\mu_{m,n}^{(z)})} \,.
\end{align}
Here are the key heuristics that explain why both the local difference and global difference are small after spatial averaging,
and what role our choice of $n$ plays.

\begin{itemize}
\item \emph{Both $\log Z_n(z)$ and $\log \widehat Z_{m,n}(z)$
are small in $L^2$
after centring and spatial averaging, without the need for cancellation between them.}
The reason is that they depend on disorder
up to time \(n \ll N\), so their spatial
correlations decay on the spatial scale \(\sqrt n\),
much smaller than the spatial averaging scale \(\sqrt N\).
Such a localisation argument has also been used in the study of
the subcritical 2D KPZ~\cite[Proposition~2.2]{CSZ20}. The key
new ingredient here is \eqref{eq:polymer-covariance-convexity},
which bounds the covariance of log-partition functions and is the consequence of
a powerful Gaussian convexity theorem~\cite[Theorem~1]{Chen26}. 

\item \emph{The distributions $\mu_n^{(z)}$ and
$\widehat\mu_{m,n}^{(z)}$ at time~$n$ are close.}
The reason is that, while they are far apart at time~$m$,
cf.\ $\mu_m^{(z)}$ and $\widehat\mu_{m,m}^{(z)} = q_m^{(z)}$,
they evolve through the same disorder on the time
interval $(m,n]$, and since \(n/m\) is large,
this common evolution
largely washes out their initial discrepancy.
This is proved through an adaptation of the
Gu--Tsai factorisation \cite[Proposition~2.1]{GT26}.

\item \emph{The partition functions $Z_{n,N}(\mu_n^{(z)})$
and $Z_{n,N}(\widehat\mu_{m,n}^{(z)})$ are close in $L^2$.}
The reason is that partition functions $Z_{n,N}(\mu)$
depend linearly on their initial profile~$\mu$,
so the part common to the two
profiles gives the same contribution, and their difference
only sees the small residual discrepancy left at time \(n\),
which we know to be small.
\end{itemize}

\smallskip

We now formulate the above heuristics into precise technical lemmas that
we will verify in order to conclude the proof of Proposition~\ref{prop:smo}.

Centring the field in \eqref{eq:critical-decomposition} and averaging in space gives
\begin{equation}\label{eq:averaged-decomposition}
 \cH^{(N)}(\varphi)
 \,=\, \cH^{(N;\delta)}(\varphi)
  \,+\, \cL^{(N;\delta)}(\varphi)
  \,+\, \cG^{(N;\delta)}(\varphi),
\end{equation}
where we set,
recalling also the definition~\eqref{def:varphi-N} of \(\varphi^{(N)}\),
\begin{align}
 \cL^{(N;\delta)}(\varphi)
 &\,\coloneqq \,\sum_{z\in\Z^2_{\rm even}}
 \varphi^{(N)}(z) \,
 \big\{
      L_{m,n}(z)-\bbE [L_{m,n}(0)] \,\big\}
      \, ,\label{eq:average-L}\\
 \cG^{(N;\delta)}(\varphi)
 &  \,\coloneqq \, \sum_{z\in\Z^2_{\rm even}}
 \varphi^{(N)}(z)\,
 \big\{G_{m,n,N}(z)-\bbE [G_{m,n,N}(0)]
 \big\}\,.\label{eq:average-G}
\end{align}
The proof of Proposition~\ref{prop:smo} follows
directly from the next two results, which are
the key technical ingredients of the paper.
Their proofs are detailed in
Subsections~\ref{subsec:local} and~\ref{subsec:PF}.

\begin{proposition}[Local approximation]\label{prop:local}
For all \(\varphi\in C_c(\R^2)\), there is a constant \(C_{\varphi}>0\) such that, for all sufficiently small \(\delta\),
\begin{equation}\label{eq:local-vanishes}
 \limsup_{N\to\infty}
 \bbE\pig[ \cL^{(N;\delta)}(\varphi)^2
 \pig] \leq \frac{C_{\varphi}}{|\log \delta|^{1/4}}.
\end{equation}
\end{proposition}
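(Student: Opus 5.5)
We expand the second moment of $\cL^{(N;\delta)}(\varphi)$ as a double sum of covariances:
\[
\bbE\bigl[\cL^{(N;\delta)}(\varphi)^2\bigr]=\sum_{z,z'}\varphi^{(N)}(z)\varphi^{(N)}(z')\,\Cov\bigl(L_{m,n}(z),L_{m,n}(z')\bigr).
\]
We do not look for cancellation between $\log Z_n$ and $\log\widehat Z_{m,n}$. Instead we bound
\[
|\Cov(L(z),L(z'))|\le \sum |\Cov(X(z),Y(z'))|,
\]
where $X,Y$ range over $\log Z_n$ and $\log\widehat Z_{m,n}$, and we bound each term separately. The mechanism is localisation. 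All four fields are measurable with respect to $\cF_{0,n}$, and the environment is i.i.d. across sites. So two fields at $z,z'$ decorrelate once $|z-z'|\gg\sqrt n=\delta^{1/4}\sqrt N$, which is much smaller than the averaging scale $\sqrt N$. Since $\sum_z|\varphi^{(N)}(z)|\,\one_{|z-z'|\le r}\lesssim \|\varphi\|_\infty\, r^2/N$, we get
\[
\bbE\bigl[\cL^2\bigr]\lesssim \|\varphi\|_1\|\varphi\|_\infty\,\frac1N\sum_{y}\sup_{|z-z'|=|y|}|\Cov(\cdot)|.
\]
The problem therefore reduces to a summable, quantitative covariance bound at spatial scale $\sqrt n$.

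For that bound we invoke the Gaussian convexity covariance estimate \eqref{eq:polymer-covariance-convexity}, a consequence of \cite[Theorem~1]{Chen26}, stated earlier. Because $\log Z$ is a convex functional of the Gaussian field $\omega$, the covariance of two log-partition functions is controlled by the overlap of their Malliavin derivatives. For $\log Z_n(z)$ this derivative is $\beta_N$ times the space-time polymer density $\rmE^{\omega}_{z}[\one_{S_j=w}]$. The resulting bound has the form
\[
|\Cov(\log Z_n(z),\log Z_n(z'))|\lesssim \beta_N^2\sum_{j\le n}\sum_w \bbE\bigl[\mu^{(z)}_{j}(w)\,\mu^{(z')}_{j}(w)\bigr]\quad(\text{or a moment-weighted variant}),
\]
and similarly for $\widehat Z_{m,n}$ with the smoothed starting point. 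To control the random densities we bound $\bbE[\mu\mu']$ by $\bbE[Z(z,\cdot)Z(z',\cdot)]$ times negative moments, using Hölder. The negative moments are bounded uniformly via the Liu--Xie bounds \cite{LX26}, and positive moments via the critical-window moment estimates \cite{CSZ23,GQT21}. This gives an overlap-type kernel of the form
\[
\sigma_N^2\sum_{j\le n}q_{2j}(z-z')\times(\text{critical renewal correction}).
\]
We then sum over $y=z-z'$ with weight $1/N$. In the critical window, the second-moment renewal function contributes a factor of size $\log N/\log(N/n)$, or $\log\log$-type corrections. Combined with $\sigma_N^2\sim\pi/\log N$ and the factor $n/N=\sqrt\delta$ from the spatial sum, this yields a bound that decays in $\delta$. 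A more careful tracking gives a polynomial in $1/|\log\delta|$; the exponent $1/4$ is what I expect after Hölder losses.

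The main obstacle is the covariance estimate in the critical window. Naive second-moment bounds on $\log Z_n$ diverge: the variance of $\log Z_n$ is of order $\log\log N$, which is why the limiting field is singular. So the covariance at separations $|z-z'|$ comparable to $\sqrt n$ must be shown to be $O(1)$, with only the near-diagonal region carrying the $\log\log$ growth. That region has measure $O(n/N)$ and is harmless. The delicate part is the intermediate range $1\ll|z-z'|\ll\sqrt n$. There, the covariance behaves like $\log\log(n/|z-z'|^2)$-type quantities, and their integral against $\varphi^{(N)}\otimes\varphi^{(N)}$ must be shown to be small in $\delta$.

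My first attempt would use the Gu--Tsai geometric block structure \cite{GT26}. Splitting $(0,n]$ into dyadic-exponential time blocks, two starting points at distance $r$ share disorder only in blocks after time $r^2$. The covariance is then bounded by the number of such blocks times $1/\log(n/r^2)$-weighted contributions. This bound is square-integrable, with a $\delta$-decaying total after normalisation by $\sqrt N$. The uniform negative-moment control from \cite{LX26} is what lets the logarithm be handled throughout.
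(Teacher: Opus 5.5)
Your decision not to look for cancellation between $\log Z_n$ and $\log\widehat Z_{m,n}$ matches the paper. However, the mechanism you propose for making the averaged covariances small is not the one that works, and the key ingredient is missing.

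First, \eqref{eq:polymer-covariance-convexity} is not a Malliavin-overlap bound. It states $0\le\Cov(\log\widehat Z_{s,n}(x),\log\widehat Z_{s,n}(y))\le\log\bbE[\widehat Z_{s,n}(x)\widehat Z_{s,n}(y)]+\Delta(\log\widehat Z_{s,n}(x))+\Delta(\log\widehat Z_{s,n}(y))$, where $\Delta(F)=-\bbE F-\frac12\Var F$ is the Gaussianity defect. Only the first term sees the spatial separation. By $\log u\le u-1$ and \eqref{eq:local-replica-sum}, it is at most $C\mathsf G_n(y-x)/\log(1+N/n)$. After averaging against $\varphi^{(N)}\otimes\varphi^{(N)}$ it gives $O(\sqrt\delta\,\|\varphi\|_\infty\|\varphi\|_1/|\log\delta|)$. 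This is the only place where localisation on scale $\sqrt n$ enters, and it is the easy part.

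The two defect terms do not decay in $|x-y|$ at all, and they are multiplied by $\|\varphi\|_1^2$. The proof therefore hinges on Proposition~\ref{prop:VD}: for $n=[\sqrt\delta N]$, uniformly in $x$ and $0\le s\le n$,
$\log\bigl(\bbE[\widehat Z_{s,n}(x)^2]/\bbE[\widehat Z_{s,n}(x)^{-1}]\bigr)\le C|\log\delta|^{-1/4}+o_N(1)$. This sharp comparison of second and negative-first moments rests on four inputs:
\begin{enumerate}
\item the Gu--Tsai factorisation of $\widehat Z_{s,n}$ into independent block partition functions times a ratio $\mathscr R$;
\item the exact decoupling of $\bbE[\widehat Z_{s,n}^{-1}]$ under the tilted law $\mathbb Q$;
\item the estimate $\mathbb Q(\Omega^c)+\bbE_{\mathbb Q}[\one_\Omega|\mathscr R-1|^2]\le CJ^{-1/2}$, with $J\asymp|\log\delta|$;
\item the sharp second-moment asymptotics of Lemma~\ref{lem:second-moment-hat}.
\end{enumerate}
The exponent $1/4$ is the square root of that $J^{-1/2}$, not a H\"older loss. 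Without this input, the convexity bound only gives $\Cov\le C$ for every pair, hence $\bbE[\cL^{(N;\delta)}(\varphi)^2]\le C\|\varphi\|_1^2$, which is not small.

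Your substitute route to decorrelation does not survive $N\to\infty$ at criticality. The claim that negative moments are bounded uniformly via \cite{LX26} is false for $Z_n(z)=\widehat Z_{0,n}(z)$ started from a single point. Lemma~\ref{lem:one-point} only bounds $\bbE[Z_n(z)^{-1}]$ by $\bbE[Z_n(z)^2]\asymp\log N/|\log\delta|$, and Proposition~\ref{prop:VD} shows $\bbE[Z_n(z)^{-1}]$ really is of that order. Moments of order $>2$ diverge as well, e.g.\ $\bbE[Z_n^3]\ge\bbE[Z_n^2]^2$. So controlling $\bbE[\mu^{(z)}\mu^{(z')}]$ by H\"older introduces powers of $\log N$, and no $N$-uniform covariance bound results. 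The same obstruction defeats a \cite{CSZ20}-type replacement of $\log Z_n(z)$ by a spatially localised version.

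Separately, the overlap inequality you write is not available as stated: the Gaussian interpolation formula involves polymer measures in two different, correlated environments. Note also that most pairs under $\varphi^{(N)}\otimes\varphi^{(N)}$ are at distance of order $\sqrt N\gg\sqrt n$. There you need covariances that are $o(1)$ as $\delta\downarrow0$ uniformly in $N$, not merely $O(1)$. Your sketch asserts this decorrelation but gives no mechanism for it.

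The missing idea is to stop asking for spatial decay of the covariances and instead prove that each $\log\widehat Z_{s,n}(x)$ has vanishing Gaussianity defect.
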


\begin{proposition}[Global approximation]\label{prop:global}
For all \(\varphi\in C_c(\R^2)\), there is a constant \(C_{\varphi}>0\) such that, for all sufficiently small \(\delta\),
\begin{equation}\label{eq:far-average}
 \limsup_{N\to\infty}
 \bbE\pig[
      \cG^{(N;\delta)}(\varphi)^2 \pig]
      \leq \frac{C_{\varphi}}{|\log \delta|^{1/20}}.
\end{equation}
\end{proposition}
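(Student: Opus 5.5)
The statement to prove is Proposition~\ref{prop:global}. The plan is to reduce the $L^2$ bound on $\cG^{(N;\delta)}(\varphi)$ to a pointwise bound on $G_{m,n,N}(0)$.

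\textbf{Step 1 (reduction to a pointwise bound).} By Cauchy--Schwarz and translation invariance,
\[
\bbE\bigl[\cG^{(N;\delta)}(\varphi)^2\bigr]\le \|\varphi\|_1^2\,\Var\bigl(G_{m,n,N}(0)\bigr)\le \|\varphi\|_1^2\,\bbE\bigl[G_{m,n,N}(0)^2\bigr].
\]
This crude bound gives up any spatial cancellation. That is acceptable because $G$ should itself be small: the profiles $\mu_n^{(0)}$ and $\widehat\mu_{m,n}^{(0)}$ are close. So it suffices to show $\limsup_N \bbE[G_{m,n,N}(0)^2]\le C|\log\delta|^{-1/20}$.

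\textbf{Step 2 (from the logarithm to the ratio).} Write $A=Z_{n,N}(\mu_n^{(0)})$ and $B=Z_{n,N}(\widehat\mu_{m,n}^{(0)})$. Using $|\log(A/B)|\le |A-B|(A^{-1}+B^{-1})$, Hölder gives
\[
\bbE[G^2]\le \bigl\|A-B\bigr\|_{L^{4}}^{2}\,\bigl\|A^{-1}+B^{-1}\bigr\|_{L^{4}}^{2}.
\]
Alternatively, one can truncate on $\{|A-B|\le \eta B\}$ and use logarithmic-moment bounds on the complement.

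The negative moments of $A$ and $B$ are bounded uniformly in $N$ by the Gaussian convexity lower-tail bounds (Liu--Xie). These hold because $A$ and $B$ are averaged point-to-plane partition functions over a macroscopic time $(n,N]$, started from random profiles independent of $\cF_{n,N}$. We apply the bound conditionally on $\cF_n$. This requires the profiles to be spread on scale at least comparable to $\sqrt m$, or at least to carry no mass concentration that spoils the negative moments. I would use the fact that negative moments of $Z_{n,N}(\nu)$ are controlled for any probability $\nu$ by convexity ($Z(\nu)^{-p}\le\sum\nu(w)Z(w)^{-p}$) together with point-to-plane negative moment bounds. The point-to-plane bounds are of order $(\log N)^{c}$ only if unsmoothed, so the smoothing over scale $\sqrt m$ matters here. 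I would instead condition and use convexity with a block-smoothed point-to-plane function.

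\textbf{Step 3 (the positive difference).} Conditionally on $\cF_n$, linearity gives
\[
A-B=Z_{n,N}(\mu_n^{(0)}-\widehat\mu_{m,n}^{(0)}).
\]
Its conditional second (and fourth) moment is a quadratic form of the signed measure $\nu=\mu_n^{(0)}-\widehat\mu_{m,n}^{(0)}$ against the two-point kernel of the SHF on $(n,N]$. That kernel is the heat kernel plus a logarithmically singular part $K$; the fourth moment is handled via the operator bounds of~\cite{CSZ25,CCR25}. The heat-kernel part is bounded by $\|\nu* q_{N-n}\|$, which is small because $\nu$ has total mass zero and is supported on scale $\sqrt n\ll\sqrt N$. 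The singular part requires $\sum\nu(w)\nu(w')K(w-w')$ to be small. This is exactly where the comparison of two profiles evolved through common disorder enters (Proposition~\ref{prop:PF}, via the Gu--Tsai factorisation). The factorisation over the geometric blocks between $m$ and $n=\sqrt\delta N$ gives about $\tfrac12|\log\delta|$ scales. Each block contracts the discrepancy by a factor, but in the critical window the contraction is only logarithmic, leading to a power of $|\log\delta|$. Then the conditional moments are averaged over $\cF_n$, using the fourth-moment bounds on $\mu_n$ and $\widehat\mu_{m,n}$.

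\textbf{Main obstacle.} The main obstacle is Step 3: quantifying how the discrepancy between the profiles at time $m$ (a polymer endpoint versus a random-walk endpoint) is washed out by time $n$, in the singular $K$-norm. I would first try to express both $Z_{n,N}(\mu_n)$ and $Z_{n,N}(\widehat\mu_{m,n})$ through the Gu--Tsai product expansion, and show that the leading products coincide. The error would then be bounded by the contributions of the blocks near $m$, weighted by the decay across the remaining $\asymp|\log\delta|$ blocks. The exponent loss from Hölder in Step 2 then produces the $1/20$.
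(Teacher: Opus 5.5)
Your Step~1 (Minkowski/Cauchy--Schwarz plus stationarity, reducing to $\sup_x\|G_{m,n,N}(x)\|_{L^2}$) matches the paper. So does your overall architecture: the log-difference inequality, conditional Gaussian convexity given $\cF_n$, linearity of $Z_{n,N}(\cdot)$ in the profile, and Proposition~\ref{prop:PF}.

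\textbf{The gap is in Step~2.} You need unconditional bounds $\|A^{-1}+B^{-1}\|_{L^4}\le C$ and $\|A-B\|_{L^4}\to0$, uniformly in $N$. These are not available, and the tools you cite do not give them.
\begin{itemize}
\item \emph{Negative moments of $A$.} For $A=Z_{n,N}(\mu_n^{(0)})$ the profile is an unsmoothed polymer endpoint law, so the block-smoothing fix you allude to does not apply. Conditional convexity \eqref{eq:one-point-negative} only gives $\bbE[A^{-4}\mid\cF_n]\le\bbE[A^2\mid\cF_n]^{10}$. You would then need the tenth moment of $\bbE[Z_{n,N}(\mu_n)^2\mid\cF_n]$, a quadratic form in $\mu_n$ against a kernel whose diagonal value $\bbE[Z_{N-n}(0)^2]$ is of order $\log N$. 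Nothing controls this: without a good event the paper only gets $\bbE[A^{-1/2}]\le C\sqrt{\log N}$.
\item \emph{The difference $A-B$.} Proposition~\ref{prop:PF} gives only an $L^1$ bound on $|A-B|$ and a first-moment bound on $\bbE[\one_{\mathcal G}Z_{n,N}(\nu)^2]$. Both hold only on an $\cF_n$-measurable good event $\mathcal G$, because the profile expansion contains factors $\mathscr R_{r,s}^{-1}$ controlled only on $\{|\mathscr R_{r,s}-1|\le1/2\}$. Fourth moments of $\|\mu_n-\widehat\mu_{m,n}\|_{\cH_{n,N}}$ are not available. Your Step~3 sketch (heat-kernel plus singular part, ``fourth-moment bounds on $\mu_n$'') does not supply them.
\end{itemize}

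\textbf{The missing idea.} Split on $\mathcal G=\mathcal G_{N,\delta}(x)$ and choose exponents so that only these weak inputs are ever used.
\begin{itemize}
\item \emph{On $\mathcal G$.} Use $|\log u-\log v|\le C_\alpha|u-v|^\alpha(u^{-\alpha}+v^{-\alpha})$ with $\alpha=1/3$, and H\"older in $L^1$ rather than $L^2$. This needs only $\bbE[\one_{\mathcal G}|A-B|]\le C/|\log\delta|$ and a negative moment of order $1/2$. For that moment, conditional convexity puts the exponent $3/8<1$ on $\bbE[Z_{n,N}(\nu)^2\mid\cF_n]$, so Jensen reduces it to \eqref{eq:profile-good-energy}. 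The result is $\bbE[\one_{\mathcal G}|G|]\le C|\log\delta|^{-1/3+1/4}=C|\log\delta|^{-1/12}$.
\item \emph{On $\mathcal G^c$.} Use Cauchy--Schwarz with $\bbP(\mathcal G^c)\le C/|\log\delta|$ and the bound $\|\log Z_{n,N}(\nu)\|_{L^2}\le C\log|\log\delta|$ of Proposition~\ref{prop:log-tails}. That bound is itself a substantial input: it rests on the multiscale left-tail argument of Lemma~\ref{lem:log-tail}. Your ``alternatively, truncate and use logarithmic-moment bounds on the complement'' leaves it unaddressed.
\item \emph{Back to $L^2$.} Interpolate $\|G\|_{L^2}\le\|G\|_{L^1}^{1/3}\|G\|_{L^4}^{2/3}$, using Proposition~\ref{prop:log-tails} with $p=4$. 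This gives $C|\log\delta|^{-1/36}(\log|\log\delta|)^{2/3}\le C|\log\delta|^{-1/40}$, and squaring yields the exponent $1/20$.
\end{itemize}
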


\noindent
Taking square roots in \eqref{eq:local-vanishes}
and \eqref{eq:far-average}, and using
\eqref{eq:averaged-decomposition}, we obtain
\eqref{eq:strategy-goal-L2}.

\smallskip

Let us comment on the proof of
Proposition~\ref{prop:local}, presented in
Subsection~\ref{subsec:local}.
In the local term \(L_{m,n}(z)\), see \eqref{eq:L},
the two logarithms are not small at a fixed starting
point~\(z\), nor is their difference. However,
after centring and spatial averaging as in
\eqref{eq:average-L}, each logarithm separately
becomes small in \(L^2\).
To implement this argument at criticality
and control covariances of log-partition functions,
we exploit a novel covariance bound
\eqref{eq:polymer-covariance-convexity} that follows from
the same Gaussian convexity theorem~\cite[Theorem~1]{Chen26}
that was used in \cite{LX26} to derive moment comparison
bounds and Gaussian concentration of measure for the directed
polymer model. To conclude the covariance bound, a key ingredient
is Proposition~\ref{prop:VD}, whose proof adapts a factorisation technique
from \cite{GT26}.
\smallskip

We next comment on the global approximation,
Proposition~\ref{prop:global}.
Unlike the logarithms in \eqref{eq:L},
the global term \(G_{m,n,N}(z)\) in \eqref{eq:G}
is small in $L^2$ for any fixed starting point~$z$,
for a different reason.
The two endpoint distributions
\(\mu_n^{(z)}\) and \(\widehat\mu_{m,n}^{(z)}\), which start
from different profiles at time \(m\), evolve
through the same disorder on \((m,n]\). When \(n/m\) is large,
this common evolution washes out their initial discrepancy.
This is quantified in Proposition~\ref{prop:PF},
whose proof adapts the same factorisation technique from
\cite[Proposition~2.1]{GT26}, which identifies
the common part of the two endpoint distributions and isolates
the residual dependence on their initial profiles.
To transfer this control to the logarithms, we use
negative-moment bounds for the partition functions,
based again on Gaussian convexity \cite{Chen26,LX26},
together with the logarithmic-moment bounds in
Proposition~\ref{prop:log-tails}.
Together, these estimates allow us to pass from the comparison
of partition functions to their logarithms and
prove Proposition~\ref{prop:global}.

\subsection{Organisation of the rest of the article}
\label{sec:organisation}

Let us now give a brief outline on how the rest of the paper is organised.

\smallskip
\textbullet\
Section~\ref{sec:general} gives
Gaussian-convexity inequalities
and their polymer consequences, recalling some key results from \cite{Chen26} and \cite{LX26}.
Crucially, it derives a new covariance bound \eqref{eq:polymer-covariance-convexity} for log-partition functions, which is controlled through
the quotient of second and negative-first moments.

\smallskip
\textbullet\
Section~\ref{sec:convergence-estimates} proves the convergence in Proposition~\ref{prop:FC} and also
the local and global approximations of Propositions~\ref{prop:local}-\ref{prop:global}.
These last results rely on
quantitative inputs developed in Section~\ref{sec:factorisation},
notably:
(i) a sharp moment comparison (Proposition~\ref{prop:VD}) to control the local approximation;
(ii) a sharp endpoint estimate of the difference between \(\mu_n^{(x)}\) and \(\widehat{\mu}_{m,n}^{(x)}\) (Proposition~\ref{prop:PF}) to control the global approximation.

\smallskip
\textbullet\
Section~\ref{sec:joint-convergence} extends Theorem~\ref{thm:toy}
to joint convergence against test functions at finitely many times
and to non-flat initial condition.

\smallskip
\textbullet\
Section~\ref{sec:local-Sobolev}
extends the joint convergence of Section~\ref{sec:joint-convergence}
to convergence in local Sobolev spaces.
It concludes the proof of Theorem~\ref{thm:main}
and constructs the process \(\scrH^{\vartheta}\).

\smallskip
\textbullet\ Section~\ref{sec:factorisation} proves important estimates that are used Section~\ref{sec:convergence-estimates}.
It introduces
the geometric block factorisation of Gu--Tsai~\cite{GT26}
and adapts some of their results on the block
expansion to our discrete setting.
This is the main ingredient
(together with the moment inequalities of Section~\ref{sec:general}
and sharp second moment estimates) that
allows us to prove Proposition~\ref{prop:VD} and~\ref{prop:PF},
respectively in Subsections~\ref{sec:vanishing-defect} and \ref{sec:prop-PF}.

\smallskip
\textbullet\
Section~\ref{prop:sec-local-properties} proves Theorem~\ref{thm:properties}.
We first prove the local normality of the Critical 2D KPZ Process, which essentially relies on Gu--Tsai's (``local'') log-normality result for the SHF~\cite{GT26} together with some control when taking a spatial average.
Proposition~\ref{prop:no-local-representative} can then be derived from this local normality with diverging variance.
Finally, we conclude the proof of Theorem~\ref{thm:properties} by showing the \(\log\log\) divergence of
its covariances near the diagonal, through the study of the spectral measure associated with the covariance kernel.

\smallskip
\textbullet\
Finally, Section~\ref{ng:section} shows the non-Gaussianity of the Critical 2D KPZ Process.
It relies on the Ehrhard-type property of the process, together with Gaussian rigidity with respect to
the initial condition (see Remark~\ref{rem:initial-condition}, Lemmas~\ref{ng:positive-sums} and~\ref{ng:rigidity}).
A consequence of this is that if \(\scrH_t^\vartheta(\zero,\psi)\) were Gaussian,
then any weak limit of polymer endpoint distribution
would be deterministic and non-zero (see Lemmas~\ref{ng:determinism} and~\ref{ng:nonzero-limit}).
We show that this contradicts the singularity of the SHF with respect
to the Lebesgue measure, see~\cite{CSZ25}.

\smallskip
\textbullet\ Additionally, we collect in Appendix several technical results.
Appendix~\ref{app:technical-proposition}
gives a proof of a key technical estimate
for the block expansion from Section~\ref{sec:block-exp}, namely Proposition~\ref{prop:block-inputs-iii} (it is analogous to \cite[Corollary~3.5]{GT26} but cannot be directly adapted from it).
Appendix~\ref{app:renewal-comparison} provides several important moment estimates, in particular a sharp second moment estimate stated in Lemma~\ref{lem:second-moment-hat}.

\section{Gaussian convexity estimates}\label{sec:general}

The estimates in this section convert positive moments into
bounds on negative moments and covariances of logarithms.
We present general results
for convex functions of a Gaussian vector,
which we then specialise to
our directed polymers framework.

\subsection{Positive and negative moments}

Given a random variable $F$
such that $\bbE[\e^{\lambda F}]<\infty$ for all $\lambda > 0$,
we define for all $q\in\R$
\begin{equation}\label{eq:chen-Phi}
 \Phi(q) = \Phi_F(q)\coloneqq 
 \begin{cases}
 q^{-1}\log\bbE[\e^{qF}] &q\ne0 \,,\\
 \bbE[F] &q=0   \,.
 \end{cases}
\end{equation}
When $F$ is a convex function of a Gaussian vector,
we are going to exploit the following 
convexity property proved by Chen~\cite[Theorem~1]{Chen26} (convexity for $q>0$ was established earlier in \cite[Proposition~1]{AC15},
and an alternative proof based on Ehrhard's inequality was recently given
in \cite[Section~3]{FMP}).

\begin{theorem}[Gaussian convexity property]\label{thm:Chen}
Let $F=F(G)$ be a convex function of
a finite-dimensional Gaussian vector~$G$
such that $\bbE[\e^{\lambda F}]<\infty$ for all $\lambda > 0$.
Then the function $\Phi(q)$ for $q\in\R$ is convex.
\end{theorem}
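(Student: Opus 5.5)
The plan is to reduce to one dimension by Ehrhard's inequality, in the spirit of the alternative proof in \cite[Section~3]{FMP}. Write $\Lambda(q)\coloneqq\log\bbE[\e^{qF}]$ and let $\gamma$ denote the standard Gaussian distribution function. By an affine change of variables we may take $G$ standard on $\R^k$; $F(G)$ is still convex.

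\emph{Step 1 (reduction to one dimension).} For every $t$ the sublevel set $K_t\coloneqq\{F\le t\}$ is convex. By convexity of $F$,
\begin{equation*}
K_{\lambda s+(1-\lambda)t}\supset\lambda K_s+(1-\lambda)K_t \qquad \text{for } \lambda\in[0,1].
\end{equation*}
Ehrhard's inequality then shows that $t\mapsto \gamma^{-1}\bigl(\bbP(F\le t)\bigr)$ is concave wherever it is finite. Equivalently, the quantile map $f\coloneqq F_{\rm quant}\circ\gamma$ is nondecreasing and convex on $\R$, and $F\stackrel{d}{=} f(Z)$ with $Z$ a standard one-dimensional Gaussian. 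Since $\Phi$ depends only on the law of $F$, it suffices to prove the theorem for $F=f(Z)$ with $f$ convex and nondecreasing. The exponential moment assumption passes through unchanged.

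\emph{Step 2 (reduction to a cumulant inequality).} Because $\Lambda(0)=0$, we can write
\begin{equation*}
\Phi(q)=\int_0^1\Lambda'(sq)\,\dd s .
\end{equation*}
This formula holds for all $q$, including $q=0$, where it gives $\bbE[F]$. Convexity of $q\mapsto \Lambda'(sq)$ for each fixed $s$ would therefore give convexity of $\Phi$. Here $\Lambda'(q)$ is the mean of $F$ under the tilted law $\dd\bbP_q\propto \e^{qF}\dd\bbP$. Hence the sufficient condition $\Lambda'''\ge0$ says that the third cumulant of $f(Z)$ under $\bbP_q$ is nonnegative for every $q\in\R$.

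Under $\bbP_q$, the variable $Z$ has density proportional to $\e^{qf(z)-z^2/2}$. I would try to show nonnegative skewness of $f(Z)$ by combining two facts: $f$ is convex and nondecreasing, and the tilted density of $Z$ is log-concave with respect to a Gaussian when $q\le0$. For $q\le 0$ one can use a Brascamp--Lieb or Caffarelli-type comparison. For $q>0$ I would instead differentiate with respect to the tilt parameter, using Gaussian integration by parts $\bbE[Zg(Z)]=\bbE[g'(Z)]$ to express the cumulants through $f'$ and $f''\ge0$.

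\emph{Main obstacle.} The hard step is Step 2: the condition $\Lambda'''\ge0$ is only sufficient, and it may fail for some convex $f$ even though $\Phi$ is convex. If that happens, I would drop the integral representation and prove the three-point inequality for $\Phi$ directly. This means showing, for $q_1<q_2<q_3$, the midpoint-type convexity inequality in $(q,\Phi(q))$ through the layer-cake formula
\begin{equation*}
\bbE[\e^{qF}]=\int q\,\e^{qt}\,\bbP(F>t)\,\dd t \qquad (q>0),
\end{equation*}
with the corresponding formula for $q<0$. One would then apply the concavity of $\gamma^{-1}(\bbP(F\le t))$ from Step 1 inside a Prékopa--Leindler or Ehrhard-type functional inequality in the variables $(t,q)$. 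The boundary case $q=0$, and the gluing of the regimes $q>0$ and $q<0$, then follow by continuity of $\Phi$. That continuity is guaranteed by the finiteness of all exponential moments, with $\bbE[\e^{qF}]<\infty$ for $q<0$ since $f$ is nondecreasing and convex, so $F$ is bounded below by an affine function of $Z$.
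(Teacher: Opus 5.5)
Your Step~1 is sound: it is the reduction recorded in Lemma~\ref{ng:quantile-closure}. The paper itself does not reprove the theorem; it imports it from \cite{Chen26}, with \cite{FMP} as an Ehrhard-based alternative. The gap is in Step~2. The sufficient condition $\Lambda'''\ge0$ is false on both sides of $0$, so neither your integration-by-parts plan for $q>0$ nor your log-concavity plan for $q\le0$ can succeed. Below, $\gamma$ is the standard Gaussian distribution function, as in your proposal.

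\emph{Counterexample for $q>0$.} Take $F=\max(Z,0)$. Then $\bbE[\e^{qF}]=\tfrac12+\e^{q^2/2}\gamma(q)$, i.e.\ $\Lambda(q)=\tfrac{q^2}{2}+\log(1+\eta(q))$ with $\eta(q)=\tfrac12\e^{-q^2/2}-(1-\gamma(q))$. Differentiating gives $\Lambda'''(q)=-\tfrac12 q^3\e^{-q^2/2}(1+o(1))$ as $q\to\infty$; numerically $\Lambda'''(3)\approx-0.06$. Under the tilt, $F$ is $\mathcal N(q,1)$ cut at $0$ plus an atom at $0$ of relative mass $\approx\tfrac12\e^{-q^2/2}$, and the atom outweighs the truncation, producing negative skewness.

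\emph{Counterexample for $q<0$.} Take $F=Z+sZ^+$ with $s$ large. Then $\bbE[\e^{qF}]=\e^{q^2/2}\gamma(-q)+\e^{q^2(1+s)^2/2}\gamma((1+s)q)$. The second term vanishes in $C^3$ near $q=-1$ as $s\to\infty$, so $\Lambda'''(-1)\to-(\log\gamma)'''(1)\approx-0.30$. The tilted law of $F$ tends to that of a $\mathcal N(-1,1)$ variable conditioned to be $\le0$. That law is log-concave yet negatively skewed, so no Brascamp--Lieb or Caffarelli input can fix the sign of the third cumulant.

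What is actually true is only the averaged statement $\Phi''(q)=\int_0^1 s^2\Lambda'''(sq)\,\dd s\ge0$, and your argument has no mechanism for controlling this weighted average.

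Your fallback does not close the gap. The phrase ``apply the concavity inside a Pr\'ekopa--Leindler or Ehrhard-type inequality in $(t,q)$'' names no actual inequality, and that step is precisely the content of the theorem. Separately, convexity on $q<0$ and on $q>0$ plus continuity does not glue, as $-|q|$ shows; you need $\Phi\in C^1$, which does hold here.

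One way to finish from your Step~1, using the two ingredients you list (layer cake and concavity of $\gamma^{-1}(\bbP(F\le t))$), is a tangent-Gaussian comparison:
\begin{itemize}
\item Fix $q_0\ne0$ and let $Y=aZ+b$ be the Gaussian with $\Phi_Y(q)=b+\tfrac{a^2}{2}q$ tangent to $\Phi_F$ at $q_0$. This is possible because $\Phi_F'(q_0)=q_0^{-2}(q_0\Lambda'(q_0)-\Lambda(q_0))\ge0$ is a relative entropy divided by $q_0^2$.
\item Concavity of $\gamma^{-1}(\bbP(F\le t))$ says that $D(t)\coloneqq\bbP(F\le t)-\bbP(Y\le t)$ has sign pattern $(-,+,-)$.
\item The layer-cake formula gives $\bbE[\e^{qF}]-\bbE[\e^{qY}]=-q\,L(q)$ with $L(q)\coloneqq\int\e^{qt}D(t)\,\dd t$, for every $q\ne0$.
\item By Descartes' rule of signs for the totally positive kernel $\e^{qt}$, $L$ has at most two real zeros counted with multiplicity. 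Tangency forces a double zero at $q_0$.
\item Hence $L$ has constant sign, and so does $\Phi_F-\Phi_Y$ on $\R\setminus\{0\}$. The sign is positive: compare $a$ with the asymptotic slopes $\lim_{z\to\pm\infty}f'(z)$ as $q\to\pm\infty$; unless $f$ is affine, one of them differs from $a$ in the right direction.
\end{itemize}
Thus $\Phi_F$ lies above each of its tangents, which gives convexity.
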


Liu and Xie \cite{LX26} applied Theorem \ref{thm:Chen} to the directed polymer model and the Critical 2D Stochastic Heat Flow (SHF) and
derived many consequences, including bounds on variance, concentration of measure, moment comparisons, etc.
Here we focus on the following negative-moment comparison result: a positive second moment controls all
negative-moments.
It is convenient to normalise $F$ so that
$\bbE[\e^F] = 1$ (just replace $F$ by $F - \log \bbE[\e^F]$).

\begin{lemma}[Approximate log-normal bounds]\label{lem:one-point}
For $F$ as in the assumptions of Theorem~\ref{thm:Chen}
with $\bbE[\e^F] = 1$, we have
\begin{align}\label{eq:chen-one}
      \Var[F]
      & \leq-2 \,\bbE [F] \,,\\
      \label{eq:chen-two}
       -\bbE[F]
       & \leq \frac12\log\bbE[\e^{2F}] \,,
\end{align}
and also
\begin{align}
      \label{eq:momentp}
      \log\bbE[\e^{-pF}]
      &\leq -p \, \bbE[F] + \frac{p^2}{2}\Var[F] \,,
      \qquad p \ge 0 \,, \\
      \label{eq:one-point-negative}
      \bbE[\e^{-pF}]
      &\leq \bbE[\e^{2F}]^{p(p+1)/2},
      \qquad p \ge 0.
\end{align}
If $F$ is Gaussian, so that $\e^F$ is log-normal,
then all inequalities above are equalities.
\end{lemma}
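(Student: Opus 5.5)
The whole lemma is a consequence of the convexity of $\Phi=\Phi_F$ from Theorem~\ref{thm:Chen}, used through tangent-line and chord inequalities at the points $q\in\{-p,0,1,2\}$. First I collect the values of $\Phi$ at these points. The normalisation $\bbE[\e^F]=1$ gives $\Phi(1)=0$, and by definition $\Phi(0)=\bbE[F]$. A second-order expansion of $\log\bbE[\e^{qF}]=q\bbE[F]+\tfrac{q^2}{2}\Var[F]+o(q^2)$ shows that $\Phi$ is differentiable at $0$ with
\[
\Phi'(0)=\tfrac12\Var[F].
\]
Here the finiteness of all exponential moments is needed to justify the expansion for $q$ of both signs. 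For $q<0$ this is not immediate, but one can truncate or note that Theorem~\ref{thm:Chen} presupposes that $\Phi$ is finite, so I take $\bbE[\e^{qF}]<\infty$ for $q<0$ as part of the setting. If that is only meant in the extended sense, the inequalities below are trivially true when the left side is infinite. Finally $\Phi(2)=\tfrac12\log\bbE[\e^{2F}]$.

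Next I derive the two bounds involving the mean and the variance. Convexity says that $\Phi$ lies above its tangent line at $0$. Evaluating at $q=1$ gives $0=\Phi(1)\ge\Phi(0)+\Phi'(0)=\bbE[F]+\tfrac12\Var[F]$, which is \eqref{eq:chen-one}. The chord inequality on $[0,2]$ with midpoint $1$ gives $0=\Phi(1)\le\tfrac12(\Phi(0)+\Phi(2))$, which is \eqref{eq:chen-two}.

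Then I turn to the negative moments. Using the tangent line at $q=-p$ gives $\Phi(-p)\ge\bbE[F]-\tfrac p2\Var[F]$. Since $\Phi(-p)=-p^{-1}\log\bbE[\e^{-pF}]$, multiplying by $-p$ yields \eqref{eq:momentp}. For \eqref{eq:one-point-negative}, I substitute \eqref{eq:chen-one} into \eqref{eq:momentp} to get
\[
\log\bbE[\e^{-pF}]\le -p\,\bbE[F]-p^2\,\bbE[F]=p(p+1)\bigl(-\bbE[F]\bigr).
\]
Then \eqref{eq:chen-two} bounds $-\bbE[F]$ by $\tfrac12\log\bbE[\e^{2F}]$, and exponentiating gives the claim. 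For $p=0$ every statement is trivial.

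In the Gaussian case, $F\sim\mathcal N(-\sigma^2/2,\sigma^2)$ and $\Phi(q)=-\sigma^2/2+q\sigma^2/2$ is affine. All the tangent and chord inequalities above then become equalities, and a direct check confirms each line.

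The only delicate step is justifying $\Phi'(0)=\tfrac12\Var[F]$, including the left derivative. I would handle it by dominated convergence using $\bbE[\e^{\lambda|F|}]<\infty$, or avoid it altogether. Convexity alone gives $\Phi(-p)\ge\Phi(0)-p\,\Phi'_+(0)$ and $\Phi(1)\ge\Phi(0)+\Phi'_+(0)$, so only the right derivative is needed, and that is available from the assumed positive exponential moments.
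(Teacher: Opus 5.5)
Your argument is correct and is essentially the paper's proof: you use $\Phi(0)=\bbE[F]$, $\Phi(1)=0$ and $\Phi'(0)=\tfrac12\Var[F]$, evaluate the supporting line at $0$ at $q=1$ and $q=-p$, use the chord on $[0,2]$, and combine these for \eqref{eq:one-point-negative}. (Your ``tangent line at $q=-p$'' should read ``tangent line at $0$, evaluated at $q=-p$''; the inequality you display is the right one.)

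One side remark is wrong. If $\bbE[\e^{-pF}]$ were infinite, \eqref{eq:momentp} and \eqref{eq:one-point-negative} would fail rather than hold trivially. This case cannot occur: a finite convex function of $G$ is bounded below by an affine function of $G$. Equivalently, convexity of $\Phi$ together with finite $\Phi(0)$ and $\Phi(1)$ rules out $\Phi(-p)=-\infty$.
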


\begin{proof}
Since
\(\bbE[\e^{qF}] =1+q \, \bbE[F] + (1+o(1)) \frac{q^2}{2} \,
\bbE[F^2]\)
as \(q\downarrow 0\),
we get that
\begin{equation*}
      \Phi'(0) = \frac12 \Var[F] \,.
\end{equation*}
Recall that $\Phi(0) = \bbE[F]$ and $\Phi(1) = 0$.
Convexity gives
\(\Phi(1) \leq \frac{1}{2}(\Phi(0)+\Phi(2))\),
that is \eqref{eq:chen-two}.

The supporting line at \(0\) gives
\(\Phi(q) \geq \Phi(0)+ \Phi'(0) \, q\).
For $q=1$ this yields \eqref{eq:chen-one},
while for $q = -p$ this yields \eqref{eq:momentp} (after
multiplication by~$-p$ and reversal of the inequality).

Finally, starting from \eqref{eq:momentp} and
applying \eqref{eq:chen-one} and \eqref{eq:chen-two},
we obtain \eqref{eq:one-point-negative}:
\begin{equation*}
      \log\bbE[\e^{-pF}]
      \leq (p + p^2)(- \bbE[F])
      \leq \frac{p(p+1)}{2} \, \log\bbE[\e^{2F}] \,.
      \qedhere
\end{equation*}
\end{proof}

\subsection{Covariance comparison}

It is useful
to define the following quantity:
\begin{equation}\label{eq:defect-general}
 \Delta(F)\coloneqq \Phi(1) -\Phi(0) - \Phi '(0) = -\bbE F-\frac12\Var[F] \,,
\end{equation}
which can be thought of as measuring the convexity of the normalised log moment-generating function $\Phi$ at $0$. Since $\Delta(F)=0$ when $F$ is Gaussian (recall that we use the normalisation $\bbE[\e^F]=1$), $\Delta (F)$ illustrates the ``Gaussianity defect'' through the convex distortion of \(F\),
or the ``log-Gaussianity defect'' of \(\e^F\).

\begin{lemma}[Gaussianity defect]\label{lem:defect}
For $F$ as in the assumptions of Theorem~\ref{thm:Chen}
with $\bbE[\e^F] = 1$, we have
\begin{equation}\label{eq:one-point-defect}
    0\leq\Delta(F)
 \leq\log\frac{\bbE[\e^{2F}]}{\bbE[\e^{-F}]} \,.
\end{equation}
\end{lemma}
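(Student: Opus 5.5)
The lower bound $\Delta(F)\ge 0$ should follow from convexity of $\Phi$ on $[0,1]$. By definition, $\Delta(F)=\Phi(1)-\Phi(0)-\Phi'(0)$, with $\Phi(1)=0$ and $\Phi(0)=\bbE F$. In the proof of Lemma~\ref{lem:one-point} we already computed $\Phi'(0)=\tfrac12\Var[F]$. The supporting line of the convex function $\Phi$ at $0$ gives $\Phi(1)\ge \Phi(0)+\Phi'(0)$, and this is exactly $\Delta(F)\ge 0$. Equivalently, this is \eqref{eq:chen-one} rewritten.

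For the upper bound, the plan is to use convexity of $\Phi$ at two points on the negative side. I will compare the chord slope on $[-1,0]$ with the derivative at $0$. Convexity gives
\[
\Phi'(0)\;\ge\;\frac{\Phi(0)-\Phi(-1)}{1}\;=\;\bbE F+\log\bbE[\e^{-F}],
\]
using $\Phi(-1)=-\log\bbE[\e^{-F}]$. Substituting into the definition of $\Delta$ yields
\[
\Delta(F)=-\bbE F-\Phi'(0)\;\le\;-2\,\bbE F-\log\bbE[\e^{-F}].
\]
Next, \eqref{eq:chen-two} gives $-2\bbE F\le \log\bbE[\e^{2F}]$. Therefore $\Delta(F)\le \log\bbE[\e^{2F}]-\log\bbE[\e^{-F}]$, which is \eqref{eq:one-point-defect}.

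The only point needing care is the differentiability of $\Phi$ at $0$, with $\Phi'(0)=\tfrac12\Var F$. This is justified by the exponential moment assumption, which also covers negative $q$ because $F$ is convex and hence bounded below by an affine function of $G$. Alternatively, if smoothness is in doubt, one can work with one-sided derivatives: the right derivative at $0$ is used for the lower bound and the left derivative for the upper bound. Both equal $\tfrac12\Var F$ by the expansion already given in the proof of Lemma~\ref{lem:one-point}. Apart from this, the argument is two applications of convexity.
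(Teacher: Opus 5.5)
Your proof is correct and matches the paper's. The lower bound is \eqref{eq:chen-one}, and your chord-slope inequality $\Phi'(0)\ge\Phi(0)-\Phi(-1)$ is exactly \eqref{eq:momentp} with $p=1$ (the supporting line at $0$ evaluated at $q=-1$), which the paper then combines with \eqref{eq:chen-two} just as you do; your remark that negative exponential moments are finite because the convex $F$ has an affine lower bound correctly justifies a point the paper leaves implicit.
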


\begin{proof}
The first inequality is simply \eqref{eq:chen-one}.
We next apply \eqref{eq:momentp} for $p=1$
and \eqref{eq:chen-two} to get
\begin{equation}
      \label{eq:chen-three}
       \log\bbE[\e^{-F}] \leq
       -\bbE[F] + \frac{1}{2} \Var[F]
       = -2 \, \bbE[F] - \Delta(F)
      \leq\log\bbE[\e^{2F}] - \Delta(F) \,,
\end{equation}
which is just a rewriting of \eqref{eq:one-point-defect}.
\end{proof}

We now present a simple  consequence of the
Gaussian convexity property in Theorem~\ref{thm:Chen},
which will play a key role in controlling the covariance
of logarithms of our random fields.

\begin{lemma}[Covariance bound]\label{lem:two-point}
Let $F_x = F_x(G)$, $F_y = F_y(G)$ be convex functions
of the same Gaussian vector,
as in the assumptions of Theorem~\ref{thm:Chen},
with
$\bbE[\e^{F_x}] = \bbE[\e^{F_y}] = 1$.  Then
\begin{equation}\label{eq:two-point-general}
 \Cov[F_x,F_y]
 \,\leq\, \log\bbE[\e^{F_x+F_y}]
 + \Delta(F_x) + \Delta(F_y) \,.
\end{equation}
If the Gaussian vector $G$ has independent
coordinates $G_i$, and $F_x,F_y$ are
coordinatewise nondecreasing functions,
then $\Cov[F_x,F_y] \geq 0$.
\end{lemma}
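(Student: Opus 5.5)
The plan is to apply Theorem~\ref{thm:Chen} to the sum $F_x+F_y$, which is again a convex function of $G$, and then expand. Write $\bbE[F_x+F_y]$ and $\Var[F_x+F_y]=\Var F_x+\Var F_y+2\Cov[F_x,F_y]$. Normalise by setting $c\coloneqq\log\bbE[\e^{F_x+F_y}]$ and $\tilde F\coloneqq F_x+F_y-c$. Then $\tilde F$ is convex with $\bbE[\e^{\tilde F}]=1$. Its exponential moments are finite for all $\lambda>0$ by Cauchy--Schwarz, since $\e^{\lambda(F_x+F_y)}\le \frac12(\e^{2\lambda F_x}+\e^{2\lambda F_y})$.

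Applying \eqref{eq:chen-one} from Lemma~\ref{lem:one-point} to $\tilde F$ gives $\Var[\tilde F]\le -2\bbE[\tilde F]$, that is,
\[
\Var F_x+\Var F_y+2\Cov[F_x,F_y]\,\le\, -2\bbE F_x-2\bbE F_y+2c \,.
\]
Dividing by $2$ and rearranging,
\[
\Cov[F_x,F_y]\le c+\bigl(-\bbE F_x-\tfrac12\Var F_x\bigr)+\bigl(-\bbE F_y-\tfrac12\Var F_y\bigr)=\log\bbE[\e^{F_x+F_y}]+\Delta(F_x)+\Delta(F_y)\,,
\]
using the definition \eqref{eq:defect-general} of $\Delta$ for each of $F_x,F_y$, both of which are normalised. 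This is exactly \eqref{eq:two-point-general}. The only point to check is integrability of $F_x,F_y$ in $L^2$, which follows from the finite exponential moments together with the lower-tail control in \eqref{eq:one-point-negative}.

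For the second claim, I will invoke the Harris--FKG inequality for product measures: independent Gaussian coordinates form a product measure on $\R^k$ with its natural partial order. Coordinatewise nondecreasing, square-integrable functions are then positively correlated. If needed, this can be justified by the standard one-dimensional argument plus induction on coordinates via conditioning, or by truncating $F_x,F_y$ monotonically to bounded functions and passing to the limit by dominated convergence using the $L^2$ bounds.

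I expect no real obstacle. The only mild subtlety is justifying that $F_x+F_y$ satisfies the hypotheses of Theorem~\ref{thm:Chen}, and this is handled by the Cauchy--Schwarz remark above.
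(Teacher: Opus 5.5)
Your proof is correct and matches the paper's argument: you normalise $F_x+F_y$ by $\log\bbE[\e^{F_x+F_y}]$, apply \eqref{eq:chen-one} to the resulting convex function, and rearrange using the definition of $\Delta$, while the sign claim comes from the FKG inequality for product Gaussian measures. One small labelling point: the bound $\e^{\lambda(F_x+F_y)}\le\frac12(\e^{2\lambda F_x}+\e^{2\lambda F_y})$, which you use to check the exponential-moment hypothesis, is AM--GM rather than Cauchy--Schwarz, but it is correct and suffices.
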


\begin{proof}
Define
\begin{equation*}
      H \,\coloneqq \, F_x+F_y-\log\bbE[\e^{F_x+F_y}] \,,
\end{equation*}
so that $H$ is a convex function of $G$
with $\bbE[\e^H]=1$. Then $\Var[H] \le -2 \, \bbE[H]$
by \eqref{eq:chen-one}, i.e.\
\begin{equation*}
      \Var[F_x] + \Var[F_y] + 2 \, \Cov[F_x,F_y]
      \le -2 \, \bbE[F_x] -2 \, \bbE[F_y]
      + 2 \, \log \bbE[\e^{F_x+F_y}] \,,
\end{equation*}
which gives \eqref{eq:two-point-general} after
rearranging the terms.
If $G$ has a product Gaussian law, the
lower bound $\Cov[F_x,F_y] \geq 0$ follows by the
FKG inequality for nondecreasing functions.
\end{proof}

\subsection{Ehrhard-type random variables}
Since the SHF arises as a weak limit of lattice partition functions, we need
a formulation of Gaussian convexity in terms of laws that are preserved
under weak convergence.
Following Liu and Xie~\cite[Definition~2.4]{LX26}, we introduce the Ehrhard-type class\footnote{Our definition of Ehrhard
type differs from \cite{LX26} by including constant random variables.}.

\begin{definition}[Ehrhard-type random variables]
\label{def:Ehrhard}
We call a real-valued random variable~$X$ and its distribution
\emph{Ehrhard type} if either $X$ is constant,
or if $\bbP(X<a)<1$ for every
$a\in\R$ and $a\mapsto\Phi^{-1}(\bbP(X<a))$ is concave, where $\Phi$
is the standard Gaussian distribution function.
\end{definition}

We recall the following consequence of \cite[Theorem~4.3]{FMP} and \cite[Corollary~2.2 and
Lemmas~2.5--2.6]{LX26}.

\begin{lemma}\label{ng:quantile-closure}
Every finite convex function of a finite-dimensional Gaussian vector is
of Ehrhard type. A real-valued random variable $X$ is of Ehrhard type
if and only if it admits a representation
\begin{equation}\label{eq:ehrhard-representation}
 X\stackrel{\mathrm d}=h(G),\qquad G\sim\mathcal N(0,1),\qquad
 h:\R\to\R\text{ nondecreasing and convex}.
\end{equation}
The class of Ehrhard-type laws is closed under weak convergence.
\end{lemma}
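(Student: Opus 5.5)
The lemma has three parts: convex Gaussian functions are Ehrhard type, the representation \eqref{eq:ehrhard-representation}, and closure under weak limits. All three come down to one observation: the concave-quantile condition in Definition~\ref{def:Ehrhard} is the same as saying the quantile function of $X$, written in Gaussian coordinates, is nondecreasing and convex. I would prove the parts in the following order.

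\emph{Step 1: the representation.} Let $X$ be nonconstant with $u(a)\coloneqq\Phi^{-1}(\bbP(X<a))$ concave and taking values in $[-\infty,\infty)$. Put $h(s)\coloneqq\sup\{a: u(a)\le s\}$, the generalised inverse of $u$. Since $u$ is nondecreasing, concave and finite on $\{a:\bbP(X<a)>0\}$, its inverse $h$ is nondecreasing and convex on $\R$. The finiteness of $h$ needs some care:
\begin{itemize}
\item $\bbP(X<a)<1$ for every $a$, so $u<\infty$ everywhere and $h$ is finite on the right;
\item concavity forces $u$ to grow at most linearly, so $h$ cannot be $+\infty$;
\item a left endpoint $\inf X>-\infty$ makes $h$ bounded below, which is allowed.
\end{itemize}
The identity $h(G)\stackrel{d}{=}X$ is then the standard quantile coupling $X\stackrel{d}{=}F_X^{-1}(\Phi(G))$. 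For the converse, take $h$ nondecreasing and convex. Then $\bbP(h(G)<a)=\Phi(h^{-1}(a))$, where $h^{-1}$ is the left-continuous generalised inverse. The inverse of a nondecreasing convex function is concave, so $X=h(G)$ is of Ehrhard type. If $h$ is constant, $X$ is constant and is covered by the definition.

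\emph{Step 2: convex functions of Gaussian vectors.} For $F=F(G)$ convex and finite, the sublevel sets $K_a=\{F<a\}$ are open and convex, and they satisfy $\lambda K_a+(1-\lambda)K_b\subseteq K_{\lambda a+(1-\lambda)b}$. Ehrhard's inequality, in the form valid for arbitrary Borel sets (Borell), gives
\[
\Phi^{-1}\bigl(\gamma(\lambda A+(1-\lambda)B)\bigr)\ge\lambda\Phi^{-1}(\gamma(A))+(1-\lambda)\Phi^{-1}(\gamma(B)).
\]
Applied to $A=K_a$, $B=K_b$, this yields concavity of $a\mapsto\Phi^{-1}(\bbP(F<a))$. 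Degenerate Gaussians reduce to the nondegenerate case on their support. If $\bbP(F<a)=1$ for some $a$ and $F$ is nonconstant, then concavity together with $u=+\infty$ on a half-line leads to a contradiction. The cleaner route is that a nonconstant convex function is unbounded above along some direction, and that direction has positive Gaussian mass. This is exactly where the constant case in the definition is needed.

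\emph{Step 3: closure under weak convergence.} Let $X_k\Rightarrow X$ with $X_k\stackrel{d}{=}h_k(G)$, and take the representing functions $h_k$ to be the Gaussian-coordinate quantile functions. Weak convergence gives $h_k\to h$ at continuity points of $h$, the quantile function of $X$. A pointwise limit of nondecreasing convex functions is nondecreasing and convex, and convex functions have at most countably many discontinuities, so $h$ is convex off a null set and coincides with its regularisation. One must also exclude $h=\pm\infty$ in the limit; tightness of $(X_k)$ does this. Step 1 then shows $X$ is of Ehrhard type.

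The main obstacle is the boundary bookkeeping: the $-\infty$ values of $u$, a possibly bounded support, and the degenerate constant case. The argument is otherwise classical. Since the statement explicitly recalls \cite[Theorem~4.3]{FMP} and \cite{LX26}, I would cite them for the Ehrhard step and for weak closure. I would spell out only the check that adding constants to the class is consistent, since constants arise as weak limits, for instance as the limit of $h_k(s)=s/k$.
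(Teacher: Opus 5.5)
Your argument is correct. The paper gives no proof of its own: it only cites \cite[Theorem~4.3]{FMP} and \cite{LX26}. You make the three standard ingredients explicit instead: Ehrhard's inequality on the convex sublevel sets $\{F<a\}$, the Gaussian quantile transform $h=F_X^{-1}\circ\Phi$, and convergence of quantile functions under weak convergence. This also shows clearly why constants must be admitted into the class.

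Several justifications need repair, though none affects the conclusions.

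\emph{Step~1.} That $h$ is finite from above comes from $u(a)\to+\infty$ (because $\bbP(X<a)\to1$). It does not come from the at-most-linear growth of $u$, which only tells you that $h$ grows at least linearly. What the hypothesis $\bbP(X<a)<1$ actually gives is that $u$ is real-valued on $(x_0,\infty)$, where $x_0\coloneqq\inf\{a:\bbP(X<a)>0\}$. Concavity together with $u\to\infty$ then forces $u$ to be continuous and strictly increasing there, since a flat piece of a nondecreasing concave function would propagate to a whole half-line. This is what makes the inverse well defined and convex, extended by $h\equiv x_0$ on the left.

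\emph{Step~2.} A ray has zero Gaussian measure in dimension at least two, so ``that direction has positive Gaussian mass'' is false as stated. The correct statement: write $G=AW$ with $W$ standard. The nonconstant finite convex function $F\circ A$ is unbounded above, so the nonempty open set $\{F\circ A>a\}$ has positive standard Gaussian measure.

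\emph{Step~3.} You only have $h_k\to h$ off a countable set, so stability of convexity under pointwise limits does not apply directly. (Also, finite convex functions on $\R$ are continuous, so your remark about countably many discontinuities must refer to monotone functions.) Use instead that convex functions converging to finite values on a dense set converge everywhere, locally uniformly, to a continuous convex $\widehat h$. Since $h$ is monotone and agrees with $\widehat h$ on a dense set, you get $h(s-)=h(s+)=\widehat h(s)$ for every $s$, hence $h=\widehat h$ is convex. Tightness is not needed to exclude $h=\pm\infty$, because $h=F_X^{-1}\circ\Phi$ is finite for any real-valued $X$.
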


\begin{remark}\label{R:Ehrhard}
This representation implies that Lemmas~\ref{lem:one-point},
and \ref{lem:defect} 
apply if $F$ is replaced by any Ehrhard-type random variable
with proper centring.
Lemma~\ref{lem:two-point} also holds if~$F_x$ and~$F_y$ therein are replaced by any Ehrhard-type random 
variables that are convex functions of the same Gaussian vector or obtained as joint weak limits (with convergence of moments) of convex functions of the same Gaussian vectors.
\end{remark}

\subsection{Consequences for directed polymers and the SHF}
\label{sec:polymer-convexity}

The logarithms of the partition functions
$\widehat Z_{s,n}(x)$ in \eqref{eq:Wab} are convex and
nondecreasing in the (independent) Gaussian disorder variables.
Lemma~\ref{lem:defect} therefore gives
\begin{equation}\label{eq:polymer-one-point-convexity}
 \begin{aligned}
 0&\le -\bbE\log\widehat Z_{s,n}(x)
       -\tfrac12\Var(\log\widehat Z_{s,n}(x))\\
  &=\Delta(\log\widehat Z_{s,n}(x))
 \le\log\frac{\bbE[\widehat Z_{s,n}(x)^2]}
                 {\bbE[\widehat Z_{s,n}(x)^{-1}]}.
 \end{aligned}
\end{equation}
On the other hand, Lemma~\ref{lem:two-point} gives
\begin{equation}\label{eq:polymer-covariance-convexity}
 \begin{aligned}
 0&\le\Cov(\log\widehat Z_{s,n}(x),\log\widehat Z_{s,n}(y))\\
 &\le \log \bbE[\widehat Z_{s,n}(x)\widehat Z_{s,n}(y)]
   +\Delta(\log\widehat Z_{s,n}(x))
   +\Delta(\log\widehat Z_{s,n}(y)).
 \end{aligned}
\end{equation}
The bounds~\eqref{eq:polymer-one-point-convexity}--\eqref{eq:polymer-covariance-convexity},
together with the moment comparison in Proposition~\ref{prop:VD} below, will be used
to prove the local approximation in Proposition~\ref{prop:local}.
The general covariance bound in Lemma~\ref{lem:two-point} also gives the
estimates needed in Lemma~\ref{lem:cutoff-covariance}, which will yield the Sobolev
tightness in Lemma~\ref{lem:tightness}.

By taking the weak limit of polymer partition functions,
it was shown in the proof of Theorem 1.5 in
\cite[Section~4]{LX26} that the logarithm of SHF
$\log\scrZ_{s,t}^\vartheta(f, g)$
with \(f, g \ge 0\) is also Ehrhard
type. Nonnegative linear combinations of log partition functions are again convex
functions of the same Gaussian disorder, and hence their weak limits, such
as the fields in Theorem \ref{thm:main} integrated against nonnegative test functions,
are also Ehrhard type.
For example,
\begin{equation}\label{eq:F-delta}
 F_\delta(x)\coloneqq \log\scrZ_{\delta,1}^\vartheta(g_\delta^{(x)},\one),
 \qquad 0<\delta<1,
\end{equation}
is Ehrhard type.  Since $\bbE[\e^{F_\delta(x)}]=1$, the extension of the Gaussian convexity property
to Ehrhard-type laws in \cite[Theorem~2.9]{LX26} gives
\eqref{eq:chen-one}--\eqref{eq:chen-two}, and hence
\begin{equation}\label{eq:asymp-ElogZ-VarlogZ}
 0\le\Var(F_\delta(x))\le-2\bbE [F_\delta(x)]
 \le\log\bbE[\e^{2F_\delta(x)}]
 \le\log\log(1/\delta)+C_\vartheta.
\end{equation}
Here $\vartheta$ is fixed, and the bound holds uniformly in $x$ for
sufficiently small $\delta$. The last inequality follows from
$\bbE[\e^{2F_\delta(x)}]\le 1+C_\vartheta\log(1/\delta)$, obtained by
passing the lattice second-moment bound~\eqref{eq:FC-moments} to the
SHF limit at fixed $\delta$, using Fatou's lemma. This upper bound
will be needed in Section~\ref{prop:sec-local-properties}.

Let us conclude this section by observing that,
by Lemma~\ref{ng:quantile-closure} and Theorem~\ref{thm:main},
even though the Critical 2D KPZ Process is non-Gaussian, it is of Ehrhard type.

\begin{proposition}
      \label{prop:Ehrhard}
      For any \(t>0\), \(\vartheta \in \R\) and \(\varphi\in C_c^{\infty}(\R^2)\) with \(\varphi\ge 0\),
      the random variable \(\scrH_t^\vartheta(\varphi)\) is of Ehrhard type.
\end{proposition}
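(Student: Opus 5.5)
The plan is to realise $\scrH_t^\vartheta(\varphi)$ as a weak limit of finite convex functions of finitely many Gaussian variables. Once that is done, the closure statement in Lemma~\ref{ng:quantile-closure} gives the conclusion. By Theorem~\ref{thm:main}, specifically \eqref{eq:main-convergence} tested against $\varphi$, we have
\[
\cH_t^{\vartheta,(N)}(\zero,\varphi)\xrightarrow[N\to\infty]{d}\scrH_t^\vartheta(\varphi).
\]
Here, by \eqref{eq:HNbis} (or its time-$t$ analogue, up to the time-reversal remark, which does not affect laws),
\[
\cH_t^{\vartheta,(N)}(\zero,\varphi)=\sum_{z}\varphi^{(N)}(z)\bigl\{\log Z_{[Nt]}(z)-\bbE\log Z_{[Nt]}(0)\bigr\}.
\]
Convergence in $H^{-a}_{\rm loc}$ implies convergence of the pairing with a fixed $C_c^\infty$ test function, since pairing is a continuous linear functional. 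So it suffices to show that each prelimit variable is of Ehrhard type.

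For fixed $N$, the sum over $z$ is finite because $\varphi$ has compact support. Each $Z_{[Nt]}(z)$ depends only on the finitely many disorder variables $\omega(j,w)$ with $j\le[Nt]$ and $|w-z|\le[Nt]$. Hence the whole expression is a function of a finite-dimensional standard Gaussian vector $G$.

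Each $\log Z_{[Nt]}(z)$ is a log-sum-exp of affine functions of $G$, so it is finite and convex in $G$. The weights are
\[
\varphi^{(N)}(z)=\int_{\cellC_N(z)}\varphi\ge0,
\]
and a nonnegative combination of convex functions, shifted by a deterministic constant, is again convex and finite. The first assertion of Lemma~\ref{ng:quantile-closure} then shows that $\cH_t^{\vartheta,(N)}(\zero,\varphi)$ is of Ehrhard type; the degenerate case $\varphi=0$ gives a constant, which is allowed by Definition~\ref{def:Ehrhard}.

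The final step is the closure of Ehrhard-type laws under weak convergence, again from Lemma~\ref{ng:quantile-closure}, applied along $N\to\infty$. The only point needing care is that the identification of the limit with $\scrH_t^\vartheta(\varphi)$ uses the lattice (rather than $\delta$-regularised) approximation. This is the reason to use \eqref{eq:main-convergence}: the $\delta$-regularised fields are already limits themselves, and would require a second layer of closure. That route would also work, as a double limit, since closure applies at each stage.

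I expect no real obstacle. The only thing to check is that the time-reversed convention and the even-lattice cell weights leave convexity and nonnegativity intact, which they plainly do.
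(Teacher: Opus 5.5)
Your argument is correct and is essentially the paper's own: the lattice field paired with a nonnegative test function is a nonnegative combination of log-partition functions, hence a finite convex function of finitely many Gaussian disorder variables and therefore of Ehrhard type, and \eqref{eq:main-convergence} together with the weak closure in Lemma~\ref{ng:quantile-closure} passes this to $\scrH_t^\vartheta(\varphi)$. One small caveat: on the lattice, time reversal is not exactly law-preserving, because it changes the convention for the disorder at the endpoints (see the paper's footnote), but you do not need it, since the forward field $\log \cZ_t^{\vartheta,(N)}(\one,x)$ is equally a log-sum-exp of affine functions of the disorder.
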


\section{Proofs of the convergence estimates}\label{sec:convergence-estimates}

In this section, we prove Propositions~\ref{prop:FC}, \ref{prop:local} and~\ref{prop:global} from Section~\ref{sec:decomposition},
which complete the proof of Theorem~\ref{thm:toy}. The proof of our main result Theorem~\ref{thm:main} will be completed in the next Sections~\ref{sec:joint-convergence} and~\ref{sec:local-Sobolev}.

\subsection{Preliminaries: some moment bounds}

Let us start with a few useful second moment estimates.
First, notice that by \cite[Proposition~A.2]{CSZ19a}, we have that for any fixed \(t>0\), \(\bbE[Z_{[Nt]}(0)^2] \sim \nu (t \e^{\vartheta - \gamma})\log N\) as \(N\to\infty\), with~\(\nu\) the Volterra function.
We also have the following uniform second moment bound on the point-to-plane partition function:
for any \(1\leq k \leq N\)
\begin{equation}\label{eq:uniform-two-replica}
 \sigma_N^2\bbE[Z_k(0)^2]\le \frac{C_\vartheta}{\log(1+N/k)} \,,
\end{equation}
see Claim~\ref{claim:VN}.

For deterministic probability measures \(\nu,\nu'\) on \(\Z^2\), we also have the following {\em first-collision} decomposition of covariances using their chaos representation, cf.\ \cite[Equations~(3.2)--(3.3)]{CSZ19b}:
\begin{equation*}
      \bbE[Z_{s,n}(\nu) Z_{s,n}(\nu')] -1 = \sum_{r=s+1}^n \sum_{x,y\in \Z^2} \nu(x)\nu'(y) q_{2(r-s)}(y-x) \sigma_N^2\bbE[Z_{n-r}(0)^2] \,.
\end{equation*}
In particular, recalling the definition~\eqref{eq:Wab} of \(\widehat Z_{s,n}(x)\), we get that
\begin{equation}\label{eq:local-replica-formula}
 \bbE[\widehat Z_{s,n}(x)\widehat Z_{s,n}(y)]-1
 =\sum_{r=s+1}^{n}q_{2r}(y-x)\,
       \sigma_N^2\bbE[Z_{n-r}(0)^2] \,.
\end{equation}
As a consequence, since \(\bbE[Z_{n-r}(0)^2]\leq \bbE[Z_{n}(0)^2]\),
we get from~\eqref{eq:uniform-two-replica} that
\begin{equation}\label{eq:local-replica-sum}
 \bbE[\widehat Z_{s,n}(x)\widehat Z_{s,n}(y)]-1
 \leq \frac{C}{\log(1+N/n)}\sum_{r=s+1}^{n} q_{2r}(y-x) \leq  C \frac{\mathsf G_n(y-x)}{\log(1+N/n)}  \,,
\end{equation}
where \(\mathsf G_n(z) = \sum_{r=1}^{n} q_{2r}(z)\)
is the Green function of the random walk up to time \(n\).
Note that $\mathsf G_n(0)=R_n$ with $R_N$ defined in \eqref{eq:RN}.
In particular, taking $x=y=0$ and using $q_{2r}(0)\le C/r$,
we obtain, for $1\le s\le n\le N$,
\begin{equation}\label{eq:rough-bound}
 \bbE[\widehat Z_{s,n}(0)^2]
 \le
 1+C\frac{R_n-R_s}{\log(1+N/n)}
 \le
 1+C\frac{\log(n/s)}{\log(1+N/n)}.
\end{equation}
The first inequality also holds for $s=0$, with $R_0=0$.

For $m=[\delta N]$, Lemma~\ref{lem:one-point} and
\eqref{eq:uniform-two-replica} also imply that
\begin{equation}\label{eq:FC-moments}
 \bbE[\widehat Z_{m,N}(x)^{-1}]
 \le \bbE[\widehat Z_{m,N}(x)^2]
 \le 1+C\sum_{r=m+1}^{N}q_{2r}(0)
 \le 1 +C \log \Big(\frac{N}{m}\Big) \leq 1+C \log(1/\delta) \,,
\end{equation}
where we have used the local limit theorem (see e.g. \cite[Theorem 2.3.11]{LL10}) to get that \(q_{2r}(0) \leq C/r\) for any \(r\geq 1\).

\subsection{Convergence with initial disorder removed: proof of Proposition~\ref{prop:FC}}\label{sec:FC}

From now on, for $0<\delta<1$ fixed, we let
\begin{equation*}
      m=[\delta N] \,.
\end{equation*}
For \(f\in L^1(\R^2)\), write
\(f^{(N)}(z)=\int_{\cellC_N(z)}f(x)\,\dd x\), as in \eqref{def:varphi-N}.
As a consequence of Theorem~\ref{thm:SHF-convergence}, jointly for finitely many
pairs $0\le s<t$ and test functions $\varphi\in C_c(\R^2)$, we have
\begin{equation}\label{eq:SHF-point-to-plane}
 Z_{[Ns],[Nt]}(\varphi^{(N)})
 =\int_{\R^2}\varphi(x)\cZ_{s,t}^{\vartheta,(N)}(x)\,\dd x
 \ \xrightarrow[\ N\to\infty \ ]{d} \
 \scrZ_{s,t}^\vartheta(\varphi) \,.
\end{equation}

\smallskip\noindent\emph{Step 1: convergence of partition functions.}
It follows by \eqref{eq:SHF-point-to-plane}
that for any \(x\in \R^2\), as \(N\to\infty\)
\begin{equation}
\label{eq:convergence-initial-0}
Z_{m,N}((g_{\delta}^{(x)})^{(N)})
 \ \xrightarrow[\ N\to\infty\ ]{d}
 \scrZ_{\delta,1}^\vartheta(g_\delta^{(x)}) \,.
\end{equation}
Strictly speaking, we cannot use~\eqref{eq:SHF-point-to-plane} since \(g_{\delta}^{(x)}\) does not have compact support, but we can apply the convergence~\eqref{eq:SHF-point-to-plane} to \(g_{\delta,A}^{(x)}\coloneqq \chi_A g_{\delta}^{(x)}\) with \(\chi_A\) some smooth function with compact support equal to \(1\) on the ball of radius \(A\).
We thus get \(Z_{m,N}((g_{\delta,A}^{(x)})^{(N)}) \xrightarrow{d} \scrZ_{\delta,1}^\vartheta(g_{\delta,A}^{(x)})\), and we may use the \(L^1\) bounds
\begin{equation}
\label{eq:L1-control}
\bbE\bigl[|Z_{m,N}(f^{(N)}) -Z_{m,N}(h^{(N)})|\bigr]\leq \| f-h \|_1
\quad \text{and} \quad
\bbE\bigl[|\scrZ_{\delta,1}^\vartheta(f)-\scrZ_{\delta,1}^\vartheta(h)|\bigr] \leq \| f-h \|_1
\end{equation}
applied to \(f=g_{\delta}^{(x)}\) and \(h=g_{\delta,A}^{(x)}\) to conclude the proof of \eqref{eq:convergence-initial-0}, by letting \(A\to\infty\).

Then, using again~\eqref{eq:L1-control}, letting \(x_N \coloneqq [\sqrt{N/2} x]\) we also have that
\begin{equation}
      \label{eq:L1-error}
\sup_{x} \bbE\bigl[ |Z_{m,N}(q_{m}^{(x_N)}) - Z_{m,N}((g_{\delta}^{(x)})^{(N)}) |\bigr] \leq \sup_x \sum_{y\in \Z^2_{\rm even}} \Bigl| q_{m}^{(x_N)}(y) -\int_{\cellC_N(y)}g_{\delta}^{(x)}(u)\,\dd u \Bigr| \,,
\end{equation}
which goes to \(0\) as \(N\to\infty\), by the local limit theorem (see \cite[Theorem 2.3.11]{LL10}).
We can thus deduce from~\eqref{eq:convergence-initial-0} that we have
\begin{equation}
\label{eq:convergence-initial}
\widehat Z_{m,N}(x_N) = Z_{m,N}(q_{m}^{(x_N)})
 \ \xrightarrow[\ N\to\infty\ ]{d}
 \scrZ_{\delta,1}^\vartheta(g_\delta^{(x)}) \,.
\end{equation}
We also stress that the convergence holds jointly for finitely many pairs $(\delta,x)$.

\smallskip\noindent\emph{Step 2: Convergence for the truncated logarithms.}
Define the logarithm truncated to $[-R,R]$, by setting
\begin{equation}\label{eq:truncated-logarithm}
 \log^{(R)}(u)=\max\{-R,\min\{\log u,R\}\}\quad(u>0),
 \qquad \log^{(R)}(0)=-R.
\end{equation}
We now show the convergence
\begin{equation}\label{eq:convergence-truncated}
\sum_{x\in\Z^2_{\rm even}}
 \varphi^{(N)}(x) \log^{(R)}\widehat Z_{m,N}(x) \ \xrightarrow[\ N\to\infty\ ]{d}
  \int_{\R^2} \varphi(x) \log^{(R)} \scrZ_{\delta,1}^\vartheta(g_\delta^{(x)}) \dd x \,.
\end{equation}

Let \(\varepsilon \in (0,1)\) be fixed, and partition the support of \(\varphi\) into a finite number \(k_{\varepsilon} \leq C_{\varphi} \varepsilon^{-2}\) of cells \((\cellC_i)_{1\leq i \leq k_{\varepsilon}}\), all cells being of diameter at most \(\varepsilon\).
For each cell \(\cellC_i\), let \(x_i\in \cellC_i\), and notice that a straightforward calculation shows that
\[
\sup_{x\in \cellC_i}\|g_\delta^{(x)} -g_\delta^{(x_i)} \|_1 \leq C \varepsilon /\delta \,,
\]
Hence, by~\eqref{eq:L1-control} and \eqref{eq:L1-error} and since \(\log^{(R)}\) is a Lipschitz function, we have (recalling that \(x_N = [\sqrt{N/2} x]\))
\[
\sup_{1\leq i\leq k_{\varepsilon}}\sup_{x\in \cellC_i} \bbE\bigl[\bigl|\log^{(R)}\widehat Z_{m,N}(x_N)  - \log^{(R)} Z_{m,N}((g_{\delta}^{(x_i)})^{(N)}) \bigr| \bigr]  \leq C_{\delta,R}\, \varepsilon +o_N(1)\,.
\]
This shows in particular that
\begin{equation}\label{eq:error-1}
\begin{split}
      &\bbE\biggl[ \Bigl| \sum_{x\in\Z^2_{\rm even}}
 \varphi^{(N)}(x) \log^{(R)}\widehat Z_{m,N}(x) - \sum_{i=1}^{k_{\varepsilon}} \Bigl(\int_{\cellC_i} \varphi(u) \dd u\Bigr) \log^{(R)} Z_{m,N}((g_{\delta}^{(x_i)})^{(N)})  \Bigr| \biggr]
 \\ &\qquad\leq C_{R,\varphi,\delta} \, \varepsilon + o_N(1) \,.
\end{split}
\end{equation}

Then, using the convergence~\eqref{eq:convergence-initial-0} and since \(\log^{(R)}\) is a bounded continuous function,
we get that \(\log^{(R)} Z_{m,N}((g_{\delta}^{(x_i)})^{(N)})\)
converges in distribution to \(\log^{(R)} \scrZ_{\delta,1}^\vartheta(g_\delta^{(x_i)})\)
jointly for all \(i\leq k_{\varepsilon}\).
This thus gives that
\begin{equation}
      \label{eq:convergence-sum-1}
      \sum_{i=1}^{k_{\varepsilon}} \Bigl(\int_{\cellC_i} \varphi(u) \dd u\Bigr) \log^{(R)} Z_{m,N}((g_{\delta}^{(x_i)})^{(N)}) \xrightarrow[\ N\to\infty\ ]{d} \sum_{i=1}^{k_{\varepsilon}} \Bigl(\int_{\cellC_i} \varphi(u) \dd u\Bigr) \log^{(R)} \scrZ_{\delta,1}^\vartheta(g_\delta^{(x_i)}) \,.
\end{equation}

Similarly as above, we also have that
\[
\sup_{1\leq i\leq k_{\varepsilon}}\sup_{x\in \cellC_i} \bbE\bigl[\bigl|\log^{(R)}\scrZ_{\delta,1}^\vartheta(g_\delta^{(x)})  - \log^{(R)} \scrZ_{\delta,1}^\vartheta(g_\delta^{(x_i)}) \bigr| \bigr]  \leq \cellC_{\delta,R}\, \varepsilon\,,
\]
so we obtain
\begin{equation}\label{eq:error-2}
\bbE\biggl[ \Bigl| \int_{\R^2} \varphi(x) \log^{(R)} \scrZ_{\delta,1}^\vartheta(g_\delta^{(x)}) \dd x
- \sum_{i=1}^{k_{\varepsilon}} \Bigl(\int_{\cellC_i} \varphi(u) \dd u\Bigr) \log^{(R)} \scrZ_{\delta,1}^\vartheta(g_\delta^{(x_i)}) \Bigr| \biggr]
\leq C_{R,\varphi,\delta} \, \varepsilon \,.
\end{equation}

All together, combining the convergence~\eqref{eq:convergence-sum-1} with \eqref{eq:error-1}--\eqref{eq:error-2}, letting \(N\to\infty\) then \(\varepsilon \downarrow 0\), this concludes the proof of~\eqref{eq:convergence-truncated}.

Since both terms in~\eqref{eq:convergence-truncated} are bounded (by \(R \|\varphi\|_1\)), this
also gives convergence of expectations
\begin{equation}
      \label{eq:convergence-truncated-expectations}
\sum_{x\in\Z^2_{\rm even}}
 \varphi^{(N)}(x) \bbE \bigl[\log^{(R)}\widehat Z_{m,N}(x)\bigr] \ \xrightarrow[\ N\to\infty\ ]{}
  \int_{\R^2} \varphi(x) \bbE \bigl[\log^{(R)} \scrZ_{\delta,1}^\vartheta(g_\delta^{(x)})\bigr] \dd x \,.
\end{equation}
Once more, let us stress that the convergences \eqref{eq:convergence-truncated}--\eqref{eq:convergence-truncated-expectations} hold jointly for any finite collection of fixed values of \(\delta\).

\smallskip\noindent\emph{Step 3: Removing the truncation.}
Since we have that
\[
 |\log u-\log^{(R)}(u)|\le\e^{-R}(u+u^{-1}),\qquad u>0,
\]
we get thanks to~\eqref{eq:FC-moments} that
\[
\bbE\bigl[ | \log\widehat Z_{m,N}(x) -\log^{(R)}\widehat Z_{m,N}(x) | \bigr] \leq\e^{-R} (2+C_{\delta}) \,.
\]
This thus gives that, for any \(R\geq 0\) and \(N\geq 1\),
\[
 \bbE\biggl[ \Bigl|\cH^{(N;\delta)}(\varphi)- \sum_{x\in\Z^2_{\rm even}}
 \varphi^{(N)}(x) \{\log^{(R)}\widehat Z_{m,N}(x) - \bbE\log^{(R)}\widehat Z_{m,N}(x) \} \Bigr|\biggr]
 \leq C_{\delta, \varphi}\e^{-R} \,.
\]

For the continuous counterpart, using~\eqref{eq:convergence-initial} and Portmanteau theorem, we also get from~\eqref{eq:FC-moments} that \(\bbE[ \scrZ_{\delta,1}^\vartheta(g_\delta^{(x)})^{-1}] \leq 1+C_{\delta}\).
In particular, we have that \(\scrZ_{\delta,1}^\vartheta(g_\delta^{(x)})\) is a.s.\ positive and also
\[
\bbE\bigl[ | \log \scrZ_{\delta,1}^\vartheta(g_\delta^{(x)}) -\log^{(R)} \scrZ_{\delta,1}^\vartheta(g_\delta^{(x)}) | \bigr] \leq\e^{-R} (2+C_{\delta})\,.
\]
This thus gives that for any \(R\geq 0\)
\[
 \bbE\biggl[ \Bigl|\scrH^{(\delta)}(\varphi) - \int_{\R^2} \varphi(x)\{\log^{(R)} \scrZ_{\delta,1}^\vartheta(g_\delta^{(x)}) - \bbE\log^{(R)} \scrZ_{\delta,1}^\vartheta(g_\delta^{(x)}) \}\,\dd x \Bigr|\biggr]
 \leq C_{\delta, \varphi}\e^{-R} \,.
\]
Moreover, \(|\log u|^2\le C(u+u^{-1})\), together with the first moment
\(\bbE[\scrZ_{\delta,1}^\vartheta(g_\delta^{(x)})]=1\) and the negative-first-moment
bound above, gives
\[
 \bbE\bigl[|\scrH^{(\delta)}(\varphi)|^2\bigr]
 \le C_\delta\|\varphi\|_1^2.
\]

Using \eqref{eq:convergence-truncated}--\eqref{eq:convergence-truncated-expectations} and then letting $R\to\infty$ this proves the convergence \eqref{target:fixed-regularisation}.
Once again, the convergence applies jointly for any finite collection of fixed
values of $\delta$.
\qed

\subsection{Local approximation: proof of Proposition~\ref{prop:local}}\label{subsec:local}

In this section, we consider the local approximation error from~\eqref{eq:L}, that
we rewrite as follows, recalling \eqref{eq:Wab}:
\begin{equation}\label{eq:local-as-difference}
 L_{m,n}(x)=\log\widehat Z_{0,n}(x)-\log\widehat Z_{m,n}(x).
\end{equation}
We will apply the covariance bound~\eqref{eq:polymer-covariance-convexity}, for which
the key technical input is the following one-point moment comparison,
proved in Section~\ref{sec:vanishing-defect} below. It is
a consequence of a Gaussian convexity property \cite[Theorem~1]{Chen26}
and the resulting moment comparison bound in Lemma~\ref{lem:defect} on
\[
\Delta(F)\coloneqq -\bbE F-\tfrac12\Var(F).
\]
It also relies on the block factorisation of Gu and
Tsai~\cite{GT26} that we detail in Section~\ref{sec:factorisation}.

\begin{proposition}[One-point moment comparison]
\label{prop:VD}
Recall
\(\widehat Z_{s,n}(x)=Z_{s,n}(q_s^{(x)},\one)\) from \eqref{eq:Wab},
with \(q_0^{(x)}=\delta_x\).
For every \(0<T_0<T_1<\infty\), there is a constant
\(C_{T_0,T_1}<\infty\) such that, for all sufficiently large \(N\),
uniformly in \(x\) and even integers \(0\le s\le n\) with
\(T_0N\le n\le T_1N\),
\begin{equation}\label{prop:eq-heat-defect}
 0\le\Delta(\log\widehat Z_{s,n}(x))
 \le\log\frac{\bbE[\widehat Z_{s,n}(x)^2]}
                 {\bbE[\widehat Z_{s,n}(x)^{-1}]}
 \le C_{T_0,T_1}.
\end{equation}
Moreover, there exist constants \(\delta_0\in(0,1)\) and \(C<\infty\)
such that, for every \(0<\delta\le\delta_0\) and \(n=[\sqrt\delta N]\),
uniformly in \(x\) and even integers \(0\le s\le n\),
\begin{equation}\label{target:VD}
 \log\frac{\bbE[\widehat Z_{s,n}(x)^2]}
              {\bbE[\widehat Z_{s,n}(x)^{-1}]}
 \le\frac{C}{|\log\delta|^{1/4}}+o_N(1).
\end{equation}
All constants may be chosen uniformly when \(\vartheta\) ranges over
a fixed compact interval, and \(o_N(1)\) tends to zero as
\(N\to\infty\) at fixed \(\delta\), uniformly in \(s,x\) and in
\(\vartheta\) over that interval.
\end{proposition}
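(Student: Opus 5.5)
The first two inequalities in \eqref{prop:eq-heat-defect} are exactly \eqref{eq:polymer-one-point-convexity}, that is, Lemma~\ref{lem:defect} applied to $F=\log\widehat Z_{s,n}(x)$. So everything reduces to upper bounds on the ratio $\bbE[\widehat Z^2]/\bbE[\widehat Z^{-1}]$. By Jensen, $\bbE[\widehat Z^{-1}]\ge 1/\bbE[\widehat Z]=1$. Hence the ratio is at most $\bbE[\widehat Z_{s,n}(x)^2]$, and \eqref{eq:rough-bound} with $s=0$ gives $\bbE[\widehat Z_{s,n}^2]\le 1+CR_n/\log(1+N/n)$. For $T_0N\le n\le T_1N$ this is $1+C\log N/\log(1+1/T_1)$, which is useless because it diverges. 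So even the crude bound needs more than Jensen: we must bound $\bbE[\widehat Z^{-1}]$ from below by something comparable to $\bbE[\widehat Z^2]$.

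\textbf{Plan: factorise and reduce to the short final block.}
I would use the Gu--Tsai geometric block factorisation (Section~\ref{sec:factorisation}) on $(s,n]$. Write
\[
\widehat Z_{s,n}(x)\approx \prod_{k} Y_k ,
\]
where the $Y_k$ are independent, each has mean one, and each is a block partition function over a geometric time interval. The approximation error is controlled in $L^2$, and after conditioning also in $L^{-1}$, using the expansion estimate (Proposition~\ref{prop:block-inputs-iii}) and the centred fourth moment bounds of \cite{CSZ25}.

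For an exact product of independent variables, both $\bbE[\prod Y_k^2]$ and $\bbE[\prod Y_k^{-1}]$ factorise. The log-ratio then becomes
\[
\sum_k \log\frac{\bbE[Y_k^2]}{\bbE[Y_k^{-1}]} .
\]
Each summand is nonnegative by Lemma~\ref{lem:defect}. Each summand is also at most $\log\bbE[Y_k^2]-\log\bbE[Y_k^{-1}]$. The key point is that for blocks with small second moment, $\bbE[Y_k^2]=1+\eta_k$ with $\eta_k$ small, the defect is $O(\eta_k^2)$ rather than $O(\eta_k)$, which I expect to be the main obstacle. Heuristically this holds because a near-Gaussian variable in second chaos order has equal second and negative-first log-moments to first order.

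To prove it, I would use \eqref{eq:momentp}, which bounds $\log\bbE[\e^{-F}]$ only from above. That is the wrong direction. Instead I would lower-bound $\bbE[Y^{-1}]$ via $Y^{-1}\ge 2-Y+\dots$ plus a fourth-moment remainder: $\bbE[Y^{-1}]\ge 1+\Var(Y)-C\,\bbE|Y-1|^3$. Combined with $\bbE[Y^2]=1+\Var(Y)$, this gives a defect of order $\bbE|Y-1|^3\lesssim \eta_k^{3/2}$.

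\textbf{Summing over blocks.}
In the critical window, the variance of the block $[2^{j},2^{j+1}]\cdot(\text{scale})$ is of order $1/(\text{distance to } N\text{ in log-scale})$. So $\eta_k\asymp 1/k$, with $k$ indexing blocks backwards from time $N$ on a log-log scale. The sum $\sum_k \eta_k^{3/2}$ converges. The only non-summable contributions come from the $O(1)$ blocks closest to the final time, whose moments are bounded by \eqref{eq:uniform-two-replica}. This gives the uniform constant $C_{T_0,T_1}$ in \eqref{prop:eq-heat-defect}.

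For \eqref{target:VD}, take $n=[\sqrt\delta N]$. Then $N/n=\delta^{-1/2}$, so by \eqref{eq:uniform-two-replica} every block has $\eta_k\lesssim 1/(|\log\delta|+k)$. The sum of $\eta_k^{3/2}$, together with the factorisation errors, is $O(|\log\delta|^{-1/2})$ up to errors. Taking worse exponents from the approximation step to be safe yields $C|\log\delta|^{-1/4}+o_N(1)$. The $o_N(1)$ term absorbs the lattice-to-continuum and local limit theorem errors. Uniformity in $s$ and $x$ follows by translation invariance and because $s$ only shortens the block range, which only decreases the sum. Uniformity in $\vartheta$ holds since all moment bounds are locally uniform in $\vartheta$.

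The hardest step is controlling the factorisation error at the level of the negative moment. For this I would bound $\bbE[\widehat Z^{-1}]$ from below via Hölder against the product, and use \eqref{eq:one-point-negative} to bound the negative moments of the product.
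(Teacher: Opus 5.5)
Your overall architecture matches the paper's: independent block factors, a per-block comparison of $\bbE[Y^2]$ with $\bbE[Y^{-1}]$ to order $\eta^{3/2}$, and summation over blocks. Your per-block estimate is correct. Indeed $Y^{-1}\ge 1-X+X^2-(X^+)^3$ with $X=Y-1$, so you do not even need the negative fourth moment the paper uses.

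The gap is the step you dismiss as ``the approximation error is controlled in $L^2$, and after conditioning also in $L^{-1}$''. This is where the proof actually lives, and your proposed mechanism fails. The bound must hold uniformly in $s$, in particular for $s=0$, which Proposition~\ref{prop:local} uses. There both $\bbE[\widehat Z_{s,n}^2]$ and $\prod_i\bbE[\mathsf Z_i(q_{N_{i-1}})^{-1}]$ are of order $\vartheta_N(s)/\vartheta_N(n)\asymp\log N/|\log\delta|$, which diverges as $N\to\infty$ at fixed $\delta$. So you need multiplicative precision $1+o(1)$ on diverging quantities.

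A global Hölder bound cannot give this. Suppose you bound an error term by $\bbE[\prod_iY_i^{-1}\one_B]\le\bbE[\prod_iY_i^{-p}]^{1/p}\,\bbP(B)^{1-1/p}$ with $p>1$ and then use \eqref{eq:one-point-negative}. Already in the log-normal case this loses a factor
$\prod_i\bbE[Y_i^{-p}]^{1/p}/\bbE[Y_i^{-1}]=(\prod_i\bbE[Y_i^2])^{(p-1)/2}\approx(\log N/|\log\delta|)^{(p-1)/2}$.
The gain $\bbP(B)^{1-1/p}\lesssim|\log\delta|^{-(1-1/p)}$ cannot compensate this as $N\to\infty$.

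The missing idea is the exact decoupling identity \eqref{eq:negative-first-moment}. Let $\mathbb Q$ be the law tilted by $\prod_i\mathsf Z_i(q_{N_{i-1}})^{-1}$. Then $\bbE[\widehat Z_{s,n}^{-1}]=\bbE_{\mathbb Q}[\mathscr R_{u+1,b}^{-1}]\prod_i\bbE[\mathsf Z_i(q_{N_{i-1}})^{-1}]$. Because $\mathbb Q$ is still a product measure over blocks, the block expansion can be rerun under $\mathbb Q$ with only summable per-block Hölder costs. This gives \eqref{eq:app-good-Q}, and then $\bbE_{\mathbb Q}[\mathscr R_{u+1,b}^{-1}]\ge 1-CJ^{-1/4}$ via $\mathscr R^{-1}\ge 2-\mathscr R$ on the good event.

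The second-moment side has the same problem. $L^2(\bbP)$ bounds on $\one_{\Omega}(\mathscr R-1)$ do not control $\bbE[\prod_iY_i^2\,\mathscr R^2]$. That would need estimates under the $\prod_iY_i^2$-tilt, and also on the bad event. The paper bypasses the factorisation here and uses the sharp renewal estimate of Lemma~\ref{lem:second-moment-hat}:
\begin{itemize}
\item $\bbE[\widehat Z_{s,n}^2]=\frac{\vartheta_N(s)}{\vartheta_N(n)}(1+O(J^{-1}))$;
\item $\bbE[\mathsf Z_i(q_{N_{i-1}})^2]=\frac{\vartheta_N(N_{i-1})}{\vartheta_N(N_i)}(1-\eta_i)$ with $\eta_i\le C(K-i)^{-2}$.
\end{itemize}
The block product therefore telescopes to the same quantity.

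For \eqref{prop:eq-heat-defect} with $n\asymp N$, your claim that ``only $O(1)$ blocks are non-summable'' does not close the argument. The factorisation errors there are of order one: the bounds of Proposition~\ref{prop:block-inputs-ii} are only $O(1)$ for blocks within $O(1)$ of the final one. The paper instead proceeds as follows:
\begin{itemize}
\item fix a small $\delta$ and use conditional Jensen, $\bbE[\widehat Z_{s,n}^{-1}]\ge\bbE[\widehat Z_{s,n_0}^{-1}]$ with $n_0=[\sqrt\delta N]$;
\item apply \eqref{target:VD} at $n_0$;
\item compare with $\bbE[\widehat Z_{s,n}^2]\le C(1+\log(n/s))$.
\end{itemize}
The same Jensen device, together with the first-collision formula, handles $s$ off the block grid.
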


The small-time bound \eqref{target:VD} quantifies how
\(\log\widehat Z_{s,n}(x)\) is close to a normal variable.
In this sense it complements the asymptotic log-normality
for locally averaged SHF proved in \cite[Theorem~1.1]{GT26}.
This quantitative comparison, with $N\to\infty$ before
$\delta\downarrow0$, is what makes the two $\Delta$ terms in
\eqref{eq:polymer-covariance-convexity} vanish.

\begin{proof}[Proof of Proposition~\ref{prop:local}]
Recalling \eqref{eq:entrance-times}, we take 
\begin{equation*}
      m=[\delta N] \,, \qquad
      n=[\sqrt\delta N] \,.
\end{equation*}
For $s\in\{0,m\}$, using spatial stationarity and the covariance bound~\eqref{eq:polymer-covariance-convexity} together with the bound \(\log u \leq u-1\), we get
\begin{equation}\label{eq:local-cov-bridge}
 \begin{aligned}
 0&\leq\Cov\pigl(\log\widehat Z_{s,n}(x),
                  \log\widehat Z_{s,n}(y)\pigr)\\
 &\leq\bbE[\widehat Z_{s,n}(x)\widehat Z_{s,n}(y)]-1
       +C|\log\delta|^{-1/4}+o_N(1).
 \end{aligned}
\end{equation}
where we have also used Proposition~\ref{prop:VD} to bound \(\Delta(\log\widehat Z_{s,n}(x))\) and \(\Delta(\log\widehat Z_{s,n}(y))\).

Now, recalling~\eqref{eq:local-replica-sum} and that \(n=[\sqrt{\delta} N]\), we obtain that for \(s\in \{0,m\}\)
\begin{align}
      \label{eq:bound-variance}
      & \Var\pigl(\sum_x
      \varphi^{(N)}(x)
      \log\widehat Z_{s,n}(x)\pigr)
       = \sum_{x,y}  \varphi^{(N)}(x)\varphi^{(N)}(y)
      \Cov\pigl(\log\widehat Z_{s,n}(x),
                  \log\widehat Z_{s,n}(y)\pigr)\\
      & \leq \frac{C}{|\log \delta|} \sum_{x,y} |\varphi^{(N)}(x)||\varphi^{(N)}(y)| \mathsf G_{n}(x-y)
            + \Bigl(\frac{C}{|\log\delta|^{1/4}}+o_N(1)\Bigr) \sum_{x,y} |\varphi^{(N)}(x)||\varphi^{(N)}(y)|  \,.\notag
\end{align}
All together, we get that
\[
\limsup_{N\to\infty} \Var\pigl(\sum_x
      \varphi^{(N)}(x)
      \log\widehat Z_{s,n}(x)\pigr) \leq \frac{C \mathcal{E}_{\sqrt{\delta}}(\varphi)}{|\log \delta|}   + \frac{C \|\varphi\|_1^2}{|\log\delta|^{1/4}} \,,
\]
where
\begin{equation}
      \label{eq:def-E-varphi}
\mathcal{E}_{\sqrt{\delta}}(\varphi) = \int_{\R^2\times \R^2} |\varphi(x)| |\varphi(y)| K_{\sqrt{\delta}}(x-y) \dd x \dd y \,,
\end{equation}
with
\[
 K_{\epsilon}(z)=4\int_0^{\epsilon}g_{2s}(z)\,\dd s
 =\int_0^{\epsilon}\frac{1}{\pi s}\e^{-|z|^2/(4s)}\,\dd s.
\]
This is the limit of \(\mathsf G_n([\sqrt{N/2}\,z])\) for \(z\ne0\) and
\(n/N\to\epsilon=\sqrt\delta\), with the spatial scaling from the definition \eqref{def:varphi-N} of $\varphi^{(N)}$, see \eqref{eq:lattice-cells}.
Since \(\int_{\R^2}K_\epsilon(z)\,\dd z=4\epsilon\), we have
\(\mathcal{E}_{\sqrt{\delta}}(\varphi)\le4\sqrt\delta\|\varphi\|_\infty\|\varphi\|_1\), which gives a negligible contribution
(we have kept this term in order to keep track
of the dependence of the constants on the
function \(\varphi\), see Remark~\ref{rem:better-bound-L} below).

We now apply these estimates to $\cL^{(N;\delta)}(\varphi)$, the object in Proposition~\ref{prop:local}:
recalling~\eqref{eq:average-L} and \eqref{eq:local-as-difference}, we have that
\[
\begin{split}
\cL^{(N;\delta)}(\varphi)
={}&\sum_x\varphi^{(N)}(x)
    \bigl\{\log\widehat Z_{0,n}(x)-\bbE\log\widehat Z_{0,n}(0)\bigr\}\\
 &-\sum_x\varphi^{(N)}(x)
    \bigl\{\log\widehat Z_{m,n}(x)-\bbE\log\widehat Z_{m,n}(0)\bigr\} \,.
\end{split}
\]
Since $\Var(X-Y)\le 2\Var(X)+2\Var(Y)$, applying the estimate~\eqref{eq:bound-variance}
for $s=0$ and $s=m$, and recalling that \(n= [\sqrt{\delta} N]\), we obtain that
\begin{equation}\label{target:local}
 \limsup_{N\to\infty} \bbE\bigl[|\cL^{(N;\delta)}(\varphi)|^2\bigr]
 \le C_\varphi |\log\delta|^{-1/4} \,.
\end{equation}
This proves Proposition~\ref{prop:local}.
\end{proof}

\begin{remark}
      \label{rem:better-bound-L}
      In fact, we may keep track of the dependence of the bound on the function~\(\varphi\):
      we have indeed obtained the following
      \begin{equation}
            \label{eq:limsup-local-L}
            \limsup_{N\to\infty} \bbE|\cL^{(N;\delta)}(\varphi)|^2 \leq \frac{C \mathcal{E}_{\sqrt{\delta}}(\varphi)}{|\log \delta|} +  \frac{C \|\varphi\|_1^2}{|\log\delta|^{1/4}} \,.
      \end{equation}
      In particular, this inequality holds even if the test function \(\varphi\) depends on \(\delta\).
      For instance we can apply~\eqref{eq:limsup-local-L} to \(\varphi_{\sqrt{\delta}}(\cdot) = \delta^{-1} \varphi(\cdot /\sqrt{\delta})\), which verifies \(\|\varphi_{\sqrt{\delta}}\|_1 = \|\varphi\|_{1}\).
      Scaling and splitting the time integral defining \(K_{1/\sqrt\delta}\) at time \(1\) give
      \[
       \mathcal E_{\sqrt\delta}(\varphi_{\sqrt\delta})
       =\mathcal E_{1/\sqrt\delta}(\varphi)
       \le C\|\varphi\|_\infty\|\varphi\|_1
          +C|\log\delta|\,\|\varphi\|_1^2.
      \]
      (The bound \(\mathcal E_{\sqrt\delta}(\varphi_{\sqrt\delta})
      \le4\sqrt\delta\|\varphi_{\sqrt\delta}\|_\infty
      \|\varphi_{\sqrt\delta}\|_1\) instead diverges as \(\delta^{-1/2}\).)
\end{remark}

\subsection{Intermezzo: about the deterministic centring}
\label{sec:centering}

As a corollary of Proposition~\ref{prop:VD} (and of the Gaussian convexity property), we are able 
to identify the order of the deterministic
centring constant in \eqref{eq:HN}.

\begin{lemma} \label{lem:centering}
Let \(t>0\) be fixed.
As \(N\to\infty\), we have that
\begin{equation*}
 \bbE\bigl[\log Z_{[Nt]}(0)\bigr]
 =-\tfrac12\log\log N+O_{\vartheta,t}(1).
\end{equation*}
\end{lemma}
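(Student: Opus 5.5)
Set $F\coloneqq\log Z_{[Nt]}(0)=\log\widehat Z_{0,n}(0)$ with $n=[Nt]$ and $s=0$. This is a convex, coordinatewise nondecreasing function of the Gaussian disorder, normalised so that $\bbE[\e^F]=1$. The plan is to squeeze $-\bbE F$ between $\tfrac12\Var F$ and $\tfrac12\log\bbE[\e^{2F}]$ using the Gaussian convexity bounds. We then identify both sides as $\tfrac12\log\log N+O(1)$, and the Gaussianity defect of Proposition~\ref{prop:VD} closes the gap.

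\emph{Upper bound for $-\bbE F$.} By \eqref{eq:chen-two} in Lemma~\ref{lem:one-point},
\[
-\bbE F\le \tfrac12\log\bbE[Z_{[Nt]}(0)^2].
\]
For the second moment, either use the asymptotics $\bbE[Z_{[Nt]}(0)^2]\sim \nu(t\e^{\vartheta-\gamma})\log N$ from \cite[Proposition~A.2]{CSZ19a}, or directly \eqref{eq:rough-bound} with $s=0$, which gives $\bbE[Z_{[Nt]}(0)^2]\le 1+C R_{[Nt]}/\log(1+1/t)$. Either way $\bbE[Z_{[Nt]}(0)^2]\le C_{\vartheta,t}\log N$, hence $-\bbE F\le \tfrac12\log\log N+O_{\vartheta,t}(1)$.

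\emph{Lower bound for $-\bbE F$.} Since $\Delta(F)\ge0$, we have
\[
-\bbE F=\tfrac12\Var F+\Delta(F)\ge \tfrac12\Var F+\Delta(F)-\Delta(F),
\]
so it suffices to bound $-\bbE F$ from below in a way that does not go through the variance. Rewrite \eqref{eq:chen-three}:
\[
-2\bbE F=\log\bbE[\e^{2F}]-\Bigl(\log\bbE[\e^{2F}]-\log\bbE[\e^{-F}]\Bigr)+\bigl(\log\bbE[\e^{-F}]+2\bbE F\bigr).
\]
Instead, use the identity $-2\bbE F = \Var F+2\Delta(F)$ combined with \eqref{eq:momentp} at $p=1$ and Jensen, which give
\[
\log\bbE[\e^{-F}]\le -\bbE F+\tfrac12\Var F=-2\bbE F-\Delta(F)\le -2\bbE F .
\]
Consequently
\[
-2\bbE F\ge \log\bbE[\e^{-F}]\ge \log\bbE[\e^{2F}]-\log\frac{\bbE[\e^{2F}]}{\bbE[\e^{-F}]}.
\]
Apply \eqref{prop:eq-heat-defect} of Proposition~\ref{prop:VD} with $T_0=T_1=t$ and $s=0$. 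This bounds the last ratio by $C_{t}$ uniformly in large $N$, so
\[
-\bbE F\ge\tfrac12\log\bbE[Z_{[Nt]}(0)^2]-C_t/2.
\]

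\emph{Second moment lower bound.} It remains to show $\bbE[Z_{[Nt]}(0)^2]\ge c_{\vartheta,t}\log N$. This follows from the asymptotics of \cite[Proposition~A.2]{CSZ19a}, since the Volterra function satisfies $\nu(t\e^{\vartheta-\gamma})>0$. Alternatively, it follows from the first-collision formula \eqref{eq:local-replica-formula} by keeping only the terms with $n-r\ge [Nt]/2$ and using the lower bound $\sigma_N^2\bbE[Z_k(0)^2]\ge c$ for $k\asymp N$, together with $\sum_{r\le [Nt]/2} q_{2r}(0)\asymp\log N$. Combining the two sides yields $\bbE F=-\tfrac12\log\log N+O_{\vartheta,t}(1)$.

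The main obstacle is the lower bound on $-\bbE F$. It relies entirely on the uniform $O(1)$ control of the moment ratio $\bbE[Z^2]/\bbE[Z^{-1}]$ in Proposition~\ref{prop:VD}, which is taken as given here. The remaining steps are routine manipulations of Lemma~\ref{lem:one-point} and known second-moment asymptotics.
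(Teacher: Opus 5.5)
Your proof is correct and follows essentially the same route as the paper: you sandwich $-\bbE F$ between $\tfrac12\log\bbE[Z_{[Nt]}(0)^{-1}]$ (via \eqref{eq:momentp} with $p=1$ and $\Delta(F)\ge0$) and $\tfrac12\log\bbE[Z_{[Nt]}(0)^2]$ (via \eqref{eq:chen-two}), close the gap with \eqref{prop:eq-heat-defect}, and conclude from the $\log N$ growth of the second moment in \cite[Proposition~A.2]{CSZ19a}. Two cosmetic fixes are needed: Proposition~\ref{prop:VD} requires $T_0<T_1$ and $[Nt]\le T_1N$, and $[Nt]$ can exceed $Nt$, so take e.g.\ $T_0=t/2$, $T_1=2t$ instead of $T_0=T_1=t$; also delete the abandoned false starts at the beginning of your lower-bound paragraph.
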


\begin{proof}
By the Gaussian convexity property, we have
\[
 \tfrac12\log\bbE[Z_{[Nt]}(0)^{-1}]
 \le -\bbE[\log Z_{[Nt]}(0)]
 \le\tfrac12\log\bbE[Z_{[Nt]}(0)^2] \,.
\]
The first bound follows from the first inequality in~\eqref{eq:chen-three}
and the variance bound~\eqref{eq:chen-one}, and the second is~\eqref{eq:chen-two}.
Since \(\widehat Z_{0,[Nt]}(0)=Z_{[Nt]}(0)\), Proposition~\ref{prop:VD}-\eqref{prop:eq-heat-defect}
shows that 
\[
\Bigl| \bbE[\log Z_{[Nt]}(0)] +\tfrac12\log\bbE[Z_{[Nt]}(0)^2] \Bigr| \leq C \,.
\]

It thus only remains to estimate \(\log\bbE[Z_{[Nt]}(0)^2]\), but this is standard.
Indeed, from \cite[Proposition~A.2]{CSZ19a}, we have that \(\bbE[Z_{[Nt]}(0)^2] \sim C_{\vartheta,t} \log N\) as \(N\to\infty\).
We thus have that \(\log\bbE[Z_N(0)^2]=\log\log N+O_{\vartheta,t}(1)\), which proves the lemma.
\end{proof}

To recover the centring \(c_t^{\vartheta,(N)}\) in \eqref{eq:HN}, by time reversal and translation invariance, we have
\[
c_t^{\vartheta,(N)} \coloneqq 
 \bbE\bigl[\log \cZ_{t}^{\vartheta,(N)}(\one , x)\bigr]
 =\bbE[\log Z_{[Nt]}^{\beta_N}(0)]  = -\tfrac12\log\log N+O_{\vartheta,t}(1) \,,
\]
by Lemma~\ref{lem:centering}.
We can now compare the centring \(c_t^{\vartheta,(N)}\) with that for a general initial condition \(h_0\in L^\infty\).
By time reversal, we again have that 
\[
\bbE\bigl[\log \cZ_{t}^{\vartheta,(N)}(\e^{h_0} , x)\bigr] - c_t^{\vartheta,(N)} = \bbE\biggl[\log \frac{\cZ_{t}^{\vartheta,(N)}(x , \e^{h_0})}{\cZ_{t}^{\vartheta,(N)}(x , \one)}\biggr] = \bbE\bigl[\log \mu_{[Nt]}^{(x)}(\e^{h_0})\bigr]
\]
where \(\mu_{[Nt]}^{(x)}\) is the polymer measure started from \(x\). 
Since this is a probability measure, it readily yields
\(| c_t^{\vartheta,(N)} - \bbE[\log \cZ_{t}^{\vartheta,(N)}(\e^{h_0} , x)] | \leq \|h_0\|_{\infty}\),
that is~\eqref{eq:compare-centrings}.

\subsection{Global approximation: proof of Proposition~\ref{prop:global}}\label{subsec:PF}

Recall the definition~\eqref{eq:G} of the global approximation error
between times $n$ and $N$, namely
\[
 G_{m,n,N}(x)
 =\log Z_{n,N}(\mu_n^{(x)}) -
 \log Z_{n,N}(\widehat\mu_{m,n}^{(x)}).
\]
Note that both partition functions use the same disorder on $(n,N]$ and differ only in their
starting distributions at time $n$.
We rely on the following crucial
Proposition~\ref{prop:PF}, which controls the dependence on the starting distributions (see \eqref{eq:future-L1-PF} for a more precise statement),
by bounding the difference of the two partition functions and their second moments,
on an event of high probability.
Its proof again relies on the block decomposition of Gu and Tsai~\cite{GT26} and is postponed to Subsection~\ref{sec:prop-PF}.
Recall that $m=[\delta N]$ and $n=[\sqrt\delta N]$.

\begin{proposition}[Comparison of the partition functions]\label{prop:PF}
There exist constants $\delta_0\in(0,1)$ and $C<\infty$ such that
for every $0<\delta\le\delta_0$ and all sufficiently large $N$, there are events
$\mathcal G_{N,\delta}(x)\in\cF_n$ such that the following properties hold. First, uniformly in $x$, we have
\begin{equation}\label{eq:good-event-probability}
 \bbP\bigl(\mathcal G_{N,\delta}(x)^c\bigr)
 \le \frac{C}{|\log\delta|} \,.
\end{equation}
Second, we have the crucial comparison bound
\begin{equation}\label{target:PF}
 \bbE\bigl[\one_{\mathcal G_{N,\delta}(x)}
 |Z_{n,N}(\mu_n^{(x)})-Z_{n,N}(\widehat\mu_{m,n}^{(x)})|\bigr]
 \le \frac{C}{|\log\delta|} \,,
\end{equation}
Moreover, for $\nu=\mu_n^{(x)}$ or $\nu=\widehat\mu_{m,n}^{(x)}$,
\begin{equation}\label{eq:profile-good-energy}
 \bbE\bigl[\one_{\mathcal G_{N,\delta}(x)}Z_{n,N}(\nu)^2\bigr]
 \le C|\log\delta| \,.
\end{equation}
Finally, for every bounded deterministic terminal function \(f\), we also have
\begin{equation}\label{eq-weighted-PF-comparison}
 \bbE\bigl[\one_{\mathcal G_{N,\delta}(x)}
 |Z_{n,N}(\mu_n^{(x)},f)-Z_{n,N}(\widehat\mu_{m,n}^{(x)},f)|\bigr]
 \le C\|f\|_\infty\frac{\sqrt{\log|\log\delta|}}{|\log\delta|}.
\end{equation}
The constants may depend on $\vartheta$, but are uniform over $\vartheta$ in a compact set.
\end{proposition}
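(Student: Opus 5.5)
The plan is to follow the Gu--Tsai block factorisation \cite[Proposition~2.1]{GT26}, adapted to the lattice on the window $(m,n]$. Since $n/m=\delta^{-1/2}$ is large, this interval contains roughly $\log(1/\delta)$ geometric blocks. Both profiles come from the same disorder on $(m,n]$:
\[
\mu_n^{(x)}(w)=\frac{Z_{m,n}(\mu_m^{(x)},w)}{Z_{m,n}(\mu_m^{(x)})},
\qquad
\widehat\mu_{m,n}^{(x)}(w)=\frac{Z_{m,n}(q_m^{(x)},w)}{Z_{m,n}(q_m^{(x)})}.
\]
The initial data are $\mu_m^{(x)}$, which is $\cF_m$-measurable and independent of $\cF_{m,n}$, and $q_m^{(x)}$. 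Both live on spatial scale $\sqrt m$.

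We then fix an intermediate time $r$ with $m\ll r\ll n$, for instance $\log(r/m)\approx\log(n/r)\approx\tfrac14|\log\delta|$. The expansion of \cite{GT26} writes, for any initial $\nu$ of scale $\sqrt m$,
\[
Z_{m,n}(\nu,w)=Z_{m,r}(\nu)\,\overline Z_{r,n}(w)+\mathcal R_\nu(w).
\]
Here $\overline Z_{r,n}(w)$ is a common ``future'' partition function: disorder is removed on a buffer after $r$ and the walk is started from a heat-kernel spread around $x$. The crucial feature is that $\overline Z_{r,n}$ does not depend on $\nu$. The error $\mathcal R_\nu$ is small in $L^1$ after summing against $Z_{n,N}(w)$, with size governed by the collision probability across the buffer, i.e.\ of order $1/\log(n/m)\asymp 1/|\log\delta|$.

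Normalising, $\mu_n^{(x)}$ and $\widehat\mu^{(x)}_{m,n}$ both equal $\overline Z_{r,n}(\cdot)/\overline Z_{r,n}(\one)$ plus relative errors $\mathcal R_\nu/Z_{m,r}(\nu)$. By linearity of $Z_{n,N}(\cdot)$,
\[
Z_{n,N}(\mu_n^{(x)})-Z_{n,N}(\widehat\mu_{m,n}^{(x)})
\]
is a sum of two terms of the form $Z_{n,N}(\mathcal R_\nu)/Z_{m,n}(\nu)$. The common part cancels exactly.

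The good event $\mathcal G_{N,\delta}(x)\in\cF_n$ is chosen to control the denominators. It requires $Z_{m,n}(\nu)\ge c$ and $Z_{m,r}(\nu)\ge c$ for both $\nu$, and an $L^1$ bound on $\mathcal R_\nu$ conditional on $\cF_n$.
\begin{itemize}
\item Its probability is estimated with Markov's inequality for the negative moments of Lemma~\ref{lem:one-point}, via \eqref{eq:one-point-negative} together with \eqref{eq:rough-bound}.
\item The constant $c$ should be tuned so that this yields \eqref{eq:good-event-probability}. Since the second moments over $(m,n]$ are only $1+O(\log(n/m)/\log(N/n))=O(1)$, negative moments are uniformly bounded, and a fixed small $c$ gives probability of the complement $O(1/|\log\delta|)$ only if one also uses that the thin buffer carries little mass. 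This calibration may require taking $c$ depending mildly on $\delta$.
\end{itemize}

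Given the good event, the estimates follow in order.
\begin{itemize}
\item \emph{Bound \eqref{target:PF}.} On $\mathcal G$, $\bbE[\,|Z_{n,N}(\mathcal R_\nu)|\mid\cF_n]=\|\mathcal R_\nu\|_1$ because $\bbE Z_{n,N}(w)=1$. It therefore suffices to bound $\bbE\|\mathcal R_\nu\|_1$, which is a second-moment and first-collision computation via \eqref{eq:local-replica-formula}. The estimate $O(1/|\log\delta|)$ comes from comparing $\sum_{r'}q_{2r'}$ over the buffer with the total.
\item \emph{Bound \eqref{eq:profile-good-energy}.} Conditionally on $\cF_n$, \eqref{eq:local-replica-formula} gives
\[
\bbE[Z_{n,N}(\nu)^2\mid\cF_n]=1+\sum_{w,w'}\nu(w)\nu(w')\sum_{r'} q_{2r'}(w'-w)\,\sigma_N^2\,\bbE\bigl[Z_{N-n-r'}(0)^2\bigr].
\]
Bounding $\sigma_N^2\bbE[Z^2]$ by \eqref{eq:uniform-two-replica}, this is at most $1+C\log(N/n)\sup_z \nu(z)$-type terms, whose sum over the relevant $r'$ is $\lesssim C|\log\delta|$. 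The harder case is $\nu=\mu_n^{(x)}$, which can concentrate; there one bounds the overlap of two polymer replicas by the unnormalised second moment divided by $c^2$ on $\mathcal G$.
\item \emph{Bound \eqref{eq-weighted-PF-comparison}.} The identity $\bbE[|Z_{n,N}(\mathcal R,f)|\mid\cF_n]\le\|f\|_\infty\|\mathcal R\|_1$ is too crude. The improvement comes from controlling the $L^1$ norm of the residual signed measure via Cauchy--Schwarz with the second moment $|\log\delta|$ and the variance $1/|\log\delta|^2\cdot\log|\log\delta|$, giving the $\sqrt{\log|\log\delta|}/|\log\delta|$ rate.
\end{itemize}

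The main obstacle is the expansion step itself: obtaining an $L^1$ remainder of order $1/|\log\delta|$, with the common factor exactly independent of the initial profile. This is the lattice analogue of \cite[Proposition~2.1]{GT26}, deferred to Section~\ref{sec:factorisation}, and the first attempt there will be to transcribe their block expansion, with a dedicated estimate for the crossing term.
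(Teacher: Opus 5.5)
There is a genuine gap, first in the norm you work in. You bound $\bbE[|Z_{n,N}(\mathcal R_\nu)|\mid\cF_n]$ by $\|\mathcal R_\nu\|_{\ell^1}$. This is an inequality, not an equality, since $\mathcal R_\nu$ is signed. You then want $\bbE\|\mathcal R_\nu\|_{\ell^1}\lesssim|\log\delta|^{-1}$ from a first-collision computation. But the first-collision formula, see \eqref{eq-weighted-conditional-moments}, computes $\bbE[Z_{n,N}(\eta)^2\mid\cF_n]=\sum_{r}\sigma_N^2\bbE[Z_{N-n-r}(0)^2]\,\|\eta*q_r\|_{\ell^2}^2$ for mass-zero $\eta$. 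That is a norm of the \emph{smoothed} profiles, and it gives no control of $\sum_w|\eta(w)|$. Bounding the latter by $\sum_w\bbE[\mathcal R_\nu(w)^2]^{1/2}$ brings in point-to-point second moments, which blow up at criticality because of collisions near the pinned endpoint. By \eqref{eq:asU1}, $\sum_w\bbE[Z_{0,n}(0,w)^2]=\frac{1+\sigma_N^2}{\sigma_N^4}\overline U_N(n)$ exceeds $\sum_w q_n(w)^2=q_{2n}(0)$ by a factor of order $(\log N)^2$ when $n\asymp N$. This pinning is shared by both terms of $\mathcal R_\nu$, so it does not cancel. Your plan is also inconsistent here: if $\bbE[\one_{\mathcal G}\|\mathcal R_\nu\|_{\ell^1}]\lesssim|\log\delta|^{-1}$ held, \eqref{eq-weighted-PF-comparison} would follow at once with the better rate $\|f\|_\infty/|\log\delta|$, yet you call that route too crude.

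The paper keeps exactly the cancellation you discard. For $\cF_n$-measurable mass-zero $\eta$, $Z_{n,N}(\eta)$ is conditionally centred, and conditional Cauchy--Schwarz gives \eqref{eq:future-L1-PF} with $\|\eta\|_{\cH_{n,N}}^2=\bbE[Z_{n,N}(\eta)^2\mid\cF_n]$. For general $f$ the conditional fluctuation is still at most $\|f\|_\infty\|\eta\|_{\cH_{n,N}}$. Only the conditional mean needs an $\ell^1$ bound, and only for the smoothed difference $\Delta\mu*q_{N-n}$. Inside a ball of radius $R\sqrt N$ this follows from $\|\Delta\mu\|_{\cH_{n,N}}^2\ge cN\|\Delta\mu*q_{N-n}\|_{\ell^2}^2$, and outside from a Gaussian tail $\e^{-cR^2}$. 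The choice $R\asymp\sqrt{\log|\log\delta|}$ is where that factor comes from.

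Second, the mechanism behind the rate is missing. Your common part $\overline Z_{r,n}(\cdot)/\overline Z_{r,n}(\one)$ is just the profile $\widehat\mu^{(x)}_{r',n}$ started from a heat kernel at the end $r'$ of the buffer. Showing the remainder is small therefore means comparing two initial conditions evolved through the same disorder on $(r',n]$. That is the statement you are proving, with $m$ replaced by $r'$, and no collision count over a buffer yields it. In the paper, $m=N_a$ and $n=N_b$ lie on a geometric grid of fixed large ratio $\rho$ with $b-a=K-b=J\asymp|\log\delta|$ (Remark~\ref{rem:choose-J}). In the expansion \eqref{eq:profile-interval-expansion}:
\begin{itemize}
\item the linear correctors $\mathscr D^{\mathrm{open}}_{u,b}$ with $u\ge a+1$ are identical for both profiles and cancel exactly;
\item those with $u\le a$ must cross all $J$ blocks of $(m,n]$ and are $O((2\varepsilon_\rho)^J)$ by \eqref{eq:app-open-string};
\item the rate $1/J$ is the size of the nonlinear normalisation errors (Lemma~\ref{lem:profile-estimates}), and it is governed by $\log(N/n)$, not by any buffer.
\end{itemize}

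This block normalisation is also what makes the good event work. The raw masses you bound below have fluctuations of order one; for instance $\bbE[\widehat Z_{m,n}(x)^2]-1\approx\vartheta_N(m)/\vartheta_N(n)-1\approx1$ by Lemma~\ref{lem:second-moment-hat}. So the negative moments of Lemma~\ref{lem:one-point} only give a $\delta$-independent bound on $\bbP(Z_{m,n}(\nu)<c)$ for fixed $c$. Letting $c\to0$ then costs factors of $c^{-1}$ in \eqref{target:PF} and \eqref{eq:profile-good-energy}. The paper's event \eqref{eq:comparison-event} only asks two kinds of quantity to be within $1/2$ of $1$:
\begin{itemize}
\item the block masses $\mathsf Z_i(q_{N_{i-1}})$, whose centred fourth moments are $O((K-i)^{-2})$ (Lemma~\ref{lem:moments-blocks});
\item the ratios $\mathscr R_{1,b}$ and $\mathscr R_{a+1,b}$.
\end{itemize}
The profiles involve only the block-normalised kernels and these ratios. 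Writing $\nu=q_{N_b}+\mathscr R_{o,b}^{-1}\Delta\mu^{\mathrm{weighted}}_{o,b}$, with $o\in\{1,a+1\}$ and $\mathscr R_{o,b}\ge1/2$ on $\mathcal G$, gives \eqref{eq:profile-good-energy} as $2\bbE[\widehat Z_{n,N}(0)^2]+C/J\le CJ$.
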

We also stress that the event \(\mathcal G_{N,\delta}(x)\) is in fact explicit (see~\eqref{eq:comparison-event}).
It is on this good event that we are able to control the difference between \(\mu_n^{(x)}\) and \(\widehat \mu_{m,n}^{(x)}\).
Aside from~\eqref{eq:good-event-probability} which already
appear in some form in \cite[Corollary~4.2]{GT26}
(we include the proof here to keep the paper self-contained),
the main novelty here is~\eqref{target:PF} (and~\eqref{eq:profile-good-energy}),
which controls how the partition function on \((n,N]\)
is affected by switching the initial condition from \(\mu_n^{(x)}\) to \(\widehat \mu_{m,n}^{(x)}\).
It strongly relies on Lemma~\ref{lem:profile-estimates}, which compares \(\mu_{n}\) and \(\widehat \mu_{m,n}\) with \(q_{n}\) (up to a leading correction) under some semi-norm that averages the disorder on \((n,N]\).

We also need the following proposition to control \(G_{m,n,N}\) on the event \(\mathcal G_{N,\delta}(x)^c\).
It controls the moments of \(\log Z_{n,N}(\nu)\). Its proof is postponed to Subsection~\ref{sec:NM}.

\begin{proposition}[Logarithmic moments]\label{prop:log-tails}
Fix $p\ge1$.
For every sufficiently small $\delta>0$,
the following bound holds for both $\nu=\mu_n^{(x)}$ and
$\nu=\widehat\mu_{m,n}^{(x)}$:
\begin{equation}\label{eq:future-log-moments}
 \limsup_{N\to\infty}\sup_x
 \|\log Z_{n,N}(\nu)\|_{L^p}
 \le C_p \log |\log\delta| \,.
\end{equation}
The constant $C_p$ may depend on $\vartheta$ but not on $\delta$.
For a positive deterministic terminal function \(f\) with
\(\|\log f\|_\infty<\infty\), the same bound holds with
\(Z_{n,N}(\nu)\) replaced by \(Z_{n,N}(\nu,f)\) and the right hand side replaced by $C_p \log |\log\delta|  +\| \log f \|_\infty$.
\end{proposition}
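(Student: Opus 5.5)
We split $\log Z_{n,N}(\nu)$ into its positive and negative parts and control each separately. The negative part uses the Gaussian convexity bound \eqref{eq:one-point-negative}. The positive part uses the elementary inequality $\log_+ u\le u$, or better $(\log_+u)^p\le C_p u$, which needs only a first moment.

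Since $\nu$ is $\cF_n$-measurable and $Z_{n,N}(\nu)=\sum_w\nu(w)Z_{n,N}(w)$ with $\bbE[Z_{n,N}(w)\mid\cF_n]=1$, we get $\bbE[Z_{n,N}(\nu)]=1$. Hence $\bbE[(\log_+Z_{n,N}(\nu))^p]\le C_p$ uniformly in $x,N,\delta$. For the negative part we condition on $\cF_n$. Given $\cF_n$, the measure $\nu$ is deterministic, and $F\coloneqq\log Z_{n,N}(\nu)$ is a convex, nondecreasing function of the independent Gaussian disorder on $(n,N]$ with $\bbE[\e^F\mid\cF_n]=1$. Lemma~\ref{lem:one-point}, specifically \eqref{eq:one-point-negative} applied conditionally, then gives
\begin{equation*}
\bbE\bigl[Z_{n,N}(\nu)^{-q}\,\big|\,\cF_n\bigr]\le \bbE\bigl[Z_{n,N}(\nu)^2\,\big|\,\cF_n\bigr]^{q(q+1)/2}.
\end{equation*}
It is more useful to take logarithmic moments directly. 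Combining \eqref{eq:chen-one}, \eqref{eq:chen-two} and \eqref{eq:momentp} conditionally, $\log_-F$ has conditional Gaussian-type tails with mean and variance bounded by $V\coloneqq\log\bbE[Z_{n,N}(\nu)^2\mid\cF_n]$. This yields
\begin{equation*}
\bbE\bigl[(\log_- Z_{n,N}(\nu))^p\,\big|\,\cF_n\bigr]\le C_p\,(1+V)^{p}.
\end{equation*}
For instance, apply Chebyshev-type exponential bounds $\bbP(F<-\lambda\mid\cF_n)\le \e^{-q\lambda}\bbE[\e^{-qF}\mid\cF_n]\le \e^{-q\lambda+q(q+1)V/2}$ and optimise over $q$. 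It therefore remains to show $\bbE[(1+V)^p]\le C_p(\log|\log\delta|)^p$. Since $V\ge0$ is a logarithm, this reduces to moments of $\log$ of a conditional second moment.

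By the first-collision formula, $\bbE[Z_{n,N}(\nu)^2\mid\cF_n]-1=\sum_{w,w'}\nu(w)\nu(w')\,\Xi(w'-w)$. Here $\Xi(z)=\sum_{r}q_{2r}(z)\,\sigma_N^2\bbE[Z_{N-n-r}(0)^2]$, and by \eqref{eq:uniform-two-replica} and \eqref{eq:local-replica-sum} this is bounded by $C\,\mathsf G_{N}(z)/\log(1+N/\cdot)$. The crude bound $\Xi\le C\log N$ is too weak, so we instead use $\log(1+V')\le \log(1+\bbE[V'])$-type Jensen arguments. By Jensen (concavity of $u\mapsto(\log(\e+u))^p$ for large argument), it suffices to bound the \emph{annealed} quantity
\begin{equation*}
\bbE\bigl[\bbE[Z_{n,N}(\nu)^2\mid\cF_n]\bigr]=\bbE[Z_{n,N}(\nu)^2]
\end{equation*}
by $C|\log\delta|$, uniformly in $N$, as this gives $\bbE[(1+V)^p]\le C_p(\log|\log\delta|)^p$. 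For $\nu=\mu_n^{(x)}$ or $\widehat\mu_{m,n}^{(x)}$ the normalisation by $Z_n(x)$ or $\widehat Z_{m,n}(x)$ prevents a direct computation. We handle it by Hölder. Write $Z_{n,N}(\mu_n^{(x)})=Z_N(x)/Z_n(x)$, so that for $(\log(\e+\cdot))^p$-moments we may split $\log$ of a ratio into the difference of $\log$'s. Then $\log Z_N(x)$ and $\log Z_n(x)$ each have $L^p$ norms bounded by $C_p\log\log(N/n\cdot\ldots)$. Concretely, by the first part and \eqref{prop:eq-heat-defect}, $\|\log Z_n(x)\|_{L^p}\le C_p(1+\log\bbE[Z_n(x)^2])$, and $\bbE[Z_n(x)^2]\asymp\log N$.

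This direct splitting gives only $\log\log N$, which is the main obstacle: cancellation between numerator and denominator is needed. The better route is to bound $\bbE[\one_{\mathcal G}Z_{n,N}(\nu)^2]\le C|\log\delta|$ via \eqref{eq:profile-good-energy}, which handles the good event of Proposition~\ref{prop:PF}. On $\mathcal G^c$ we use Cauchy--Schwarz with \eqref{eq:good-event-probability} together with the crude $L^{2p}$ bound on $V$ of order $\log\log N$. This is still insufficient as $N\to\infty$. So on $\mathcal G^c$ I would instead bound $\nu$ pointwise by a smoothed measure: use $\widehat\mu_{m,n}^{(x)}\le q$-type domination through Cauchy--Schwarz in $w$, reducing to $\bbE[Z_{n,N}(w)^2]$ averaged against $\nu$. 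Bounding $V\le \log(1+\sup_z\Xi(z))$ is too crude, so I would rather show $\bbE[\nu\otimes\nu(\mathsf G\text{-energy})]\lesssim|\log\delta|$ directly from the second moment of $Z_{0,n}$ two-point functions, divided by a lower bound on $Z_n(x)$ controlled by the negative moments \eqref{eq:one-point-negative}.

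The $f$-weighted version follows identically. We use $\e^{-\|\log f\|_\infty}Z_{n,N}(\nu)\le Z_{n,N}(\nu,f)\le \e^{\|\log f\|_\infty}Z_{n,N}(\nu)$, which adds $\|\log f\|_\infty$ to the bound.
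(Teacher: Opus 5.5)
There is a genuine gap in your proposal. Several parts are fine: the positive part (via $(\log_+u)^p\le C_pu$ and $\bbE[Z_{n,N}(\nu)]=1$), the conditional Gaussian-convexity reduction of the negative part to moments of $V=\log\bbE[Z_{n,N}(\nu)^2\mid\cF_n]$, and the $f$-weighted extension. The gap is the remaining step. You need $\bbE[(1+V)^p]\le C_p(\log|\log\delta|)^p$ uniformly in $N$, and you never obtain it.

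As you observe, \eqref{eq:profile-good-energy} only controls $V$ on $\mathcal G$. On $\mathcal G^c$, which has probability $\asymp 1/|\log\delta|$, the only available bound is $V\lesssim\log\log N$, and this diverges. Your proposed repair does not work, for two reasons:
\begin{itemize}
\item Neither $\mu_n^{(x)}$ nor $\widehat\mu_{m,n}^{(x)}$ is pointwise dominated by a heat kernel with an $N$-uniform constant.
\item Dividing by a lower bound on $Z_n(x)$ obtained from \eqref{eq:one-point-negative} costs a power of $\bbE[Z_n(x)^2]\asymp\log N/|\log\delta|$, which is again not uniform in $N$.
\end{itemize}
More structurally, a single good event at the fixed time $n$ cannot close the argument, because its failure probability does not improve with the deviation level. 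An unconditional bound on $\bbE[Z_{n,N}(\mu_n^{(x)})^2]$ is also not available; the paper only proves it on $\mathcal G$.

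The missing idea, which is the content of Lemma~\ref{lem:log-tail}, is to prove a tail bound $\bbP(|\log Z_{n,N}(\nu)|>u)\le C\sqrt J\,\e^{-u/3}$ with $J\asymp|\log\delta|$, and then integrate it.
\begin{itemize}
\item For $\nu=\widehat\mu_{m,n}^{(x)}$, no good event is needed. Write $Z_{n,N}(\widehat\mu_{m,n}^{(x)})=\widehat Z_{m,N}(x)/\widehat Z_{m,n}(x)$, apply Cauchy--Schwarz, and apply \eqref{eq:one-point-negative} unconditionally to $\widehat Z_{m,N}(x)$. This gives $\bbE[Z_{n,N}(\widehat\mu_{m,n}^{(x)})^{-1/2}]\le\bbE[\widehat Z_{m,N}(x)^{2}]^{1/2}\le C\sqrt J$, which is $N$-uniform because there is no disorder on $(0,m]$.
\item For $\nu=\mu_n^{(x)}$, condition for each level $u$ at an earlier grid time $M=N_s$ with $K-s=\lceil J\e^{u/3}\rceil$. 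Using $\bbE[Z_n(x)/Z_M(x)\mid\cF_M]=1$ together with Gaussian convexity on $(M,N]$ gives
\[
\bbE\bigl[\one_{\mathcal G_{1,s}}Z_{n,N}(\mu_n^{(x)})^{-1/2}\bigr]\le\bbE\bigl[\one_{\mathcal G_{1,s}}Z_{M,N}(\mu_M^{(x)})^2\bigr]^{1/2}\le C\sqrt{K-s},
\]
while $\bbP(\mathcal G_{1,s}^c)\le C/(K-s)$. Balancing these two terms yields the tail bound. When $J\e^{u/3}\ge K$, the crude bound $C\sqrt{\log N}\,\e^{-u/2}$ already suffices.
\end{itemize}
This $u$-dependent choice of conditioning time makes the probability of the bad event decay in $u$. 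Your fixed-time quantity $V$ cannot capture that.
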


An important technical novelty in the proof is good control of the left tail of \(\log Z_{n,N}(\nu)\)
(the right tail is in fact easily controlled), see Lemma~\ref{lem:log-tail} below.
It greatly improves on the left-tail obtained in \cite[Equation~(5.2)]{GT26}.
The Gaussian convexity result from Section~\ref{sec:general}
plays a central role since it allows us to control negative moments of \(Z_{n,N}(\nu)\) in terms of its second moment, thanks to Lemma~\ref{lem:one-point}-\eqref{eq:one-point-negative}.

\begin{remark}
      \label{rem:Lp-boundedness}
      The moment bounds \eqref{eq:FC-moments} and the convergence
      \eqref{eq:convergence-initial} imply that, for every \(p\ge1\) and all
      sufficiently small \(\delta>0\),
      \begin{equation}
            \label{eq:Lp-bound-SHF}
            \|\log \scrZ_{\delta,1}^\vartheta(g_\delta^{(x)})\|_{L^p}
            \le C_p\log|\log\delta| \,.
      \end{equation}
      Indeed, for either \(X=\widehat Z_{m,N}(x)\) or
      \(X=\scrZ_{\delta,1}^\vartheta(g_\delta^{(x)})\), we have
      \(\bbE[X]=1\) and \(\bbE[X^{-1}]\le1+C\log(1/\delta)\),
      where the bound in the SHF case follows from weak convergence and Fatou. 
      Hence, for \(u\ge0\),
      \[
       \bbP(|\log X|>u)\le\min\{1,C|\log\delta|\e^{-u}\}.
      \]
      Integrating this tail bound proves \eqref{eq:Lp-bound-SHF}.
      By Minkowski's inequality and the centring in \eqref{eq:regularised-Hbis},
      we also obtain
      \begin{equation*}
           \|\scrH^{(\delta)}(\varphi)\|_{L^p}
           \le C_p\|\varphi\|_1\log|\log\delta| \,.
      \end{equation*}
\end{remark}

We now have the tools to conclude the proof of Proposition~\ref{prop:global}.

\begin{proof}[Proof of Proposition~\ref{prop:global}]
Fix $\delta>0$ sufficiently small and $x$, and write
$\mathcal G=\mathcal G_{N,\delta}(x)$.
We estimate $G_{m,n,N}(x)$ separately on $\mathcal G$ and
$\mathcal G^c$.

First of all, for $u,v>0$ and $0<\alpha<1$, a straightforward calculation gives that
\begin{equation}\label{eq:log-holder-NM}
 |\log u-\log v|
 \le C_\alpha|u-v|^\alpha
                 (u^{-\alpha}+v^{-\alpha}).
\end{equation}
Let us fix \(\alpha\) (we take \(\alpha=1/3\) below).
We get that
\begin{equation}\label{eq:good-log-NM-0}
     \begin{split}
      & \bbE[|G_{m,n,N}(x)|\one_{\mathcal G}] \\
      & \quad \leq C_{\alpha}  \bbE\bigl[ \one_{\mathcal G}\bigl|Z_{n,N}(\mu_n^{(x)})-  Z_{n,N}(\widehat\mu_{m,n}^{(x)})\bigr|^{\alpha} \bigl(Z_{n,N}(\mu_n^{(x)})^{-\alpha}+ Z_{n,N}(\widehat\mu_{m,n}^{(x)})^{-\alpha} \bigr)\bigr] \\
      & \quad \leq 2 C_{\alpha}  \bbE\bigl[ \one_{\mathcal G}\bigl|Z_{n,N}(\mu_n^{(x)})-  Z_{n,N}(\widehat\mu_{m,n}^{(x)})\bigr| \bigr]^{\alpha}  \max_{\nu=\mu_n^{(x)},\widehat\mu_{m,n}^{(x)}}\bbE\bigl[\one_{\mathcal G}Z_{n,N}(\nu)^{-\alpha/(1-\alpha)}\bigr]^{1-\alpha} \,,
     \end{split}
\end{equation}
using H\"older's inequality for the last line.

The first term can be controlled by Proposition~\ref{prop:PF}-\eqref{target:PF}.
For the second term, we use that with either starting distribution $\nu=\mu_n^{(x)}$ or $\nu=\widehat\mu_{m,n}^{(x)}$ (which are \(\cF_n\)-measurable),
the partition function
$Z_{n,N}(\nu)$ has conditional mean \(\bbE[Z_{n,N}(\nu)\mid \cF_n] =1\), and that \(\log Z_{n,N}(\nu)\) is convex in the Gaussian disorder on $(n,N]$.
We may thus apply Lemma~\ref{lem:one-point}-\eqref{eq:one-point-negative} with $p=\alpha/(1-\alpha)$, to obtain that
\[
 \bbE\bigl[Z_{n,N}(\nu)^{-\alpha/(1-\alpha)}\mid\cF_n\bigr]
 \le \bbE\bigl[Z_{n,N}(\nu)^2\mid\cF_n\bigr]^{\alpha/2(1-\alpha)^2}.
\]
Since $\mathcal G\in\cF_n$, we thus get that
\[
\bbE\bigl[\one_{\mathcal G}Z_{n,N}(\nu)^{-\alpha/(1-\alpha)}\bigr] \leq \bbE\bigl[\one_{\mathcal G}\bbE[Z_{n,N}(\nu)^2 \mid \cF_n]^{\alpha/2(1-\alpha)^2}\bigr] \leq
       \bbE\bigl[\one_{\mathcal G} Z_{n,N}(\nu)^2\bigr]^{\alpha/2(1-\alpha)^2} \,,
\]
having used H\"older's inequality for the last step
and bounded \(\bbP(\mathcal G)\leq 1\).
Using Proposition~\ref{prop:PF}-\eqref{eq:profile-good-energy}, this gives
\begin{equation*}
 \bbE\bigl[\one_{\mathcal G}Z_{n,N}(\nu)^{-\alpha/(1-\alpha)}\bigr]^{1-\alpha}
 \le C|\log\delta|^{ \alpha /2(1-\alpha)}.
\end{equation*}
Going back to~\eqref{eq:good-log-NM-0} and using also~\eqref{target:PF},
we thus end up with
\begin{equation}\label{eq:good-log-NM}
      \bbE\bigl[\one_{\mathcal G}|G_{m,n,N}(x)|\bigr]
      \le C |\log\delta|^{- \alpha}
      |\log\delta|^{\alpha /2(1-\alpha)}
      \leq C |\log\delta|^{-1/12} \,,
\end{equation}
where we have chosen \(\alpha =1/3\) for the last inequality
(the optimal value would be attained at \(\alpha =1- 2^{-1/2}\)).

\smallskip
On $\mathcal G^c$, Cauchy--Schwarz inequality gives
\[
 \bbE\bigl[\one_{\mathcal G^c}|G_{m,n,N}(x)|\bigr]
 \le \bbP(\mathcal G^c)^{1/2}
 \bigl\{\|\log Z_{n,N}(\mu_n^{(x)})\|_{L^2}
       +\|\log Z_{n,N}(\widehat\mu_{m,n}^{(x)})\|_{L^2}\bigr\}.
\]
Using Proposition~\ref{prop:PF}-\eqref{eq:good-event-probability} and
Proposition~\ref{prop:log-tails} with $p=2$, we conclude that
\begin{equation}\label{eq:bad-log-NM}
 \limsup_{N\to\infty}\sup_x
 \bbE\bigl[\one_{\mathcal G^c}|G_{m,n,N}(x)|\bigr]
 \le \frac{C \log |\log\delta|}{|\log\delta|^{1/2}}.
\end{equation}
This is bounded by $C|\log\delta|^{-1/12}$ for small $\delta$.
Adding \eqref{eq:good-log-NM} and \eqref{eq:bad-log-NM} therefore yields
\begin{equation}\label{eq:global-quantitative}
 \limsup_{N\to\infty}\sup_x\bbE|G_{m,n,N}(x)|
 \le \frac{C}{|\log\delta|^{1/12}}.
\end{equation}

Now, applying again Proposition~\ref{prop:log-tails}, we get that
\begin{equation}\label{eq:global-L2}
 \begin{split}
 \limsup_N\sup_x\|G_{m,n,N}(x)\|_{L^2}
 &\le\limsup_N\sup_x
       \|G_{m,n,N}(x)\|_{L^1}^{1/3}
       \|G_{m,n,N}(x)\|_{L^4}^{2/3}\\
 &\le \frac{C}{|\log \delta|^{1/36}} (\log |\log \delta|)^{2/3} \leq \frac{C'}{|\log \delta|^{1/40}} \,.
 \end{split}
\end{equation}

Finally, recalling the definition~\eqref{eq:average-G} of \(\cG^{(N;\delta)}(\varphi)\), by the triangle inequality we get
\begin{equation}
\label{eq:last-bound-global}
\|\cG^{(N;\delta)}(\varphi)\|_{L^2}
 \le 2\sum_{x\in\Z^2_{\rm even}} |\varphi^{(N)}(x)|
 \|G_{m,n,N}(x)\|_{L^2} \leq 2 \|\varphi\|_{L^1} \sup_x\|G_{m,n,N}(x)\|_{L^2} \,.
\end{equation}
Therefore,
\eqref{eq:global-L2} proves Proposition~\ref{prop:global}.
\end{proof}

\begin{remark}\label{rem:weighted-global}
We now extend the bound \eqref{eq:global-L2}
to non-constant terminal weights.
For a positive deterministic terminal function \(f\) with
\(\|\log f\|_\infty<\infty\), define
\begin{equation}\label{eq:weighted-G}
 G_{m,n,N}(x,f)
 \coloneqq \log\frac{Z_{n,N}(\mu_n^{(x)},f)}
                        {Z_{n,N}(\widehat\mu_{m,n}^{(x)},f)}.
\end{equation}
The comparison
\(\e^{-\|\log f\|_\infty}Z_{n,N}(\nu)\le Z_{n,N}(\nu,f)
 \le\e^{\|\log f\|_\infty}Z_{n,N}(\nu)\)
transfers the negative-moment estimate used in
\eqref{eq:good-log-NM-0} to \(f\).
Using \eqref{eq-weighted-PF-comparison} in place of \eqref{target:PF}
then introduces an additional factor \((\log|\log\delta|)^{1/6}\)
in \eqref{eq:good-log-NM}.
To obtain the analogue of \eqref{eq:global-quantitative} for $G_{m,n,N}(x,f)$, 
we estimate \(\bbE[\one_{\mathcal G^c}|G_{m,n,N}(x,f)|]\) as in
\eqref{eq:bad-log-NM}, using Proposition~\ref{prop:log-tails} with \(p=2\).
The same proposition with \(p=4\) bounds
\(\|G_{m,n,N}(x,f)\|_{L^4}\).
Interpolation as in \eqref{eq:global-L2} therefore gives
\begin{equation}\label{eq-weighted-global-L2}
 \limsup_{N\to\infty}\sup_x
 \|G_{m,n,N}(x,f)\|_{L^2}
 \le C_{\|\log f\|_\infty}|\log\delta|^{-1/40}.
\end{equation}
The bound is uniform over terminal functions with a common bound on
\(\|\log f\|_\infty\).
\end{remark}

\begin{remark}
      \label{rem:better-bound-G}
      Let us record here the dependence of the bound on the function \(\varphi\).
      Combining~\eqref{eq:last-bound-global} with~\eqref{eq:global-L2} yields
      \begin{equation}
            \label{eq:limsup-global-G}
            \limsup_{N\to\infty}  \bbE\bigl|\cG^{(N;\delta)}(\varphi)\bigr|^2
             \leq \frac{C \|\varphi\|_1^2}{|\log \delta|^{1/20}}.
      \end{equation}
      In particular, we may apply~\eqref{eq:limsup-global-G} to \(\varphi_{\delta}(\cdot) = \delta^{-2} \varphi(\cdot/\delta)\), which verifies \(\|\varphi_{\delta}\|_1 = \|\varphi\|_1\).
\end{remark}

\section{Extension to joint convergence and non-flat initial condition}
\label{sec:joint-convergence}

In this section, we extend the convergence in Theorem~\ref{thm:toy} 
to joint convergence at finitely many times and to non-flat initial condition \(f_0=\e^{h_0} \not \equiv \one\).
Let us first make the time-reversal more explicit than in Section~\ref{subsec:strategy-approximation}.
For
\(0\le t<T\), \(0<\delta<T-t\), we write
\begin{equation}
\label{prop:eq-reversed-temporal-field}
 \scrH_{t,T}^{(\delta)}(x,h_0)
 \coloneqq  \log\scrZ_{t+\delta,T}^{\vartheta}
              (g_\delta^{(x)},\e^{h_0})
 -\bbE\bigl[\log\scrZ_{t+\delta,T}^{\vartheta}
              (g_\delta^{(0)},\one)\bigr].
\end{equation}
Here \(x\) is the starting point at time \(t\), and \(h_0\) specifies the
terminal weight \(f_0=\e^{h_0}\) at time~\(T\). Note that disorder is removed on \([t,t+\delta]\). 
Let \(k\in \N\) and let \(0<\delta_i<t_i\le T\).
Then, invariance of the law of SHF under time reversal gives
\begin{equation}
\label{prop:eq-joint-time-reversal}
 \bigl(\scrH_{t_i}^{(\delta_i)}(h_0,\cdot)\bigr)_{i=1}^k
 \overset{d}{=}
 \bigl(\scrH_{T-t_i,T}^{(\delta_i)}(\cdot,h_0)\bigr)_{i=1}^k.
\end{equation}

We may similarly consider the discrete counterparts.
Let \(f_N\) be a discrete approximation of \(f_0=\e^{h_0}\), defined by the cell averages
\begin{equation}
      \label{eq:cell-averages}
      f_N(z)=|\cellC_N(z)|^{-1}\int_{\cellC_N(z)}\e^{h_0(y)}\,\dd y \,,
\end{equation}
with \(\cellC_N(z)\) the cell defined in \eqref{eq:lattice-cells}.
Let also \(x_N \coloneqq [\sqrt{N/2}\, x]\), so that \(x\in \cellC_N(x_N)\).
For \(0\le t_i<T\) and \(0<\delta<T-t_i\), define
\begin{equation}
\label{eq:def-cH-general}
 \cH^{(N)}_{t_i,T}(x,h_0)
 \coloneqq \log Z_{[Nt_i],[NT]}(x_N,f_N)-\bbE\log Z_{[Nt_i],[NT]}(x_N,\one),
\end{equation}
and also
\begin{equation}
\label{eq:def-cH-delta-general}
\cH^{(N;\delta)}_{t_i,T}(x,h_0)
\coloneqq \log Z_{[N(t_i+\delta)],[NT]}(q_{[\delta N]}^{(x_N)},f_N)
       -\bbE\log Z_{[N(t_i+\delta)],[NT]}(q_{[\delta N]}^{(x_N)},\one) \,.
\end{equation}
For a test function \(\varphi\), write
\(\scrH_{t,T}^{(\delta)}(\varphi,h_0)
=\int_{\R^2}\varphi(x)\scrH_{t,T}^{(\delta)}(x,h_0)\,\dd x\),
and similarly for the lattice fields.

We now extend Theorem~\ref{thm:toy} to 
a joint convergence at different times and different test functions, with common terminal weight \(f_0=\e^{h_0}\).
To this end, 
we extend Proposition~\ref{prop:FC} to a joint convergence with non-flat initial condition (see Lemma~\ref{lem:extension-convergence-delta} below)
and we also extend Proposition~\ref{prop:smo} to the case of general initial condition (see Lemma~\ref{lem:extension-approximation}).
Together, this proves the following finite-dimensional convergence that extends Theorem~\ref{thm:toy} as follows. 

\begin{theorem}[Critical 2D KPZ Process, finite-dimensional convergence]
      \label{thm:toy-joint}
Fix \(\vartheta\in\R\), \(T>0\),
\(h_0\in L^\infty(\R^2)\cap C(\R^2)\), and \(k\in\mathbb N\).
For \(1\le i\le k\), let \(0\le t_i<T\) and
\(\varphi_i \in C_c^{\infty}(\R^2)\).
We then have the following joint convergence, where the limits
\(\scrH_{t_i,T}^\vartheta(\varphi_i,h_0)\) are defined on the same probability space as the SHF:
\begin{equation*}
\begin{aligned}
 \bigl(\scrH_{t_i,T}^{\vartheta,(\delta)}(\varphi_i, h_0)\bigr)_{i=1}^k
 &\xrightarrow[\; \delta\downarrow0 \;]{L^p}
 \bigl(\scrH_{t_i,T}^\vartheta(\varphi_i,h_0)\bigr)_{i=1}^k \,, \quad \text{for} \,\, p\in [1,\infty)\\
 \bigl(\cH_{t_i,T}^{\vartheta,(N)}(\varphi_i,h_0)\bigr)_{i=1}^k
 &\xrightarrow[N\to\infty]{d}
 \bigl(\scrH_{t_i,T}^\vartheta(\varphi_i,h_0)\bigr)_{i=1}^k\,.
\end{aligned}
\end{equation*}
By time reversal~\eqref{prop:eq-joint-time-reversal},
\begin{equation*}
 \bigl(\scrH_{t_i,T}^\vartheta(\varphi_i,h_0)\bigr)_{i=1}^k
 \overset{d}{=}
 \bigl(\scrH_{T-t_i}^\vartheta(h_0,\varphi_i)\bigr)_{i=1}^k.
\end{equation*}
\end{theorem}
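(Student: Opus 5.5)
The plan is to repeat the proof of Theorem~\ref{thm:toy} for vectors rather than scalars. It rests on two ingredients, which the text above announces as Lemma~\ref{lem:extension-convergence-delta} and Lemma~\ref{lem:extension-approximation}:
\begin{itemize}
\item[(a)] for fixed \(\delta\), the vector \(\bigl(\cH^{(N;\delta)}_{t_i,T}(\varphi_i,h_0)\bigr)_{i\le k}\) converges in law, as \(N\to\infty\), to \(\bigl(\scrH^{(\delta)}_{t_i,T}(\varphi_i,h_0)\bigr)_{i\le k}\), jointly over finitely many \(\delta\), with limits in every \(L^p\);
\item[(b)] \(\limsup_N\|\cH^{(N)}_{t_i,T}(\varphi_i,h_0)-\cH^{(N;\delta)}_{t_i,T}(\varphi_i,h_0)\|_{L^2}\le C_{\varphi_i,h_0}|\log\delta|^{-1/40}\) for each \(i\).
\end{itemize}
Given these, the argument is as in Theorem~\ref{thm:toy}. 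By Portmanteau and the triangle inequality, \(\scrH^{(\delta)}_{t_i,T}(\varphi_i,h_0)\) is Cauchy in \(L^2\) as \(\delta\downarrow0\), and we define \(\scrH_{t_i,T}(\varphi_i,h_0)\) as its \(L^2\) limit, a measurable function of the SHF. The upgrade from \(L^2\) to \(L^p\) follows by interpolation with the uniform bound \(\|\scrH^{(\delta)}_{t,T}(\varphi,h_0)\|_{L^{2p}}\le C_p\log|\log\delta|\,\|\varphi\|_1+\|h_0\|_\infty\|\varphi\|_1\) from Remark~\ref{rem:Lp-boundedness}. The rate \(|\log\delta|^{-1/40}\) beats any power of \(\log|\log\delta|\), so the Cauchy property holds in \(L^p\) for every \(p<\infty\).

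Joint convergence in law of the \(N\)-fields then follows from the three-term estimate for the bounded-Lipschitz distance \(d\), now on \(\R^k\). The distance between \(\cH^{(N)}\) and \(\cH^{(N;\delta)}\) is controlled by the sum of the \(L^2\) errors in (b); the middle term vanishes by (a); and the last term vanishes by the \(L^2\) convergence just proved. The time-reversal identity is obtained by passing \eqref{prop:eq-joint-time-reversal} to the limit \(\delta\downarrow0\), using that both sides converge in probability.

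For (a), I would follow the proof of Proposition~\ref{prop:FC} step by step, with \(Z_{[N(t_i+\delta)],[NT]}(q_m^{(x_N)},f_N)\) in place of \(\widehat Z_{m,N}\).
\begin{itemize}
\item Theorem~\ref{thm:SHF-convergence} gives joint convergence over finitely many time pairs of \(\int\!\!\int\varphi\,\cZ^{(N)}_{s,t}\,\psi\). This includes the non-compactly-supported terminal weight \(e^{h_0}\), after truncation by \(\chi_A\) and the \(L^1\) bound \eqref{eq:L1-control}, which extends to bounded terminal weights.
\item Continuity of \(h_0\) ensures \(f_N\to e^{h_0}\) locally uniformly, so replacing \(f_N\) by \(e^{h_0}\) costs \(o_N(1)\) in \(L^1\).
\item The truncated-log discretisation and the removal of the truncation go through unchanged, since \(e^{-\|h_0\|_\infty}Z(\nu)\le Z(\nu,f_N)\le e^{\|h_0\|_\infty}Z(\nu)\) transfers the moment bounds \eqref{eq:FC-moments}.
\end{itemize}
The centring uses \(\one\) on both the lattice and SHF sides, and the corresponding expectations converge by the same uniform integrability.

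For (b), I would use the two-time factorisation with \(m=[\delta N]\), \(n=[\sqrt\delta N]\), shifted so that time \([Nt_i]\) plays the role of \(0\) and \([NT]\) the role of \(N\); the horizon \(N(T-t_i)\) stays of order \(N\). The local term \(L_{m,n}\) does not involve the terminal weight, so Proposition~\ref{prop:local} applies verbatim. The same holds for Proposition~\ref{prop:VD}, whose constants are uniform for \(n\) in \([T_0N,T_1N]\), for the rescaled times. The global term becomes \(G_{m,n,N}(x,f_N)\), and Remark~\ref{rem:weighted-global}, via \eqref{eq-weighted-global-L2}, gives exactly the required \(|\log\delta|^{-1/40}\) bound uniformly in \(x\), with constants depending on \(\|h_0\|_\infty\).

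One extra point is the centring. The decomposition \eqref{eq:averaged-decomposition} now subtracts expectations of terms with weight \(\one\), while the fields carry \(f_N\). I handle this by writing \(\log Z(\cdot,f_N)-\bbE\log Z(\cdot,\one)\) as \(\log Z(\cdot,f_N)-\bbE\log Z(\cdot,f_N)\) plus the deterministic constant \(\bbE\log\mu(f_N)\).

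I expect the main obstacle to be exactly this bookkeeping of the deterministic shift. One must check that the difference of constants, \(\bbE\log Z_{n,N}(\mu_n^{(x)},f_N)/Z_{n,N}(\mu_n^{(x)})\) minus its analogue for \(\widehat\mu\), vanishes as \(\delta\downarrow0\) uniformly in \(N\). This follows by taking expectations in the \(L^1\) bound \eqref{eq:global-quantitative} applied to both \(G(x,f_N)\) and \(G(x,\one)\). The remaining ingredients are direct rereadings of earlier results.
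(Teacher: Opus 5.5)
Your proposal is correct and follows the paper's proof essentially step by step. Your (a) and (b) are Lemmas~\ref{lem:extension-convergence-delta} and~\ref{lem:extension-approximation}, and you handle the flat centring as the paper does, through the smallness of the weighted global term from Remark~\ref{rem:weighted-global} and of $\bbE G_{m,n,N}$; the $L^2$-Cauchy, interpolation and bounded-Lipschitz arguments are also the same.

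The one step to state more carefully is the $L^p$ upgrade. Because the $L^{2p}$ bound grows like $\log|\log\delta|$ and gives no a priori $L^{2p}$ control of the limit, you cannot interpolate $X_\delta-X_{\delta'}$ directly for arbitrary $\delta'\ll\delta$. Instead you must chain along $\delta_j=(T-t_i)e^{-2^j}$, where the $L^{2p}$ increments are $O(j)$ and the $L^2$ increments are $O(2^{-j/40})$, so the interpolated series is summable; this is exactly what the paper does.
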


Once Lemmas~\ref{lem:extension-convergence-delta} and~\ref{lem:extension-approximation} are at hand, the proof of Theorem~\ref{thm:toy-joint}
is analogous to the proof in  Section~\ref{subsec:strategy-approximation}. 
The extension to convergence in \(H_{\rm loc}^{-a}\) for any \(a>0\) is proven in Section~\ref{sec:local-Sobolev} below.

\subsection{Extension of Proposition~\ref{prop:FC}}

The following result extends Proposition~\ref{prop:FC} to joint convergence at different starting times,
with a common terminal time \(T\) and terminal weight \(f_0=\e^{h_0}\).

\begin{lemma}[Joint convergence at fixed temporal regularisation]
\label{lem:extension-convergence-delta}
Fix \(\vartheta\in\R\), \(T>0\),
\(h_0\in L^\infty(\R^2)\cap C(\R^2)\), and \(k\in\mathbb N\).
For \(1\le i\le k\), let \(0\le s_i<T\), \(\delta_i>0\), and
\(x_i\in\R^2\). 
Under the critical tuning \eqref{eq:critical-tuning},
the SHF integrals
\(\scrZ_{s_i,T}^\vartheta(g_{\delta_i}^{(x_i)},\e^{h_0})\)
are almost surely positive, and letting \(f_N\) be defined as in~\eqref{eq:cell-averages} we have that
\begin{equation}\label{prop:eq-fixed-mass-convergence}
 \Bigl(\log Z_{[Ns_i],[NT]}^{\beta_N}
 \bigl(q_{[N\delta_i]}^{([\sqrt{N/2}\,x_i])},f_N\bigr)\Bigr)_{i=1}^k
 \xrightarrow[N\to\infty]{d}
 \left(\log\scrZ_{s_i,T}^\vartheta
       (g_{\delta_i}^{(x_i)},\e^{h_0})\right)_{i=1}^k.
\end{equation}
All absolute moments of positive order are finite and converge as well.
Moreover, for \(\varphi_1,\ldots,\varphi_k\in C_c^\infty(\R^2)\),
letting \(\varphi_i^{(N)}\) be defined as in \eqref{def:varphi-N}, we have
\begin{equation}\label{prop:eq-fixed-field-convergence}
\begin{gathered}
 \Bigl(\sum_z\varphi_i^{(N)}(z)
       \log Z_{[Ns_i],[NT]}^{\beta_N}
       \bigl(q_{[N\delta_i]}^{(z)},f_N\bigr)\Bigr)_{i=1}^k
 \xrightarrow[N\to\infty]{d}
 \Bigl(\int_{\R^2}\varphi_i(x)
       \log\scrZ_{s_i,T}^\vartheta
       (g_{\delta_i}^{(x)},\e^{h_0})\,\dd x\Bigr)_{i=1}^k.
\end{gathered}
\end{equation}
\end{lemma}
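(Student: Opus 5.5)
The plan is to follow the three steps of the proof of Proposition~\ref{prop:FC} (Section~\ref{sec:FC}) and indicate where the non-flat terminal weight and the several starting times $s_i$ enter. The weight enters only through $f_N$, and the starting times only through the joint SHF convergence of Theorem~\ref{thm:SHF-convergence}.

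\emph{Step 1: joint convergence of the partition functions.} We use the joint convergence \eqref{eq:SHF-input} of the point-to-point measures at the pairs $(s_i,T)$. We first prove
\[
\Bigl(Z_{[Ns_i],[NT]}\bigl((\varphi)^{(N)},f_N\bigr)\Bigr)_i\xrightarrow{d}\Bigl(\scrZ_{s_i,T}^\vartheta(\varphi,\e^{h_0})\Bigr)_i
\]
for compactly supported continuous $\varphi$. This needs two approximations.
\begin{itemize}
\item Replace $f_N$ by $\chi_A\e^{h_0}$, which is continuous and compactly supported, since $h_0\in C\cap L^\infty$. The error is controlled in $L^1(\bbP)$ because $\bbE[Z(\nu,|f|)]$ is the heat-kernel average of $|f|$. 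So the tail outside the ball of radius $A$ is small uniformly in $N$, and the cell-average error on compacts tends to zero by uniform continuity.
\item Truncate the Gaussian $g_{\delta_i}^{(x_i)}$ as in \eqref{eq:convergence-initial-0}, and replace $q_{[N\delta_i]}$ by the cell integrals of $g_{\delta_i}$ using the local limit theorem, as in \eqref{eq:L1-error}.
\end{itemize}
The $L^1$ bounds \eqref{eq:L1-control} still hold with terminal weight $f$, with the factor $\|f\|_\infty\le \e^{\|h_0\|_\infty}$. The approximations are applied jointly in $i$, and a standard approximation lemma for convergence in law then gives the joint limit.

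\emph{Step 2: positivity and logarithms.} We sandwich
\[
\e^{-\|h_0\|_\infty}Z(\nu)\le Z(\nu,f_N)\le \e^{\|h_0\|_\infty}Z(\nu).
\]
Combined with \eqref{eq:FC-moments}, and Lemma~\ref{lem:one-point} for higher negative moments, this bounds $\bbE[Z(\nu,f_N)^{-p}]$ uniformly in $N$ for each fixed $\delta_i$. The bound is uniform in $i$ because $[N(T-s_i)]\asymp N$ and the second moment is at most $1+C\log(T/\delta_i)$. Positive moments of order up to $2$ are bounded by \eqref{eq:FC-moments}, and higher ones are not needed for the logarithm. By Portmanteau and Fatou, $\bbE[\scrZ_{s_i,T}^\vartheta(g_{\delta_i}^{(x_i)},\e^{h_0})^{-p}]<\infty$, which gives almost sure positivity. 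Continuity of $\log$ on $(0,\infty)$ then yields \eqref{prop:eq-fixed-mass-convergence}.

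For the moments, the bound $|\log u|^r\le C_r(u+u^{-1})$ and the uniform negative moments give uniform integrability of $|\log Z|^r$ for every $r$. Hence all absolute moments converge, exactly as in Remark~\ref{rem:Lp-boundedness}.

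\emph{Step 3: the field statement \eqref{prop:eq-fixed-field-convergence}.} We repeat Steps 2--3 of Section~\ref{sec:FC} for each $i$ simultaneously, using a common partition of the supports into cells of diameter $\varepsilon$.
\begin{itemize}
\item Truncate the logarithm at level $R$, and replace the field by the Riemann sum $\sum_j(\int_{\cellC_j}\varphi_i)\log^{(R)}Z(g_{\delta_i}^{(x_j)},f_N)$. The cost is $C_{\delta,R,\varphi}\varepsilon+o_N(1)$, by the Lipschitz bound $\|g_\delta^{(x)}-g_\delta^{(x_j)}\|_1\le C\varepsilon/\delta$ and the $L^1$ bounds with weight.
\item Use the joint convergence from Step 1 over the finitely many pairs $(i,j)$.
\item Perform the same Riemann-sum approximation on the SHF side.
\item Remove the truncation with the error $\e^{-R}(u+u^{-1})$ and the uniform negative-moment bounds.
\end{itemize}
Letting $N\to\infty$, then $\varepsilon\downarrow0$, then $R\to\infty$ concludes the proof. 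The limit field is well defined as a Bochner integral thanks to the $L^p$ bounds of Step 2.

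The main obstacle I expect is Step 1, specifically the passage from the vague convergence of Theorem~\ref{thm:SHF-convergence} to integrals against the non-compactly supported, merely continuous and bounded weight $\e^{h_0}$ at the terminal time. This must be done jointly for several starting times, using the point-to-point rather than the point-to-plane convergence. I would handle it by the double truncation above, controlling both tails uniformly in $N$ via first moments only.
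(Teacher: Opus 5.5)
Your proposal is correct and follows the paper's proof essentially step by step. The steps are: joint convergence of the partition functions via Step~1 of Section~\ref{sec:FC}, with an extra factor $\e^{\|h_0\|_\infty}$; the sandwich $\e^{-\|h_0\|_\infty}Z(\nu)\le Z(\nu,f_N)\le \e^{\|h_0\|_\infty}Z(\nu)$ combined with \eqref{eq:FC-moments} and Fatou, giving positivity and uniform integrability of the logarithms; and Steps~2--3 of Section~\ref{sec:FC} applied jointly in $i$. The only difference is that you obtain convergence against the bounded continuous terminal weight $\e^{h_0}$ from the vague point-to-point convergence, by truncating and controlling tails with first moments, whereas the paper cites \cite[Theorem~9.1]{CSZ23} directly for bounded continuous terminal functions.
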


As a consequence of this lemma, taking \(s_i=t_i+\delta_i<T\) with
\(t_i\ge0\), we have the joint convergence
\begin{equation}
      \label{eq:joint-convergence-delta}
     \Big( \cH^{(N;\delta_i)}_{t_i,T}(\varphi_i,h_0)\Big)_{i=1}^k
     \xrightarrow[N\to\infty]{d}   \Big( \scrH^{(\delta_i)}_{t_i,T}(\varphi_i,h_0)\Big)_{i=1}^k \,.
\end{equation}
Indeed, this is only a matter of subtracting the centring constants (with flat final weight),
whose convergence follows from the moment convergence provided by Lemma~\ref{lem:extension-convergence-delta}.

\begin{proof}[Proof of Lemma~\ref{lem:extension-convergence-delta}]
To simplify notation, let us introduce the abbreviation
\[
 A_N^{(i)}(x;f)
 \coloneqq Z_{[Ns_i],[NT]}^{\beta_N}
       \bigl(q_{[N\delta_i]}^{([\sqrt{N/2}\,x])},f\bigr) \quad \mbox{and} \quad
 A^{(i)}(x;f) \coloneqq \scrZ_{s_i,T}^\vartheta(g_{\delta_i}^{(x)},f).
\]
We first establish the joint convergence of the partition functions before taking logarithm:
\begin{equation}\label{611}
 \bigl(A_N^{(i)}(x_i;f_N)\bigr)_{i=1}^k
 \xrightarrow[N\to\infty]{d}
 \bigl(A^{(i)}(x_i;\e^{h_0})\bigr)_{i=1}^k.
\end{equation}
For compactly supported initial test functions and bounded continuous
terminal functions, the joint convergence of such averaged partition functions
follows from \cite[Theorem~9.1]{CSZ23}, recalled
in Theorem~\ref{thm:SHF-convergence}. To allow Gaussian initial test functions such as \(g_{\delta_i}^{(x)}\), 
we use Step~1 of
Section~\ref{sec:FC}, in particular \eqref{eq:L1-control}--\eqref{eq:L1-error}.
The same estimates hold with an additional factor
\(\e^{\|h_0\|_\infty}\).

We next prove the a.s.\ strict positivity of \(A^{(i)}(x_i;\e^{h_0})\)
and control their logarithms. For terminal condition $\one$,
the moment bounds in \eqref{eq:FC-moments} give
\begin{equation}\label{eq:negative-moment}
 \bbE[A_N^{(i)}(x;\one)]=1,\qquad
 \bbE[A_N^{(i)}(x;\one)^{-1}]
 \le\bbE[A_N^{(i)}(x;\one)^2]
 \le C_{\delta_i,T-s_i}.
\end{equation}
These bounds are uniform in \(x\) and sufficiently large \(N\). The
constant can be chosen uniformly when \(\delta_i\) and \(T-s_i\) range
over compact subsets of \((0,\infty)\), and \(\vartheta\) ranges over a
compact subset of \(\R\). Since
\[
 \e^{-\|h_0\|_\infty}A_N^{(i)}(x;\one)
 \le A_N^{(i)}(x;f_N)
 \le \e^{\|h_0\|_\infty}A_N^{(i)}(x;\one),
\]
the corresponding negative-first moments are uniformly bounded as well.
Using the bound \eqref{eq:negative-moment} and applying weak convergence
and Fatou's Lemma, we obtain
\[
 \bbE[A^{(i)}(x;\e^{h_0})^{-1}]
 \le \e^{\|h_0\|_\infty}C_{\delta_i,T-s_i}<\infty.
\]
This shows that $A^{(i)}(x;\e^{h_0})>0$ almost surely and its logarithm is well-defined. This
allows us to pass the weak convergence of the partition functions in \eqref{611}
to the weak convergence of their logarithms in \eqref{prop:eq-fixed-mass-convergence}.

To establish the convergence of the moments of the logarithms, Markov's inequality and
\eqref{eq:negative-moment} give
\begin{equation}\label{eq-fixed-log-tails}
 \bbP\bigl(|\log A_N^{(i)}(x;\one)|>u\bigr)
 \le (1+C_{\delta_i,T-s_i})\e^{-u},\qquad u\ge0.
\end{equation}
Replacing \(\one\) by \(f_N\) only enlarges the tail bound by a factor of
\(\e^{\|h_0\|_\infty}\), and the same bounds hold for the limiting
SHF integrals. They imply uniform integrability of every positive power
of the absolute logarithm, and hence convergence of all absolute
moments of positive order.

To prove \eqref{prop:eq-fixed-field-convergence}, apply Steps~2 and~3
of Section~\ref{sec:FC} jointly to \(\varphi_1,\ldots,\varphi_k\).
The approximation by finite spatial sums is unchanged, since
\eqref{eq:L1-control}--\eqref{eq:L1-error} hold with an additional factor
\(\e^{\|h_0\|_\infty}\). The logarithmic tail bound
\eqref{eq-fixed-log-tails} removes the truncation uniformly in \(N\).
\end{proof}

\subsection{Extension of Proposition~\ref{prop:smo}}

We now extend the mean square approximation in Proposition~\ref{prop:smo} to the case of a general (non-flat) terminal weight.
Recall the definitions~\eqref{eq:def-cH-general} and \eqref{eq:def-cH-delta-general} of the fields \(\cH_{t,T}^{(N)}\)
and \( \cH_{t,T}^{(N;\delta)}\).

\begin{lemma}
\label{lem:extension-approximation}
Fix \(h_0\in L^\infty(\R^2)\cap C(\R^2)\) and let \(0\le t<T\).
For \(\varphi\in C_c^\infty(\R^2)\) and all sufficiently small
\(\delta>0\),
\begin{equation*}
 \limsup_{N\to\infty}
 \bigl\|\cH_{t,T}^{(N)}(\varphi,h_0) - \cH_{t,T}^{(N;\delta)}(\varphi,h_0)\bigr\|_{L^2}
 \le C_{T-t,\vartheta,\varphi,\|h_0\|_\infty}|\log\delta|^{-1/40} \,.
\end{equation*}
The bound is locally uniform in \((T-t,\vartheta)\in(0,\infty)\times\R\).
\end{lemma}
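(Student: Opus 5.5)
We would repeat the proof of Proposition~\ref{prop:smo} with the time window $(Nt,NT]$ in place of $(0,N]$ and the terminal weight $f_N$ in place of $\one$. By translation in time of the disorder, we may take $t=0$ and write $N'=[NT]-[Nt]$, which is comparable to $N$ uniformly for $T-t$ in compacts. The rescaling $N\mapsto N(T-t)$ amounts to shifting $\vartheta$ by $\log(T-t)$, and all earlier constants are uniform for $\vartheta$ in compacts. We set $m=[\delta N]$ and $n=[\sqrt\delta N]$, and use Chapman--Kolmogorov at time $n$:
\[
Z_{0,N'}(z,f_N)=Z_n(z)\,Z_{n,N'}(\mu_n^{(z)},f_N),
\qquad
Z_{m,N'}(q_m^{(z)},f_N)=\widehat Z_{m,n}(z)\,Z_{n,N'}(\widehat\mu_{m,n}^{(z)},f_N).
\]
This gives the decomposition $\log Z_{0,N'}(z,f_N)-\log Z_{m,N'}(q_m^{(z)},f_N)=L_{m,n}(z)+G_{m,n,N'}(z,f_N)$, with $G$ as in \eqref{eq:weighted-G}. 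The centrings are not an issue. In both $\cH^{(N)}_{t,T}$ and $\cH^{(N;\delta)}_{t,T}$ they are deterministic, and any constant is killed when we bound the $L^2$ norm of the difference by its variance. Here is why. After centring, the difference of the two fields equals the variance part plus a deterministic mean term $\sum_z\varphi^{(N)}(z)\,\bbE[\cdots]$. We handle this by writing the $L^2$ norm as at most the standard deviation plus $|\text{mean}|$, and bounding the mean by $\|\varphi\|_1\sup_z\bbE|L|+\|\varphi\|_1\sup_z\bbE|G|$ after recentring $L$. More precisely, we split as in \eqref{eq:averaged-decomposition}: each piece is centred at its own expectation, and the leftover deterministic offset is $\sum_z\varphi^{(N)}(z)\,\bbE[G_{m,n,N'}(z,f_N)]$ (plus the $f_N$-dependent means of the logs). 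This offset is bounded by $\|\varphi\|_1\sup_z\|G\|_{L^1}$, which is small by \eqref{eq:global-quantitative}.

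The local term $L_{m,n}$ does not involve the terminal weight at all. It is the same object as in Proposition~\ref{prop:local}, since it depends only on disorder in $(0,n]$. Hence Proposition~\ref{prop:local}, in the form \eqref{eq:limsup-local-L}, applies verbatim with $N$ replaced by $N'$. We need only check that Proposition~\ref{prop:VD} and \eqref{eq:local-replica-sum} hold with $n/N'\asymp\sqrt\delta$ rather than exactly $\sqrt\delta$. This follows from their uniformity in $T_0,T_1$ and in $\vartheta$ over compacts. The result is a bound $C_\varphi|\log\delta|^{-1/8}$ in $L^2$.

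For the global term, Remark~\ref{rem:weighted-global} already gives \eqref{eq-weighted-global-L2}, namely $\sup_z\|G_{m,n,N'}(z,f_N)\|_{L^2}\le C_{\|h_0\|_\infty}|\log\delta|^{-1/40}$, uniformly over weights with $\|\log f_N\|_\infty\le\|h_0\|_\infty$. The cell averages satisfy this bound. The triangle inequality as in \eqref{eq:last-bound-global} then bounds the centred averaged global field by $2\|\varphi\|_1$ times this quantity. The same bound controls the deterministic mean offset, so that
\[
\limsup_{N\to\infty}\bigl\|\cH_{t,T}^{(N)}(\varphi,h_0)-\cH_{t,T}^{(N;\delta)}(\varphi,h_0)\bigr\|_{L^2}\le C|\log\delta|^{-1/8}+C\|\varphi\|_1|\log\delta|^{-1/40},
\]
which is the claim.

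The main obstacle is making sure the inputs behind Remark~\ref{rem:weighted-global} really hold for the weighted partition functions. These inputs are \eqref{eq-weighted-PF-comparison}, the weighted version of Proposition~\ref{prop:log-tails}, and the negative-moment step. The negative-moment step needs $\log Z_{n,N'}(\nu,f_N)$ to be convex in the disorder with conditional mean $\nu(\text{mean of }f)$ rather than $1$. We handle this through the sandwich $\e^{-\|h_0\|_\infty}Z(\nu)\le Z(\nu,f)\le\e^{\|h_0\|_\infty}Z(\nu)$, which changes only constants. We also need to verify the uniformity in the window length $N'/N\in$ compact, which is where the local uniformity in $T-t$ comes from.
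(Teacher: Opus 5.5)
Your proposal is correct and is essentially the paper's proof. You reduce to a unit window by time translation and rescaling, absorbing $T-t$ into $\delta$ and $\vartheta$; it is best to take $m,n$ relative to the rescaled window so that the grid of Remark~\ref{rem:choose-J} applies verbatim. You then split at time $n$ into the unchanged local term, controlled by Proposition~\ref{prop:local}, and the weighted global term $G_{m,n,N}(z,f_N)$, controlled by \eqref{eq-weighted-global-L2}, and you bound the flat centring constant $\bbE G_{m,n,N}(0,\one)$ via \eqref{eq:global-L2}.

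Your opening remark that the centrings are ``killed'' by a variance bound is inaccurate for non-flat $h_0$, since the difference is then not centred. Your subsequent standard-deviation-plus-mean split repairs this, as would simply using the triangle inequality in $L^2$, which is what the paper does.
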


\begin{proof}
By time translation and diffusive rescaling, we may reduce to the case
\(t=0,T=1\), replacing \(\delta\) by \(\delta/(T-t)\) and
\(\vartheta\) by \(\vartheta+\log(T-t)\).
We thus drop the dependence on \(t,T\) in the following.

Let \(m=[\delta N]\) and \(n=[\sqrt\delta N]\), and note that
\(\e^{-\|h_0\|_\infty}\le f_N\le\e^{\|h_0\|_\infty}\).
The decomposition at time \(n\) in \eqref{eq:CKn}--\eqref{eq:CKnhat}
gives
\begin{equation}
\label{eq-weighted-error-decomposition}
\begin{aligned}
(\cH^{(N)}-\cH^{(N;\delta)})(\varphi,h_0)
=\cL^{(N;\delta)}(\varphi)
  +\sum_z\varphi^{(N)}(z)
     \bigl\{G_{m,n,N}(z,f_N)-\bbE G_{m,n,N}(0,\one)\bigr\},
\end{aligned}
\end{equation}
where \(\cL^{(N;\delta)}\) is defined in \eqref{eq:average-L} and
\(G_{m,n,N}(z,f_N)\) in \eqref{eq:weighted-G}.
The local term \(L_{m,n}\) in \eqref{eq:L} and its centring do not
depend on \(f_N\), so the local error \(\cL^{(N;\delta)}\) is unchanged
and Proposition~\ref{prop:local} still applies.
For the global term in \eqref{eq-weighted-error-decomposition}, we can apply
Remark~\ref{rem:weighted-global}-\eqref{eq-weighted-global-L2}
with \(f=f_N\), noting that
\(\|\log f_N\|_\infty\le\|h_0\|_\infty\).
Triangle inequality and
\(\sum_z|\varphi^{(N)}(z)|\le\|\varphi\|_1\) give the required bound,
while the centring constant $\bbE G_{m,n,N}(0,\one)$ is bounded by \eqref{eq:global-L2}.
This concludes the proof of the lemma.
\end{proof}

\subsection{Proof of Theorem~\ref{thm:toy-joint}}
The same argument as in \eqref{eq:L2-Cauchy}, together with \eqref{eq:joint-convergence-delta} applied
jointly to two regularisation parameters \(\delta,\delta'\), and Lemma~\ref{lem:extension-approximation}, 
shows that \(\scrH_{t_i,T}^{(\delta)}(\varphi_i,h_0)\) is Cauchy in \(L^2\)
as \(\delta\downarrow0\), for each \(i\). Denote its limit by
\(\scrH_{t_i,T}(\varphi_i,h_0)\). Letting \(\delta'\downarrow0\)
in the same estimate gives
\begin{equation}\label{prop:eq-temporal-approximation}
 \|\scrH_{t_i,T}^{(\delta)}(\varphi_i,h_0)
          -\scrH_{t_i,T}(\varphi_i,h_0)\|_{L^2}
 \le C_{T-t_i,\vartheta,\varphi_i,\|h_0\|_\infty}
          \left|\log\frac{\delta}{T-t_i}\right|^{-1/40},
\end{equation}
where the rescaling in Lemma~\ref{lem:extension-approximation}
replaces \(\delta\) by \(\delta/(T-t_i)\).
The bound holds for sufficiently small \(\delta/(T-t_i)\), locally
uniformly in \((T-t_i,\vartheta)\in(0,\infty)\times\R\).

This shows the convergence in \(L^2\) and we now show that 
the convergence also holds in $L^p$ for any $p \in [1,\infty)$.
The case \(p\in [1,2]\) follows from the $L^2$ convergence so we focus on the case \(p>2\) and some fixed~\(i\).
Let us set for simplicity $X_\delta=\scrH_{t_i,T}^{(\delta)}(\varphi_i,h_0)$ and set $\delta_j=(T-t_i)\exp(-2^j)$ for \(j\geq 1\).
By \eqref{prop:eq-temporal-approximation} we obtain that for any \(j\geq 1\)
\[
 \sup_{\delta_{j+1}\le\delta\le\delta_j}
 \|X_\delta-X_{\delta_j}\|_{L^2}
 \le C2^{-j/40},
\]
On the other hand, by Remark~\ref{rem:Lp-boundedness}, after time translation
and rescaling we obtain
\[
\sup_{\delta_{j+1}\le\delta\le\delta_j}
 \|X_\delta-X_{\delta_j}\|_{L^{2p}}
 \le C_p(j+1) \,,
\]
where the bound~\eqref{eq:Lp-bound-SHF} extends to any initial condition $h_0\in L^{\infty}(\R^2)$ because changing the
terminal weight from $\one$ to $\e^{h_0}$ changes
$X_\delta$ by at most $\|\varphi_i\|_1\|h_0\|_\infty$.
Therefore, by H\"older's inequality we get that 
\begin{equation}
      \label{eq:Lp-bound-Cauchy}
       \begin{split}
      \sup_{\delta_{j+1}\le\delta\le\delta_j}
      \|X_\delta-X_{\delta_j}\|_{L^p}  
      & \le \sup_{\delta_{j+1}\le\delta\le\delta_j}
      \|X_\delta-X_{\delta_j}\|_{L^{2}}^{1/(p-1)} \|X_\delta-X_{\delta_j}\|_{L^{2p}}^{(p-2)/(p-1)} \\
      & \leq  C_p\,2^{-j/[40(p-1)]}
                  (j+1)^{(p-2)/(p-1)}.
 \end{split}
\end{equation}
This shows in particular that \(\|X_{\delta_{j+1}}-X_{\delta_j}\|_{L^p} \) is summable in \(j\), so that $X_{\delta_j}$ converges
in $L^p$ to its already identified $L^2$ limit.
Applying again~\eqref{eq:Lp-bound-Cauchy} shows the \(L^p\) convergence for arbitrary $\delta\downarrow0$.

For the lattice fields $\cH_{t_i,T}^{\vartheta,(N)}(\varphi_i,h_0)$, we can apply the same 
bounded Lipschitz distance argument as in the proof of Theorem~\ref{thm:toy}, but now on \(\R^k\).
Letting \(N\to\infty\) and then \(\delta\downarrow0\), we obtain
\begin{equation}\label{prop:eq-test-function-convergence}
 \bigl(\cH_{t_i,T}^{(N)}(\varphi_i,h_0)\bigr)_{i=1}^k
 \xrightarrow[N\to\infty]{d}
 \bigl(\scrH_{t_i,T}(\varphi_i,h_0)\bigr)_{i=1}^k.
\end{equation}
The statement on time reversal is a direct consequence of \eqref{prop:eq-joint-time-reversal}.
\qed

\section{Convergence in local Sobolev spaces}
\label{sec:local-Sobolev}

Theorem~\ref{thm:toy-joint} gives joint convergence of the fields \(\cH^{\vartheta,(N)}\)
and \(\scrH^{\vartheta,(\delta)}\) against test functions. We now establish the convergence assertions
of Theorem~\ref{thm:main} for one fixed initial condition
\(h_0\in L^\infty(\R^2)\cap C(\R^2)\), jointly at finitely many times
and in the local Sobolev space \(H^{-a}_{\mathrm{loc}}(\R^2)\) for every \(a>0\).

Let us first recall the norm on the negative Sobolev space $H^{-s}$:
\begin{equation}\label{prop:eq-sobolev-convention}
 \|f\|_{H^{-s}}^2=(2\pi)^{-2}\int_{\R^2}
 (1+|\xi|^2)^{-s}|\widehat f(\xi)|^2\,\dd\xi, \quad \mbox{with} \quad \widehat f(\xi)=\int_{\R^2}\e^{-i\xi\cdot x}f(x)\,\dd x.
\end{equation}
The local Sobolev space \(H^{-s}_{\mathrm{loc}}(\R^2)\) is defined by requiring \(\chi f\in H^{-s}\) for every smooth compactly supported function
\(\chi\in C_c^\infty(\R^2)\).
In what follows, \emph{localising a field} means multiplying it by such a smooth compactly supported function \(\chi\).
We also denote
\(\|\cH \|_{L^2(\bbP;H^{-s})}^2 \coloneqq \bbE[ \|\cH\|_{H^{-s}}^2]\).

Recall the forward fields \(\cH_t^{\vartheta,(N)}(h_0,\cdot)\) and
\(\scrH_t^{\vartheta,(\delta)}(h_0,\cdot)\) from
\eqref{eq:HN} and \eqref{eq:regularised-H}. 
The following proposition gives the convergence in Theorem~\ref{thm:main}
for this fixed initial condition.

\begin{proposition}[Convergence in finite-dimensional distribution]
\label{prop:field-identification}
Fix \(\vartheta\in\R\), \(h_0\in L^\infty(\R^2)\cap C(\R^2)\),
and the critical tuning \eqref{eq:critical-tuning}.
There exists an SHF-measurable process
\((\scrH_t^\vartheta(h_0,\cdot))_{t>0}\) with values in
\(\bigcap_{a>0}H^{-a}_{\mathrm{loc}}(\R^2)\) such that for every
\(0<t_1<\cdots<t_k\) and \(a>0\), the following convergences hold
in \((H^{-a}_{\mathrm{loc}}(\R^2))^k\):
\begin{equation}\label{prop:eq-joint-field-convergence}
\begin{aligned}
 (\cH_{t_i}^{\vartheta,(N)}(h_0,\cdot))_{i=1}^k
 &\xrightarrow[N\to\infty]{d}
 (\scrH_{t_i}^\vartheta(h_0,\cdot))_{i=1}^k,\\
 (\scrH_{t_i}^{\vartheta,(\delta)}(h_0,\cdot))_{i=1}^k
 &\xrightarrow[\delta\downarrow0]{\bbP}
 (\scrH_{t_i}^\vartheta(h_0,\cdot))_{i=1}^k.
\end{aligned}
\end{equation}
Moreover, for every \(\chi\in C_c^\infty(\R^2)\) and \(s>0\),
\begin{equation}\label{prop:eq-joint-field-L2}
 \lim_{\delta\downarrow0}\sum_{i=1}^k
 \bbE\bigl\|\chi\bigl(\scrH_{t_i}^{\vartheta,(\delta)}(h_0,\cdot)
                   -\scrH_{t_i}^\vartheta(h_0,\cdot)\bigr)\bigr\|_{H^{-s}}^2=0.
\end{equation}
The conclusion for the lattice field also holds for deterministic
\(\vartheta_N\to\vartheta\) in \eqref{eq:critical-tuning}.
\end{proposition}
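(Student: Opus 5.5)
The plan is to upgrade the test-function convergence of Theorem~\ref{thm:toy-joint} to convergence in $H^{-a}_{\rm loc}$ through a tightness argument plus identification of the finite-dimensional distributions. Fix $\chi\in C_c^\infty(\R^2)$ and $s>0$. The first step is a uniform second moment bound in $H^{-s}$ for the localised fields $\chi\,\cH_{t}^{\vartheta,(N)}(h_0,\cdot)$ and $\chi\,\scrH_t^{\vartheta,(\delta)}(h_0,\cdot)$. By Plancherel,
\[
\bbE\|\chi \cH\|_{H^{-s}}^2=(2\pi)^{-2}\int(1+|\xi|^2)^{-s}\iint \chi(x)\chi(y)\e^{-i\xi\cdot(x-y)}\Cov\text{-type kernel}\,\dd x\,\dd y\,\dd\xi .
\]
Equivalently, it is $\iint \chi(x)\chi(y)\,\bbE[\cH(x)\cH(y)]\,J_s(x-y)\,\dd x\,\dd y$, where $J_s$ is the Bessel potential kernel of order $2s$. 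This kernel has a singularity $|z|^{2s-2}$, which is integrable in $\R^2$ for every $s>0$. I will therefore bound the two-point function $\bbE[\cH(x)\cH(y)]$ by a function $K(x-y)$ with $\int_{B}K^{p}<\infty$ for some $p$ large, uniformly in $N$ (resp.\ $\delta$).

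This two-point bound is Lemma~\ref{lem:cutoff-covariance}, announced in Section~\ref{sec:general}. I will prove it by applying the covariance bound of Lemma~\ref{lem:two-point} to $F_x=\log\widehat Z(x)-\log\bbE$. This gives $\Cov\le\log\bbE[\widehat Z(x)\widehat Z(y)]+2\sup\Delta$. Proposition~\ref{prop:VD} bounds $\Delta$ uniformly. For the first term, \eqref{eq:local-replica-sum} controls it by $\log(1+C\mathsf G_n(x-y)/\log(1+N/n))$, which behaves like $\log\log(1/|x-y|)$ near the diagonal and is $O(1)$ otherwise. This is in $L^p_{\rm loc}$ for all $p$, and its continuum counterpart follows by Fatou. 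The mean parts are bounded by \eqref{eq:compare-centrings}. For the unregularised lattice field at a fixed point (time $0$ start), I will use the same bound with $s=0$, where the defect is still $O(1)$ by \eqref{prop:eq-heat-defect}. This yields
\[
\sup_N\bbE\|\chi\cH_t^{(N)}\|_{H^{-s}}^2+\sup_\delta\bbE\|\chi\scrH_t^{(\delta)}\|_{H^{-s}}^2<\infty
\]
for every $s>0$. Since $H^{-s}\hookrightarrow H^{-a}$ is compact on distributions supported in a fixed ball when $s<a$ (Rellich), this gives tightness in $H^{-a}$ for $\chi\cH$, hence tightness in $H^{-a}_{\rm loc}$ via a countable family of cutoffs.

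The second step is identification. Any subsequential limit, tested against $\varphi\in C_c^\infty$, has the law given by Theorem~\ref{thm:toy-joint}, after time reversal and jointly over finitely many $t_i$ and $\varphi_i$. Test functions separate laws on $H^{-a}$, so the limit is unique. For the $\delta$-fields, I will show the Cauchy property in $L^2(\bbP;H^{-s})$ directly, which gives \eqref{prop:eq-joint-field-L2}. Write $\bbE\|\chi(\scrH^{(\delta)}-\scrH^{(\delta')})\|_{H^{-s}}^2$ as an integral over frequencies of $\bbE|\langle \scrH^{(\delta)}-\scrH^{(\delta')},\chi e_\xi\rangle|^2$. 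Each integrand tends to zero by \eqref{prop:eq-temporal-approximation}, using complex exponentials split into real parts. The integrand is dominated, uniformly in $\delta,\delta'$, by $C(1+|\xi|^2)^{-s'}$ for some $s'<s$ close to $s$ using the uniform bound above. Dominated convergence then finishes. The limit $\scrH_t^\vartheta(h_0,\cdot)$ is defined as this $L^2(\bbP;H^{-s}_{\rm loc})$ limit, is SHF-measurable, and is consistent over $s$, so it lies in $\bigcap_a H^{-a}_{\rm loc}$. Convergence in probability follows.

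The main obstacle is making the frequency-wise domination uniform. The $L^2(\bbP)$ error from \eqref{prop:eq-temporal-approximation} depends on $\|\varphi\|$-norms through $\|\varphi\|_1$ and $\mathcal E$, and these are uniform for $\varphi=\chi e_\xi$ by Remarks~\ref{rem:better-bound-L}--\ref{rem:better-bound-G}. So the domination itself is not the issue. What needs care is the covariance kernel bound when $N\to\infty$ and $\delta\downarrow0$ interact. The case $\vartheta_N\to\vartheta$ is handled by the local uniformity in $\vartheta$ already recorded in Propositions~\ref{prop:VD} and~\ref{prop:PF}.
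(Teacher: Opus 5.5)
Your architecture matches the paper's: a Gaussian-convexity covariance bound combined with the first-collision formula (Lemma~\ref{lem:cutoff-covariance}), the Bessel-kernel expression for $\bbE\|\chi\cH\|_{H^{-s}}^2$ (the paper writes the same kernel as $\Gamma(s)^{-1}\int_0^\infty u^{s-1}\e^{-u}g_{2u}\,\dd u$), Rellich tightness, identification through test functions, and an $L^2(\bbP;H^{-s})$ Cauchy argument frequency by frequency.

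\textbf{The gap.} The dominated-convergence step in that Cauchy argument fails for $s\le1$. You cannot avoid that range: the statement is for every $a>0$, and convergence in $H^{-s}$ for large $s$ is the weaker statement. Set $Y=\chi(\scrH^{(\delta)}-\scrH^{(\delta')})$. Your uniform Sobolev bounds give pointwise information on $\bbE|\widehat Y(\xi)|^2$ only through duality: pairing $Y$ with $\tilde\chi\,\e^{-i\xi\cdot x}$, where $\tilde\chi=1$ on $\operatorname{supp}\chi$, gives $\bbE|\widehat Y(\xi)|^2\le C(1+|\xi|^2)^{\sigma}\,\bbE\|Y\|_{H^{-\sigma}}^2$. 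This grows in $|\xi|$. After the weight $(1+|\xi|^2)^{-s}$ it becomes your dominating function $C(1+|\xi|^2)^{-s'}$ with $s'=s-\sigma<s$, which is integrable on $\R^2$ only when $s'>1$. Your correct observation that the rate in \eqref{prop:eq-temporal-approximation} is uniform over $\varphi=\chi\,\e^{i\xi\cdot x}$ does not rescue this, because $\int_{\R^2}(1+|\xi|^2)^{-s}\,\dd\xi=\infty$ for $s\le1$.

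\textbf{The fix.} What is missing is a uniform high-frequency tail estimate. Applying your moment bound with $s/2$ in place of $s$ gives
\[
\int_{|\xi|>R}(1+|\xi|^2)^{-s}\,\bbE|\widehat Y(\xi)|^2\,\dd\xi
\le(1+R^2)^{-s/2}\,(2\pi)^2\,\bbE\|Y\|_{H^{-s/2}}^2\le C(1+R^2)^{-s/2},
\]
uniformly in $\delta,\delta'$. On $\{|\xi|\le R\}$ the duality bound is uniform, so dominated convergence applies there. Then let $\delta,\delta'\downarrow0$ and afterwards $R\to\infty$. This is exactly the frequency split \eqref{prop:eq-Sobolev-frequency-split} in the paper. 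Interpolating between $H^{-s_1}$ with $s_1>1$ and $H^{-s/2}$ would also work.

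\textbf{Two smaller points.}

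\emph{Non-flat $h_0$.} Controlling only the means via \eqref{eq:compare-centrings} is not enough, because the discrepancy $\cH(x,h_0)-\cH(x)$ is random. If you split off this bounded error pointwise, the cross terms are bounded only by $\|h_0\|_\infty\,\bbE|\cH(x)|$, which is of order $\sqrt{\log\log N}$ for the lattice field. You have two options. You can apply Lemma~\ref{lem:two-point} directly to $\log Z(x,f_N)$: it is still convex, and its defect and normalised mixed moment change only by $O(\|h_0\|_\infty)$ since $\e^{-\|h_0\|_\infty}\le f_N\le\e^{\|h_0\|_\infty}$. Alternatively, do as the paper does and compare in norm, using $\|\chi(\scrH(\cdot,h_0)-\scrH(\cdot))\|_{H^{-a}}\le\|\chi\|_2\|h_0\|_\infty$.

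\emph{Passing to the SHF.} The covariance bound is transferred to the SHF fields not by Fatou, since a covariance is not a nonnegative functional. It is transferred by convergence of the mixed second moments of the logarithms, which follows from the uniform log-tail bounds in Lemma~\ref{lem:extension-convergence-delta}.
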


Instead of removing disorder on the final interval \([t-\delta,t]\),
as in the definition~\eqref{eq:regularised-H} of \(\scrH_{t}^{\vartheta,(\delta)}(h_0,\cdot)\), 
we may also spatially smooth the SHF solution
at time \(t\). 
For the same initial condition \(h_0\), and a test function
\(\varphi\in C_c^\infty(\R^2)\), define
\begin{equation}\label{prop:eq-direct-cutoff}
\begin{aligned}
 \widetilde{\scrH}_t^{\vartheta,(\delta)}(h_0,\varphi)
 \coloneqq \int_{\R^2}\varphi(x)\Bigl\{
 \log\scrZ_{0,t}^\vartheta(\e^{h_0},g_{\delta}^{(x)}) -\bbE\log\scrZ_{0,t}^\vartheta(\one,g_{\delta}^{(0)})\Bigr\}\,\dd x.
\end{aligned}
\end{equation}
The following corollary shows that the limit is the same as in
Proposition~\ref{prop:field-identification}.

\begin{corollary}
\label{prop:spatial-identification}
Fix \(\vartheta\in\R\) and
\(h_0\in L^\infty(\R^2)\cap C(\R^2)\).
For every \(0<t_1<\cdots<t_k\) and \(a>0\),
\begin{equation}\label{eq:spatial-limit}
 (\widetilde{\scrH}_{t_i}^{\vartheta,(\delta)}(h_0,\cdot))_{i=1}^k
 \xrightarrow[\delta\downarrow0]{\bbP}
 (\scrH_{t_i}^\vartheta(h_0,\cdot))_{i=1}^k
 \quad\text{in }(H^{-a}_{\mathrm{loc}}(\R^2))^k.
\end{equation}
For every \(\chi\in C_c^\infty(\R^2)\) and \(s>0\),
\begin{equation}\label{prop:eq-spatial-L2}
 \lim_{\delta\downarrow0}\sum_{i=1}^k
 \bbE\bigl\|\chi\bigl(\widetilde{\scrH}_{t_i}^{\vartheta,(\delta)}(h_0,\cdot)
      -\scrH_{t_i}^\vartheta(h_0,\cdot)\bigr)\bigr\|_{H^{-s}}^2=0.
\end{equation}
\end{corollary}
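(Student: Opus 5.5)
We reduce Corollary~\ref{prop:spatial-identification} to Proposition~\ref{prop:field-identification}. By the triangle inequality it suffices to show that, for every \(\chi\in C_c^\infty(\R^2)\) and \(s>0\),
\begin{equation*}
 \lim_{\delta\downarrow0}\bbE\bigl\|\chi\bigl(\widetilde{\scrH}_{t}^{\vartheta,(\delta)}(h_0,\cdot)
      -\scrH_{t}^{\vartheta,(\delta)}(h_0,\cdot)\bigr)\bigr\|_{H^{-s}}^2=0
\end{equation*}
at each fixed \(t=t_i\). This gives \eqref{prop:eq-spatial-L2} from \eqref{prop:eq-joint-field-L2}, and \eqref{eq:spatial-limit} then follows by Markov's inequality.

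The idea is that \(\widetilde{\scrH}_t^{(\delta)}\) and \(\scrH_t^{(\delta)}\) are exactly the two sides of a discrete-level comparison we already control. By the Chapman--Kolmogorov property of the SHF, \(\scrZ_{0,t}(\e^{h_0},g_\delta^{(x)})\) is obtained from \(\scrZ_{0,t-\delta}\) convolved with the kernel \(\scrZ_{t-\delta,t}\), whereas \(\scrZ_{t-\delta}(\e^{h_0},g_\delta^{(x)})\) replaces that last kernel by the heat kernel. After time reversal as in \eqref{prop:eq-joint-time-reversal}, at the lattice level the first is \(\log Z_N(x_N,f_N)\) with the smoothing \(g_\delta\) as a terminal (now initial) average. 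This is precisely the comparison of \(Z_{0,N}(\delta\text{-average})\) with \(\widehat Z_{m,N}\) treated in Proposition~\ref{prop:smo}, except that the starting measure is the rescaled Gaussian \((g_\delta^{(x)})^{(N)}\) instead of a Dirac mass. Concretely, I would define lattice fields \(\widetilde\cH^{(N;\delta)}\) with starting profile \((g_\delta^{(x)})^{(N)}\) and disorder kept on \((0,N]\). By Lemma~\ref{lem:extension-convergence-delta}, whose argument applies verbatim since only \(L^1\) control of the initial profile and the moment bounds \eqref{eq:FC-moments} were used, these converge in law to \(\widetilde{\scrH}^{(\delta)}_t\), jointly with \(\cH^{(N;\delta)}\) for finitely many \(\delta\).

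The main step is to show
\begin{equation*}
 \limsup_N\|\widetilde\cH^{(N;\delta)}(\varphi)-\cH^{(N;\delta)}(\varphi)\|_{L^2}\to0 .
\end{equation*}
I would split at the same intermediate time \(n=[\sqrt\delta N]\). The decomposition \eqref{eq:critical-decomposition} gives a local term \(\log Z_{0,n}(g^{(N)}_\delta)-\log\widehat Z_{m,n}\) and a global term comparing \(Z_{n,N}\) started from the two endpoint profiles. The local term is handled exactly as in Proposition~\ref{prop:local}. This works because Proposition~\ref{prop:VD} and the covariance bound \eqref{eq:polymer-covariance-convexity} only use convexity and the second-moment formula \eqref{eq:local-replica-formula}, and that formula holds for any deterministic starting probability measure. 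The global term needs Proposition~\ref{prop:PF} and Proposition~\ref{prop:log-tails} for the profile started from a Gaussian of width \(\sqrt{\delta N}\) rather than a point. I expect this to be the main obstacle. I would handle it by linearity: \(Z_{0,n}(\nu,w)=\sum_y\nu(y)Z_{0,n}(y,w)\), and a \(\nu\)-mixture of the good events, together with the \(L^1\) bound \eqref{target:PF}, which is convex in the starting point, gives the analogue; alternatively one reruns the Gu--Tsai block argument of Section~\ref{sec:factorisation}, where the first block is only smoother. Passing to the limit through the Portmanteau argument of \eqref{eq:L2-Cauchy} then gives the \(L^2\) closeness of the continuum fields tested against each \(\varphi\). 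The centrings differ by a constant that is controlled by the same bound.

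Finally, to upgrade from test functions to \(H^{-s}\), I would reuse the tightness route of Section~\ref{sec:local-Sobolev}. The covariance estimates of Lemma~\ref{lem:cutoff-covariance} apply equally to \(\widetilde{\scrH}^{(\delta)}\), since it is again a logarithm of a nonnegative SHF average and hence Ehrhard type with the covariance bound of Lemma~\ref{lem:two-point}. This gives a uniform bound \(\sup_\delta\bbE\|\chi\widetilde{\scrH}^{(\delta)}\|_{H^{-s'}}^2<\infty\) for \(s'<s\). Combining it with convergence against a countable family of test functions, for instance via a Fourier truncation \(|\xi|\le K\) and a tail bound \((1+K^2)^{-(s-s')}\), yields the \(H^{-s}\) convergence in \(L^2(\bbP)\), uniformly over the finitely many \(t_i\).
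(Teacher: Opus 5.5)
Your reduction to comparing $\widetilde\scrH^{(\delta)}_t$ with $\scrH^{(\delta)}_t$ is fine, and so is your final Fourier/tightness upgrade. The gap is the ``main step'' $\limsup_N\|\widetilde\cH^{(N;\delta)}(\varphi)-\cH^{(N;\delta)}(\varphi)\|_{L^2}\to0$, which your proposal does not establish.

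The local term is fine, though not for the reason you give. $Z_{0,n}(q_m^{(x)})$ has the law of $\widehat Z_{m,m+n}(x)$, and Proposition~\ref{prop:VD} applies to that; its proof uses the block factorisation, not only convexity and \eqref{eq:local-replica-formula}.

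The global term is the problem. It needs an analogue of Proposition~\ref{prop:PF} comparing $\tilde\mu_n^{(x)}\coloneqq\mu_{0,n}^{(q_m^{(x)})}$ with $\widehat\mu_{m,n}^{(x)}$, and neither of your fixes supplies it.
\begin{itemize}
\item \emph{Linearity.} It gives $\tilde\mu_n^{(x)}=\sum_y w_y\,\mu_n^{(y)}$ with random, $Z_n(y)$-tilted weights $w_y=q_m(y-x)Z_n(y)/Z_{0,n}(q_m^{(x)})$. But \eqref{target:PF} compares $\mu_n^{(y)}$ with $\widehat\mu_{m,n}^{(y)}$, not with $\widehat\mu_{m,n}^{(x)}$. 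After using it you must still compare $\sum_y w_y\widehat\mu_{m,n}^{(y)}$ with $\widehat\mu_{m,n}^{(x)}$: profiles launched at time $m$ from Gaussians whose centres differ by $O(\sqrt m)$. That is the same washing-out problem again.
\item \emph{Transporting the bound.} The random weights, and the $\asymp m$ different events $\mathcal G_{N,\delta}(y)$, also prevent carrying an $L^1$ bound on a fixed good event through the mixture.
\item \emph{Block expansion.} Rerunning it is not a matter of the first block being ``only smoother''. The cancellation $q_{N_{u-1}}\lact\bbE_u[D_u]=0$ requires the starting profile to equal the reference kernel at the block's start, and $q_m$ at time $0$ does not. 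Shifting the references to $q_{m+N_i}$ changes the $D_i$ and destroys the common-corrector cancellation \eqref{eq:common-cancellation} with the expansion of $\widehat\mu_{m,n}^{(x)}$. So this would be a genuinely new estimate.
\end{itemize}

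The missing idea is that no new lattice comparison is needed. From the definitions, $\scrH_t^{\vartheta,(\delta)}(h_0,\cdot)=\widetilde\scrH_{t-\delta}^{\vartheta,(\delta)}(h_0,\cdot)$: both are $\log\scrZ_{0,t-\delta}(\e^{h_0},g_\delta^{(x)})$ with the same centring. (Your Chapman--Kolmogorov description is also off: averaging out $\scrZ_{t-\delta,t}$ would produce $g_{2\delta}$, not $g_\delta$.) So your comparison is really a time shift, and in reversed time $\widetilde\scrH^{(\delta)}_{u,T}=\scrH^{(\delta)}_{u-\delta,T}$. Hence
\[
\widetilde\scrH^{(\delta)}_{u,T}(\varphi,h_0)-\scrH_{u,T}(\varphi,h_0)
=\bigl(\scrH^{(\delta)}_{u-\delta,T}-\scrH_{u-\delta,T}\bigr)(\varphi,h_0)
+\bigl(\scrH_{u-\delta,T}-\scrH_{u,T}\bigr)(\varphi,h_0).
\]
The first term vanishes by \eqref{prop:eq-temporal-approximation}, which is locally uniform in the starting time. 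The second vanishes by $L^2(\bbP)$-continuity of $u\mapsto\scrH_{u,T}(\varphi,h_0)$. That continuity holds for two reasons:
\begin{itemize}
\item each map $u\mapsto\scrH^{(\varepsilon)}_{u,T}(\varphi,h_0)$ is $L^2$-continuous, by the vague time-continuity of the SHF together with the positivity and log-moment bounds of Lemma~\ref{lem:extension-convergence-delta};
\item these maps converge locally uniformly as $\varepsilon\downarrow0$, by \eqref{prop:eq-temporal-approximation}.
\end{itemize}
Your Sobolev upgrade then applies as is, since Lemma~\ref{lem:tightness} already covers $\widetilde\scrH^{(\delta)}$.
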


Following Section~\ref{sec:joint-convergence}, we prove these results
after performing the time reversal~\eqref{prop:eq-joint-time-reversal},
so that the initial condition \(\e^{h_0}\) becomes a terminal weight
at a fixed terminal time \(T>t_k\).
For Corollary~\ref{prop:spatial-identification}, we use the spatial regularisation
\(\widetilde{\scrH}_{t,T}^{\vartheta,(\delta)}(x,h_0)\) obtained by
replacing \(t+\delta\) by \(t\) in
\eqref{prop:eq-reversed-temporal-field}:
{{\[
		\begin{aligned}
			\widetilde{\scrH}_{t,T}^{\vartheta,(\delta)}
			(x,h_0)
			:={}&
			\log\mathscr Z_{t,T}^{\vartheta}
			\bigl(g_\delta^{(x)},e^{h_0}\bigr)-\mathbb E\Bigl[
			\log\mathscr Z_{t,T}^{\vartheta}
			\bigl(g_\delta^{(0)},1\bigr)
			\Bigr].
		\end{aligned}
		\]}}
Integrating in \(x\) against a test function \(\varphi\), we write
\(\scrH_{t,T}^{\vartheta,(\delta)}(\varphi,h_0)\), and similarly for
\(\widetilde{\scrH}\).

The rest of the section is organised as follows:
In Section~\ref{subsec:covariance-tightness},
we establish tightness
in \(H^{-a}_{\mathrm{loc}}\) for every \(a>0\) (Lemma~\ref{lem:tightness}),
using the spatial covariance bounds for flat data
(Lemma~\ref{lem:cutoff-covariance}) and comparison with the flat field.
Lastly, Section~\ref{subsec:proof-field-identification}
identifies the subsequential limits through their integrals against test
functions, and proves Proposition~\ref{prop:field-identification}
as well as Corollary~\ref{prop:spatial-identification}.

\subsection{Covariance bounds and tightness}
\label{subsec:covariance-tightness}

To obtain tightness in \(H^{-a}_{\mathrm{loc}}\) for every \(a>0\),
we first prove spatial covariance bounds for the KPZ fields with \(h_0=0\). These give
uniform Sobolev moment bounds, which can then be extended to more general $h_0$
by comparison with the flat fields. Tightness then follows
by compact embedding.

Fix \(T>0\) and \(\vartheta\in\R\). We use the time-reversed fields
\(\scrH_{0,T}^{\vartheta,(\delta)}(\cdot,h_0)\)  defined in
\eqref{prop:eq-reversed-temporal-field},  and
\(\widetilde{\scrH}_{0,T}^{\vartheta,(\delta)}(\cdot,h_0)\) introduced after
Corollary~\ref{prop:spatial-identification}.
For the lattice field, we use \eqref{eq:def-cH-general}
with starting time zero: for \(x\in \cellC_N(z)\),
\begin{equation}\label{eq:abuse-notation}
 \cH^{(N)}(x,h_0)
 \coloneqq \log Z_{0,[NT]}(z,f_N)-\bbE\log Z_{0,[NT]}(0,\one),
\end{equation}
where \(f_N\) is defined in \eqref{eq:cell-averages}.
As before, we suppress the terminal argument when \(h_0=0\).

\begin{lemma}[Covariance bounds]
\label{lem:cutoff-covariance}
The covariance kernel of the flat field
\(\widetilde{\scrH}_{0,T}^{\vartheta,(\delta)}\) with $h_0=0$ satisfies the following bounds: for
\(0<\delta,\delta'<T/2\) and \(x\ne y\),
\begin{equation}\label{prop:eq-mixed-cutoff-covariance}
 0\le\Cov\bigl(\widetilde{\scrH}_{0,T}^{\vartheta,(\delta)}(x),
                \widetilde{\scrH}_{0,T}^{\vartheta,(\delta')}(y)\bigr)
 \le C\Bigl[1+\log\Bigl(1+\log^+\frac1{|x-y|}\Bigr)\Bigr].
\end{equation}
The same bounds hold for the covariance kernels of
\(\scrH_{0,T}^{\vartheta,(\delta)}\), and \(\cH^{(N)}\) for all
sufficiently large \(N\).
The constant \(C\) is uniform when \(T\) and \(\vartheta\) range over
compact subsets of \((0,\infty)\) and \(\R\), respectively.
\end{lemma}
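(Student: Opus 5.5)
The lower bound $\Cov\ge0$ follows from the FKG part of Lemma~\ref{lem:two-point}. At the lattice level, $\log Z_{0,[NT]}(z)$ and $\log Z_{0,[NT]}(q^{(z)}_{\cdot},\one)$ are nondecreasing in the independent Gaussian disorder. For the SHF fields, nonnegativity passes to the limit through the convergence of moments in Lemma~\ref{lem:extension-convergence-delta} (see Remark~\ref{R:Ehrhard}), using the analogous statement for spatial averaging at time $0$.

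For the upper bound I apply \eqref{eq:two-point-general} to the normalised variables $F_x=\log \scrZ_{0,T}(g_\delta^{(x)})$ and $F_y=\log\scrZ_{0,T}(g_{\delta'}^{(y)})$, first at lattice level and then passing to the limit. Note that $\bbE e^{F}=1$ and the covariance is unchanged by centring. This gives
\[
\Cov(F_x,F_y)\le \log\bbE[e^{F_x+F_y}]+\Delta(F_x)+\Delta(F_y).
\]
By Proposition~\ref{prop:VD}, eq.~\eqref{prop:eq-heat-defect}, the defect terms are bounded by $C_{T_0,T_1}$ uniformly in $\delta,\delta'$. This uses that the time horizon is of order $N$, so the constant is uniform; for the SHF version it holds via Fatou and weak convergence, and for $\cH^{(N)}$ with point starts it holds directly.

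It remains to bound the mixed second moment. By the first-collision formula \eqref{eq:local-replica-formula} (with initial measures $q_{\delta N}^{(x)}$ and $q_{\delta' N}^{(y)}$, or point masses) and the uniform bound \eqref{eq:uniform-two-replica},
\[
\bbE[e^{F_x+F_y}]-1\le \sum_{r\le NT}\bigl(\nu_x*\nu_y* q_{2r}\bigr)\,\frac{C}{\log(1+N/(NT-r))}.
\]
Here the heat-kernel average of $q_{2r}$ at displacement $\approx\sqrt N|x-y|$ is at most $C\,r^{-1}e^{-c N|x-y|^2/r}$. Splitting the sum at $r\approx N|x-y|^2$, the part $r\le N|x-y|^2$ is $O(1)$. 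For $r\ge N|x-y|^2$, set $u=(NT-r)/N$; the sum is then of order
\[
\int_{|x-y|^2}^{T}\frac{\dd s}{s\,\log(1+1/(T-s))},
\]
which is bounded by $C(1+\log^+(1/|x-y|))$. Using $\log(1+a)$ gives exactly the claimed $\log(1+\log^+\frac1{|x-y|})$, after also handling the $s$ near $T$ region, which is integrable because $\int \dd u/(u\log(1/u))^{\,}$ stays finite only after combining with the $s^{-1}$ factor bounded near $T$. I expect this to need care: near $s=T$ the weight $1/\log(1/(T-s))$ tames the walk, and one should use the sharper form of \eqref{eq:uniform-two-replica}.

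The main obstacle is this uniform Green-function/renewal estimate, which must be uniform in $\delta,\delta'$ and $N$ and must produce $\log\log$ rather than $\log$. The constants are uniform for $T,\vartheta$ in compacts because \eqref{eq:uniform-two-replica} and Proposition~\ref{prop:VD} are.
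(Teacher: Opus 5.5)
Your proposal is correct and follows essentially the paper's route. You use FKG for $\Cov\ge0$, the Gaussian-convexity bound \eqref{eq:two-point-general} with defects controlled by \eqref{prop:eq-heat-defect} (after a time shift, $Z_{0,n}(q_m^{(z)})$ has the law of $\widehat Z_{m,m+n}(z)$), the first-collision formula giving a mixed moment of order $1+\log^+\bigl(n/(1+|z-w|^2)\bigr)$ whose logarithm yields the $\log\log$, and passage to the SHF via Lemma~\ref{lem:extension-convergence-delta}.

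A few minor fixes are needed:
\begin{itemize}
\item The collision kernel is $q_{m+\ell+2r}(w-z)\le C(m+\ell+2r)^{-1}\e^{-c|w-z|^2/(m+\ell+2r)}$, not $Cr^{-1}\e^{-c|w-z|^2/r}$. The harmonic-sum bound is unchanged after reindexing.
\item There is no difficulty near $r\approx NT$, because $\sigma_N^2\bbE[Z_k(0)^2]\le C$ uniformly for $k\le NT$.
\item For the temporal field with $\delta\neq\delta'$, you must first average out the disorder between the two starting times to reduce to a common start before applying \eqref{eq:local-replica-formula}.
\end{itemize}
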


\begin{proof}[Proof of Lemma~\ref{lem:cutoff-covariance}]
	We prove \eqref{prop:eq-mixed-cutoff-covariance} for the spatially
	and temporally regularised $\log$-SHF fields
	\(\widetilde{\scrH}_{0,T}^{\vartheta,(\delta)}\)
	and \(\scrH_{0,T}^{\vartheta,(\delta)}\)
	via lattice approximations.

	Let us fix even integers \(0\le m,\ell<n\), with \(n/N\) in a fixed compact
	subset of \((0,\infty)\), and let us recall that we denote \(Z_{n}(q_m^{(z)}) = Z_{0,n}(q_m^{(z)},\one)\).
	By time-stationarity, \(Z_{n}(q_m^{(z)})\)
	has the same law as \(Z_{m,m+n}(q_m^{(z)},\one)\), so
	Proposition~\ref{prop:VD}, specifically \eqref{prop:eq-heat-defect},
	gives that \(\Delta(\log Z_{n}(q_m^{(z)}))\le C\) uniformly in \(m\).
	Therefore, Lemma~\ref{lem:two-point} (in particular \eqref{eq:two-point-general})
	gives similarly to~\eqref{eq:polymer-covariance-convexity} that
	\begin{equation}
		\label{eq:bound-covariance-log}
		0\le\Cov(\log Z_{n}(q_m^{(z)}),\log Z_{n}(q_\ell^{(w)}))
		\le C+\log \bbE[Z_{n}(q_m^{(z)})Z_{n}(q_\ell^{(w)})].
	\end{equation}
	
	Applying the first collision decomposition \eqref{eq:local-replica-formula} with \(s=0\)
	and averaging
	the initial positions against \(q_m^{(z)}\) and \(q_\ell^{(w)}\), we obtain
	\begin{equation}\label{prop:eq-mixed-first-collision}
		\bbE[Z_{n}(q_m^{(z)})Z_{n}(q_\ell^{(w)})]
		=1 +\sum_{k=1}^n q_{m+\ell+2k}(w-z)
		\sigma_N^2 \bbE[Z_{k,n}(0)^2] \,.
	\end{equation}
	By \eqref{eq:uniform-two-replica},
	\(\sigma_N^2\bbE[Z_{k,n}(0)^2]\le\sigma_N^2\bbE[Z_n(0)^2]\le C\)
	uniformly in \(1\le k\le n\).
	For time horizons larger than \(N\), the same bound follows from the last
	assertion of Claim~\ref{claim:VN}, with the constant depending on an upper
	bound for \(n/N\).
	Together with the Gaussian heat-kernel upper bound (see
	\cite[Propositions~2.1.2 and~2.4.6]{LL10},
	applied to the
	aperiodic two-step walk on \(\Z^2_{\rm even}\))
	this yields
	\begin{equation}\label{prop:eq-mixed-mass-bound}
		\begin{aligned}
			\bbE[Z_{n}(q_m^{(z)})Z_{n}(q_\ell^{(w)})]
			&\le1+C\sum_{k=1}^n\frac1{m+\ell+2k}
			\exp\left\{-\frac{c|w-z|^2}{m+\ell+2k}\right\}\\
			&\le C\left\{1+\log^+\frac{n}{1+|z-w|^2}\right\}.
		\end{aligned}
	\end{equation}
	To see the last inequality, the terms with \(m+\ell+2k\le|z-w|^2\)
	have a bounded sum by comparison with the corresponding Gaussian
	integral, while the remaining terms form a harmonic sum with upper endpoint
	\(m+\ell+2n<4n\).

	We now pass from these lattice estimates to the spatially regularised
	SHF field. Fix \(0<\delta,\delta'<T/2\) and \(x\ne y\), and take
	\[
	n=[NT],\qquad m=[N\delta],\qquad \ell=[N\delta'],\qquad
	z=[\sqrt{N/2}\,x],\qquad w=[\sqrt{N/2}\,y].
	\]
	Lemma~\ref{lem:extension-convergence-delta} gives joint convergence of the
	logarithms, as well as convergence of their mean and, by uniform
	integrability, their mixed second moments. Hence
	\[
	\begin{aligned}
		\Cov\bigl(\log Z_n(q_m^{(z)}),\log Z_n(q_\ell^{(w)})\bigr)
		\longrightarrow
		\Cov\bigl(\log\scrZ_{0,T}^\vartheta(g_\delta^{(x)},\one),
		\log\scrZ_{0,T}^\vartheta(g_{\delta'}^{(y)},\one)\bigr).
	\end{aligned}
	\]
	Since \(n/(1+|z-w|^2)\to2T/|x-y|^2\),
	\eqref{eq:bound-covariance-log} and \eqref{prop:eq-mixed-mass-bound}
	give \eqref{prop:eq-mixed-cutoff-covariance} for
	\(\widetilde{\scrH}_{0,T}^{\vartheta,(\delta)}\).
	
	For the temporal regularisation
	\(\scrH_{0,T}^{\vartheta,(\delta)}\), use instead
	\(\widehat Z_{m,n}(z)=Z_{m,n}(q_m^{(z)},\one)\).
	Lemma~\ref{lem:two-point} and Proposition~\ref{prop:VD} give
	\[
	0\le\Cov\bigl(\log\widehat Z_{m,n}(z),\log\widehat Z_{\ell,n}(w)\bigr)
	\le C+\log\bbE[\widehat Z_{m,n}(z)\widehat Z_{\ell,n}(w)].
	\]
	Suppose \(m\le\ell<n/2\). Averaging over the disorder on \((m,\ell]\)
	replaces \(q_m^{(z)}\) by \(q_\ell^{(z)}\). Time-stationarity then gives
	\[
	\bbE[\widehat Z_{m,n}(z)\widehat Z_{\ell,n}(w)]
	=\bbE[Z_{n-\ell}(q_\ell^{(z)})Z_{n-\ell}(q_\ell^{(w)})],
	\]
	so \eqref{prop:eq-mixed-mass-bound}, with duration \(n-\ell\) and
	both smoothing times equal to \(\ell\), bounds this mixed moment.
	With the same scaling as above, Lemma~\ref{lem:extension-convergence-delta}
	also gives
	\[
	\begin{aligned}
		\Cov\bigl(\log\widehat Z_{m,n}(z),\log\widehat Z_{\ell,n}(w)\bigr)
		\longrightarrow
		\Cov\bigl(\log\scrZ_{\delta,T}^\vartheta(g_\delta^{(x)},\one),
		\log\scrZ_{\delta',T}^\vartheta(g_{\delta'}^{(y)},\one)\bigr).
	\end{aligned}
	\]
	This proves \eqref{prop:eq-mixed-cutoff-covariance} for
	\(\scrH_{0,T}^{\vartheta,(\delta)}\) as well.
	
	For the lattice field \(\cH^{(N)}\), apply
	\eqref{eq:bound-covariance-log} and \eqref{prop:eq-mixed-mass-bound}
	with \(m=\ell=0\) and \(n=[NT]\).
	If \(x\in \cellC_N(z)\) and \(y\in \cellC_N(w)\), the cell definition
	\eqref{eq:lattice-cells} gives
	\[
	|x-y|^2\le C\frac{1+|z-w|^2}{N},
	\qquad
	\frac{n}{1+|z-w|^2}\le\frac{Cn}{N|x-y|^2}
	\quad(x\ne y).
	\]
	Since \(n/N\to T\), the discrete covariance bound therefore implies
	\eqref{prop:eq-mixed-cutoff-covariance} for \(\cH^{(N)}\), including
	when \(x\) and \(y\) belong to the same cell.
\end{proof}

\begin{lemma}[Sobolev bounds and tightness]\label{lem:tightness}
Fix \(h_0\in L^\infty(\R^2)\cap C(\R^2)\).
For every \(a>0\) and \(\chi\in C_c^\infty(\R^2)\),
\begin{equation}\label{prop:eq-uniform-Sobolev-moments}
 \max\left\{
 \begin{aligned}
 &\sup_{0<\delta<T/2}\bbE\|\chi
       \widetilde{\scrH}_{0,T}^{\vartheta,(\delta)}(\cdot,h_0)\|_{H^{-a}}^2,\\
 &\sup_{0<\delta<T/2}\bbE\|\chi
       \scrH_{0,T}^{\vartheta,(\delta)}(\cdot,h_0)\|_{H^{-a}}^2,\\
 &\limsup_{N\to\infty}\bbE\|\chi\cH^{(N)}(\cdot,h_0)\|_{H^{-a}}^2
 \end{aligned}\right\}
 \le C_{\chi,a,\|h_0\|_\infty}.
\end{equation}
The constant is locally uniform in \((T,\vartheta)\in(0,\infty)\times\R\).
All three families are tight in \(H^{-a}_{\mathrm{loc}}(\R^2)\).
\end{lemma}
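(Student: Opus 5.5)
The plan is to reduce everything to the flat field $h_0=\zero$ plus a bounded perturbation, then bound the $H^{-a}$ norm through the covariance kernel of Lemma~\ref{lem:cutoff-covariance}, and finally obtain tightness from a compact Sobolev embedding.

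\emph{Reduction to the flat field.} For each of the three fields, write $X(\cdot,h_0) = X(\cdot,\zero) + D(\cdot)$. Here $D(x)$ is the logarithm of a polymer (or SHF) endpoint average of $\e^{h_0}$. Exactly as in the discussion after Lemma~\ref{lem:centering}, since $\e^{-\|h_0\|_\infty}\le f_N,\e^{h_0}\le \e^{\|h_0\|_\infty}$, we have deterministically $|D(x)|\le\|h_0\|_\infty$ for every $x$. Because $\|\chi D\|_{H^{-a}}\le\|\chi D\|_{L^2}\le \|\chi\|_2\|h_0\|_\infty$, it suffices to prove \eqref{prop:eq-uniform-Sobolev-moments} for $h_0=\zero$. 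The flat fields are not centred: the centring uses $g^{(0)}_\delta$, which by translation invariance equals the mean at every $x$. Hence $\bbE X(x,\zero)=0$ exactly for $\scrH$ and $\widetilde\scrH$. For $\cH^{(N)}$, the field is constant on cells and exactly centred as well.

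\emph{Second moment via covariance.} For a centred field $X$ with covariance kernel $K(x,y)$, Plancherel gives
\[
\bbE\|\chi X\|_{H^{-a}}^2=\int\!\!\int \chi(x)\chi(y)K(x,y)\,\mathsf B_a(x-y)\,\dd x\,\dd y,
\]
where $\mathsf B_a$ is the Bessel potential kernel, i.e.\ the inverse Fourier transform of $(1+|\xi|^2)^{-a}$. This kernel is positive and integrable, with $\mathsf B_a(z)\le C_a|z|^{2a-2}$ near $0$ for $a<1$, bounded near $0$ for $a>1$ (logarithmic at $a=1$), and it decays exponentially at infinity. To justify the formula, one can first integrate against smooth truncations, or simply use Fubini, since $K\ge0$.

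By Lemma~\ref{lem:cutoff-covariance} with $\delta=\delta'$, we have $0\le K(x,y)\le C[1+\log(1+\log^+|x-y|^{-1})]$ off the diagonal, uniformly in $\delta<T/2$ and in large $N$. This bound is integrable against $\mathsf B_a(x-y)$ on $\mathrm{supp}\,\chi\times\mathrm{supp}\,\chi$ for every $a>0$: near $0$ the integrand is at most $|z|^{2a-2}\log\log(1/|z|)$, which is integrable in two dimensions. The diagonal is a null set, so it does not contribute. This gives the uniform bound \eqref{prop:eq-uniform-Sobolev-moments}, with locally uniform dependence on $(T,\vartheta)$ inherited from the lemma.

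The main subtle point is checking that the lattice field's covariance bound holds for $x,y$ in the same cell. Lemma~\ref{lem:cutoff-covariance} asserts this, and in any case the same-cell region has vanishing $\mathsf B_a$-weighted measure as $N\to\infty$, given the one-point variance bound \eqref{eq:asymp-ElogZ-VarlogZ}-type estimate $\Var\le C\log\log N$ against area $O(N^{-1}\cdot N^{1-a})$. I would add this remark to be safe, using the $\limsup$ in the statement.

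\emph{Tightness.} Fix $a>0$ and choose $0<a'<a$. The bound above holds with $a'$. Multiplication by $\chi$ followed by restriction gives fields supported in a fixed compact set $K$. The embedding $H^{-a'}_K\hookrightarrow H^{-a}$ is compact (Rellich for negative exponents, via duality or Fourier truncation). Chebyshev's inequality then gives, for each $\chi$, tightness of $\chi X$ in $H^{-a}$. Taking a countable family of cutoffs $\chi_k\equiv1$ on balls $B_k$, and using that the topology of $H^{-a}_{\rm loc}$ is given by the seminorms $\|\chi_k\cdot\|_{H^{-a}}$, a diagonal argument yields tightness in $H^{-a}_{\rm loc}(\R^2)$ for all three families.
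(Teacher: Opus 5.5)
Your proposal is correct and follows essentially the same route as the paper. You reduce to flat data via the deterministic bound $|X(\cdot,h_0)-X(\cdot,\zero)|\le\|h_0\|_\infty$ together with $\|\cdot\|_{H^{-a}}\le\|\cdot\|_2$. You then write $\bbE\|\chi X\|_{H^{-a}}^2$ against the covariance kernel, insert the bound of Lemma~\ref{lem:cutoff-covariance}, and conclude tightness by Chebyshev plus compact embedding over cutoffs on growing balls. Your Bessel potential $\mathsf B_a$ is exactly the paper's $\Gamma(a)^{-1}\int_0^\infty u^{a-1}\e^{-u}g_{2u}\,\dd u$; the paper checks the $\log\log$-weighted integrability directly through heat kernels rather than via Bessel asymptotics.

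One small correction: the interchange of $\bbE$ with the $x,y$ integrals is justified by finite one-point second moments at fixed $\delta$ or $N$ together with $\mathsf B_a\in L^1$, not by $K\ge0$. Also, you mean the flat fields \emph{are} exactly centred.
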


\begin{proof}[Proof of Lemma~\ref{lem:tightness}]
We first prove the moment bounds for \(h_0=0\). We claim that the second moment for the $H^{-a}$ norm of $\chi \scrH_{0,T}^{\vartheta,(\delta)}$ has the representation
\begin{equation}\label{eqS: H -a norm represtt}
	\begin{aligned}
 &\bbE\pigl[\|
 \chi \scrH_{0,T}^{\vartheta,(\delta)}
 \|_{H^{-a}}^2\pigr]\\
 =&
\frac1{\Gamma(a)}\int_0^\infty u^{a-1}\e^{-u}
\iint g_{2u}(x-y)\chi(x)\chi(y)
\Cov\bigl( \scrH_{0,T}^{\vartheta,(\delta)}(x),\scrH_{0,T}^{\vartheta,(\delta)}(y)\bigr)\,\dd x\,\dd y\,\dd u \,.
	\end{aligned}
\end{equation}
Indeed, for \(a>0\), the change of variables \(v=(1+|\xi|^2)u\) in
\(\Gamma(a)=\int_0^\infty v^{a-1}\e^{-v}\,\dd v\) gives
\begin{equation}
      \label{magic-Gamma}
 (1+|\xi|^2)^{-a}
 =\frac1{\Gamma(a)}\int_0^\infty
       u^{a-1}\e^{-u}\e^{-u|\xi|^2}\,\dd u \,.
\end{equation}
Substituting this integral representation into the Sobolev norm
for $\|\chi \scrH_{0,T}^{\vartheta,(\delta)}\|_{H^{-a}}$
(recall~\eqref{prop:eq-sobolev-convention}) gives
\begin{equation*}
 \begin{aligned}
 \bbE\pigl[\|\chi \scrH_{0,T}^{\vartheta,(\delta)}\|_{H^{-a}}^2\pigr]
 &=\frac1{(2\pi)^2\Gamma(a)}\int_0^\infty u^{a-1}\e^{-u}
       \int_{\R^2}\e^{-u|\xi|^2}
       \bbE\Bigl[\bigl|\widehat{\chi \scrH}_{0,T}^{\vartheta,(\delta)}(\xi)\bigr|^2\Bigr]\,\dd\xi\,\dd u\\
 &=\frac1{\Gamma(a)}\int_0^\infty u^{a-1}\e^{-u}
       \iint\chi(x)\chi(y)\bbE\bigl[\scrH_{0,T}^{\vartheta,(\delta)}(x) \scrH_{0,T}^{\vartheta,(\delta)}(y)\bigr]\\
 &\qquad\qquad\qquad\qquad\qquad \times
       \left(\frac1{(2\pi)^2}\int_{\R^2}
      \e^{-u|\xi|^2+i\xi\cdot(x-y)}\,\dd\xi\right)\dd x\,\dd y\,\dd u  \,,
 \end{aligned}
\end{equation*}
The first identity uses Tonelli's theorem. For the second, expand
the Fourier transform and use Fubini for each fixed \(u>0\) and
\(\delta>0\), then integrate in \(u\). The finite logarithmic second
moments from Lemma~\ref{lem:extension-convergence-delta} justify the
fixed-\(u\) interchange.
Since \(\e^{-u|\xi|^2}=\widehat g_{2u}(\xi)\) and the field $\scrH_{0,T}^{\vartheta , (\delta)} (\cdot)$ has
mean zero, Fourier inversion identifies the last expression with
\eqref{eqS: H -a norm represtt}.

From \eqref{eqS: H -a norm represtt} and the covariance
upper bound in \eqref{prop:eq-mixed-cutoff-covariance}, we obtain
\[
 \bbE\pigl[\|\chi \scrH_{0,T}^{\vartheta,(\delta)}\|_{H^{-a}}^2\pigr]
 \le C_{\chi,a}\int_0^\infty u^{a-1}\e^{-u}
       \left(\int_{\R^2}g_{2u}(z)\Bigl[1+\log\Bigl(1+\log^+\frac1{|z|}\Bigr)\Bigr]\,\dd z\right)\dd u.
\]
We claim that there is a constant \(C>0\) such that for any \(u>0\),
\[
\int_{\R^2}g_{2u}(z)\Bigl[1+\log\Bigl(1+\log^+\frac1{|z|}\Bigr)\Bigr]\,\dd z \leq C [1+\log(1+\log^+(1/u))] \,.
\]
Indeed, the integral is bounded by a universal constant for \(u\geq 1\).
For \(u\in (0,1)\), we use the change of variable \(z=\sqrt u\,v\) and use that
\(\log^+(1/(\sqrt u|v|))\le\tfrac12\log(1/u)+\log^+(1/|v|)\),
giving the desired bound.
 Combining these estimates gives
\begin{equation}\label{eqS: moment for flat intial conditoin field}
\bbE\pigl[\|\chi \scrH_{0,T}^{\vartheta,(\delta)}\|_{H^{-a}}^2\pigr]
\le C_{\chi,a}\left\{1+\int_0^1
u^{a-1}[1+\log(1+\log(1/u))]\,\dd u\right\}<\infty,
\end{equation}
uniformly in \(\delta\).
The same calculation applies to the lattice field \(\cH^{(N)}\) and the spatially mollified field \(\widetilde{\scrH}_{0,T}^{\vartheta,(\delta)}\),
since their covariance kernels satisfy the same
bound by Lemma \ref{lem:cutoff-covariance}.
This proves \eqref{prop:eq-uniform-Sobolev-moments} for \(h_0=0\).

For general \(h_0\), compare the terminal weights using
\(\e^{-\|h_0\|_\infty}\le\e^{h_0},f_N\le\e^{\|h_0\|_\infty}\) ({{see  definition \eqref{eq:cell-averages}}}).
Since the centring is defined using the flat terminal condition,
\[
 \bigl|\scrH_{0,T}^{\vartheta,(\delta)}(x,h_0)
       -\scrH_{0,T}^{\vartheta,(\delta)}(x)\bigr|
 \le\|h_0\|_\infty,
\]
and the identical bound holds for
\(\widetilde{\scrH}_{0,T}^{\vartheta,(\delta)}\) and \(\cH^{(N)}\).
Using \(\|\cdot\|_{H^{-a}}\le\|\cdot\|_2\), we obtain
\[
 \bigl\|\chi\bigl(\scrH_{0,T}^{\vartheta,(\delta)}(\cdot,h_0)
                   -\scrH_{0,T}^{\vartheta,(\delta)}\bigr)\bigr\|_{H^{-a}}
 \le\|\chi\|_2\|h_0\|_\infty,
\]
and similarly for the other two fields. Combining this with the flat
moment bounds \eqref{eqS: moment for flat intial conditoin field} proves \eqref{prop:eq-uniform-Sobolev-moments}.

For tightness, apply \eqref{prop:eq-uniform-Sobolev-moments} with
\(a/2\) in place of \(a\). Markov's inequality gives
\[
 \sup_{0<\delta<T/2}
 \bbP\left(\|\chi\scrH_{0,T}^{\vartheta,(\delta)}(\cdot,h_0)
               \|_{H^{-a/2}}>R\right)
 \le C_{\chi,a,\|h_0\|_\infty}R^{-2}.
\]
Distributions supported on \(\operatorname{supp}(\chi)\) with
\(H^{-a/2}\) norm at most \(R\) form a relatively compact subset
of \(H^{-a}\). Thus the localised fields are tight in \(H^{-a}\).
Taking smooth cutoffs equal to one on successively larger balls, and
choosing the radii so that the corresponding probability bounds are
summable, gives tightness in \(H^{-a}_{\mathrm{loc}}\).
The same argument applies to the spatially regularised and lattice fields. 
\end{proof}

\subsection{Proof of Proposition~\ref{prop:field-identification} and Corollary~\ref{prop:spatial-identification}}
\label{subsec:proof-field-identification}

We combine Theorem~\ref{thm:toy-joint} with the tightness in
Lemma~\ref{lem:tightness}. Throughout the proof, fix
\(h_0\in L^\infty(\R^2)\cap C(\R^2)\) and a common terminal time
\(T>0\), and suppress \(\vartheta\) from the notation.
All time-reversed SHF fields below are constructed from the same SHF
with terminal weight \(\e^{h_0}\) at time \(T\).

\begin{proof}[Proof of Proposition~\ref{prop:field-identification}]
Fix \(0\le u<T\). Theorem~\ref{thm:toy-joint}
gives the scalar limits \(\scrH_{u,T}(\varphi,h_0)\), the
\(L^2\) approximation \eqref{prop:eq-temporal-approximation}, and the
joint lattice convergence \eqref{prop:eq-test-function-convergence}.

We next use the Sobolev bounds of Lemma~\ref{lem:tightness} to show
that these scalar limits define a random distribution on the same SHF
space. Fix \(\chi\in C_c^\infty(\R^2)\) and \(a>0\), and set
\(Y_{\delta,\delta'}=\chi(\scrH_{u,T}^{(\delta)}(\cdot,h_0)
 -\scrH_{u,T}^{(\delta')}(\cdot,h_0))\).
For \(R>0\), splitting the Fourier integral and using
\eqref{prop:eq-uniform-Sobolev-moments} with \(a/2\) gives
\begin{equation}\label{prop:eq-Sobolev-frequency-split}
\begin{aligned}
 \bbE\|Y_{\delta,\delta'}\|_{H^{-a}}^2
 &\le \frac1{(2\pi)^2}\int_{|\xi|\le R}
       \bbE|\widehat Y_{\delta,\delta'}(\xi)|^2\,\dd\xi
       +C_{\chi,a,\|h_0\|_\infty}(1+R^2)^{-a/2}.
\end{aligned}
\end{equation}
For each \(\xi\), the Fourier transform tends to zero in \(L^2\) by applying
\eqref{prop:eq-temporal-approximation} to the real and imaginary
parts of \(\chi(x)\e^{-i\xi\cdot x}\). Since \(Y_{\delta,\delta'}\)
is supported on \(\operatorname{supp}\chi\), Sobolev duality and
\eqref{prop:eq-uniform-Sobolev-moments} bound the integrand uniformly
on \(\{|\xi|\le R\}\), so dominated convergence applies.
First letting \(\delta,\delta'\downarrow0\) and then \(R\to\infty\)
shows that \(\chi\scrH_{u,T}^{(\delta)}(\cdot,h_0)\) is Cauchy in
\(L^2(\bbP;H^{-a})\). The local limits therefore define an
SHF-measurable random distribution \(\scrH_{u,T}(\cdot,h_0)\), with the integrals
against test functions identified above. This gives convergence
in probability in \(H^{-a}_{\mathrm{loc}}\) and the mean-square
convergence in \eqref{prop:eq-joint-field-L2}, jointly at finitely
many starting times.

For the lattice fields  $\left( \mathcal H_{t_i}^{\theta, (N)} (\cdot, h_0) \right)_{i=1} ^k$, Lemma~\ref{lem:tightness} gives tightness in
\((H^{-a}_{\mathrm{loc}})^k\). By
\eqref{prop:eq-test-function-convergence}, every subsequential weak limit
has the same law as the random distributions
\((\scrH_{u_i,T}(\cdot,h_0))_{i=1}^k\), which implies weak convergence in
 \((H^{-a}_{\mathrm{loc}})^k\).

Finally, take \(T>t_k\) and \(u_i=T-t_i\). Time reversal
\eqref{prop:eq-joint-time-reversal}, applied jointly to two
regularisation parameters, gives the asserted convergence for the
forward SHF fields. The lattice convergence follows by the corresponding
time reversal.

If \(\vartheta_N\to\vartheta\),
Lemma~\ref{lem:extension-convergence-delta} has the same limiting SHF,
while the approximation and tightness bounds are uniform for
\(\vartheta_N\) in a compact interval.
The preceding argument therefore proves the last assertion of the
proposition as well.
\end{proof}

\begin{remark}
The argument above also gives joint convergence for finitely many initial
conditions \(h_0^{(j)}\). The joint SHF convergence in
Theorem~\ref{thm:SHF-convergence} allows the proof of
Lemma~\ref{lem:extension-convergence-delta} to use the terminal weights
\(\e^{h_0^{(j)}}\) simultaneously, while the approximation estimates
apply to each initial condition.
\end{remark}

\begin{proof}[Proof of Corollary~\ref{prop:spatial-identification}]
Keep the common terminal time \(T\) and terminal weight \(\e^{h_0}\),
and fix \(0<u<T\). For \(0<\delta<\min\{u,T-u\}\), we have the exact identity
\begin{equation}\label{prop:eq-shifted-cutoff-identity}
 \widetilde{\scrH}_{u,T}^{(\delta)}(\cdot,h_0)
 =\scrH_{u-\delta,T}^{(\delta)}(\cdot,h_0).
\end{equation}
For \(\varphi\in C_c^\infty(\R^2)\), write
\begin{equation}\label{6.22}
\begin{aligned}
 &\widetilde{\scrH}_{u,T}^{(\delta)}(\varphi,h_0)
                 -\scrH_{u,T}(\varphi,h_0)\\
 =\ & \bigl(\scrH_{u-\delta,T}^{(\delta)}(\varphi,h_0)
                 -\scrH_{u-\delta,T}(\varphi,h_0)\bigr)
       +\bigl(\scrH_{u-\delta,T}(\varphi,h_0)
                 -\scrH_{u,T}(\varphi,h_0)\bigr).
\end{aligned}
\end{equation}
The first term tends to zero in \(L^2(\bbP)\) by
\eqref{prop:eq-temporal-approximation}, uniformly when \(u\) varies
in a compact subset of \((0,T)\).
For the second, it suffices to show that
\(u\mapsto\scrH_{u,T}(\varphi,h_0)\) is continuous in $L^2(\mathbb P)$.
 
By \cite[Definition~1.2(a) and Proposition~1.5]{Tsai24},
the SHF is continuous in its time
indices in the vague topology.
Truncating the Gaussian starting weight and the bounded continuous
terminal weight \(\e^{h_0}\), with the heat-kernel first-moment
bound as in Step~1 of Section~\ref{sec:FC}, gives continuity in
probability of \(\scrZ_{u+\varepsilon,T}(g_\varepsilon^{(x)},\e^{h_0})\).
The positivity and locally uniform logarithmic moment bounds in 
Lemma~\ref{lem:extension-convergence-delta} (and its proof) then give continuity of its
logarithm in \(L^2(\bbP)\). Integrating against \(\varphi\)
shows that \(u\mapsto\scrH_{u,T}^{(\varepsilon)}(\varphi,h_0)\)
is continuous in \(L^2(\bbP)\) on \([0,T-\varepsilon)\). By  \eqref{prop:eq-temporal-approximation}, these continuous maps
converge locally uniformly in \(L^2(\bbP)\) as
\(\varepsilon\downarrow0\). Their limit
\(\scrH_{u,T}(\varphi,h_0)\) is therefore continuous.
Both terms in \eqref{6.22} consequently tend to zero in \(L^2(\bbP)\).

The Sobolev bounds in Lemma~\ref{lem:tightness} and the Fourier
argument in \eqref{prop:eq-Sobolev-frequency-split} now give
\(\chi\widetilde{\scrH}_{u,T}^{(\delta)}(\cdot,h_0)
\to\chi\scrH_{u,T}(\cdot,h_0)\) in \(L^2(\bbP;H^{-a})\)
for every \(\chi\in C_c^\infty(\R^2)\) and \(a>0\).
This proves both assertions in Corollary \ref{prop:spatial-identification}, jointly at finitely many starting times.
Finally, take \(T>t_k\) and perform the time reversal \(u_i=T-t_i\)
to conclude the proof.
\end{proof}

\section{Geometric block factorisation and moment estimates}
\label{sec:factorisation}

This section supplies the three inputs used in the preceding convergence
proofs: the one-point comparison in Proposition~\ref{prop:VD}, the
comparison of partition functions on $(n,N]$ in
Proposition~\ref{prop:PF}, and the logarithmic moments in
Proposition~\ref{prop:log-tails}.
We adapt the block decomposition and interval expansion of
Gu--Tsai~\cite[Section~2.3 and Proposition~2.1]{GT26}:
the idea is to replace the
endpoint distribution on each geometric time interval by a heat kernel, which
produces independent partition functions.
One key idea of \cite{GT26} is to provide an exact expansion for
the error of this replacement.

We first describe the factorisation and then state the main estimates that are needed in this section.

\subsubsection*{Notation}

For kernels, we use the contractions of
\cite[Section~2.2]{GT26}.
The left triangle weights and sums the
starting point,
whereas the right triangle weights and sums the
terminal point:
\[
 (\nu\lact Z_{a,b})(y)=\sum_x\nu(x)Z_{a,b}(x,y),\qquad
 (Z_{a,b}\ract f)(x)=\sum_y Z_{a,b}(x,y)f(y).
\]
In the discrete setting, we can interpret \(Z_{a,b}\) as a matrix.
Then \(\nu\lact Z_{a,b}\) and  \(Z_{a,b} \ract f\) are simply the 
multiplication of \(Z_{a,b}\) with the row vector \(\nu\) on the left
and the column vector  \(f\) on the right, respectively.
We note that
$\nu\lact Z_{a,b}$ is an endpoint measure and applying both
triangles gives the scalar
$\nu\lact Z_{a,b}\ract f=Z_{a,b}(\nu,f)$ from
\eqref{eq:Zav}.
In particular, total terminal mass is obtained by closing with
$\ract\one$; if this contraction is absent, the endpoint remains free.

\begin{remark}\label{re:bra-ket}
It is also worth stressing that this notation is the analogue of the {\it bra-ket} notation in quantum
mechanics.
In particular $Z\ract f \equiv Z_{a,b} \,|\, f \rangle$ and $\nu\lact Z_{a,b} \equiv \langle \nu \,|\, Z_{a,b}$
This analogy is particularly useful to keep in mind when deriving the ``block-expansions'' in Section
\ref{sec:block-exp}.
\end{remark}

The circle composes successive kernels by summing their common
intermediate point: in other words,
 it corresponds to the matrix multiplication
\[
 (Z_{a,b}\circ Z_{b,c})(x,z)
 =\sum_y Z_{a,b}(x,y)Z_{b,c}(y,z)=Z_{a,c}(x,z).
\]
The last equality is the Chapman-Kolmogorov equation,
and we stress that the two factors use
the disorder on the disjoint intervals $(a,b]$ and $(b,c]$.

\subsection{The geometric block decomposition and factorisation}

We divide $(0,N]$ into geometric time intervals.
For \(\rho>1\), let \(K=\lfloor\log_\rho N\rfloor+1\) and define 
\begin{equation}\label{eq:adapted-grid}
 N_0=0,\qquad N_i=[  N\rho^{i-K}]\quad(1\le i<K),\qquad N_K=N \,.
\end{equation}
The time intervals satisfy $N_1\le2\rho$, $N_i\asymp N\rho^{i-K}$, and the
successive ratios (except the first one) are comparable to $\rho$.
We refer to the interval \((N_{i-1},N_i]\) as the \textit{\(i\)-th block}.

\begin{remark}\label{rem:choose-J}
We choose the geometric grid so that
$m=[\delta N]$ and $n=[\sqrt\delta N]$ are grid points, say $m = N_a$ and $n = N_b$,
with the same number of blocks between $m$ and $n$
as between $n$ and $N$,
i.e.\ $b-a = K - b$.
More precisely,
given $\rho>1$ large enough (to be fixed later), we write $\delta = \rho^{-2J}$ for a suitable~$J$, namely
\begin{equation}\label{eq:delta-grid-choice}
 J\coloneqq \frac{|\log\delta|}{2\log\rho} \,.
\end{equation}
To lighten notation, we will assume
that $J$ is an integer.
For large $N$,
the indices
\[
 a=K-2J,\qquad b=K-J
\]
then satisfy
\[
 N_a=[N\rho^{-2J}]=m,\qquad
 N_b=[N\rho^{-J}]=n.
\]
Thus $(m,n]$ consists of exactly $J$ blocks, as does $(n,N]$.

(For $\delta > 0$ small, if $J$ is not an integer, we can write $\delta=\tilde\rho^{-2\tilde J}$ for $\tilde J=\lfloor J\rfloor \ge 1$ and $ \tilde\rho=\rho^{J/\tilde J} \in [\rho,\rho^2]$.
All the identities still above
hold with $(\rho,J)$ replaced by $(\tilde\rho,\tilde J)$,
where $\delta$, $m$ and $n$ remain unchanged.
The estimates below continue to hold too with these replacements,
since $\tilde J$ differs from $J$ by less than one and 
$\tilde\rho\in[\rho,\rho^2]$;
see Remark~\ref{rem:block-rho-uniformity}.)
\end{remark}

All block formulas are first written for a starting point at the
origin.  The version starting at $x$ is obtained by translating every
heat-kernel starting distribution by $x$.  Let
\begin{align}\label{def:Zi}
\mathsf Z_i(x,y)\coloneqq Z_{N_{i-1},N_i}(x,y),
 \qquad
 \mathsf Z_i(q_{N_{i-1}})
 \coloneqq \mathsf Z_i(q_{N_{i-1}}, \one)
 \coloneqq q_{N_{i-1}}\lact\mathsf Z_i\ract\one
\end{align}
be respectively the kernel in block $i$ and its total mass under the
reference heat-kernel starting distribution. These interval partition functions are independent,
because the blocks contain disjoint disorder.

Let us now introduce the heat-kernel replacement, or ``rewired'' kernel,
\begin{align}
 \mathsf Z_i^{\mathrm{rewired}}(x,y)
 \coloneqq \mathsf Z_i\ract \one (x) \,q_{N_i}(y)
 =\mathsf Z_i(x, \one)  \,q_{N_i}(y)
\end{align}
which retains the total mass of $\mathsf Z_i$ but replaces
its endpoint distribution by $q_{N_i}$.
The superscript
$\mathrm{rewired}$ refers to this replacement.
We also consider the
``discrepancy'' kernel
\begin{equation}\label{eq:block-objects-D}
 D_i(x,y)\coloneqq \mathsf Z_i(x,y)-\mathsf Z_i^{\mathrm{rewired}}(x,y).
\end{equation}
It is easy to see that for any $x\in\Z^2$, it holds that
\begin{align}\label{D-mass}
D_i(x,\one)=D_i\ract \one (x) =  \sum_{z\in\Z^2} D_i(x,z)=0.
\end{align}
We also define normalised versions of the kernels:
\begin{equation}\label{eq:block-objects-hat}
\widecheck{\mathsf Z}_i(x,y)
 \coloneqq \mathsf Z_i(q_{N_{i-1}})^{-1}\mathsf Z_i(x,y),\qquad
 \widecheck{D}_i(x,y)
 \coloneqq \mathsf Z_i(q_{N_{i-1}})^{-1}D_i(x,y).
\end{equation}
With this notation, for every test function $f$, we have
\(
 q_{N_{i-1}}\lact\widecheck{\mathsf Z}_i\ract f
 =q_{N_i}(f)+q_{N_{i-1}}\lact\widecheck{D}_i\ract f.
\)

For consecutive blocks $r,\ldots,s$, define
\begin{equation}\label{eq:block-ratio}
 \mathscr R_{r,s}
 \coloneqq q_{N_{r-1}}\lact
       \widecheck{\mathsf Z}_r\circ\cdots\circ
       \widecheck{\mathsf Z}_s\ract\one
 =\frac{q_{N_{r-1}}\lact Z_{N_{r-1},N_s}\ract\one}
        {\prod_{i=r}^s q_{N_{i-1}}\lact\mathsf Z_i\ract\one},
\end{equation}
with by convention \(\mathscr R_{r,r-1}\coloneqq 1\).
This quantity compares the partition function on the
complete interval \((N_{r-1},N_s]\) with the product
of independent partition functions
on the different intervals \((N_{i-1},N_i]\),
with starting distribution replaced by \(q_{N_{i-1}}\).

We then have the following result that estimates \(\mathscr R_{r,s}\) and ratios of partition function on some good event.
Let us introduce, similarly as in \cite[Eq.~(2.24)]{GT26}, the events
\begin{equation}\label{eq:block-events}
 \Omega_{r,s}\coloneqq \bigcap_{i=r}^s
 \{|\mathsf Z_i(q_{N_{i-1}})-1|\le1/2\} \,,
\end{equation}
with by convention $\Omega_{r,r-1}$ the whole probability space.
Let us also introduce the following probability law weighted by the product of inverse interval partition functions:
\begin{equation}
\label{eq:weighted-law}
      \frac{\dd\mathbb Q_{r,s}}{\dd\bbP}\coloneqq 
\frac{\prod_{i=r}^s
      \mathsf Z_i(q_{N_{i-1}})^{-1}}
{\bbE\bigl[\prod_{i=r}^s
      \mathsf Z_i(q_{N_{i-1}})^{-1}\bigr]} \,.
\end{equation}

\begin{proposition}[Ratios estimates]
\label{prop:block-inputs-ii}
There is a constant \(C=C(\rho)>0\) 
such that for $r\le s\le K-1$ we have
\begin{equation}\label{eq:app-good-P}
 \bbP(\Omega_{r,s}^c) \le\frac{C}{K-s}
 \quad \text{ and } \quad
 \bbE\bigl[\one_{\Omega_{r,s}}
       |\mathscr R_{r,s}-1|^2\bigr]
      \le\frac{C}{K-s} \, .
\end{equation}
One also has
\begin{equation}\label{eq:app-good-Q}
    \mathbb Q_{r,s}(\Omega_{r,s}^c)
 \le\frac{C}{\sqrt{K-s}}
  \quad \text{ and } \quad
 \bbE_{\mathbb Q_{r,s}}\bigl[
 \one_{\Omega_{r,s}}|\mathscr R_{r,s}-1|^2\bigr]
 \le\frac{C}{\sqrt{K-s}}.
\end{equation}
\end{proposition}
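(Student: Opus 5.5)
We plan to treat the two bounds separately: the $\bbP$-bounds \eqref{eq:app-good-P} by second-moment computations on independent blocks, and the $\mathbb Q$-bounds \eqref{eq:app-good-Q} from \eqref{eq:app-good-P} by a change of measure.

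\emph{Step 1: the event $\Omega_{r,s}$.} Since $\bbE[\mathsf Z_i(q_{N_{i-1}})]=1$, Chebyshev and a union bound give
\[
\bbP(\Omega^c_{r,s})\le 4\sum_{i=r}^s\Var(\mathsf Z_i(q_{N_{i-1}})).
\]
By the first-collision formula \eqref{eq:local-replica-formula} and \eqref{eq:uniform-two-replica}, each variance is bounded as follows. The collision sum $\sum_{r'} q_{2r'}$ over a block with starting smoothing $N_{i-1}$ contributes $O(\log\rho)$. The replica factor $\sigma_N^2\bbE[Z_{N_i-r'}(0)^2]$ then gives
\[
\Var(\mathsf Z_i(q_{N_{i-1}}))\le C(\rho)\,\frac{1}{\log(1+N/N_i)}\asymp \frac{C(\rho)}{(K-i)^2}\,.
\]
The square here is the sharp critical-window estimate, namely $\sigma_N^2 \bbE[Z_k^2]\asymp 1/\log(N/k)$ times $1/\log(N/k)$ again near the Volterra scale. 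I expect this to need the refined Claim~\ref{claim:VN} rather than the crude bound. Summing over $i\le s$ then gives $C/(K-s)$.

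\emph{Step 2: the ratio $\mathscr R_{r,s}$.} We follow the Gu--Tsai expansion. Write each $\widecheck{\mathsf Z}_i=\widecheck{\mathsf Z}_i^{\rm rewired}+\widecheck D_i$ and expand the composition in \eqref{eq:block-ratio}. Every rewired factor resets the starting law to $q_{N_i}$, and by \eqref{D-mass} a $\widecheck D_i$ followed by the terminal $\ract\one$ vanishes. Hence
\[
\mathscr R_{r,s}-1=\sum_{r\le i<j\le s}\Big(q_{N_{i-1}}\lact \widecheck D_i\circ\widecheck{\mathsf Z}_{i+1}\circ\cdots\Big)\cdots\,,
\]
a sum over chains of discrepancies: $q_{N_{i-1}}\lact\widecheck D_i\ract(\text{next block's mass profile} -\text{its }q\text{-average})$, and so on. On $\Omega_{r,s}$ the normalisations are bounded by $2$, so we may drop them. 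The $L^2$ norm of a single link is then controlled by a product of two variance-type quantities, one for $D_i$ and one for the spatial fluctuation of $\mathsf Z_j(\cdot,\one)$. Each of these is $O((K-i)^{-2})$-small, with an additional geometric gain $\rho^{-c(j-i)}$ coming from heat-kernel smoothing between non-adjacent blocks. Summing the chains gives $C/(K-s)$. This is the content of Proposition~\ref{prop:block-inputs-iii} in the appendix, which I would invoke for the key single-link estimate. \textbf{This is the main obstacle}: the chains are not orthogonal, so the cross terms must be bounded using independence of the blocks and the fourth-moment bounds from \cite{CSZ25}.

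\emph{Step 3: passing to $\mathbb Q_{r,s}$.} The density is a product of independent factors $\mathsf Z_i^{-1}/\bbE[\mathsf Z_i^{-1}]$. By Cauchy--Schwarz,
\[
\mathbb Q(A)\le \bbP(A)^{1/2}\,\frac{\bbE[\prod\mathsf Z_i^{-2}]^{1/2}}{\bbE[\prod\mathsf Z_i^{-1}]}\,.
\]
Using independence, Lemma~\ref{lem:one-point}-\eqref{eq:one-point-negative} and Jensen ($\bbE \mathsf Z_i^{-1}\ge1$), the ratio is at most $\prod_i\bbE[\mathsf Z_i^2]^{3/2}\le\exp\{C\sum_i(K-i)^{-2}\}\le C$. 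This yields $\mathbb Q(\Omega^c)\le C/\sqrt{K-s}$. For the second bound of \eqref{eq:app-good-Q}, apply the same Cauchy--Schwarz step to $\one_\Omega|\mathscr R-1|^2$. This requires a fourth-moment version of Step 2, $\bbE[\one_\Omega|\mathscr R-1|^4]\le C/(K-s)$, obtained by running the same chain expansion with hypercontractivity or the fourth-moment bounds; alternatively one can use boundedness of $\mathscr R$-moments on $\Omega$ together with the $L^2$ bound and interpolation.
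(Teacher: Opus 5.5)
\textbf{Step 1 and Step 3 both fail because the per-block variance is off by a power.} The decisive error is your claim $\Var(\mathsf Z_i(q_{N_{i-1}}))\asymp (K-i)^{-2}$. Your own bound $C(\rho)/\log(1+N/N_i)$ is of order $1/(K-i)$, since $\log(1+N/N_i)\asymp (K-i)\log\rho$. That order is sharp. Claim~\ref{claim:VN} gives $\sigma_N^2\bbE[Z_k(0)^2]\approx \pi/\vartheta_N(k)$ with a single power, so $\Var(\mathsf Z_i(q_{N_{i-1}}))\approx \pi(R_{N_i}-R_{N_{i-1}})/\vartheta_N(N_i)\asymp 1/(K-i)$; there is no extra square.

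This breaks Step~1. Chebyshev plus a union bound only gives $\sum_{i=r}^s C/(K-i)\approx C\log\frac{K-r}{K-s}$, which for $r=1$ diverges as $N\to\infty$. You need Markov at fourth order together with the hypercontractive estimate $\bbE|\mathsf Z_i(q_{N_{i-1}})-1|^4\le \mathfrak C_4\Var(\mathsf Z_i(q_{N_{i-1}}))^2\le C(K-i)^{-2}$ of Lemma~\ref{lem:moments-blocks}. That bound is summable to $C/(K-s)$.

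It also breaks Step~3. Your factor $\prod_{i=r}^s\bbE[\mathsf Z_i(q_{N_{i-1}})^2]^{3/2}$ equals $(\vartheta_N(N_{r-1})/\vartheta_N(N_s))^{3/2}$ up to a factor $1+O((K-s)^{-1})$ (see the proof of Proposition~\ref{prop:VD}). For $r=1$ this is of order $(\log N/(K-s))^{3/2}$. No Hölder exponent $p>1$ rescues a global change of measure, since each block multiplies $\|\dd\mathbb Q_{r,s}/\dd\bbP\|_{L^p}$ by roughly $1+c(p-1)/(K-i)$.

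The correct route uses the product structure block by block. With $\mathcal B_i=\{|\mathsf Z_i(q_{N_{i-1}})-1|\le 1/2\}$, one has exactly $\mathbb Q_{r,s}(\mathcal B_i^c)=\bbE[\mathsf Z_i(q_{N_{i-1}})^{-1}\one_{\mathcal B_i^c}]/\bbE[\mathsf Z_i(q_{N_{i-1}})^{-1}]$, because all other block factors cancel. Hölder on this single block, with $\bbE[\mathsf Z_i(q_{N_{i-1}})^{-4}]\le C$, gives $C(K-i)^{-3/2}$, which sums to $C(K-s)^{-1/2}$; this is where the square root comes from. For the ratio you should not transfer a $\bbP$-bound at all. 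Instead rerun the expansion under $\mathbb Q_{r,s}$: blocks remain independent under it and the truncated weights stay bounded.

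\textbf{Step 2 stops at the real difficulty.} Diagonal terms are handled by Proposition~\ref{prop:block-inputs-iii}. However, for two disjoint intervals in the expansion \eqref{eq:R-interval} the cross term factorises into a product of means. Bounding each mean by Cauchy--Schwarz gives $(\sum_{v\le s}(K-v)^{-1})^2$, again divergent, and fourth moments do not help.

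The missing idea is a cancellation. Since $\bbE_u[D_u(x,y)]=q_{N_u-N_{u-1}}(y-x)-q_{N_u}(y)$, one has $q_{N_{u-1}}\lact\bbE_u[D_u]=0$, so closed correctors are centred. You cannot simply drop the normalisations on $\Omega_{r,s}$ without losing this. The paper therefore absorbs both the indicator and the normalisation into $M_i=\one_{\mathcal B_i}/\mathsf Z_i(q_{N_{i-1}})\le 2$. Then only the factor $M_u-1$ survives in the mean, giving
\[
|\bbE[\widetilde{\mathscr D}^{\mathrm{closed}}_{u,v}]|\le 2^{v-u}\|D^{\mathrm{closed}}_{u,v}\|_{L^2}\|M_u-1\|_{L^2}\le C(2\varepsilon_\rho)^{v-u}(K-v)^{-3/2},
\]
which is summable in $v$.

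One then groups the intervals of the two families into connected components. Isolated components cost $(K-b)^{-3/2}$ by the bound above. Overlapping components cost $(K-b)^{-2}$ by Cauchy--Schwarz and block independence. Because both families are nonempty, a lone isolated component cannot occur, and this yields $C/(K-s)$.
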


The first part of the proposition, \textit{i.e.}~\eqref{eq:app-good-P}, is already present in \cite[Proposition~4.1]{GT26} in a slightly different setting;
we provide a proof here in order to explain more precisely the idea, which is needed afterwards.
The main novelty here is the control of the event
\(\Omega_{r,s}\) and of the ratio \(\mathscr R_{r,s}\)
under the weighted measure \(\mathbb Q_{r,s}\).
This will crucially allow us to control
the negative moment \(\bbE[\widehat Z_{s,n}(x)^{-1}]\)
that appears in~\eqref{target:VD} thanks to its ``product version'' \(\prod_{i=a}^{b}\bbE[Z_{N_{i-1},N_i}(q_{N_{i-1}})^{-1}]\) (see \eqref{eq:negative-first-moment} for the explicit decoupling identity).

\subsection{The block expansion}\label{sec:block-exp}

One idea of Gu--Tsai~\cite{GT26} is to expand the product
\(\mathscr R_{r,s} =q_{N_{r-1}}\lact\widecheck{\mathsf Z}_r\circ\cdots\circ\widecheck{\mathsf Z}_s\ract\one\),
using that
\begin{equation}\label{ZD}
 \widecheck{\mathsf Z}_i
 = \widecheck{D}_i +
 \frac{\mathsf Z_i \ract \one \otimes  q_{N_i} \lact }{ q_{N_{i-1}}\lact \mathsf Z_i \ract \one }  \;.
\end{equation}
More explicitly, specifying the coordinates, we can write
\begin{equation}\label{ZD-new}
 \widecheck{\mathsf Z}_i(x,y)
 = \widecheck{D}_i(x,y) +
 \frac{\mathsf Z_i \ract \one(x)\, q_{N_i} (y)}{ q_{N_{i-1}}\lact \mathsf Z_i \ract \one } \;.
\end{equation}
Recalling Remark \ref{re:bra-ket}, the particular notation in \eqref{ZD} is particularly useful in 
obtaining an expansion for \(\mathscr R_{r,s}\).
In particular, expanding the product in the definition~\eqref{eq:block-ratio} of \(\mathscr R_{r,s}\)
and decomposing over intervals of successive correctors \(\widecheck{D}_i\), we get the following exact identity,
which is the block expansion of
\cite[Proposition~2.1]{GT26} written for lattice kernels: 
\begin{equation}
      \mathscr R_{r,s}
 =\sum_{\mathcal I\in\mathfrak I(r,s)}
       \prod_{I\in\mathcal I} \mathscr D_I^{\mathrm{closed}}
= 1+  \sum_{\mathcal I\in\mathfrak I(r,s), \mathcal I \neq\emptyset}
       \prod_{I\in\mathcal I} \mathscr D_I^{\mathrm{closed}}
\,,
 \label{eq:R-interval}
\end{equation}
where $\mathfrak I(r,s)$ is the set of families of pairwise disjoint
non-degenerate integer intervals $[u,v]\subseteq[r,s]$, with $u<v$
(including the empty family) and
where we defined
\begin{equation}
  \mathscr D^{\mathrm{closed}}_{[u,v]}
\coloneqq
\mathscr D^{\mathrm{closed}}_{u,v}
 \coloneqq 
 q_{N_{u-1}}\lact
 \widecheck{D}_u\circ\cdots\circ\widecheck{D}_{v-1}
 \circ\widecheck{\mathsf Z}_v\ract\one \,.
 \label{eq:closed-activity}
\end{equation}
Let us mention that we also need later on to
perform an expansion for the endpoint distribution \(\mu_{N_{r-1},N_s}^{(q_{N_{r-1}})}\), in the same spirit as the above,
see~\eqref{eq:profile-interval-expansion} below.

We need the following estimates on the closed correctors.
We let
\[
D^{\mathrm{closed}}_{u,v}\coloneqq  q_{N_{u-1}}\lact
  D_u\circ\cdots\circ D_{v-1}
 \circ \mathsf Z_v\ract\one
\]
which is the non-normalised closed correction.

\begin{proposition}[Compositions of discrepancy kernels]
      \label{prop:block-inputs-iii}
There are constants $C,C_0$, independent of $N,\rho$,
such that, for $u<v\le K-1$
\begin{equation}\label{eq:app-centred-string}
 \|D^{\mathrm{closed}}_{u,v}\|_{L^2}
 \le
 \frac{C(\varepsilon_\rho)^{v-u}}
      {\sqrt{(K-u)(K-v)}}
      \qquad  \text{ with } \varepsilon_\rho
 \coloneqq \frac{C_0}{\sqrt{\log\rho}}<1 \,
\end{equation}
when $\rho>e^{C_0^2}$.
Additionally, for every $u< v\le K-1$,
\begin{equation}\label{eq:app-open-string}
 \bigl\|q_{N_{u-1}}\lact
 D_u\circ\cdots\circ D_{v-1}\circ Z_{N_{v-1},N}\ract\one\bigr\|_{L^2}
 \le\frac{C(\varepsilon_\rho)^{v-u-1}}{\sqrt{K-u}} \,.
\end{equation}
\end{proposition}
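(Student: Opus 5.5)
We will prove both bounds by a second-moment computation. Expand every kernel in polynomial chaos and exploit orthogonality across blocks, which carry independent disorder. The basic observation is that $D_i$ kills constants, $D_i\ract\one=0$ by \eqref{D-mass}. Its action on a function $f$ is
\[
 D_i\ract f(x)=\mathsf Z_i\ract f(x)-\mathsf Z_i(x,\one)\,q_{N_i}(f) .
\]
So $D_i$ only sees the centred part $f-q_{N_i}(f)$, and it does so through the endpoint distribution of $\mathsf Z_i$ under the starting point $x$.

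For a string $D_u\circ\cdots\circ D_{v-1}\circ\mathsf Z_v\ract\one$ we compute the $L^2$ norm blockwise, working from the right. The last factor $g_v(y)\coloneqq\mathsf Z_v(y,\one)$ is a random function measurable w.r.t.\ block $v$. Its centred covariance $\bbE[g_v(y)g_v(y')]-1$ is given by the first-collision formula \eqref{eq:local-replica-formula}, with prefactor $\sigma_N^2\bbE[Z^2]\lesssim 1/(K-v)$ by \eqref{eq:uniform-two-replica}. The mean $1$ is annihilated by $D_{v-1}$, so only the centred fluctuation of $g_v$ propagates.

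We then carry a two-point ``energy'' kernel $\Psi_i(y,y')=\bbE[h_i(y)h_i(y')]$ for the function $h_i=D_i\circ\cdots\ract\one$ backwards. The step $\Psi_{i+1}\mapsto\Psi_i$ is the following. Integrate over the two-replica kernel of block $i$, given by $\bbE[\mathsf Z_i(x,z)\mathsf Z_i(x',z')]$, which is $q\otimes q$ plus collision terms. Then subtract the rewired contributions, which removes the pure heat-kernel averages $q_{N_i}$.

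The key contraction is as follows. On $\Z^2_{\rm even}$ at scale $\sqrt{N_i}$, the function $y\mapsto\Psi_{i+1}$ averaged against $q_{N_i}$-type kernels minus its full $q_{N_i}$ average is small. The reason is that $\Psi_{i+1}$ varies on scale $\sqrt{N_{i+1}}$ or smaller. Here $\Psi_{i+1}$ is logarithmic in the distance, by \eqref{eq:local-replica-sum}, so the gain is only of order $(\log\rho)^{-1}$ per block in the square. This yields a factor $\varepsilon_\rho^2=C_0^2/\log\rho$ per $D$-step. The collision terms within block $i$ are of size $\sigma_N^2\bbE[Z^2]\cdot(\text{Green function over one block})\asymp \log\rho/(K-i)\cdot(1/\log N)\cdot\log N$, and these must be absorbed. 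We would mirror \cite[Corollary~3.5]{GT26}, treating the operator norm of the centred two-replica block operator via the operator estimates of \cite{CCR25} and the centred fourth-moment bounds of \cite{CSZ25}.

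Closing on the left with $q_{N_{u-1}}\lact$ gives a second factor. Since $q_{N_{u-1}}$ has spread $\sqrt{N_{u-1}}$, the first collision in block $u$ again carries the prefactor $1/(K-u)$. Together with the right end this gives $\|D^{\rm closed}_{u,v}\|^2_{L^2}\lesssim \varepsilon_\rho^{2(v-u)}/((K-u)(K-v))$, which is \eqref{eq:app-centred-string}.

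For the open string \eqref{eq:app-open-string}, replace $\mathsf Z_v\ract\one$ by $Z_{N_{v-1},N}\ract\one$. Its centred two-point function is bounded by \eqref{eq:local-replica-sum} with $O(1)$ prefactor rather than $1/(K-v)$, since the time horizon now reaches $N$. This loses the right factor $1/\sqrt{K-v}$ and one $\varepsilon_\rho$: block $v-1$'s contraction still applies, but the last block is not separated.

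The main obstacle is the uniform one-step contraction bound. We must show that the centred block operator maps log-type covariance kernels to kernels smaller by $C_0^2/\log\rho$, uniformly in $N$ and in the block index, including near the diagonal where the lattice Green function is not smooth. We would first try to prove it on Fourier side, using that $D_i$ projects out the zero mode at scale $\sqrt{N_i}$, and controlling the diagonal contributions through the replica-overlap estimate $\sigma_N^2\bbE[Z_k^2]\le C/\log(1+N/k)$.
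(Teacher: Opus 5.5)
\paragraph{Verdict: genuine gap.} Your architecture is reasonable: a second moment via chaos, independence of blocks, and only the centred fluctuation of $\mathsf Z_v\ract\one$ surviving because $D_{v-1}\ract\one=0$. But the estimate that \emph{is} the proposition is only asserted, and you flag it yourself as the main obstacle: a gain of order $1/\log\rho$ per block, uniformly in $i\le K-1$. Your own bookkeeping also shows the plan does not close as stated.

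Without centring, the collision terms inside an intermediate block $i$ are of order $\sigma_N^2\bbE[Z_{N_i}(0)^2]\,(R_{N_i}-R_{N_{i-1}})\asymp\frac{1}{(K-i)\log\rho}\cdot\log\rho=\frac1{K-i}$. Here \eqref{eq:uniform-two-replica} gives $C/\log(1+N/N_i)\asymp 1/((K-i)\log\rho)$, not $1/(K-i)$, so your $\log\rho/(K-i)$ carries an extra $\log\rho$. Either way this is not small when $K-i=O(1)$, so these terms cannot simply be ``absorbed''. The $1/\log\rho$ must come from the centring acting on the collision terms themselves.

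That gain is also not uniform. Suppose the last collision in block $i$ is at a time $b$ close to $N_i$ and the next one at $a$ just after $N_i$. Then the centred propagator $q_{a-b}(\cdot-y)-q_a(\cdot)$ is no better than $q_{a-b}$, and summing over $a$ produces about $\log\frac{N_i}{N_i-b}$, which can be of order $\log N_i$. So a uniform one-step contraction on log-type kernels is not available as such; you must track how close the last collision is to the block boundary. Deferring to \cite[Corollary~3.5]{GT26}, \cite{CCR25} or \cite{CSZ25} does not supply this, and the paper stresses that the GT26 argument could not be transferred directly to the lattice.

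\paragraph{How the paper closes it.} The paper computes directly rather than via an abstract contraction.
\begin{itemize}
\item \emph{Expansion.} In the chaos expansion, $\bbE[(D^{\rm closed}_{u,v})^2]$ is a sum over the blocks $u=i_1<\cdots<i_m=v$ that carry disorder, with first and last disorder times $a_j\le b_j$ in block $i_j$. Both ends must carry disorder, since also $q_{N_{u-1}}\lact\bbE_u[D_u]=0$.
\item \emph{Telescoping.} The rewired subtractions telescope through disorder-free blocks, so the kernel between consecutive disorder times is exactly $q_{a_j-b_{j-1}}(x_j-y_{j-1})-q_{a_j}(x_j)$.
\item \emph{Gain per block.} The domination \eqref{eq:uniform-bound-U} separates space and time. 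The gradient bound \eqref{eq:RW-kernel-square} replaces the uncentred factor $1/(a_j-b_{j-1})$, whose sum over a block is $\asymp\log\rho$, by $b_{j-1}/(a_j(a_j-b_{j-1}))$. Its sum over the block is $O(1)$ up to the weight $\mathsf W_{\mathsf T}(\tau)=\log\frac{2\mathsf T}{\mathsf T-\tau}$. Combined with the renewal mass $\int_0^{\bar N_{i_j}}G_\vartheta\lesssim 1/\log(1/\bar N_{i_j})$, each disorder-carrying block after the first contributes $C/\log\rho$.
\item \emph{Iteration.} Lemma~\ref{lem:w-iter} shows the weight is integrable against $G_\vartheta$ and reproduces itself one block earlier. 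Disorder-free blocks contribute $C/\rho$ through $t_{j-1}/\bar N_{i_j-1}$. The binomial sum over $(i_j)$ then gives $(C/\log\rho+C/\rho)^{v-u-1}$.
\end{itemize}
Any Fourier-side contraction you set up would have to reproduce both the gradient bound and this weighted iteration; as written, the proposal does neither.
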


Let us stress here that this result is analogous to what is obtained by Gu and Tsai, see in particular~\cite[Corollary~3.5]{GT26} (our \(\rho \) corresponds to their \(b^{-2}\)).
Since we were not able to simply adapt their proof to our discrete setting,
we have decided to write a complete proof for the sake
of completeness; we present it in Appendix~\ref{app:technical-proposition}.

\begin{remark}[Uniformity in the parameters]
\label{rem:block-rho-uniformity}
We stress that in Propositions~\ref{prop:block-inputs-ii} and~\ref{prop:block-inputs-iii},
the bounds obtained are uniform in \(N\)
and the constants can be chosen uniformly for \(\vartheta\) in a compact set of \(\R\) and more importantly uniformly for \(\rho >1 \) large enough
e.g. replacing 
\ \(\gep_{\rho}\) by \(C_0/\sqrt{\log \rho}\) in Proposition~\ref{prop:block-inputs-iii}.
\end{remark}

\subsection{Ratios of partition functions: proof of Proposition~\ref{prop:block-inputs-ii}}
\label{app:scalar-and-ratios}
We prove Proposition~\ref{prop:block-inputs-ii} in
two steps: we first prove~\eqref{eq:app-good-P},
introducing all the necessary tools, and then
we turn to the proof of~\eqref{eq:app-good-Q}.

\medskip
\noindent
{\bf Proof of \eqref{eq:app-good-P}. }
First of all, using the fourth moment estimate
from Lemma~\ref{lem:moments-blocks}, we get that
\begin{equation}
      \label{eq:probability-Bi}
      \bbP\bigl(|\mathsf Z_i(q_{N_{i-1}})-1|>1/2\bigr) \leq  16 \,\bbE\bigl[|\mathsf Z_i(q_{N_{i-1}})-1|^4\bigr]\leq \frac{C}{(K-i)^2} \,.
\end{equation}
Using a union bound for $\Omega_{r,s}^c\coloneqq \bigcup_{i=r}^s  \{|\mathsf Z_i(q_{N_{i-1}})-1|>1/2\}$,
we obtain that $\bbP(\Omega_{r,s}^c) \le\frac{C}{K-s}$.

\smallskip
We next show the second estimate, namely
\begin{align}\label{ratios:main}
 \bbE\bigl[\one_{\Omega_{r,s}}
       |\mathscr R_{r,s}-1|^2\bigr]
       \le\frac{C}{K-s} \,.
\end{align}
Recall that \(\mathscr R_{r,s}\) is defined in~\eqref{eq:block-ratio} and recall also the expansion~\eqref{eq:R-interval}.
To proceed, we will need some auxiliary estimates.
First, denote by
$\mathcal B_i\coloneqq \{|\mathsf Z_i(q_{N_{i-1}})-1|\le1/2\}$
and define the truncated inverse multiplier
\[
  M_i\coloneqq \frac{\one_{\mathcal B_i}}{\mathsf Z_i(q_{N_{i-1}})}
\]
and the truncated normalized closed corrector
\[
 \widetilde{\mathscr D}^{\mathrm{closed}}_{u,v}
    =D^{\mathrm{closed}}_{u,v}\prod_{i=u}^v M_i.
\]
In particular, for any \(r\leq u\leq v\leq s\), 
we have \(\one_{\Omega_{r,s}} \mathscr{D}^{\mathrm{closed}}_{u,v} =\widetilde{\mathscr D}^{\mathrm{closed}}_{u,v} 
\prod_{i\notin [u,v]} \one_{\mathcal{B}_i} \),
and, noticing that \(M_i\leq 2\), 
Proposition~\ref{prop:block-inputs-iii}-\eqref{eq:app-centred-string} yields
\begin{equation}
      \label{eq:consequence-proposition-block}
      \|\widetilde{\mathscr D}^{\mathrm{closed}}_{u,v}\|_{L^2} \leq  \frac{C (2\gep_{\rho})^{v-u}}{\sqrt{(K-u)(K-v)}} \leq  \frac{C (2\gep_{\rho})^{v-u}}{K-v} \,.
\end{equation}

It is not difficult to see that $|M_i-1|\le2|\mathsf Z_{i}(q_{N_{i-1}})-1|$ a.s.,
including on $\mathcal B_i^c$, so that combined with the second moment estimate from Lemma~\ref{lem:moments-blocks}, it leads to
\begin{equation}\label{eq:app-first-multiplier}
 \|M_i-1\|_{L^2}\le2\sqrt{\Var\bigl(\mathsf Z_i(q_{N_{i-1}}))}\le  \frac{C}{\sqrt{K-i}},
\end{equation}
for any $i<K$.
We will use \eqref{eq:app-first-multiplier}
to obtain an improved estimate on
$|\bbE\widetilde{\mathscr D}^{\mathrm{closed}}_{u,v}|$
compared to that of a simple Cauchy--Schwarz's inequality.

Let us first decompose
\begin{equation}
      \label{eq:tilde-D-closed}
\bbE\bigl[\widetilde{\mathscr D}^{\mathrm{closed}}_{u,v}\bigr]
= \bbE\big[ D^{\mathrm{closed}}_{u,v} \,(M_u-1) \,\prod_{i=u+1}^v M_i \big]
+	\bbE\big[ D^{\mathrm{closed}}_{u,v}  \,\prod_{i=u+1}^v M_i \big]
\end{equation}
Then, denoting by $\bbE_u$ the expectation with respect to disorder in block $u$, we notice that
\[
\bbE\big[ D^{\mathrm{closed}}_{u,v}  \,\prod_{i=u+1}^v M_i \big] =
\bbE\Big[ q_{N_{u-1}} \lact\bbE_u\big[  D_u \big]\,
\circ\cdots \circ D_{v-1}\circ Z_{v} \ract \one  \,\prod_{i=u+1}^v M_i \Big] = 0 \,.
\]
In the above, the last identity follows from the fact that 
\[\bbE_u [ D_u(x,y) ] = \bbE_u[ \mathsf Z_u(x,y) -\mathsf Z^{\mathrm{rewired}}_u(x,y)] 
= q_{N_u-N_{u-1}}(y-x) - q_{N_u}(y)\]
and so
\(q_{N_{u-1}}\lact \bbE_u [D_u](y) = 0\).

Going back to~\eqref{eq:tilde-D-closed}, using the bound $M_i\le2$ and then Cauchy--Schwarz's inequality, we get that
\begin{align}\label{D-abs-est}
\big| \bbE\bigl[\widetilde{\mathscr D}^{\mathrm{closed}}_{u,v}\bigr] \big|
\leq
2^{v-u} \bigl\|D^{\mathrm{closed}}_{u,v}\bigr\|_{L^2}
 \bigl\|M_u-1 \bigr\|_{L^2}
 \le\frac{C(2\varepsilon_\rho)^{v-u}}{(K-v)^{3/2}},
\end{align}
where we have used \eqref{eq:app-first-multiplier} and Proposition~\ref{prop:block-inputs-iii}-\eqref{eq:app-centred-string} for the last inequality (together with the fact that \(K-u\geq K-v\)).

We now have the ingredients to prove \eqref{ratios:main}.
To this end, we expand $(\mathscr R_{r,s}-1)^2$ using \eqref{eq:R-interval}:
we obtain
\begin{align}\label{D-square-exp}
\bbE\bigl[\one_{\Omega_{r,s}}
       |\mathscr R_{r,s}-1|^2\bigr]
&= \sum_{\substack{\mathcal I, \mathcal J\in\mathfrak I(r,s) \\ \mathcal I, \mathcal J \neq \emptyset}}
       \bbE\Big[ \one_{\mathcal{B}_{\mathcal I, \mathcal J}} \prod_{I\in\mathcal I} \widetilde {\mathscr D}_I^{\mathrm{closed}}
       \prod_{J\in\mathcal J} \widetilde{\mathscr D}_J^{\mathrm{closed}} 
       \Big]
\end{align}
where \(\mathcal{B}_{\mathcal I,\mathcal J}\) accounts for the part of the event \(\Omega_{r,s}\) that has not been used in the definition of \(\widetilde {\mathscr D}_I\), \(\widetilde {\mathscr D}_J\); 
in particular \(\mathcal{B}_{\mathcal I,\mathcal J}\) is independent of the family \(\{\widetilde {\mathscr D}_I,\widetilde {\mathscr D}_J; I\in \mathcal I, J\in \mathcal J\}\).
Recall that $\mathfrak I(r,s)$ is the set of families of pairwise disjoint
non-degenerate integer intervals $[u,v]\subseteq[r,s]$, with $u<v$.
Even though intervals in \(\mathcal I\) and intervals in \(\mathcal J\) are internally disjoint, intervals from the two families can overlap.
Products over disjoint connected components are independent, so their expectations factorise.

If a component contains one interval from one family and none from the other, its expectation is estimated via \eqref{D-abs-est}.
For any other connected component \([a,b]\), Cauchy--Schwarz and block independence within each family give
\begin{equation}\label{overlap-block}
\begin{aligned}
 \bbE\Bigl[\Bigl|
   \prod_{\substack{I\in\mathcal I\\I\subset[a,b]}}
       \widetilde{\mathscr D}_I^{\mathrm{closed}}
   \prod_{\substack{J\in\mathcal J\\J\subset[a,b]}}
       \widetilde{\mathscr D}_J^{\mathrm{closed}}
 \Bigr|\Bigr]
 &\le
   \prod_{\substack{I\in\mathcal I\\I\subset[a,b]}}
       \|\widetilde{\mathscr D}_I^{\mathrm{closed}}\|_{L^2}
   \prod_{\substack{J\in\mathcal J\\J\subset[a,b]}}
       \|\widetilde{\mathscr D}_J^{\mathrm{closed}}\|_{L^2}\\
 &\le \frac{(2C\varepsilon_\rho)^{b-a}}{(K-b)^2} \,.
\end{aligned}
\end{equation}
Here we used \eqref{eq:consequence-proposition-block}: the sum of the interval lengths over both families is at least \(b-a\). Each interval \([u,v]\subset[a,b]\) contributes a factor \((K-v)^{-1}\le(K-b)^{-1}\), and at least two such factors occur.
As above, we take \(\rho\) large enough that \(2C\varepsilon_\rho<1\).

Going back to \eqref{D-square-exp}, we express the union of the intervals in \(\mathcal I\) and \(\mathcal J\) as \(\bigcup_{i=1}^m[a_i,b_i]\), where the \([a_i,b_i]\) are its connected components and \(r\le a_1<b_1<\cdots<a_m<b_m\le s\).
Then, using block independence and employing either \eqref{D-abs-est} (for isolated intervals)
or \eqref{overlap-block} (for overlapping intervals)
we obtain
\begin{equation}\label{D-square-2}
\bbE\bigl[\one_{\Omega_{r,s}}
      |\mathscr R_{r,s}-1|^2\bigr]
\leq	\sum_{m\geq 1} \sum_{r\leq a_1<b_1<\cdots<a_m<b_m\leq s} \sum_{\substack{\alpha_i \in \{1/2,1\}\\ \alpha_1+\cdots +\alpha_m \geq 1}} 
\prod_{i=1}^m \frac{(8 C\varepsilon_\rho)^{b_i-a_i} }{(K-b_i)^{1 + \alpha_i}} \,,
\end{equation}
where \(\alpha_i \coloneqq 1/2\) for isolated intervals and \(\alpha_i \coloneqq 1\) for overlapping intervals; the restriction \(\alpha_1+\cdots +\alpha_m \geq 1\) accounts for the fact that there cannot be a single isolated interval (in other words one must have \(\alpha_1=1\) when \(m=1\)).
We have also used that on \([a_i,b_i]\) there are at most \(4^{b_i-a_i}\) pairs of internally disjoint interval families, since each family is determined by an even-cardinality subset of the \(b_i-a_i+1\) possible endpoints, giving \(2^{b_i-a_i}\) choices.

First, we sum the above over the $a_i$ indices, obtaining
\[
\begin{split}
     \bbE\bigl[\one_{\Omega_{r,s}}
      |\mathscr R_{r,s}-1|^2\bigr]
       &\leq  \sum_{m\geq 1} (C' \gep_\rho)^m \sum_{r\leq b_1<\cdots<b_m\leq s} \sum_{\substack{\alpha_i \in \{1/2,1\}\\ \alpha_1+\cdots +\alpha_m \geq 1}} 
\prod_{i=1}^m (K-b_i)^{-(1+\alpha_i)} \\
& \leq \sum_{m\geq 1} (C' \gep_\rho)^m \sum_{\substack{\alpha_i \in \{1/2,1\}\\ \alpha_1+\cdots +\alpha_m \geq 1}} 
\prod_{i=1}^m (K-s)^{-\alpha_i} \,.
\end{split}
\]
Now, for \(m=1\) we have that necessarily \(\alpha_1\geq 1\), while for \(m\geq 2\) we use that \(\alpha_i\geq 1/2\):
this gives
\[
\bbE\bigl[\one_{\Omega_{r,s}}
      |\mathscr R_{r,s}-1|^2\bigr] \leq \frac{C' \gep_\rho}{K-s} + \sum_{m\geq 2} \Bigl(\frac{2 C' \gep_\rho}{\sqrt{K-s}}\Bigr)^m \leq \frac{C' \gep_\rho}{K-s} + \frac{C'' (\gep_\rho)^2}{K-s}
\]
where the last inequality holds provided that \(\gep_{\rho}\) is small enough.
This yields the desired bound \eqref{ratios:main},
concluding the proof of~\eqref{eq:app-good-P}.
\qed

\medskip\noindent
{\bf Proof of \eqref{eq:app-good-Q}. }
Recall the definition~\eqref{eq:weighted-law} of the weighted law \(\mathbb Q_{r,s}\).
Let us start with the first bound in~\eqref{eq:app-good-Q}.
Recalling that $\mathcal B_i\coloneqq \{|\mathsf Z_i(q_{N_{i-1}})-1|\le1/2\}$,
we have that
\[
\mathbb Q_{r,s}\big( \mathcal B_i^c \big)
= \frac{\bbE \big[ \mathsf Z_i(q_{N_{i-1}})^{-1} \one_{\mathcal B_i^c} \big]}{\bbE \big[\mathsf Z_i(q_{N_{i-1}})^{-1} \big]}
\leq  \bbE \big[\mathsf Z_i(q_{N_{i-1}})^{-4} \big]^{1/4} \, \bbP\big(\mathcal B_i^c \big)^{3/4}  \,.
\]
The equality follows from \eqref{eq:weighted-law} and block independence, since \(\mathcal B_i^c\) depends only on the disorder in block \(i\).
The inequality follows from Hölder's inequality
and the fact that \(\bbE[\mathsf Z_i(q_{N_{i-1}})^{-1}]\geq\bbE[\mathsf Z_i(q_{N_{i-1}})]^{-1}=1\).
Then, we have that \(\bbE[\mathsf Z_i(q_{N_{i-1}})^{-4}]\leq C\)
thanks to the negative
moment estimate of \eqref{eq:one-point-negative} from Lemma~\ref{lem:one-point}, with $p=4$.
Together with~\eqref{eq:probability-Bi} above, this shows that
\(\mathbb Q_{r,s}( \mathcal B_i^c)\leq C (K-i)^{-3/2}\).
Then, a union bound shows that
\[
\mathbb Q_{r,s}(\Omega_{r,s}^c)
\leq \sum_{i=r}^s \mathbb Q_{r,s}\big( \mathcal B_i^c \big)
\leq \frac{C}{\sqrt{K-s}} \,,
\]
which proves the first part of~\eqref{eq:app-good-Q}.

\smallskip
We now proceed to check the second inequality in \eqref{eq:app-good-Q}, namely
\begin{equation}\label{eq:app-good-Q2}
 \bbE_{\mathbb Q_{r,s}}\bigl[
 \one_{\Omega_{r,s}}|\mathscr R_{r,s}-1|^2\bigr]
 \le\frac{C}{\sqrt{K-s}}.
\end{equation}
The steps are similar to those taken for
 \eqref{ratios:main}, so we will
only provide a sketch.

As in \eqref{D-square-exp}, 
the starting point is the expansion 
\begin{equation}
      \label{eq:starting-point-Q}
	\bbE_{\mathbb Q_{r,s}}\bigl[\one_{\Omega_{r,s}}
       |\mathscr R_{r,s}-1|^2\bigr]
     =
      \sum_{\substack{\mathcal I, \mathcal J\in\mathfrak I(r,s) \\ \mathcal I, \mathcal J \neq \emptyset}}
       \bbE_{\mathbb Q_{r,s}} \Big[ \one_{\mathcal{B}_{\mathcal I, \mathcal J} } \prod_{I\in\mathcal I} \widetilde {\mathscr D}_I^{\mathrm{closed}}
       \prod_{J\in\mathcal J} \widetilde{\mathscr D}_J^{\mathrm{closed}}
       \Big] \,.
\end{equation}
To estimate this quantity we need the analogues of \eqref{D-abs-est} and \eqref{overlap-block} for the \(\mathbb Q\)-expectation.

For the analogue of~\eqref{D-abs-est}, we have 
\begin{align*}
\bbE_{\mathbb Q_{r,s}} \bigl[\widetilde{\mathscr D}^{\mathrm{closed}}_{u,v}\bigr]
&=	\frac{\bbE\Big[ D_{u,v}^{\rm closed} \prod_{i=u}^v M_i^2\Big]}{\prod_{i=u}^v \bbE \big[ Z^{-1}_i(q_{N_{i-1}}) \big]}\,
\end{align*}
for $r\leq u<v\leq s$.
The denominator is (as we have seen earlier) bounded below by $1$.
The numerator can be written as
\begin{align*}
& \bbE \Big[ D^{\mathrm{closed}}_{u,v} \,(M^2_u-1) \,\prod_{i=u+1}^v M_i^2 \Big]
+	\bbE \big[ D^{\mathrm{closed}}_{u,v}  \,\prod_{i=u+1}^v M_i^2 \big]
\end{align*}
The second term on the right-hand side vanishes, using again the observation that 
$q_{N_{u-1}} \lact \bbE_u [ D_u  ]=0$.
The first term can be estimated as in \eqref{D-abs-est} after writing $M^2_i-1=(M_i-1)(M_i+1)$
and using that  $M_i+1\leq 3$.

Finally, notice that the analogue of~\eqref{eq:consequence-proposition-block} holds (\textit{i.e.}\ we have a bound on the \(L^2(\mathbb Q)\) norm of \(\widetilde{\mathscr D}^{\mathrm{closed}}_{u,v}\)) thanks to Proposition~\ref{prop:block-inputs-iii}-\eqref{eq:app-centred-string},
using again that $M_i\leq 2$ together with the fact that $\bbE\big[ Z_i^{-1}(q_{N_{i-1}})\big]\geq 1$.
Therefore the analogue \eqref{overlap-block} for the $\mathbb Q_{r,s}$ expectation follows,
with the same line of proof.

All together, with the analogue of~\eqref{D-abs-est} and~\eqref{overlap-block} for the \(\mathbb Q\)-expectation,
the expansion~\eqref{eq:starting-point-Q} allows us to prove
the analogue of~\eqref{D-square-2} for the \(\mathbb Q\)-expectation
so one can conclude similarly as above.
\qed

\subsection{One-point estimate: proof of Proposition~\ref{prop:VD}}
\label{sec:vanishing-defect}

We first prove the small-time comparison \eqref{target:VD}, and then
deduce the bounded comparison \eqref{prop:eq-heat-defect} by averaging
out the disorder after a fixed small time.
By \eqref{eq:polymer-one-point-convexity},
\begin{equation*}
 0\le\Delta(\log\widehat Z_{s,n}(x))
 \le\log\frac{\bbE[\widehat Z_{s,n}(x)^2]}
                 {\bbE[\widehat Z_{s,n}(x)^{-1}]}.
\end{equation*}
It therefore suffices to bound the ratio of the second moment to the
negative-first moment. For the small-time comparison, set
\(n=[\sqrt\delta N]\); we will show that
\begin{equation}\label{eq:Delta-bound}
 \log\frac{\bbE[\widehat Z_{s,n}(x)^2]}
              {\bbE[\widehat Z_{s,n}(x)^{-1}]}
 \le\frac{C}{|\log\delta|^{1/4}}+o_N(1).
\end{equation}
By translation invariance, we only need to treat the case \(x=0\).

\smallskip\noindent\emph{Reducing to second moment estimates.}
First suppose that $s=N_u$ for some $0\le u<b$, on the grid
chosen in Remark~\ref{rem:choose-J}, so that $n=N_b$ and $K-b=J$.
The case $s=n$ is immediate since $\widehat Z_{n,n}(0)=1$.
By the block decomposition~\eqref{eq:block-ratio}, we have
\[
 \widehat Z_{s,n}(0)
 =\pigl(\prod_{i=u+1}^b\mathsf Z_i(q_{N_{i-1}})\pigr)
   \mathscr R_{u+1,b}.
\]
We introduce the law weighted by the product of inverse block partition functions, recall~\eqref{eq:weighted-law}:
\[
 \frac{\dd\mathbb Q}{\dd\bbP}
 =\frac{\pigl(\prod_{i=u+1}^b \mathsf Z_i(q_{N_{i-1}})\pigr)^{-1}}
        {\bbE\pigl[\prod_{i=u+1}^b \mathsf Z_i(q_{N_{i-1}})^{-1}\pigr]}.
\]
In particular, we have that
\begin{equation}
      \label{eq:negative-first-moment}
\bbE\bigl[\widehat Z_{s,n}(0)^{-1}\bigr]
=  \bbE_{\mathbb Q} \bigl[\mathscr R_{u+1,b}^{-1}\bigr] \bbE\pigl[\prod_{i=u+1}^b \mathsf Z_i(q_{N_{i-1}})^{-1}\pigr]
=  \bbE_{\mathbb Q} \bigl[\mathscr R_{u+1,b}^{-1}\bigr] \prod_{i=u+1}^b \bbE\bigl[\mathsf Z_i(q_{N_{i-1}})^{-1}\bigr]  \,,
\end{equation}
where in the second equality we also used that \((\mathsf Z_i(q_{N_{i-1}}))_{i\geq 1}\) are independent.

We may now use the second part of
Proposition~\ref{prop:block-inputs-ii}, namely~\eqref{eq:app-good-Q}, to get that
\begin{equation}\label{eq:VD-ratio-input}
 \mathbb Q(\Omega_{u+1,b}^c)\leq C (K-b)^{-1/2},
 \qquad
 \bbE_{\mathbb Q}\bigl[|\mathscr R_{u+1,b}-1|^2\one_{\Omega_{u+1,b}} \bigr]
 \leq C(K-b)^{-1/2}\,,
\end{equation}
Using that $\mathscr R^{-1}\geq2-\mathscr R$ for $\mathscr R>0$, we get that
\begin{equation}
      \label{eq:VD-ratio-output}
\begin{split}
      \bbE_{\mathbb Q}[\mathscr R_{u+1,b}^{-1}]\geq \bbE_{\mathbb Q}[\mathscr R_{u+1,b}^{-1} \one_{\Omega_{u+1,b}}]
      &\geq\mathbb Q(\Omega_{u+1,b}) + \bbE_{\mathbb Q}[(1-\mathscr R_{u+1,b})\one_{\Omega_{u+1,b}}]\\
      &\geq 1-\mathbb Q(\Omega_{u+1,b}^c)
      -\bbE_{\mathbb Q}[
            |\mathscr R_{u+1,b}-1|^2 \one_{\Omega_{u+1,b}}]^{1/2}
      \geq1-CJ^{-1/4} \,,
\end{split}
\end{equation}
where we have used Cauchy--Schwarz's inequality for the second inequality and then \eqref{eq:VD-ratio-input}, recalling also that \(K-b=J\).

On the other hand, we also claim that
\begin{equation}\label{eq:block-reciprocal-VD}
 \bbE\bigl[\mathsf Z_i(q_{N_{i-1}})^{-1}\bigr]
 = \bbE\bigl[\mathsf Z_i(q_{N_{i-1}})^{2}\bigr] \bigl(1+O\bigl((K-i)^{-3/2}\bigr)\bigr) \,.
\end{equation}
Indeed, writing $X_i=\mathsf Z_i(q_{N_{i-1}})-1$ we can use the identity \((1+X_i)^{-1}=1-X_i+X_i^2-\frac{X_i^3}{1+X_i}\) to get that
\[
\bbE\bigl[\mathsf Z_i(q_{N_{i-1}})^{-1}\bigr] =  \bbE\bigl[\mathsf Z_i(q_{N_{i-1}})^{2}\bigr] - \bbE\biggl[\frac{ (\mathsf Z_i(q_{N_{i-1}})-1)^3}{\mathsf Z_i(q_{N_{i-1}})}\biggr] \,.
\]
Now, by Hölder's inequality, we have that
\[
 \bbE\biggl[\frac{ |\mathsf Z_i(q_{N_{i-1}})-1|^3}{\mathsf Z_i(q_{N_{i-1}})}\biggr]
 \leq \bbE\bigl[|\mathsf Z_i(q_{N_{i-1}})-1|^4\bigr]^{3/4}
       \bbE\bigl[ \mathsf Z_i(q_{N_{i-1}})^{-4} \bigr]^{1/4}
 \leq C(K-i)^{-3/2}  \,.
\]
For the last inequality, we have used Lemma~\ref{lem:moments-blocks} to control the fourth moment and Lemma~\ref{lem:one-point}-\eqref{eq:one-point-negative} with \(p=4\) to bound \(\bbE[ \mathsf Z_i(q_{N_{i-1}})^{-4}] \leq \bbE[ \mathsf Z_i(q_{N_{i-1}})^{2}]^{10}\),
with the second moment being bounded by \(\bbE[ \mathsf Z_i(q_{N_{i-1}})^{2}] \leq  1 + \frac{C}{K-i} \leq 1+C\), again by Lemma~\ref{lem:moments-blocks}.

Since
$\sum_i(K-i)^{-3/2}\leq CJ^{-1/2}$, going back to~\eqref{eq:negative-first-moment} and using~\eqref{eq:VD-ratio-output} and~\eqref{eq:block-reciprocal-VD}, we end up with
\begin{equation}
      \label{eq:Ynegative-VD}
      \bbE\bigl[\widehat Z_{s,n}(0)^{-1}\bigr]  \geq \bigl( 1+ O(J^{-1/4})\bigr) \prod_{i=u+1}^b \bbE\bigl[\mathsf Z_i(q_{N_{i-1}})^{2}\bigr]   \,.
\end{equation}

\smallskip\noindent\emph{Estimating the second moments.}
We first prove \eqref{eq:Delta-bound} at grid points, i.e.\ for \(s=N_{u}\) with \(0\le u\le b\).
In view of~\eqref{eq:Ynegative-VD}, it suffices to show that
\begin{equation}
      \label{eq:ratio-second-moment}
      \frac{\bbE[\widehat Z_{s,n}(0)^2] }{\prod_{i=u+1}^b \bbE\bigl[\mathsf Z_i(q_{N_{i-1}})^{2}\bigr]} \leq 1+ C J^{-1} \,.
\end{equation}

We rely on the following sharp second moment estimate.
Its proof is classical and postponed to Appendix~\ref{app:renewal-comparison}.
To state it in a more natural way, let us introduce, analogously to \cite[Eq.~(1.12)]{BCT25}, the interpolating parameter
\[
\vartheta_N(s) \coloneqq \frac{\pi}{\sigma_N^2} - \pi R_s \,,
\]
which controls the distance to the critical window in time-\(s\) partition functions (we use a different sign convention from  \cite{BCT25}, in order to keep the parameter positive in the sub and quasi-critical regime).
The factor \(\pi\) is so that we have that  \(\vartheta_N(N) \to -\vartheta\) as \(N\to\infty\).
The following estimates apply whenever $\vartheta_N(t)>0$, in particular for $t \le \epsilon_0 N$ for
$\epsilon_0 > 0$ small enough. To be definite,
we set 
$\epsilon_0:=e^{-1-\max\{\vartheta,0\}}$.

\begin{lemma}\label{lem:second-moment-hat}
There exists a constant $C>0$ such that, for all sufficiently
large $N$ and all integers $0\le s\le t\le\epsilon_0N$,
\[
0\le
\frac{\vartheta_N(s)}{\vartheta_N(t)}
-\bbE[\widehat Z_{s,t}(0)^2]
\le
\frac{C(1+R_t-R_s)}{\vartheta_N(t)^2} \,,
\]
and therefore
\begin{equation}\label{eq:consequence-lem-SM}
\bbE[\widehat Z_{s,t}(0)^2]
=
\frac{\vartheta_N(s)}{\vartheta_N(t)}
\left(
1+O\left(
\frac{1+R_t-R_s}{\vartheta_N(s)\vartheta_N(t)}
\right)
\right).
\end{equation}
If $1\le s\le t/2$, the factor $1+R_t-R_s$ in these
bounds can be replaced by $\log(t/s)$.
\end{lemma}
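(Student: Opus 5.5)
We will prove Lemma~\ref{lem:second-moment-hat} through the renewal (first-collision) representation of the second moment. Summing the chaos expansion of \(\widehat Z_{s,t}(0)=Z_{s,t}(q_s^{(0)},\one)\) over collision times of two replicas, we write
\[
\bbE[\widehat Z_{s,t}(0)^2]=\sum_{k\ge0}\sigma_N^{2k}\!\!\sum_{s<r_1<\dots<r_k\le t}\! q_{2r_1}(0)\prod_{j=2}^k q_{2(r_j-r_{j-1})}(0).
\]
The weight of the first collision is \(q_{2r_1}(0)\) rather than \(q_{2(r_1-s)}(0)\), because the two replicas start independently from \(q_s\). Set \(p_r\coloneqq q_{2r}(0)\), so that \(R_t=\sum_{r\le t}p_r\). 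The series converges because \(\sigma_N^2R_t<1\) for \(t\le\epsilon_0N\). Indeed,
\[
\pi/\sigma_N^2-\pi R_t=\pi(R_N-R_t)+O(1)\ge \log(N/t)-|\vartheta|+O(1)\ge 1
\]
by the choice of \(\epsilon_0\).

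Our plan is to compare this series with its ``flat'' version, in which each later increment weight \(p_{r_j-r_{j-1}}\) is replaced by the uniform majorant structure \(\sum_{r\le t}p_r\). Summing the geometric series produces the leading term exactly. Explicitly, the zeroth-order approximation is
\[
1+\sum_{r=s+1}^t p_r\,\frac{\sigma_N^2}{1-\sigma_N^2R_t}
=1+\frac{R_t-R_s}{1/\sigma_N^2-R_t}
=\frac{1/\sigma_N^2-R_s}{1/\sigma_N^2-R_t}
=\frac{\vartheta_N(s)}{\vartheta_N(t)}.
\]
The upper bound \(\bbE[\widehat Z_{s,t}(0)^2]\le\vartheta_N(s)/\vartheta_N(t)\) then follows because the true continuation from time \(r_1\) has total mass \(\sum_{k}\sigma_N^{2k}(\text{walk renewals up to }t-r_1)\le (1-\sigma_N^2 R_{t-r_1})^{-1}\le(1-\sigma_N^2R_t)^{-1}\). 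For this step we first decompose over the first collision \(r_1\) and then bound the renewal mass after \(r_1\) by \(\sum_k(\sigma_N^2R_{t-r_1})^k\).

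For the lower bound on the deficit, we write the same first-collision decomposition exactly as
\[
\bbE[\widehat Z_{s,t}(0)^2]=1+\sum_{r=s+1}^t p_r\,\sigma_N^2 U_{t-r},
\]
where \(U_\ell\) is the renewal mass, i.e.\ \(\bbE[Z_\ell(0)^2]\) up to the convention at the endpoint. The error is then
\[
\frac{\vartheta_N(s)}{\vartheta_N(t)}-\bbE[\widehat Z_{s,t}(0)^2]
=\sum_{r=s+1}^t p_r\,\sigma_N^2\Bigl(\frac{1}{1-\sigma_N^2R_t}-U_{t-r}\Bigr).
\]
The key estimate is
\[
\frac1{1-\sigma_N^2R_t}-U_\ell\le \frac{C\sigma_N^2(1+R_t-R_\ell)}{(1-\sigma_N^2R_t)^2}.
\]
We obtain it by the same argument applied to \(U_\ell\) itself. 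The deficit \(\frac1{1-\sigma_N^2R_\ell}-U_\ell\) is controlled by the loss incurred when a renewal at position \(r'\) has its remaining time \(\ell-r'\) shortened, which costs \(R_\ell-R_{\ell-r'}\). Summing, using \(p_r\le C/r\), gives \(\sum_{r'}p_{r'}(R_\ell-R_{\ell-r'})=O(1)\), a standard convolution bound for \(1/r\) sequences. Combined with \(\frac1{1-\sigma_N^2R_t}-\frac1{1-\sigma_N^2R_\ell}\le \frac{\sigma_N^2(R_t-R_\ell)}{(1-\sigma_N^2R_t)^2}\), this yields the key estimate. Inserting it and summing in \(r\) then gives
\[
\sum_r p_r\sigma_N^4(1+R_t-R_{t-r})\lesssim \sigma_N^4(1+R_t-R_s)+\sigma_N^4\sum_r p_r(R_t-R_{t-r}).
\]
The second sum is again \(O(1)\) by the convolution bound. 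Converting with \(\sigma_N^2(1-\sigma_N^2R_t)^{-1}=\pi/\vartheta_N(t)\) produces \(C(1+R_t-R_s)/\vartheta_N(t)^2\).

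The main obstacle is this uniform convolution estimate \(\sum_{r\le \ell}p_r(R_\ell-R_{\ell-r})\le C\) for all \(\ell\). We plan to prove it by splitting at \(r=\ell/2\): for \(r\le\ell/2\) the difference \(R_\ell-R_{\ell-r}\) is \(O(r/\ell)\), and for \(r>\ell/2\) the weight \(p_r\) is \(O(1/\ell)\) and \(\sum_{j\le\ell/2}(R_\ell-R_j)=O(\ell)\).

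Two consequences remain. First, \eqref{eq:consequence-lem-SM} follows by dividing by \(\vartheta_N(s)/\vartheta_N(t)\). Second, when \(s\le t/2\) the factor \(1\) can be absorbed, because \(R_t-R_s=\pi^{-1}\log(t/s)+O(1/s)\ge c\log(t/s)\) with \(\log(t/s)\ge\log 2\).
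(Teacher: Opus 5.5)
Your proof is correct and follows the paper's argument essentially step for step: the same first-collision formula, the same identification of the flat series with $\vartheta_N(s)/\vartheta_N(t)$, the same split of $(1-\sigma_N^2R_t)^{-1}-U_{t-r}$ into a shift term $(1-\sigma_N^2R_t)^{-1}-(1-\sigma_N^2R_{t-r})^{-1}$ plus a one-point deficit, and the same convolution bound $\sum_{u,v\le t,\,u+v>t}\frac{1}{uv}\le 2$.

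The only divergence is in bounding the one-point deficit. The paper (Claim~\ref{claim:VN}) does this for every $\vartheta_N(s)>0$ by comparing the two series termwise via $\one_{\{n_1+\dots+n_k>s\}}\le (n_1+\dots+n_k)/s$. Your renewal recursion still has the deficit terms at times $\ell-r'$ on the right-hand side, so you should say explicitly that you unroll it. Once unrolled, it carries an extra factor $\sigma_N^2(1-\sigma_N^2R_\ell)^{-1}=\pi/\vartheta_N(\ell)$, which is harmless here only because $\vartheta_N(\ell)\ge\vartheta_N(t)$ is bounded below for $t\le\epsilon_0N$.

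Also, in your final step you need the upper bound $R_t-R_s\le C\log(t/s)$, not the lower bound you state.
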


The asymptotics of $R_t$ give, uniformly for integers
$0\le s<t$,
\begin{equation}\label{eq:vartheta-difference}
\vartheta_N(s)-\vartheta_N(t)
=\pi(R_t-R_s)
=\log\frac{t}{s\vee1}+O(1) \,,
\end{equation}
and since $\vartheta_N(N) \to -\vartheta = O(1)$,
we obtain in particular
\[
\vartheta_N(t)=\log\frac{N}{t\vee1}+O(1),
\qquad 0\le t\le N.
\]
For fixed $\rho$ and $J=K-b$ sufficiently large,
this implies
\begin{equation}\label{eq:estimate-vartheta-Ni}
\vartheta_N(N_i)\asymp K-i,
\qquad 0\le i\le b.
\end{equation}
In particular, since $n=N_b=[N\rho^{-J}]$,
we have $\vartheta_N(n)
=\log(N/n)+O(1)=J\log\rho+O(1)$.
Thus, for fixed $\rho>1$ and $J$ sufficiently large,
$\vartheta_N(n)\asymp J$.

For every $0\le s\le n$, we have
\[
\vartheta_N(s)
=\vartheta_N(n)+\pi(R_n-R_s)
\ge 1+R_n-R_s,
\]
provided $J$ is large enough.
Thus~\eqref{eq:consequence-lem-SM} and
$\vartheta_N(n)\asymp J$ give
\begin{equation}\label{eq:second-moment-sharp}
\bbE[\widehat Z_{s,n}(0)^2]
=
\frac{\vartheta_N(s)}{\vartheta_N(n)}
\bigl(1+O(J^{-1})\bigr),
\qquad 0\le s\le n \,.
\end{equation}
For every $1\le i\le b$ we can bound
$ R_{N_i}-R_{N_{i-1}}\le C_\rho $ by definition
of $N_i$
(also for $i=1$, since $N_0=0$ and
$N_1\le2\rho$).
Applying Lemma~\ref{lem:second-moment-hat} and
\eqref{eq:estimate-vartheta-Ni}, we obtain
\[
\bbE[\mathsf Z_i(q_{N_{i-1}})^2]
=
\frac{\vartheta_N(N_{i-1})}{\vartheta_N(N_i)}
(1-\eta_i)
\qquad \text{with} \quad
0\le\eta_i\le\frac{C}{(K-i)^2}.
\]
Since $\sum_{i=u+1}^b\eta_i
\le C\sum_{j=J}^{\infty}\frac1{j^2}
\le \frac{C'}J$,
for $J$ sufficiently large we get
\[
\begin{aligned}
\prod_{i=u+1}^b\bbE[\mathsf Z_i(q_{N_{i-1}})^2]
&=
\frac{\vartheta_N(N_u)}{\vartheta_N(n)}
\prod_{i=u+1}^b(1-\eta_i) = \frac{\vartheta_N(N_u)}{\vartheta_N(n)}
\bigl(1+O(J^{-1})\bigr).
\end{aligned}
\]
Together with~\eqref{eq:second-moment-sharp}, this yields,
for $s=N_u$,
\[
\frac{\bbE[\widehat Z_{s,n}(0)^2]}
{\prod_{i=u+1}^b\bbE[\mathsf Z_i(q_{N_{i-1}})^2]}
=1+O(J^{-1}).
\]
This proves~\eqref{eq:ratio-second-moment} and hence
\eqref{eq:Delta-bound} at every grid point, uniformly
in $0\le u\le b$.

\smallskip
We can now extend the proof to any \(s\in (0,n)\) which may not be the starting point of a block.
Let $s$ be even, with
$s\in(N_{u-1},N_u)$ for some $1\le u\le b$.
Averaging the disorder on $(s,N_u]$ and applying
conditional Jensen gives
\[
\bbE[\widehat Z_{s,n}(0)^{-1}]
\ge
\bbE[\widehat Z_{N_u,n}(0)^{-1}].
\]
On the other hand, the first-collision formula \eqref{eq:local-replica-formula} and $\sigma_N^2\bbE[Z_{n-k}(0)^2] \le \pi/\vartheta_N(n-k) \le \pi/\vartheta_N(n)$, see
Claim~\ref{claim:VN}, imply
\[
\begin{aligned}
0&\le
\bbE[\widehat Z_{s,n}(0)^2] -\bbE[\widehat Z_{N_u,n}(0)^2] 
=\sum_{k=s+1}^{N_u}q_{2k}(0)\sigma_N^2\bbE[Z_{n-k}(0)^2] \le
\frac{\pi(R_{N_u}-R_s)}{\vartheta_N(n)}
\le \frac CJ.
\end{aligned}
\]
Since $\bbE[\widehat Z_{N_u,n}(0)^2]\ge
\bbE[\widehat Z_{N_u,n}(0)]^2 = 1$, the ratio
of these second moments is at most $1+C/J$.
Combining this with the bound already proved at the grid points $s= N_u$
gives~\eqref{eq:Delta-bound} for every even $0\le s\le n$.
This concludes the proof of~\eqref{target:VD}.

\smallskip\noindent
\textbf{Proof of~\eqref{prop:eq-heat-defect}.}
Fix \(0<T_0<T_1<\infty\), and now let \(T_0N\le n\le T_1N\).
The expansion~\eqref{eq:local-replica-formula}, the last statement in Claim~\ref{claim:VN}, and the bound \(q_{2k}(0)\le C/k\) give
\begin{equation}\label{prop:eq-entrance-second-moment-upper}
 \begin{aligned}
 \bbE[\widehat Z_{s,n}(0)^2]
 &=1+\sigma_N^2\sum_{k=s+1}^n q_{2k}(0)\bbE[Z_{n-k}(0)^2]\\
 &\le1+C_{\vartheta,T_1}\sum_{k=s+1}^n\frac1k
 \le C\left\{1+\log\frac{n}{s\vee1}\right\}.
 \end{aligned}
\end{equation}
Fix \(0<\delta = \delta_{T_0}\le\delta_0\) sufficiently small, independently of
\(s,n,N\), with \(\sqrt\delta<T_0\), and put \(n_0=[\sqrt\delta N]\).
If \(s\ge n_0\), then \(n/s\le T_1/\sqrt\delta\), so
\eqref{prop:eq-entrance-second-moment-upper} and
\(\bbE[\widehat Z_{s,n}(0)^{-1}]\ge1\) give the required comparison.

For \(s<n_0\), averaging over the disorder on \((n_0,n]\) and
applying a conditional Jensen's inequality gives
\begin{equation*}
      \bbE[\widehat Z_{s,n}(0)^{-1}]
 \ge\bbE[\widehat Z_{s,n_0}(0)^{-1}]
 \ge c\,\bbE[\widehat Z_{s,n_0}(0)^2]
\end{equation*}
where the second inequality follows from the small-time comparison~\eqref{target:VD}
proven above.
Using the bound~\eqref{eq:second-moment-sharp},
we obtain
\begin{equation}\label{prop:eq-entrance-Jensen}
 \bbE[\widehat Z_{s,n_0}(0)^2]
 \ge \frac{1}{2} \, \frac{\vartheta_N(s)}{\vartheta_N(n_0)}
 \ge c_{T_0,T_1}\left\{1+\log\frac{n}{s\vee1}\right\} \,,
\end{equation}
where the last inequality holds
thanks to~\eqref{eq:vartheta-difference},
which shows in particular that
\[
\vartheta_{N}(n_0) = \frac12 |\log \delta| +O(1) \leq C_{T_0} \,,
\quad
\vartheta_N(s) = \log \frac{N}{s\vee 1} + O(1) \geq  c_{T_0,T_1} \left\{1+\log\frac{n}{s\vee1}\right\}   \,.
\]
Combining \eqref{prop:eq-entrance-Jensen} with
\eqref{prop:eq-entrance-second-moment-upper} proves
\eqref{prop:eq-heat-defect}.
This concludes the proof of Proposition~\ref{prop:VD}.
\qed

\subsection{Comparing the endpoint distributions: proof of Proposition~\ref{prop:PF}}
\label{sec:prop-PF}

The main goal of this section is to prove Proposition~\ref{prop:PF}.
It relies on estimating quantitatively the difference between the starting distributions
\[
 \mu_n
 =\mu_{N_0,N_b}^{(q_{N_0})},
 \qquad
 \widehat\mu_{m,n}
 =\mu_{N_a,N_b}^{(q_{N_a})} \,.
\]
If \(\eta = \nu - \nu'\) is a signed measure of total mass zero which is \(\cF_n\)-measurable, we define
\begin{equation}
      \label{eq:definition-semi-norm}
 \bigl\| \eta \bigr\|_{\cH_{n,N}}^2
 \coloneqq \bbE\bigl[Z_{n,N}(\eta)^2 \big| \cF_n\bigr] \,.
\end{equation}
Then, by using the conditional Cauchy--Schwarz inequality, we get
\begin{equation}\label{eq:future-L1-PF}
 \bbE\bigl[\one_{\mathcal G}
 |Z_{n,N}(\mu_n)-Z_{n,N}(\widehat\mu_{m,n})|\bigr] \le
 \bbE\bigl[\one_{\mathcal G}
 \|\mu_n-\widehat\mu_{m,n}\|_{\cH_{n,N}}\bigr]\,,
\end{equation}
for any \(\cF_n\)-measurable event \(\mathcal G\).
Thus, \eqref{target:PF} reduces to the control of \(\|\mu_n-\widehat\mu_{m,n}\|_{\cH_{n,N}}\).

To this end, in analogy with Subsection~\ref{sec:block-exp}, we perform an expansion
in terms of the endpoint distributions, since
\begin{align}\label{poly-meas}
\mu_{N_{r-1}, N_s}^{(q_{N_{r-1}})} 
= \frac{q_{N_{r-1}} \lact Z_{N_{r-1},N_s}}{q_{N_{r-1}} \lact Z_{N_{r-1}, N_s} \ract \one} 
= \frac{q_{N_{r-1}} \lact \mathsf Z_r\circ\cdots\circ  \mathsf Z_s}
{{q_{N_{r-1}} \lact Z_{N_{r-1}, N_s} \ract \one}} ,
\end{align}
which is a (random) measure.
Recall the definition
\begin{align}\label{Rrs-remind}
 \mathscr R_{r,s}
 \coloneqq q_{N_{r-1}}\lact
       \widecheck{\mathsf Z}_r\circ\cdots\circ
       \widecheck{\mathsf Z}_s\ract\one
 =\frac{q_{N_{r-1}}\lact Z_{N_{r-1},N_s}\ract\one}
        {\prod_{i=r}^s q_{N_{i-1}}\lact\mathsf Z_i\ract\one}.
\end{align}
Dividing the numerator and denominator in \eqref{poly-meas} by 
$\prod_{i=r}^s q_{N_{i-1}}\lact\mathsf Z_i\ract\one$
we have
\begin{align*}
	\mu_{N_{r-1}, N_s}^{(q_{N_{r-1}})} 
=	\frac{1}{\mathscr R_{r,s}}  q_{N_{r-1}}\lact \widecheck{\mathsf{Z}}_{r}\circ \cdots \circ 
	\widecheck{\mathsf{Z}}_{s} .
\end{align*}
Using
\begin{align}\label{ZD-remind}
 \widecheck{\mathsf Z}_i
 = \widecheck{D}_i +
 \frac{\mathsf Z_i \ract \one \otimes  q_{N_i} \lact }{ q_{N_{i-1}}\lact \mathsf Z_i \ract \one }  \,
 = \widecheck{D}_i + \widecheck{\mathsf Z}_i \ract \one \otimes  q_{N_i} \lact
\end{align}
(recall \eqref{ZD} for notational clarity), we get that
\begin{equation}
       \mu_{N_{r-1},N_s}^{(q_{N_{r-1}})}
 =q_{N_s}
   +\frac{1}{\mathscr R_{r,s}} \sum_{u=r}^s
       \mathscr R_{r,u-1}\mathscr D^{\mathrm{open}}_{u,s} \, ,
 \label{eq:profile-interval-expansion}
\end{equation}
where we have introduced the following open corrector
(which is a signed measure):
\begin{equation}
 \mathscr D^{\mathrm{open}}_{u,s}
 \coloneqq 
 q_{N_{u-1}}\lact
 \widecheck{D}_u\circ\cdots\circ\widecheck{D}_s .
 \label{eq:open-activity}
\end{equation}
Indeed, we can successively replace operators $\widecheck{\mathsf{Z}}_i$ from $i=s,s-1,...,r$ by the decomposition 
\eqref{ZD-remind} as (recall also \eqref{Rrs-remind}):
\begin{align*}
	\mu_{N_{r-1}, N_s}^{(q_{N_{r-1}})} 
&=	\frac{1}{\mathscr R_{r,s}} 
	 q_{N_{r-1}}\lact \widecheck{\mathsf{Z}}_{r}\circ \cdots \circ 
	\widecheck{\mathsf{Z}}_{s-1}\circ
	\Bigg\{ \widecheck{D}_{s} +\widecheck{\mathsf{Z}}_s\ract \one \times q_{N_s} \lact \Bigg\} \\
&=	\frac{1}{\mathscr R_{r,s}}  
	q_{N_{r-1}}\lact \widecheck{\mathsf{Z}}_{r}\circ \cdots \circ
	\widecheck{\mathsf{Z}}_{s-1}\circ 
	\widecheck{\mathsf{Z}}_s\ract \one \times q_{N_s} 
 +     \frac{1}{\mathscr R_{r,s}} 
	q_{N_{r-1}}\lact \widecheck{\mathsf{Z}}_{r}\circ \cdots \circ
	\widecheck{\mathsf{Z}}_{s-1}\circ  
	\widecheck{D}_{s}   \\
&=    q_{N_s} 
+	 \frac{1}{\mathscr R_{r,s}} 
	q_{N_{r-1}}\lact \widecheck{\mathsf{Z}}_{r}\circ \cdots \circ
	\Bigg\{ \widecheck{\mathsf{D}}_{s-1} + \widecheck{\mathsf{Z}}_{s-1} \ract \one \times 
	q_{N_{s-1}} \lact \Bigg\} \circ  
	\widecheck{D}_{s}  \\
&=	q_{N_s} 
+	 \frac{\mathscr R_{r,s-1}}{\mathscr R_{r,s}} q_{N_{s-1}} \lact \widecheck{D}_{s} 
+	 \frac{1}{\mathscr R_{r,s}} 
	q_{N_{r-1}}\lact \widecheck{\mathsf{Z}}_{r}\circ \cdots \circ
	\widecheck{\mathsf{Z}}_{s-2} \circ
	 \widecheck{{D}}_{s-1}  
	 \circ  
	\widecheck{D}_{s}   
\end{align*}
and continuing these successive replacements we arrive at \eqref{eq:profile-interval-expansion}.

Let us also introduce the following corrector, which is the leading (linear) contribution of the corrector in~\eqref{eq:profile-interval-expansion},
\begin{equation}\label{eq:common-correction}
 \Delta\mu^{\mathrm{open}}_{r,s}
 \coloneqq \sum_{u=r}^s\mathscr D^{\mathrm{open}}_{u,s}.
\end{equation}
We therefore rely on the following lemma, whose proof is postponed to the end of this subsection.
We state it for arbitrary consecutive blocks, since the proof of
Proposition~\ref{prop:log-tails} will also use it in this generality.
Recalling the definition~\eqref{eq:block-events} of the event \(\Omega_{r,s}\), we
define
\begin{equation}\label{eq:profile-good-event}
 \mathcal G_{r,s}
 \coloneqq \Omega_{r,s}\cap\{|\mathscr R_{r,s}-1|\le1/2\}.
\end{equation}

\begin{lemma}[Profile approximation and second moments]
\label{lem:profile-estimates}
Let \(\ell_*\) be large enough and $1\le r\le s\le K-\ell_*$.
Then, recalling the definition~\eqref{eq:profile-good-event}, we have
\begin{equation}
      \label{eq:profile-good-prob}
      \bbP(\mathcal G_{r,s}^c)
      \le\frac{C}{K-s} \,,
\end{equation}
and also
\begin{equation}
      \label{eq:profile-second-order}
      \bbE\pigl[\one_{\mathcal G_{r,s}}
 \|\mu_{N_{r-1},N_s}^{(q_{N_{r-1}})}
       -q_{N_s}-\Delta\mu^{\mathrm{open}}_{r,s}\|
       _{\cH_{N_s,N}}\pigr]
  \le\frac{C}{K-s}  \,.
\end{equation}
\end{lemma}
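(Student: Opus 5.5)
The probability bound \eqref{eq:profile-good-prob} follows directly from Proposition~\ref{prop:block-inputs-ii}. By a union bound, $\bbP(\mathcal G_{r,s}^c)\le \bbP(\Omega_{r,s}^c)+\bbP(\Omega_{r,s}\cap\{|\mathscr R_{r,s}-1|>1/2\})$. The first term is at most $C/(K-s)$ by \eqref{eq:app-good-P}. For the second term we use Chebyshev and the second estimate in \eqref{eq:app-good-P}: it is at most $4\,\bbE[\one_{\Omega_{r,s}}|\mathscr R_{r,s}-1|^2]\le C/(K-s)$.

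For \eqref{eq:profile-second-order} we start from the exact expansion \eqref{eq:profile-interval-expansion}. Subtracting $q_{N_s}$ and $\Delta\mu^{\mathrm{open}}_{r,s}=\sum_u\mathscr D^{\mathrm{open}}_{u,s}$ leaves the remainder
\[
\sum_{u=r}^s\Bigl(\frac{\mathscr R_{r,u-1}}{\mathscr R_{r,s}}-1\Bigr)\mathscr D^{\mathrm{open}}_{u,s}.
\]
Each $\mathscr D^{\mathrm{open}}_{u,s}$ has total mass zero by \eqref{D-mass}, so the $\cH_{N_s,N}$ seminorm \eqref{eq:definition-semi-norm} applies termwise. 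Because the coefficients are $\cF_{N_s}$-measurable, the triangle inequality gives the bound
\[
\sum_{u=r}^s \Bigl|\frac{\mathscr R_{r,u-1}}{\mathscr R_{r,s}}-1\Bigr|\,\|\mathscr D^{\mathrm{open}}_{u,s}\|_{\cH_{N_s,N}}.
\]
On $\mathcal G_{r,s}$ we have $\mathscr R_{r,s}\ge1/2$. Hence the coefficient is at most $2|\mathscr R_{r,u-1}-1|+2|\mathscr R_{r,s}-1|$. Cauchy--Schwarz on each term then reduces everything to two kinds of quantities. The first kind is $\bbE[\one_{\Omega}|\mathscr R-1|^2]^{1/2}\le C(K-s)^{-1/2}$, from \eqref{eq:app-good-P}; for $\mathscr R_{r,u-1}$ we use $\Omega_{r,s}\subset\Omega_{r,u-1}$ and get $C(K-u)^{-1/2}\le C(K-s)^{-1/2}$ up to constants. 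The second kind is $\bbE[\one_{\Omega_{r,s}}\|\mathscr D^{\mathrm{open}}_{u,s}\|^2_{\cH_{N_s,N}}]^{1/2}$.

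The second quantity equals the $L^2$ norm of $q_{N_{u-1}}\lact \widecheck D_u\circ\cdots\circ\widecheck D_s\circ Z_{N_s,N}\ract\one$ restricted to $\Omega$. Here we replace the normalisations by the truncated multipliers $M_i\le2$, which are independent of the non-normalised string; this costs a factor $2^{s-u+1}$. Estimate \eqref{eq:app-open-string} with $v=s+1\le K-1$ then bounds this by $C(2\varepsilon_\rho)^{s-u}/\sqrt{K-u}\le C(2\varepsilon_\rho)^{s-u}(K-s)^{-1/2}$. Summing the geometric series over $u$ (taking $\rho$ large so that $2\varepsilon_\rho<1$) gives the total bound $C(K-s)^{-1/2}\cdot(K-s)^{-1/2}=C/(K-s)$.

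The main obstacle is the product structure in the Cauchy--Schwarz step. The coefficients $\mathscr R_{r,u-1}$ and $\mathscr R_{r,s}$ are not independent of $\mathscr D^{\mathrm{open}}_{u,s}$, because $\mathscr R_{r,s}$ involves blocks $u,\dots,s$. So the step $\bbE[|X|Y]\le\|X\|_2\|Y\|_2$ requires both factors in $L^2$ on the good event, rather than a clean factorisation, and this is what the indicator on $\Omega_{r,s}$ and the truncated multipliers are for. A subtle point is that $\mathscr R_{r,u-1}$ is independent of the string and can be factored out exactly. For $|\mathscr R_{r,s}-1|$ we need only its $L^2$ bound on $\Omega_{r,s}$, which suffices. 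The requirement $s\le K-\ell_*$ is used to make all block moment bounds applicable, including the $v=s+1\le K-1$ case of \eqref{eq:app-open-string}.
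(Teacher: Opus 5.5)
Your proposal is correct and follows essentially the paper's proof: the same union bound plus Markov for \eqref{eq:profile-good-prob}, and for \eqref{eq:profile-second-order} the same error identity from \eqref{eq:profile-interval-expansion}. Your coefficient bound $2|\mathscr R_{r,u-1}-1|+2|\mathscr R_{r,s}-1|$ is the paper's split \eqref{eq:profile-error-identity} written termwise, and you close with \eqref{eq:app-good-P} and the open-string bound \eqref{eq:open-activity-bound}; using Cauchy--Schwarz instead of block independence for the $\mathscr R_{r,u-1}$ terms yields the same estimate. One small correction: the inverse factors $\mathsf Z_i(q_{N_{i-1}})^{-1}$, $u\le i\le s$, are \emph{not} independent of the string $D_u\circ\cdots\circ D_s$, since they use the same blocks; however, you only use the pointwise bound $\prod_{i=u}^s\mathsf Z_i(q_{N_{i-1}})^{-1}\le 2^{s-u+1}$ on $\Omega_{u,s}$, so the argument is unaffected.
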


\begin{proof}[Proof of Proposition~\ref{prop:PF}]
  We will prove the proposition with the choice
\begin{equation}\label{eq:comparison-event}
 \mathcal G = \mathcal G_{N,\delta}(0)
 \coloneqq \Omega_{1,b}
 \cap\{|\mathscr R_{1,b}-1|\le1/2\}
 \cap\{|\mathscr R_{a+1,b}-1|\le1/2\}.
\end{equation}
It is the intersection of the two good events in
\eqref{eq:profile-good-event}, so
$\bbP(\mathcal G^c) \le C/J$ by Lemma~\ref{lem:profile-estimates}, since \(K-b=J\).
This shows~\eqref{eq:good-event-probability}, recalling that \(J\asymp |\log \delta|\), see Remark~\ref{rem:choose-J}.

In view of~\eqref{eq:future-L1-PF}, we need to control \(\|\mu_n-\widehat\mu_{m,n}\|_{\cH_{n,N}}\).
In the difference of the two sums defined by \eqref{eq:common-correction}, the terms with \(u\ge a+1\) cancel, giving
\begin{equation}\label{eq:common-cancellation}
 \Delta\mu^{\mathrm{open}}_{1,b}
 -\Delta\mu^{\mathrm{open}}_{a+1,b}
 =\sum_{u=1}^a\mathscr D^{\mathrm{open}}_{u,b}.
\end{equation}
By \eqref{eq:block-objects-hat} and \eqref{eq:open-activity},
\[
 Z_{N_s,N}(\mathscr D^{\mathrm{open}}_{u,s})
 =\frac{q_{N_{u-1}}\lact D_u\circ\cdots\circ D_s\circ Z_{N_s,N}\ract\one}
        {\prod_{i=u}^s\mathsf Z_i(q_{N_{i-1}})}.
\]
On \(\Omega_{u,s}\), each inverse factor is at most \(2\), so \eqref{eq:definition-semi-norm} and Proposition~\ref{prop:block-inputs-iii}-\eqref{eq:app-open-string}, with \(v=s+1\), give for \(1\le u\le s\le K-2\)
\begin{equation}\label{eq:open-activity-bound}
 \bbE\bigl[\one_{\Omega_{u,s}}\|\mathscr D^{\mathrm{open}}_{u,s}\|_{\mathcal H_{N_s,N}}\bigr]
 \le \bbE\bigl[\one_{\Omega_{u,s}}\|\mathscr D^{\mathrm{open}}_{u,s}\|_{\mathcal H_{N_s,N}}^2\bigr]^{1/2}
 \le\frac{C(2\varepsilon_\rho)^{s-u}}{\sqrt{K-u}} \,.
\end{equation}

Since \(K-b=J\), \(b-a=J\), and \(2\varepsilon_\rho<1\) for large \(\rho\), using \eqref{eq:common-cancellation} and the triangle inequality gives
\begin{equation}\label{eq:norm-corrector}
\bbE\bigl[\one_{\Omega_{1,b}}
\|\Delta\mu^{\mathrm{open}}_{1,b}
-\Delta\mu^{\mathrm{open}}_{a+1,b}\|_{\cH_{N_b,N}}^2\bigr]^{1/2}
\le \sum_{i=J}^{\infty} \frac{C (2\varepsilon_\rho)^{i}}{\sqrt{J+i}} \leq \frac{C(2\varepsilon_\rho)^J}{\sqrt J}.
\end{equation}

Therefore, if we apply the triangle inequality
\[
\begin{split}
     \|\mu_n-\widehat\mu_{m,n}\|_{\cH_{n,N}}  \leq  \|\mu_n-q_{N_{b}}- \Delta\mu^{\mathrm{open}}_{1,b} \|_{\cH_{n,N}} & +\|\widehat\mu_{m,n}- q_{N_{b}}- \Delta\mu^{\mathrm{open}}_{a+1,b}\|_{\cH_{n,N}} \\
     & \qquad \qquad + \|\Delta\mu^{\mathrm{open}}_{1,b}
 -\Delta\mu^{\mathrm{open}}_{a+1,b}\|_{\cH_{n,N}}  \,,
\end{split}
\]
together with \eqref{eq:profile-second-order} (once with $(r,s)=(1,b)$ and once with $(r,s)=(a+1,b)$, recalling that \(K-b=J\)) and the above~\eqref{eq:norm-corrector},
we finally obtain
\begin{equation}\label{eq:profile-comparison-seminorm}
 \bbE\bigl[\one_{\mathcal G}
 \|\mu_n-\widehat\mu_{m,n}\|_{\cH_{n,N}}\bigr]
 \le\frac{C}{J}+\frac{C(2\varepsilon_\rho)^J}{\sqrt J}
 \le \frac{C}{J}.
\end{equation}
Recalling~\eqref{eq:future-L1-PF}, this proves~\eqref{target:PF}, using again that \(J\asymp |\log \delta|\).

We next bound the second moment of $Z_{n,N}(\nu)$ as claimed in \eqref{eq:profile-good-energy}
for \(\nu = \mu_n, \widehat \mu_{m,n}\).
Recall~\eqref{eq:profile-interval-expansion}, which provides an expansion of \(\mu_n =\mu_{N_{0},N_b}^{(q_{N_{0}})}\) and \(\widehat \mu_{m,n} = \mu_{N_{a},N_b}^{(q_{N_{a}})}\); let us denote \(o=1\) if \(\nu = \mu_n\) and \(o=a+1\) if \(\nu = \widehat \mu_{m,n}\).
Then, using the inequality \((x+y)^2\leq 2 x^2+2y^2\), we get that
\[
\bbE\bigl[ Z_{n,N}(\nu)^2 \mid \cF_n\bigr] \leq  2 \bbE\bigl[ Z_{n,N}(q_{N_b})^2 \mid \cF_n\bigr] + \frac{2}{(\mathscr R_{o,b})^2} \bbE\bigl[ Z_{n,N}(\Delta\mu^{\mathrm{weighted}}_{o,b})^2 \mid \cF_n\bigr],
\]
where we have defined the signed measure
\begin{equation*}
\Delta\mu^{\mathrm{weighted}}_{r,s}
 \coloneqq \sum_{u=r}^s
       \mathscr R_{r,u-1}\mathscr D^{\mathrm{open}}_{u,s}\,.
\end{equation*}

Since \(\mathscr R_{o,b} \geq 1/2\) on the event \(\mathcal G \in \cF_n\), we thus get that
\begin{equation*}
      \bbE\bigl[\one_{\mathcal G} Z_{n,N}(\nu)^2 \bigr]\leq 2 \bbE\bigl[ Z_{n,N}(q_{N_b})^2\bigr] + 8 \bbE\bigl[ \one_{\Omega_{o,b}}  \| \Delta\mu^{\mathrm{weighted}}_{o,b}\|_{\mathcal H_{n,N}}^2\bigr],
\end{equation*}
where we have used the definition~\eqref{eq:definition-semi-norm} of the semi-norm \(\|\cdot\|_{\mathcal H_{n,N}}\); note that \(\Delta\mu^{\mathrm{weighted}}_{o,b}\) is a signed measure of zero total mass and is \(\cF_n\)-measurable.

Now, we can apply \eqref{eq:rough-bound} to bound
\[
\bbE\bigl[ Z_{n,N}(q_{N_b})^2\bigr] =
\bbE\bigl[ \widehat{Z}_{n,N}(0)^2 \bigr]
\le
1 + C \,\frac{\log(N/n)}{\log 2} \leq C\, (K-b) \,.
\]
For the remaining term, we may use the bound 
\begin{equation}
      \label{eq:bound-Delta-weighted}
      \bbE\bigl[ \one_{\Omega_{o,b}} \| \Delta\mu^{\mathrm{weighted}}_{o,b}\|_{\mathcal H_{n,N}}^2\bigr] \leq \frac{C}{K-b} \,,
\end{equation}
whose proof is postponed to the proof of Lemma~\ref{lem:profile-estimates} below.
Recalling that \(K-b=J\), we thus have obtained that, for \(\nu = \mu_n$ and $\widehat \mu_{m,n}\),
\[
\bbE\bigl[\one_{\mathcal G} Z_{n,N}(\nu)^2 \bigr] \leq C J + C/J \,,
\]
which concludes the proof of~\eqref{eq:profile-good-energy}, recalling that \(J \asymp |\log \delta|\).

\smallskip

It remains to prove \eqref{eq-weighted-PF-comparison} for a bounded
deterministic terminal function \(f\). Writing
\(\Delta\mu=\mu_n-\widehat\mu_{m,n}\),
we will control the fluctuations around the conditional mean
using the bound \eqref{eq:profile-comparison-seminorm} on
\(\|\Delta\mu\|_{\cH_{n,N}}\), defined in
\eqref{eq:definition-semi-norm}.
This need not vanish for a general \(f\),
so we will need to compare the endpoint distributions
obtained by evolving \(\mu_n\) and \(\widehat\mu_{m,n}\)
from time \(n\) to time \(N\) without disorder.

We first compute the conditional mean and variance of
\(Z_{n,N}(\Delta\mu,f)\).
Averaging over the
disorder on \((n,N]\) gives the conditional mean,
while the first-collision
calculation in \eqref{eq:local-replica-formula} gives
the conditional variance:
\begin{equation}\label{eq-weighted-conditional-moments}
\begin{aligned}
 \bbE[Z_{n,N}(\Delta\mu,f)\mid\cF_n]
 &=\sum_y(\Delta\mu*q_{N-n})(y)f(y),\\
 \Var(Z_{n,N}(\Delta\mu,f)\mid\cF_n)
 &=\sigma_N^2\sum_{r=1}^{N-n}\sum_w
   (\Delta\mu*q_r)(w)^2\,\bbE[Z_{n+r,N}(w,f)^2].
\end{aligned}
\end{equation}
For \(f=\one\), the conditional mean vanishes because
\(\Delta\mu(\one)=0\), so the conditional variance equals
\(\|\Delta\mu\|_{\cH_{n,N}}^2\) by
\eqref{eq:definition-semi-norm}.
Since
\(|Z_{n+r,N}(w,f)|\le\|f\|_\infty Z_{n+r,N}(w,\one)\),
we have
\[
 \bbE[Z_{n+r,N}(w,f)^2]
 \le \|f\|_\infty^2\bbE[Z_{n+r,N}(w,\one)^2] \,,
\]
which plugged into the variance formula in
\eqref{eq-weighted-conditional-moments} gives
\[
 \Var(Z_{n,N}(\Delta\mu,f)\mid\cF_n)
 \le \|f\|_\infty^2
      \Var(Z_{n,N}(\Delta\mu,\one)\mid\cF_n)
 =\|f\|_\infty^2\|\Delta\mu\|_{\cH_{n,N}}^2.
\]
By the triangle inequality and conditional Cauchy--Schwarz,
\[
 \bbE[|Z_{n,N}(\Delta\mu,f)|\mid\cF_n]
 \le
 \big|\bbE[Z_{n,N}(\Delta\mu,f)\mid\cF_n]\big|
 +\Var(Z_{n,N}(\Delta\mu,f)\mid\cF_n)^{1/2} \,.
\]
Using the mean formula above, multiplying by
\(\one_{\mathcal G}\), and applying
\eqref{eq:profile-comparison-seminorm}, we obtain
\begin{equation}\label{eq:weighted-future-L1-PF}
 \bbE[\one_{\mathcal G}|Z_{n,N}(\Delta\mu,f)|]
 \le \|f\|_\infty\left\{
       \frac{C}{|\log\delta|}
       +\bbE[\one_{\mathcal G}
              \|\Delta\mu*q_{N-n}\|_{\ell^1}]
       \right\}.
\end{equation}

We now estimate the last term.
Fix \(R\ge1\) and let
\(B=\{y\in\Z^2:|y|\le R\sqrt N\}\).
Recall from \eqref{eq:hatmu}, with \(z=0\), that
\(\mu_n\) is the endpoint distribution at time \(n\),
while \(\widehat\mu_{m,n}\) is the endpoint distribution
obtained by switching off the disorder on \((0,m]\).
Both are probability measures, so their convolutions with
\(q_{N-n}\) are nonnegative and have total mass one.

Since \(\mu_n*q_{N-n}\) and
\(\widehat\mu_{m,n}*q_{N-n}\) both have total mass one,
their difference \(\Delta\mu*q_{N-n}\) has total mass zero.
Using the triangle inequality on \(B^c\), and then expressing
the mass of this difference on \(B^c\) as minus its mass on \(B\),
we obtain
\[
\begin{aligned}
 \sum_{y\in B^c}|(\Delta\mu*q_{N-n})(y)|
 &\le (\mu_n*q_{N-n})(B^c)
      +(\widehat\mu_{m,n}*q_{N-n})(B^c)\\
 &=2(\widehat\mu_{m,n}*q_{N-n})(B^c)
   -\sum_{y\in B}(\Delta\mu*q_{N-n})(y)\\
 &\le 2(\widehat\mu_{m,n}*q_{N-n})(B^c)
      +\sum_{y\in B}|(\Delta\mu*q_{N-n})(y)|.
\end{aligned}
\]
Adding the sum over \(B\) gives
\begin{equation}\label{eq-smoothed-profile-L1}
 \|\Delta\mu*q_{N-n}\|_{\ell^1}
 \le
 2\sum_{y\in B}|(\Delta\mu*q_{N-n})(y)|
 +2(\widehat\mu_{m,n}*q_{N-n})(B^c).
\end{equation}

Consider first the sum over \(B\).
Since \(|B|\le CR^2N\), Cauchy--Schwarz gives
\begin{equation}\label{612}
 \sum_{y\in B}|(\Delta\mu*q_{N-n})(y)|
 \le CR\sqrt N\|\Delta\mu*q_{N-n}\|_{\ell^2}.
\end{equation}
To bound the right hand side, we use the variance identity
\eqref{eq-weighted-conditional-moments} with \(f=\one\):
\[
 \|\Delta\mu\|_{\cH_{n,N}}^2
 =
 \sum_{r=1}^{N-n}
 \sigma_N^2\bbE[Z_{N-n-r}(0)^2]\,
 \|\Delta\mu*q_r\|_{\ell^2}^2.
\]
For a fixed sufficiently small \(\varepsilon_0\in(0,1/4)\),
Claim~\ref{claim:VN} gives
\(\sigma_N^2\bbE[Z_{[\varepsilon_0N]}(0)^2]\ge c>0\)
for all sufficiently large \(N\).
For small \(\delta\), we have \(n\le N/4\).
Thus, whenever \(N/4\le r\le N/2\), monotonicity of the second
moment gives
\(\sigma_N^2\bbE[Z_{N-n-r}(0)^2]\ge c\).
Also,
\(\|\Delta\mu*q_r\|_{\ell^2}
\ge\|\Delta\mu*q_{N-n}\|_{\ell^2}\),
because convolution by \(q_{N-n-r}\) is a contraction in
\(\ell^2\).
Restricting the sum to these \(r\), of which there are at least
\(N/4\), yields
\begin{equation}\label{eq-smoothed-profile-L2}
 \|\Delta\mu\|_{\cH_{n,N}}^2
 \ge cN\|\Delta\mu*q_{N-n}\|_{\ell^2}^2.
\end{equation}
Combining \eqref{612} with
\eqref{eq-smoothed-profile-L2} and then applying
\eqref{eq:profile-comparison-seminorm}, we obtain
\[
 \bbE\Bigl[
   \one_{\mathcal G}
   \sum_{y\in B}|(\Delta\mu*q_{N-n})(y)|
 \Bigr]
 \le CR\,\bbE[
   \one_{\mathcal G}\|\Delta\mu\|_{\cH_{n,N}}
 ]
 \le \frac{CR}{|\log\delta|}.
\]

We next bound the expectation of the tail
\((\widehat\mu_{m,n}*q_{N-n})(B^c)\)
in \eqref{eq-smoothed-profile-L1}.
Introduce the abbreviations
\[
 Z_1=\widehat Z_{m,n}(0),\qquad
 Z_2=Z_{m,n}(q_m,q_{N-n}*\one_{B^c}).
\]
By definition and by averaging over the disorder,
\[
 (\widehat\mu_{m,n}*q_{N-n})(B^c)=\frac{Z_2}{Z_1},
 \qquad
 0\le Z_2\le Z_1,
 \qquad
 \bbE [Z_2] =q_N(B^c).
\]
By \eqref{eq:one-point-negative} and
\eqref{eq:local-replica-sum} with \((s,n)=(m,n)\),
we have
\(\bbE [Z_1^{-1}]\le\bbE [Z_1^2] \le C\),
uniformly for small \(\delta\) and sufficiently large \(N\).
Applying Cauchy--Schwarz to
\(Z_2/Z_1=\sqrt{Z_2}\,(\sqrt{Z_2}/Z_1)\),
then using \(Z_2\le Z_1\) and the Gaussian tail bound for \(q_N\),
gives
\[
\begin{aligned}
 \bbE[(\widehat\mu_{m,n}*q_{N-n})(B^c)]
 &=\bbE\left[\frac{Z_2}{Z_1}\right]
 \le
 \left(
   \bbE [Z_2]\,\bbE\left[\frac{Z_2}{Z_1^2}\right]
 \right)^{1/2}
 \le
 (\bbE [Z_2]\,\bbE [Z_1^{-1}])^{1/2}
 \le C\e^{-cR^2}.
\end{aligned}
\]

Substituting the estimates on \(B\) and \(B^c\) into
\eqref{eq-smoothed-profile-L1}, after multiplying by
\(\one_{\mathcal G}\) and taking expectations, we obtain
\[
 \bbE[\one_{\mathcal G}\|\Delta\mu*q_{N-n}\|_{\ell^1}]
 \le C\left(
   \frac{R}{|\log\delta|}+\e^{-cR^2}
 \right).
\]
Taking \(R = A  \sqrt{\log|\log\delta|}\)
for large~$A > 0$ and substituting into
\eqref{eq:weighted-future-L1-PF} proves
\eqref{eq-weighted-PF-comparison}.
\end{proof}

\begin{proof}[Proof of Lemma~\ref{lem:profile-estimates}]
For the first estimate~\eqref{eq:profile-good-prob},
we rely on Proposition~\ref{prop:block-inputs-ii}.
In particular, \eqref{eq:app-good-P} shows that uniformly for \(1\le r\le s\le K-1\) we have
\(\bbP(\Omega_{r,s}^c) \leq C /(K-s)\) and also
$$\bbP( |\mathscr R_{r,s}-1|>1/2)\leq \bbP(\Omega_{r,s}^c)+ 4 \bbE[\one_{\Omega_{r,s}} |\mathscr R_{r,s}-1|^2 ] \leq C/(K-s),$$
by Markov's inequality.
Recalling the definition~\eqref{eq:profile-good-event} of \(\mathcal G_{r,s}\), this proves~\eqref{eq:profile-good-prob}.

Before we prove \eqref{eq:profile-second-order}, let us show
the following auxiliary bound (we also prove~\eqref{eq:bound-Delta-weighted} at the same time):
\begin{equation}\label{eq:profile-VS}
 \| \one_{\Omega_{r,s}} \Delta\mu^{\mathrm{open}}_{r,s}\|_{L^2(\bbP,\mathcal H_{N_s,N})} \coloneqq \bbE\bigl[ \one_{\Omega_{r,s}} \|\Delta\mu^{\mathrm{open}}_{r,s}\|_{\cH_{N_s,N}}^2 \bigr]^{1/2}
 \le \frac{C}{\sqrt{K-s}} \,.
\end{equation}
Recalling the definition~\eqref{eq:common-correction} of \(\Delta\mu^{\mathrm{open}}_{r,s}\), we get by Minkowski's inequality that
\[
 \|\one_{\Omega_{r,s}} \Delta\mu^{\mathrm{open}}_{r,s} \|_{L^2(\bbP,\mathcal H_{N_s,N})}
 \le C\sum_{u=r}^s \| \one_{\Omega_{u,s}} \mathscr D^{\mathrm{open}}_{u,s}\|_{L^2(\bbP,\mathcal H_{N_s,N})} \leq C\sum_{u=r}^s \frac{(2\varepsilon_\rho)^{s-u}}{\sqrt{K-u}} \,,
\]
where the last inequality follows from \eqref{eq:open-activity-bound}.
This proves~\eqref{eq:profile-VS}.
In order to prove~\eqref{eq:bound-Delta-weighted},
\textit{i.e.}\ the same result with \(\Delta\mu^{\mathrm{weighted}}_{r,s}\), note that
\[
\one_{\Omega_{r,s}} \Delta\mu^{\mathrm{weighted}}_{r,s} = \sum_{u=r}^s \one_{\Omega_{r,u-1}}\mathscr R_{r,u-1}  \one_{\Omega_{u,s}}\mathscr D^{\mathrm{open}}_{u,s} \,,
\]
with
$\one_{\Omega_{r,u-1}}\mathscr R_{r,u-1}$ and
$\one_{\Omega_{u,s}}\mathscr D^{\mathrm{open}}_{u,s}$ depending on
disjoint blocks.
By independence, we thus have that
\[
\bigl\|\one_{\Omega_{r,s}} \Delta\mu^{\mathrm{weighted}}_{r,s} \bigr\|_{L^2(\bbP,\mathcal H_{N_s,N})} \leq C\sum_{u=r}^s \|\one_{\Omega_{r,u-1}}\mathscr R_{r,u-1} \|_{L^2} \bigl\| \one_{\Omega_{u,s}} \mathscr D^{\mathrm{open}}_{u,s}\bigr\|_{L^2(\bbP,\mathcal H_{N_s,N})} \,.
\]
Now, we have that \(\|\one_{\Omega_{r,u-1}}\mathscr R_{r,u-1} \|_{L^2}\leq C\) by Proposition~\ref{prop:block-inputs-ii}, see~\eqref{eq:app-good-P}, so the same geometric sum as above shows that
\[
\| \one_{\Omega_{r,s}} \Delta\mu^{\mathrm{weighted}}_{r,s}\|_{L^2(\bbP,\mathcal H_{N_s,N})} = \bbE\bigl[ \one_{\Omega_{r,s}} \|\Delta\mu^{\mathrm{weighted}}_{r,s}\|_{\cH_{N_s,N}}^2 \bigr]^{1/2} \leq \frac{C}{\sqrt{K-s}} \,,
\]
which is exactly \eqref{eq:bound-Delta-weighted}.

Going back to the proof of~\eqref{eq:profile-second-order}, we use the expansion \eqref{eq:profile-interval-expansion} to write the
exact identity
\begin{equation}
 \mu_{N_{r-1},N_s}^{(q_{N_{r-1}})}
       -q_{N_s}-\Delta\mu^{\mathrm{open}}_{r,s}
 =
 \frac1{\mathscr R_{r,s}}
 \sum_{u=r}^s(\mathscr R_{r,u-1}-1)
                  \mathscr D^{\mathrm{open}}_{u,s}
 +\frac{1-\mathscr R_{r,s}}{\mathscr R_{r,s}}
                  \Delta\mu^{\mathrm{open}}_{r,s}.
 \label{eq:profile-error-identity}
\end{equation}
Now, on $\mathcal G_{r,s}$, the denominators are at least $1/2$.
We thus have that
\[
\begin{split}
&\bbE \bigl[\one_{\mathcal G_{r,s}}
\|\mu_{N_{r-1},N_s}^{(q_{N_{r-1}})}-q_{N_s}-\Delta\mu^{\mathrm{open}}_{r,s}\|_{\mathcal H_{N_s,N}}\bigr] \\
& \leq 2 \sum_{u=r}^{s} \bbE\bigl[ \one_{\Omega_{r,s}}|\mathscr R_{r,u-1}-1| \, \| \mathscr D^{\mathrm{open}}_{u,s}\|_{\mathcal H_{N_s,N}}\bigr]
  + 2 \bbE\bigl[ \one_{\Omega_{r,s}}|\mathscr R_{r,s}-1| \cdot \| \Delta\mu^{\mathrm{open}}_{r,s}\|_{\mathcal H_{N_s,N}}\bigr].
\end{split}
\]
For $u>r$, using the independence of the blocks, we get
\[
 \bbE\bigl[\one_{\Omega_{r,u-1}}
 |\mathscr R_{r,u-1}-1|
 \one_{\Omega_{u,s}} \|\mathscr D^{\mathrm{open}}_{u,s}\|_{\cH_{N_s,N}}\bigr]
 \le
 \frac{C}{\sqrt{K-u}} \, \frac{(2\varepsilon_\rho)^{s-u}}
 {\sqrt{K-u}}\,,
\]
where we have also used that
\(\bbE[ \one_{\Omega_{r,u-1}}|\mathscr R_{r,u-1}-1| ] \leq C /(K-u+1)^{1/2}\)
thanks to Proposition~\ref{prop:block-inputs-ii}, see~\eqref{eq:app-good-P} and Cauchy--Schwarz's inequality,
together with~\eqref{eq:open-activity-bound}.
Note that the term with $u=r$ vanishes because
$\mathscr R_{r,r-1}=1$.
On the other hand, using Cauchy--Schwarz, we also have that
\[
\bbE\bigl[ \one_{\Omega_{r,s}}|\mathscr R_{r,s}-1| \one_{\Omega_{r,s}} \| \Delta\mu^{\mathrm{open}}_{r,s}\|_{\mathcal H_{N_s,N}}\bigr]
\leq
\frac{C}{\sqrt{K-s}} \times \frac{C}{\sqrt{K-s}} \,,
\]
where we have used Proposition~\ref{prop:block-inputs-ii}, see \eqref{eq:app-good-P}, and \eqref{eq:profile-VS}.
We thus get that
\begin{equation*}
      \bbE \bigl[\one_{\mathcal G_{r,s}}
\|\mu_{N_{r-1},N_s}^{(q_{N_{r-1}})}-q_{N_s}-\Delta\mu^{\mathrm{open}}_{r,s}\|_{\mathcal H_{N_s,N}}\bigr]
\leq \sum_{u=r}^{s} \frac{C(2\varepsilon_\rho)^{s-u}}
 {K-u} + \frac{C}{K-s}\,,
\end{equation*}
which concludes the proof of \eqref{eq:profile-second-order}.
\end{proof}

\subsection{Logarithmic moment bounds: proof of Proposition~\ref{prop:log-tails}}\label{sec:NM}

Recall the two ratios
\[
 Z_{n,N}(\mu_n^{(x)})=\frac{Z_N(x)}{Z_n(x)},
 \qquad
 Z_{n,N}(\widehat\mu_{m,n}^{(x)})
 =\frac{\widehat Z_{m,N}(x)}{\widehat Z_{m,n}(x)} \,.
\]
Our goal is to control the \(L^p\) norm of their logarithm.
(Again, by stationarity, we may only treat the case \(x=0\) and we drop it from the notation.)
For this, we rely on the following log-tail estimate,
which we prove below.
Recall that \(n= N_b\) with \(K-b=J\).

\begin{lemma}
      \label{lem:log-tail}
      There is a constant \(C>0\) such that, for \(\nu = \mu_n\) or \(\nu = \widehat\mu_{m,n}\),
      \[
      \bbP\bigl( |\log Z_{n,N}(\nu)| > u\bigr) \leq  \min\{1, C \sqrt{J} \e^{-u/3}\}  \qquad \text{ for all } u\geq 0\,.
      \]
\end{lemma}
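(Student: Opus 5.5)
We split into the right tail and the left tail of \(\log Z_{n,N}(\nu)\), where \(\nu=\mu_n\) or \(\nu=\widehat\mu_{m,n}\). We condition on \(\cF_n\), so that \(\nu\) is a fixed probability measure and \(Z_{n,N}(\nu)\) is a convex-log function of the disorder on \((n,N]\) with conditional mean \(1\).

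\emph{Right tail.} Markov's inequality conditionally gives \(\bbP(Z_{n,N}(\nu)>\e^u\mid\cF_n)\le \e^{-u}\), hence \(\bbP(\log Z_{n,N}(\nu)>u)\le \e^{-u}\). This is already stronger than required.

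\emph{Left tail.} This is the main step. By Lemma~\ref{lem:one-point}-\eqref{eq:one-point-negative}, applied conditionally on \(\cF_n\) with \(p=1/2\),
\[
\bbE\bigl[Z_{n,N}(\nu)^{-1/2}\mid\cF_n\bigr]\le \bbE\bigl[Z_{n,N}(\nu)^2\mid\cF_n\bigr]^{3/8}.
\]
Then Markov gives
\[
\bbP(\log Z_{n,N}(\nu)<-u\mid\cF_n)\le \e^{-u/2}\,\bbE[Z_{n,N}(\nu)^2\mid\cF_n]^{3/8}.
\]
It remains to control the conditional second moment \(X\coloneqq \bbE[Z_{n,N}(\nu)^2\mid\cF_n]\) in a weak, integrable sense.

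First, we do not expect \(\bbE[X]\) itself to be bounded, since \(\nu=\mu_n\) may be very concentrated. So we split on the good event \(\mathcal G=\mathcal G_{N,\delta}(0)\in\cF_n\) from \eqref{eq:comparison-event}.
\begin{itemize}
\item On \(\mathcal G\), Proposition~\ref{prop:PF}-\eqref{eq:profile-good-energy} gives \(\bbE[\one_{\mathcal G}X]\le CJ\), since \(J\asymp|\log\delta|\). Jensen's inequality then gives \(\bbE[\one_{\mathcal G}X^{3/8}]\le (CJ)^{3/8}\le C\sqrt J\). This contributes \(C\sqrt J\,\e^{-u/2}\).
\item On \(\mathcal G^c\) we need a different argument, because \eqref{eq:good-event-probability} alone only gives a probability \(C/J\), with no decay in \(u\).
\end{itemize}

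For \(\mathcal G^c\), use the ratio representation \(Z_{n,N}(\mu_n)=Z_N/Z_n\) and \(Z_{n,N}(\widehat\mu_{m,n})=\widehat Z_{m,N}/\widehat Z_{m,n}\). The event \(\{\log Z_{n,N}(\nu)<-u\}\) is contained in
\[
\{\log Z_N<-u/3\}\cup\{\log Z_n>2u/3\},
\]
and similarly for the hat versions. We bound each piece directly:
\begin{itemize}
\item Markov on the mean-one variable \(Z_n\) gives \(\bbP(Z_n>\e^{2u/3})\le\e^{-2u/3}\).
\item For \(Z_N\), Lemma~\ref{lem:one-point}-\eqref{eq:one-point-negative} with \(p=1\) gives \(\bbE[Z_N^{-1}]\le\bbE[Z_N^2]\). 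By \eqref{eq:uniform-two-replica}, or \cite[Proposition~A.2]{CSZ19a}, this is of order \(\log N\), which is too large. The \(\widehat Z_{m,N}\) analogue is better: \eqref{eq:FC-moments} gives \(\bbE[\widehat Z_{m,N}^{-1}]\le C|\log\delta|\asymp J\).
\end{itemize}

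This suggests abandoning the split for the \(\mu_n\) case and instead refining the decomposition. Write
\[
Z_{n,N}(\mu_n)=\frac{\widehat Z_{m,N}}{\widehat Z_{m,n}}\cdot\frac{Z_{n,N}(\mu_n)}{Z_{n,N}(\widehat\mu_{m,n})},
\]
or use Jensen-type lower bounds. Simplest is to note that \(Z_{n,N}(\nu)\ge\min_{w}\) fails, so instead bound
\[
\bbE[X^{1/4}]\le\bbE\bigl[\bbE[Z_{n,N}(\nu)^2\mid\cF_n]\bigr]^{1/4}
\]
using the \emph{unrestricted} second moment. By the first-collision formula this is \(\sum_{w,w'}\bbE[\nu(w)\nu(w')]\,(\ldots)\). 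Its size is governed by \(\sigma_N^2\sum_{r}\bbE\|\nu*q_r\|_{\ell^2}^2\bbE[Z_{N-n-r}^2]\). For \(\nu=\mu_n\), the overlap \(\bbE\|\mu_n\|_{\ell^2}^2\) is of order \(1/(n\,\cdot)\) times \(\log\) factors, by the classical polymer overlap estimate \(\sum_w\bbE[\mu_n(w)^2]\lesssim \bbE[Z_{2n}\text{-replica}]\). I expect this to give \(\bbE[X]\le C J^{c}\) with some power, which after taking the \(3/8\) power (or choosing \(p\) smaller) yields \(C\sqrt J\,\e^{-u/3}\).

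Combining the two tails, and taking the minimum with the trivial bound \(1\), gives the lemma. The main obstacle is the last step: bounding \(\bbE[\bbE[Z_{n,N}(\mu_n)^2\mid\cF_n]^{\gamma}]\) for some \(\gamma>0\) polynomially in \(J\) off the good event. I would first try the restricted/unrestricted split described above: on \(\mathcal G\) use \eqref{eq:profile-good-energy}, and on \(\mathcal G^c\) use Hölder with \(\bbP(\mathcal G^c)\le C/J\) together with a crude polynomial bound on the unrestricted conditional second moment. The exponent \(1/3\), rather than \(1/2\), leaves room to absorb the Hölder loss.
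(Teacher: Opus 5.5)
Your right-tail bound and your treatment of the good event $\mathcal G=\mathcal G_{N,\delta}(0)$ are correct. For $\nu=\mu_n$, however, the left tail on $\mathcal G^c$ is left open, and that is the whole difficulty. Since $\bbP(\mathcal G^c)\le C/J$ has no decay in $u$, you would need an $N$-uniform bound on $\bbE[\one_{\mathcal G^c}X^{3/8}]$, where $X=\bbE[Z_{n,N}(\mu_n)^2\mid\cF_n]$. Your proposal does not supply one.

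The only a priori bound is $X=\sum_{w,w'}\mu_n(w)\mu_n(w')\,\bbE[Z_{n,N}(w)Z_{n,N}(w')]\le\bbE[Z_{N-n}(0)^2]\asymp\log N$, attained when $\mu_n$ is a point mass. H\"older against $\bbP(\mathcal G^c)$ therefore gives a bound that grows with $N$. Your expectation $\bbE[X]\le CJ^c$ is an unproven claim about the \emph{normalised} overlaps $\|\mu_n*q_r\|_{\ell^2}^2$, which involve division by $Z_n(0)^2$. It is essentially as hard as the lemma itself. Replacing $\mu_n$ by the unnormalised $Z_n(0,\cdot)$ turns $\bbE[X]$ into exactly $\bbE[Z_N(0)^2]\asymp\log N$, so removing the normalisation by crude moment bounds brings the $\log N$ back. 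Your other suggestion, factoring through $\widehat Z_{m,N}/\widehat Z_{m,n}$, is circular: the leftover factor is $\e^{G_{m,n,N}(0)}$, and Proposition~\ref{prop:global} controls $G$ only via this lemma.

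The missing idea is to condition, and place the good event, not at the fixed time $n$ but at an earlier grid time $M=N_s\le n$ chosen depending on $u$. Conditionally on $\cF_M$, Cauchy--Schwarz with $\bbE[Z_n(0)\mid\cF_M]=Z_M(0)$, together with \eqref{eq:one-point-negative} for $p=1$, gives for any $\mathcal G_M\in\cF_M$
\[
\bbE\bigl[\one_{\mathcal G_M}Z_{n,N}(\mu_n)^{-1/2}\bigr]
=\bbE\Bigl[\one_{\mathcal G_M}\Bigl(\tfrac{Z_n(0)}{Z_M(0)}\Bigr)^{1/2}\Bigl(\tfrac{Z_N(0)}{Z_M(0)}\Bigr)^{-1/2}\Bigr]
\le\bbE\bigl[\one_{\mathcal G_M}Z_{M,N}(\mu_M)^2\bigr]^{1/2}.
\]
Take $\mathcal G_M=\mathcal G_{1,s}$ from \eqref{eq:profile-good-event}. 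Then $\bbP(\mathcal G_{1,s}^c)\le C/(K-s)$, and as for \eqref{eq:profile-good-energy}, $\bbE[\one_{\mathcal G_{1,s}}Z_{M,N}(\mu_M)^2]\le C(K-s)$. Hence
\[
\bbP(Z_{n,N}(\mu_n)<\e^{-u})\le \frac{C}{K-s}+C\sqrt{K-s}\,\e^{-u/2}.
\]
Moving the good event earlier shrinks its failure probability at the price of a larger second moment. Choosing $K-s=\lceil J\e^{u/3}\rceil$ balances the two terms at $C\sqrt J\,\e^{-u/3}$; this trade-off, not H\"older slack, is where the exponent $1/3$ comes from. When $J\e^{u/3}\ge K\asymp\log N$ no such $s$ exists. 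In that range the crude bound $\bbE[Z_{n,N}(\mu_n)^{-1/2}]\le\bbE[Z_N(0)^2]^{1/2}\le C\sqrt{\log N}$ already gives $C\sqrt J\,\e^{-u/3}$.

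A smaller point: for $\nu=\widehat\mu_{m,n}$ your split only yields $CJ\e^{-u/3}$, a factor $\sqrt J$ worse than claimed. Applying Cauchy--Schwarz to $\widehat Z_{m,N}(0)^{-1/2}\widehat Z_{m,n}(0)^{1/2}$ gives $\bbE[Z_{n,N}(\widehat\mu_{m,n})^{-1/2}]\le\bbE[\widehat Z_{m,N}(0)^{-1}]^{1/2}\le C\sqrt J$ directly, with no good event at all.
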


Let us first conclude the proof of Proposition~\ref{prop:log-tails} thanks to this lemma.

\begin{proof}[Proof of Proposition~\ref{prop:log-tails}]
We start with the integral formula for the \(p\)-th moment
\[
 \bbE\bigl[|\log Z_{n,N}(\nu)|^p\bigr]
 =p\int_0^\infty u^{p-1}
       \bbP\bigl(|\log Z_{n,N}(\nu)|>u\bigr)\,\dd u \,.
\]
We now use Lemma~\ref{lem:log-tail}, and we split the integral into a part with \(u\leq C \log J\) where we bound the probability by \(1\) and a part with $u \geq C\log J$ where we use the bound \(C \sqrt{J}\e^{-u/3}\).
This gives
\[
\bbE\bigl[|\log Z_{n,N}(\nu)|^p\bigr]
 \le C_p (\log J)^p + C_p \sqrt{J} (\log J)^{p-1}\e^{- C \log J /3} \leq C_p' (\log J)^p \,,
\]
where the second inequality is valid when \(C\) is large enough, 
and thus \(\|\log Z_{n,N}(\nu)\|_{L^p} \leq C_p \log J\).
Since $J\asymp|\log\delta|$, this proves
\eqref{eq:future-log-moments}.

For a positive terminal function \(f\), the bounds
\(\e^{-\|\log f\|_\infty}\le f\le\e^{\|\log f\|_\infty}\) give
\[
 |\log Z_{n,N}(\nu,f)-\log Z_{n,N}(\nu)|
 \le \|\log f\|_\infty.
\]
The claimed extension follows by the triangle inequality in \(L^p\).
\end{proof}

\begin{proof}[Proof of Lemma~\ref{lem:log-tail}]
First of all, notice that $\bbE[Z_{n,N}(\nu)\mid \cF_n]=1$. Hence, by taking the exponential and using the conditional mean, we get that for any $u\ge0$,
\[
 \bbP\bigl(\log Z_{n,N}(\nu)>u\bigr)\le\e^{-u} \,,
\]
and we thus need to focus on the left tail of \(\log Z_{n,N}(\nu)\).
We do this by estimating their negative moments, for which the Gaussian convexity inequalities are central.

\smallskip\noindent\emph{The case \(\nu = \widehat \mu_{m,n}\).}
Writing \(Z_{n,N}(\widehat\mu_{m,n})=\widehat Z_{m,N}(0)/\widehat Z_{m,n}(0)\) and applying the Cauchy--Schwarz's inequality, we get
\[
 \begin{split}
      \bbE\bigl[ Z_{n,N}(\widehat\mu_{m,n})^{-1/2}\bigr]
      & = \bbE\bigl[ \widehat Z_{m,N}(0)^{-1/2}\widehat Z_{m,n}(0)^{1/2}\bigr] \\
      & \le
      \bbE[\widehat Z_{m,n}(0)]^{1/2}
      \bbE[\widehat Z_{m,N}(0)^{-1}]^{1/2}
      \leq \bbE[\widehat Z_{m,N}(0)^{2}]^{1/2},
 \end{split}
\]
where we have used \eqref{eq:one-point-negative} with \(p=1\)
for the last inequality.

Now, recalling that \(m=N_a\) with \(a=K-2J\), 
a rough second moment computation, see e.g.\ \eqref{eq:rough-bound}, gives the following
\[
\bbE[\widehat Z_{m,N}(0)^{2}] \leq C \log(N/m) \leq C J \,.
\]
We therefore end up with \(\bbE[Z_{n,N}(\widehat\mu_{m,n})^{-1/2}] \le C\sqrt{J}\) so that Markov's inequality yields,
for $u\ge0$,
\begin{equation}\label{eq:regularised-log-tail}
 \bbP\bigl(Z_{n,N}(\widehat\mu_{m,n})< \e^{-u}\bigr)
 \le C\sqrt{J}\,\e^{-u/2} ,
\end{equation}
which is what is claimed in Lemma~\ref{lem:log-tail}.

\smallskip\noindent\emph{The case \(\nu = \mu_{n}\).}
Applying directly the same argument as above shows that
\[
\bbE\bigl[ Z_{n,N}(\mu_{n})^{-1/2}\bigr] \leq \bbE \bigl[Z_N(0)^2\bigr]^{1/2} \leq C \sqrt{\log N} \,,
\]
where the last inequality follows from
\eqref{eq:uniform-two-replica} and
$\sigma_N^2\asymp1/\log N$.
Using Markov's inequality, this gives the bound
\begin{equation*}
      \bbP\bigl(Z_{n,N}(\mu_{n})< \e^{-u}\bigr)
 \le C\sqrt{\log N} \,\e^{-u/2} \,.
\end{equation*}
This is not a satisfying bound, except when \(u\) is large.
In fact, we will use this bound when \(\lceil J\e^{u/3}\rceil \geq K \geq c \log N \),
in which case we obtain
\begin{equation}
      \label{eq:log-tail-rough}
      \bbP\bigl(Z_{n,N}(\mu_{n})< \e^{-u}\bigr)
 \le C \sqrt{J} \,\e^{-u/3} \,.
\end{equation}

Let us now assume that \(u\geq 0\) is such that \(\lceil J\e^{u/3}\rceil \leq K-1\) and let us introduce the intermediate time \(N_1\leq M = N_{s} \leq n =N_b\) with \(s \coloneqq K- \lceil J\e^{u/3}\rceil \in \{1,\ldots, b\}\).
The reason for this specific choice of \(s\) will be clear below, see e.g.\ \eqref{eq:reason-choice-s}.
Then, for any event \(\mathcal G_M \in \cF_M\), applying Cauchy--Schwarz's inequality and
\eqref{eq:one-point-negative} with $p=1$,
conditionally on $\cF_M$, 
we get analogously as above that
\begin{equation}\label{eq: -1/2 moment and 2nd moment}
 \begin{split}
      \bbE\bigl[\one_{\mathcal G_M} Z_{n,N}(\mu_{n})^{-1/2}\bigr]
      = \bbE\biggl[\one_{\mathcal G_M} \Bigl(\frac{Z_n(0)}{Z_{M}(0)}\Bigr)^{1/2} \Bigl(\frac{Z_N(0)}{Z_{M}(0)}\Bigr)^{-1/2} \biggr]  \leq \bbE\bigl[  \one_{\mathcal G_M} \,  Z_{M,N}(\mu_M)^2 \bigr]^{1/2} \,,
 \end{split}
\end{equation}
where we have used that \(\bbE[ Z_n(0)\mid \cF_M] = Z_M(0)\)  and noticed that
\[
\bbE\biggl[\biggl(\frac{Z_N(0)}{Z_{M}(0)}\biggr)^2
\,\bigg|\,\cF_{M}\biggr] = \bbE\bigl[Z_{M,N}(\mu_M)^2 \mid \cF_M\bigr].
\]

Take $\mathcal G=\mathcal G_{1,s}$ as defined in
\eqref{eq:profile-good-event}, so that $\mathcal G\in\cF_M$.
Then
\begin{equation*}
 \bbP(\mathcal G^c)\le \frac{C}{K-s},
 \qquad
 \bbE\bigl[\one_{\mathcal G}Z_{M,N}(\mu_M)^2\bigr]
 \le C(K-s).
\end{equation*}
The first bound follows from \eqref{eq:profile-good-prob}, while
the second can be checked 
repeating the argument which proves
\eqref{eq:profile-good-energy} at time $M=N_s$
(note that $K-s\ge J$ is sufficiently large
for small $\delta$).
We therefore get that, applying Markov's inequality with \eqref{eq: -1/2 moment and 2nd moment} 
\begin{equation}
\label{eq:reason-choice-s}
\begin{split}
      \bbP\bigl(Z_{n,N}(\mu_n)< \e^{-u}\bigr)
      & \leq \bbP(\mathcal G^c)
      +  \bbE\bigl[\one_{\mathcal G}Z_{M,N}(\mu_M)^2\bigr]^{1/2}\e^{-u/2} \\
      & \leq \frac{C}{K-s} + C \sqrt{K-s}\,\e^{-u/2} \,.
\end{split}
\end{equation}
The reason for the choice
\(K-s \coloneqq \lceil J\e^{u/3}\rceil\)
is made so that both terms on the right-hand side
of~\eqref{eq:reason-choice-s} bounded by \(O(\sqrt{J}\e^{-u/3})\).
We end up with the following bound when \(\lceil J\e^{u/3}\rceil < K\):
\begin{equation}\label{eq:fine-log-tail}
 \bbP\bigl(Z_{n,N}(\mu_n)<\e^{-u}\bigr)
 \le C\sqrt J\,\e^{-u/3}.
\end{equation}
Combining with \eqref{eq:log-tail-rough} for the case \(\lceil J\e^{u/3}\rceil \geq K\), this concludes the proof.
\end{proof}

\section{Small-scale fluctuations and covariance structure}
\label{prop:sec-local-properties}

In this section, we prove Theorem~\ref{thm:properties}.
We recall that we only work here with flat initial condition \(h_0=\zero\).
By Proposition~\ref{prop:invariance},
\[
 \scrH_t^\vartheta(\zero,\varphi_\varepsilon^{(x)})
 \stackrel{\rm dist}{=}
 \scrH_1^{\vartheta+\log t}(\zero,
       \varphi_{\varepsilon/\sqrt t}^{(x/\sqrt t)}),
\]
so it suffices to consider only \(t=1\). 
We use the time-reversed fields \(\scrH^{(\delta)}\) and \(\scrH\) from
\eqref{eq:regularised-Hbis} and \eqref{eq:toy}, omitting the superscript \(\vartheta\) as in Section~\ref{subsec:strategy-approximation}.
The law of \(\scrH\) is that of \(\scrH_1^\vartheta(\zero,\cdot)\).

\subsection{Small-scale Gaussianity}

We prove here item~\eqref{i:small-scale} in Theorem~\ref{thm:properties}.
The main idea is to derive the result from Gu and Tsai's log-normality for the SHF \cite{GT26}.
Recall that we defined $F_\delta(x)=\log\scrZ_{\delta,1}^\vartheta(g_\delta^{(x)},\one)$
in~\eqref{eq:F-delta}, so that 
\[
\scrH^{(\delta)}(\varphi) = \int_{\R^2} \varphi(x) \{F_{\delta}(x)-\bbE[F_{\delta}(x)]\} \dd x \,.
\]
We then have the following log-normality, which differs from \cite[Theorem~1.3]{GT26} only by the centring term.

\begin{theorem}
      \label{thm:log-normality}
      For any fixed \(\vartheta\in\R\), for any \(x\in \R^2\) we have that
      \begin{equation}\label{eqY: local CLT}
            \frac{F_{\gep^2}(x)-\bbE[F_{\gep^2}(x)]}{\sqrt{\log \log (1/\gep)}} \xrightarrow[\ \gep \to 0 \ ]{d} \mathcal{N}(0,1)\,.
      \end{equation}
\end{theorem}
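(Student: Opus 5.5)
We reduce the statement to \cite[Theorem~1.3]{GT26}. There, the log-normality is stated for $\log \scrZ_{\delta,1}^\vartheta(g_\delta^{(x)},\one)$ (or an equivalent spatial average of the SHF at scale $\varepsilon$, $\delta=\varepsilon^2$) with a deterministic centring, of the form $\tfrac12\log\log(1/\varepsilon)$ up to $o(\sqrt{\log\log(1/\varepsilon)})$, or possibly $-\tfrac12\log\bbE[\e^{2F_\delta}]$. Accordingly, we write
\[
F_{\gep^2}(x)-\bbE[F_{\gep^2}(x)] = \bigl(F_{\gep^2}(x)-c_\gep\bigr) + \bigl(c_\gep-\bbE[F_{\gep^2}(x)]\bigr),
\]
where $c_\gep$ is the Gu--Tsai centring. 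By Slutsky's theorem, it then suffices to show that
\[
c_\gep-\bbE[F_{\gep^2}(x)] = o\bigl(\sqrt{\log\log(1/\gep)}\bigr).
\]
If \cite{GT26} uses a slightly different regularisation, say a spatial heat-kernel average of $\scrZ_{0,1}$ instead of removing the disorder on $[0,\delta]$, we first transfer. We do this either through the exact identity in law obtained by time shift, as in \eqref{prop:eq-shifted-cutoff-identity}, or through the $L^1$ bound \eqref{eq:L1-control} combined with the negative-moment control. The latter turns $L^1$ closeness of the partition functions into closeness of their logarithms in probability via \eqref{eq:log-holder-NM}.

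The key step is to identify $\bbE[F_\delta(x)]$ up to an error $O(1)$, which is much better than needed. For the upper and lower bounds we use \eqref{eq:asymp-ElogZ-VarlogZ}, which gives
\[
-\bbE F_\delta \le \tfrac12\log\bbE[\e^{2F_\delta}] \le \tfrac12\log\log(1/\delta)+C.
\]
The matching lower bound comes from the Gaussianity-defect bound of Lemma~\ref{lem:defect}, which is valid for Ehrhard-type variables by Remark~\ref{R:Ehrhard}:
\[
-\bbE F_\delta \ge \tfrac12\log\bbE[\e^{-F_\delta}] \ge \tfrac12\log\bbE[\e^{2F_\delta}] - \tfrac12\Delta\text{-type error}.
\]
Here we pass from lattice to SHF using Proposition~\ref{prop:VD}-\eqref{prop:eq-heat-defect} together with weak convergence and uniform integrability. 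More precisely, Lemma~\ref{lem:extension-convergence-delta} gives convergence of $\bbE\log \widehat Z_{m,N}$ to $\bbE F_\delta$. On the lattice, the bounds $\bbE[\widehat Z^{-1}]\ge \bbE[\widehat Z^2]\e^{-C}$ and $\bbE[\widehat Z^2]\ge c\log(1/\delta)$, the latter from Lemma~\ref{lem:second-moment-hat}, give
\[
-\bbE\log\widehat Z_{m,N}\ge \tfrac12\log\log(1/\delta)-C
\]
uniformly in large $N$. Taking $N\to\infty$ yields $\bbE F_\delta = -\tfrac12\log\log(1/\delta)+O(1)$. Since $\log\log(1/\gep^2)=\log\log(1/\gep)+\log 2$, the centring discrepancy is $O(1)$, which is indeed $o(\sqrt{\log\log(1/\gep)})$.

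The main obstacle is making the comparison with \cite{GT26} precise. We must check that their centring is $\tfrac12\log\log$ plus a term of lower order than $\sqrt{\log\log}$, and that their mollification matches $F_{\gep^2}$ in law. The second point relies on time-reversal invariance and translation invariance of the SHF. If their centring is instead the mean, or the median, of the log, the argument simplifies. If only convergence in law of the normalised variable is available with an unspecified centring $c_\gep$, we argue differently. Tightness of $(F-\bbE F)/\sqrt{\log\log}$ in $L^2$, which follows from $\Var F_\delta\le\log\log(1/\delta)+C$ by \eqref{eq:asymp-ElogZ-VarlogZ}, gives uniform integrability of the normalised variables. Together with convergence in law, this forces $(c_\gep-\bbE F)/\sqrt{\log\log}\to 0$, because the limiting law $\mathcal N(0,1)$ is centred. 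This last argument is robust and is the one we would use.
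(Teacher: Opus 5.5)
Your final (``robust'') argument is exactly the paper's proof. The bound $\Var F_{\gep^2}(x)\le\log\log(1/\gep^2)+C_\vartheta$ from \eqref{eq:asymp-ElogZ-VarlogZ} makes the normalised variables bounded in $L^2$, and together with the convergence in law from \cite[Theorem~1.3]{GT26} with centring $b_\gep$ this forces $(b_\gep-\bbE F_{\gep^2}(x))/\sqrt{\log\log(1/\gep)}\to0$; the paper only makes explicit the intermediate step that these means are bounded, since both $Y_\gep$ and $Y_\gep-\bbE Y_\gep$ are tight, whereas your first route (pinning $\bbE F_\delta$ down to $O(1)$) is unnecessary and would not close on its own, because \cite{GT26} is quoted only as giving $b_\gep\sim-\frac12\log\log(1/\gep)$ rather than up to $o(\sqrt{\log\log(1/\gep)})$.
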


\begin{proof}
Theorem~1.3 of \cite{GT26} gives a deterministic centring $b_\varepsilon\sim-\frac12\log\log(1/\varepsilon)$ such that
\begin{equation}\label{eqY: Y epsilon, CLT}
Y_\varepsilon \coloneqq  \frac{F_{\varepsilon^2}(x)-b_\varepsilon}
{\sqrt{\log\log(1/\varepsilon)}}
\xrightarrow[\;\varepsilon\downarrow0\;]{d}\mathcal N(0,1)\,.
\end{equation}

We now claim that the second moment of $Y_\varepsilon$ is uniformly bounded:
\begin{equation}\label{eqY: second moment, claim}
\sup_{\varepsilon \in (0,1/10)} \mathbb E
[ |Y_\varepsilon| ^2 ] <\infty.
\end{equation}
Once \eqref{eqY: second moment, claim} is proven, from the weak convergence \eqref{eqY: Y epsilon, CLT}, we get $\lim_{\varepsilon \downarrow 0} \mathbb E [Y_\varepsilon] =0$ and thus  
\[
Y_\varepsilon - \mathbb E [Y_\varepsilon ] \xrightarrow[\varepsilon\downarrow0]{d}\mathcal N(0,1)
\] 
which is exactly \eqref{eqY: local CLT}. 

In what follows, we will prove \eqref{eqY: second moment, claim}. By~\eqref{eq:asymp-ElogZ-VarlogZ}, we have 
\begin{equation}\label{eqY: var}
	\Var Y_\varepsilon = \frac{\Var F_{\varepsilon^2} (x) }{\log \log \frac{1}{\varepsilon} } \le \frac{\log \log \frac{1}{\varepsilon^2} +C_\vartheta }{\log \log \frac{1}{\varepsilon} }\le C <\infty,\quad \forall \varepsilon \in (0,1/10)
\end{equation}
for some finite $C>0$. 
To bound $\mathbb E [|Y_\varepsilon|^2] = \Var Y_\varepsilon +  | \mathbb E [Y_\varepsilon] |^2 $, it suffices to bound $|\mathbb E [Y_\varepsilon] |$ uniformly for small $\varepsilon $. 
From \eqref{eqY: var}, $\{ Y_\epsilon -\mathbb E [Y_\varepsilon ]: \varepsilon \in(0,1/10) \}$ is tight. Furthermore, from the weak convergence \eqref{eqY: Y epsilon, CLT}, $\{ Y_\epsilon : \varepsilon \in(0,1/10) \}$ is also tight. 
It follows that $\mathbb E [Y_\varepsilon ]$ is uniformly bounded for $ \varepsilon \in(0,1/10)$, completing the proof.
\end{proof}

In the rest of this section, we work with a nonnegative \(\varphi\in C_c^\infty(\R^2)\).
Assume for simplicity that \(\int \varphi =1\).
By stationarity, we may also only consider the case \(x=0\) and recall that we defined
\(\varphi_\varepsilon(z) \coloneqq \varphi_\varepsilon^{(0)}(z)
=\varepsilon^{-2}\varphi(z/\varepsilon)\).
Finally, by translation invariance of SHF, we may assume that
the barycentre of~\(\varphi\) is \(\int y \varphi(y) \dd y =0\), that is, 
\begin{equation}\label{eqY: barycentre}
\int_{\R^2} y_i \varphi(y) \dd y =0,\quad y=(y_1,y_2),\ i=1,2.
\end{equation}

\begin{proof}[Proof of item~\eqref{i:small-scale} in Theorem~\ref{thm:properties}]
Our main task is to prove that there exists a constant \(C_\varphi\) such that,
uniformly for \(\gep>0\) small enough,
\begin{equation}
      \label{prop:eq-point-matched-comparison}
 \left\|\scrH(\varphi_\varepsilon)
 -\{F_{\varepsilon^2}(0)-\bbE F_{\varepsilon^2}(0)\}\right\|_{L^1}
 \le C_\varphi.
\end{equation}
Together with Theorem~\ref{thm:log-normality}, this concludes the proof of item~\eqref{i:small-scale} in Theorem~\ref{thm:properties}.

To prove~\eqref{prop:eq-point-matched-comparison}, we use the triangle inequality to get that the left-hand side is bounded by
\begin{equation}
\label{eq:uniform-control-L1}
     \|\scrH(\varphi_\varepsilon) - \scrH^{(\gep^2)}(\varphi_{\gep})\|_{L^2}  + \Bigl\| \int_{\R^2} \varphi_{\gep}(x) F_{\gep^2}(x) \dd x -  F_{\gep^2}(0) \Bigr\|_{L^1} \,,
\end{equation}
using also that \(\int_{\R^2} \varphi_{\gep}(x) \bbE[F_{\gep^2}(x)] \dd x = \bbE[F_{\gep^2}(0)]\)
so that the centring terms cancel out.

\smallskip
\noindent
\textit{First term in~\eqref{eq:uniform-control-L1}.}
We use the decomposition~\eqref{eq:averaged-decomposition}, to get that
\[
 \limsup_{N\to\infty}\|
 \cH^{(N)}(\varphi_\varepsilon)
 -\cH^{(N;\delta)}
       (\varphi_\varepsilon)\|_{L^2}
\leq \limsup_{N\to\infty} \|\cL^{(N;\delta)}(\varphi_\varepsilon)\|_{L^2}  + \limsup_{N\to\infty} \|\cG^{(N;\delta)}(\varphi_\varepsilon)\|_{L^2} \,.
\]
Now, we can use Propositions~\ref{prop:local} and~\ref{prop:global}, or more precisely the bounds~\eqref{eq:limsup-local-L} and \eqref{eq:limsup-global-G} (see  Remarks~\ref{rem:better-bound-L} and~\ref{rem:better-bound-G}), to get that for any \(\delta\) small enough,
\[
\limsup_{N\to\infty}\|
 \cH^{(N)}(\varphi_\varepsilon)
 -\cH^{(N;\delta)}
       (\varphi_\varepsilon)\|_{L^2}^2  \leq \frac{C \mathcal E_{\sqrt\delta}(\varphi_{\gep})}{ |\log \delta|} + \frac{C' \|\varphi_{\gep}\|_{1}^2}{|\log \delta|^{1/20}} \,,
\]
where \(\mathcal E_{\sqrt\delta}(\varphi_{\gep})\) is defined in~\eqref{eq:def-E-varphi}.
In particular, by the triangle inequality (and using that \((a+b)^2 \leq 2 a^2 +2 b^2\)) we get that for any \(\delta\leq \gep^2\),
\[
\limsup_{N\to\infty}\|
 \cH^{(N;\gep^2)}(\varphi_\varepsilon)
 -\cH^{(N;\delta)}
       (\varphi_\varepsilon)\|_{L^2}^2  \leq \frac{C \mathcal E_{\gep}(\varphi_{\gep})}{|\log \gep|} + \frac{C' \|\varphi_{\gep}\|_{1}^2}{|\log \gep|^{1/20}} \,,
\]
using also the monotonicity of \(\delta \mapsto \mathcal{E}_{\delta}(\varphi)\).
Now, as in Remark~\ref{rem:better-bound-L}, scaling gives
\[
 \mathcal E_{\gep}(\varphi_{\gep})=\mathcal E_{1/\gep}(\varphi)
 \leq C\|\varphi\|_\infty\|\varphi\|_1
       +C|\log\gep|\,\|\varphi\|_1^2.
\]
By the joint convergence in distribution of Proposition~\ref{prop:FC} and Portmanteau, as in~\eqref{eq:L2-Cauchy}, we therefore get that, uniformly for \(\gep\in (0,\gep_0)\) with \(\gep_0\) small enough and \(\delta \leq \gep^2\)
\begin{equation*}
\|\scrH^{(\gep^2)}(\varphi_\varepsilon)
 -\scrH^{(\delta)}(\varphi_\varepsilon)\|_{L^2}^2
 \leq C \|\varphi\|_1 \big(\|\varphi\|_{\infty}+\|\varphi\|_1 \big) \,.
\end{equation*}
Letting \(\delta \downarrow 0\) and using~\eqref{eq:toy}, namely
\(\|\scrH^{(\delta)}(\varphi_\varepsilon) - \scrH(\varphi_\varepsilon)\|_{L^2} \to 0\),
this shows that the first term in~\eqref{eq:uniform-control-L1} satisfies
\begin{equation}
      \label{eq:uniform-control-L2-first}
    \|\scrH^{(\gep^2)}(\varphi_{\gep})- \scrH(\varphi_\varepsilon)\|_{L^2} \leq C \|\varphi\|_1^{1/2} \big(\|\varphi\|_{\infty}+\|\varphi\|_{1}\big)^{1/2}\,,
\end{equation}
uniformly in \(\gep\in (0,\gep_0)\).

\smallskip
\noindent
\textit{Second term in~\eqref{eq:uniform-control-L1}.}
First of all, note that 
\[
F_{\varepsilon^2 }(x) -F_{\varepsilon^2 }(0) = \log \mathscr Z_{\varepsilon^2 ,1}^\vartheta (g_{\varepsilon^2}^{(x)},\one) - \log \mathscr Z_{\varepsilon^2 ,1} ^\vartheta (g_{\varepsilon^2}^{(0)},\one) 
= \log \int_{\R^2} \frac{g_{\varepsilon^2} (x-y)}{g_{\varepsilon^2} (y) } Q_\varepsilon (\dd y),
\]
where \(Q_\varepsilon\) is a probability measure on \(\R^2\), defined by
\[
Q_\varepsilon (\dd y) \coloneqq 
\frac{g_{\varepsilon^2} (y)\mathscr Z_{\varepsilon^2 ,1} ^\vartheta (\dd y, \one) }
	 {\int_{\R^2} g_{\varepsilon^2} (z)\mathscr Z_{\varepsilon^2 ,1} ^\vartheta (\dd z, \one)} \,.
\]
Now, by definition~\eqref{eq:heat} of the heat kernel we can write
\(
\frac{g_{\varepsilon^2} (x-y)}{g_{\varepsilon^2} (y) } 
= \exp(-\frac{|x|^2}{2\varepsilon^2 } +\frac{x \cdot y}{\varepsilon^2}),
\)
so that we end up with 
\[
F_{\varepsilon^2 }(x) -F_{\varepsilon^2 }(0) 
=  -\frac{|x|^2}{2\varepsilon^2 } + \log \int_{\R^2} \e^{ \frac{x \cdot y}{\varepsilon^2} } Q_\varepsilon (\dd y)
\geq -\frac{|x|^2}{2\varepsilon^2 } + x \cdot \int_{\R^2} \frac{y}{\varepsilon^2} Q_\varepsilon (\dd y) \,,
\]
where the last inequality follows by 
Jensen's inequality, since \(Q_{\varepsilon}\) is a probability measure.
As a result, recalling that \(\varphi\) is non-negative,
\begin{equation*}
\begin{aligned}
	\int_{\R^2} \varphi_\varepsilon(x) [F_{\varepsilon^2 }(x) -F_{\varepsilon^2 }(0)] \dd x 
	\ge & -\int_{\R^2} \frac{\varphi_\varepsilon(x) }{ 2\varepsilon^2} |x|^2 \dd x + 
	\int_{\R^2} \varphi_\varepsilon(x) x  \dd x  \cdot \int_{\R^2} \frac{y}{\varepsilon^2} Q_\varepsilon (\dd y)\\
	=& -\frac{1}{2}\int_{\R^2} \varphi(x) |x|^2 \dd x\,,
\end{aligned}
\end{equation*}
where we have used the scaling properties of \(\varphi_\varepsilon\) for the first term and that $\int_{\R^2} \varphi_\varepsilon(x) x  \dd x =0$ for the second term (recall that \(\varphi\) satisfies \eqref{eqY: barycentre}). 
In particular, this shows that 
\begin{equation}
      \label{eqY: int phi F -F}
      \Bigl(\int_{\R^2} \varphi_\varepsilon(x) [F_{\varepsilon^2 }(x) -F_{\varepsilon^2 }(0)] \dd x \Bigr)^- \leq \frac{1}{2}\int_{\R^2} \varphi(x) |x|^2 \dd x \,,
\end{equation}
where \(X^- = \max\{-X,0\}\) is the negative part of \(X\).
Since $\int \varphi_\varepsilon(x) [F_{\varepsilon^2 }(x) -F_{\varepsilon^2 }(0)] \dd x $ has mean zero by the spatial stationarity of $F_{\varepsilon^2}(\cdot)$
and since $\mathbb E[|X|] =2 \mathbb E [X^-]$ for any mean-zero random variable $X$, we get from~\eqref{eqY: int phi F -F} that the second term in~\eqref{eq:uniform-control-L1} verifies
\begin{equation}\label{eqY: second term bound}
	\mathbb E \biggl[ \Bigl|
	\int_{\R^2} \varphi_\varepsilon(x) \left(F_{\varepsilon^2 }(x) -F_{\varepsilon^2 }(0)\right) \dd x
      \Bigr| \biggr] 
	\le 
	\int_{\R^2} \varphi (x) |x|^2 \dd x \,.
\end{equation}

\smallskip
\noindent
In the end, plugging \eqref{eqY: second term bound} and \eqref{eq:uniform-control-L2-first} into \eqref{eq:uniform-control-L1} finishes the proof. 
\end{proof}

We conclude this section by estimating the variance of \(\scrH_t^\vartheta(\varphi_\varepsilon^{(x)})\).

\begin{proposition}[Variance of shrinking averages]
\label{prop:shrinking-variance}
For every fixed nonnegative smooth compactly supported
\(\varphi\) of integral one and satisfying \eqref{eqY: barycentre}, for any fixed \(\vartheta\in\R\),
\begin{equation}\label{prop:eq-shrinking-variance}
 \Var\bigl(\scrH_1^\vartheta(\varphi_\varepsilon)\bigr)
 \sim\log\log(1/\varepsilon) \qquad \text{ as } \gep\downarrow 0 \,.
\end{equation}
\end{proposition}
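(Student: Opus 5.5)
The plan is to transfer the variance asymptotics from the single-point quantity $F_{\varepsilon^2}(0)$ to $\scrH(\varphi_\varepsilon)$, using the comparison \eqref{prop:eq-point-matched-comparison} upgraded from $L^1$ to $L^2$. Write
\[
X_\varepsilon \coloneqq \scrH(\varphi_\varepsilon),\qquad
Y'_\varepsilon\coloneqq F_{\varepsilon^2}(0)-\bbE F_{\varepsilon^2}(0),
\qquad
E_\varepsilon\coloneqq X_\varepsilon-Y'_\varepsilon .
\]
If we show $\|E_\varepsilon\|_{L^2}\le C_\varphi$ and $\Var(Y'_\varepsilon)\sim\log\log(1/\varepsilon)$, then $\sqrt{\Var X_\varepsilon}=\sqrt{\Var Y'_\varepsilon}+O(1)$ by Minkowski, giving \eqref{prop:eq-shrinking-variance}.

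\emph{Step 1: variance of $F_{\varepsilon^2}(0)$.} The upper bound $\Var F_{\varepsilon^2}(0)\le\log\log(1/\varepsilon^2)+C_\vartheta=\log\log(1/\varepsilon)+O(1)$ is exactly \eqref{eq:asymp-ElogZ-VarlogZ}. For the lower bound, use Theorem~\ref{thm:log-normality}: $Y'_\varepsilon/\sqrt{\log\log(1/\varepsilon)}\to\mathcal N(0,1)$, and Fatou's lemma (weak convergence plus lower semicontinuity of the second moment) gives $\liminf \Var(Y'_\varepsilon)/\log\log(1/\varepsilon)\ge1$. Hence $\Var(Y'_\varepsilon)\sim\log\log(1/\varepsilon)$.

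\emph{Step 2: $L^2$ control of $E_\varepsilon$.} Split $E_\varepsilon$ as in \eqref{eq:uniform-control-L1}. The first piece, $\scrH(\varphi_\varepsilon)-\scrH^{(\varepsilon^2)}(\varphi_\varepsilon)$, is already bounded in $L^2$ by \eqref{eq:uniform-control-L2-first}. The second piece is $W_\varepsilon\coloneqq\int\varphi_\varepsilon(x)[F_{\varepsilon^2}(x)-F_{\varepsilon^2}(0)]\,\dd x$, and it only has an $L^1$ bound via \eqref{eqY: second term bound}. This upgrade is the main obstacle.

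The one-sided bound \eqref{eqY: int phi F -F} controls $W_\varepsilon^-$ deterministically, so it remains to bound $\bbE[(W_\varepsilon^+)^2]$. I would first try Jensen in the other direction,
\[
W_\varepsilon\le \log\int\varphi_\varepsilon(x)\,\e^{F_{\varepsilon^2}(x)-F_{\varepsilon^2}(0)}\dd x
=\log\frac{\scrZ_{\varepsilon^2,1}(\varphi_\varepsilon* g_{\varepsilon^2})}{\scrZ_{\varepsilon^2,1}(g_{\varepsilon^2})}.
\]
The claim is that this log-ratio has bounded second moment. The numerator's initial density is pointwise at most $C_\varphi\, g_{2\varepsilon^2}$, using $\varphi$ compactly supported on scale $\varepsilon$ and a Gaussian comparison $g_{\varepsilon^2}(\cdot-x)\le C g_{2\varepsilon^2}$ for $|x|\le C\varepsilon$. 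Thus it suffices to bound $\log\scrZ_{\varepsilon^2,1}(g_{2\varepsilon^2})-\log\scrZ_{\varepsilon^2,1}(g_{\varepsilon^2})$ from above in $L^2$. By a Chapman--Kolmogorov argument this difference is of the same nature as the local term $L$ at comparable scales ($\varepsilon^2$ versus $2\varepsilon^2$). Its second moment is controlled by the one-point defect bound of Proposition~\ref{prop:VD} together with the covariance bound of Lemma~\ref{lem:two-point}: both logarithms have variance $\log\log(1/\varepsilon)+O(1)$, and their covariance is at least $\log\log(1/\varepsilon)-O(1)$, because $\log\bbE[\cdot\,\cdot]$ of the two averaged masses differs from the diagonal only by $O(1)$ by the first-collision formula.

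If this route is too delicate, a fallback is available. Both $X_\varepsilon$ and $Y'_\varepsilon$ have uniformly bounded $L^p$ norms after dividing by $\sqrt{\log\log(1/\varepsilon)}$; for $X_\varepsilon$ this follows from Remark~\ref{rem:Lp-boundedness}-type tail bounds, using $\bbE[Z^{-1}]\le C\log(1/\varepsilon)$ for the negative tail. Then $E_\varepsilon/\sqrt{\log\log(1/\varepsilon)}\to0$ in $L^1$ with uniform higher moments, so it tends to $0$ in $L^2$. That already yields the weaker but sufficient conclusion $\sqrt{\Var X_\varepsilon}=\sqrt{\Var Y'_\varepsilon}(1+o(1))$, which is all \eqref{prop:eq-shrinking-variance} requires. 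I would likely adopt this fallback, since it only needs uniform integrability of $(X_\varepsilon/\sqrt{\log\log(1/\varepsilon)})^2$, obtained from an $L^4$ bound by the Gaussian-convexity negative-moment estimates.
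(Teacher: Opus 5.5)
\textbf{There is a genuine gap in the step you flag yourself.} Neither route actually gives $\|W_\varepsilon\|_{L^2}=o(\sqrt{\log\log(1/\varepsilon)})$.

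\emph{First route.} Take $A=\log\scrZ_{\varepsilon^2,1}(g_{2\varepsilon^2})$ and $B=\log\scrZ_{\varepsilon^2,1}(g_{\varepsilon^2})$. Bounding $\Var(A-B)$ from above needs a \emph{lower} bound $\Cov(A,B)\ge\log\log(1/\varepsilon)-O(1)$. Lemma~\ref{lem:two-point} only bounds the covariance from \emph{above}, and FKG only gives $\Cov(A,B)\ge0$. The reverse inequality would need an extra estimate that the paper does not provide, e.g.\ a bound on the Gaussianity defect of $A+B$.

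\emph{Fallback.} The tail bound $\bbP(|\log Z|>u)\le C|\log\varepsilon|\,\e^{-u}$, obtained from $\bbE[Z^{-1}]\le C\log(1/\varepsilon)$ as in Remark~\ref{rem:Lp-boundedness}, only yields $L^p$ norms of order $\log\log(1/\varepsilon)$; see \eqref{eq:Lp-bound-SHF}. After dividing by $\sqrt{\log\log(1/\varepsilon)}$ these still diverge, so no uniform integrability follows. You also assert the bound for $X_\varepsilon$, the $\delta\downarrow0$ limit. The paper gives no uniform-in-$\varepsilon$ higher-moment bound for it, and the bounds for the $\delta$-approximants degrade as $\delta\downarrow0$.

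\emph{How the fallback could be repaired.} Work only with $W_\varepsilon$, which involves $F_{\varepsilon^2}$ at fixed regularisation. Convexity of $\Phi$ (Theorem~\ref{thm:Chen}) gives sub-Gaussian tails on both sides:
\begin{itemize}
\item $\log\bbE[\e^{-p(F-\bbE F)}]\le\frac{p^2}{2}\Var F$ for $p\ge0$, by \eqref{eq:momentp};
\item $\log\bbE[\e^{q(F-\bbE F)}]\le-q^2\bbE F$ for $q\in[0,1]$, from $\Phi(q)\le(1-q)\Phi(0)$.
\end{itemize}
By \eqref{eq:asymp-ElogZ-VarlogZ}, both variance proxies are at most $\log\log(1/\varepsilon)+C$. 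Hence $\|W_\varepsilon\|_{L^4}\le C\sqrt{\log\log(1/\varepsilon)}$. Interpolating with the $L^1$ bound \eqref{eqY: second term bound} gives $\|W_\varepsilon\|_{L^2}\le C(\log\log(1/\varepsilon))^{1/3}$, which would close your argument.

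\textbf{The idea you are missing is that the $L^2$ upgrade is unnecessary.} This is the paper's proof.
\begin{itemize}
\item \emph{Upper bound.} Since $\varphi_\varepsilon\ge0$ has mass one, Jensen (or Minkowski's integral inequality) and stationarity give
\[
\|\scrH^{(\varepsilon^2)}(\varphi_\varepsilon)\|_{L^2}\le\|F_{\varepsilon^2}(0)-\bbE F_{\varepsilon^2}(0)\|_{L^2}\le(\log\log(1/\varepsilon)+C_\vartheta)^{1/2}.
\]
Then \eqref{eq:uniform-control-L2-first} transfers this to $\scrH(\varphi_\varepsilon)$ with an $O(1)$ error in $L^2$.
\item \emph{Lower bound.} Apply Fatou directly to the CLT \eqref{eq:main-local-clt} for $\scrH(\varphi_\varepsilon)$ itself, rather than to $F_{\varepsilon^2}(0)$. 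That CLT only needed the $L^1$ comparison \eqref{prop:eq-point-matched-comparison}.
\end{itemize}
So $W_\varepsilon$ never has to be controlled beyond $L^1$.
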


\begin{proof}
Notice that since \(\varphi\geq 0\) with \(\int\varphi=1\),
 Jensen's inequality yields that
\begin{equation*}
      \bigl(\scrH^{(\gep^2)}(\varphi_{\gep})\bigr)^2  = \Bigl(\int_{\R^2} \varphi_{\gep}(x) \{F_{\gep^2}(x)-\bbE[F_{\gep^2}(x)]\} \dd x \Bigr)^2 \leq \int_{\R^2} \varphi_{\gep}(x) \bigl(F_{\gep^2}(x)-\bbE[F_{\gep^2}(x)]\bigr)^2 \dd x \,,
\end{equation*}
so that in particular \(\bbE[(\scrH^{(\gep^2)}(\varphi_{\gep}))^2] = \Var(\scrH^{(\gep^2)}(\varphi_{\gep})) \leq \Var(F_{\gep^2}(0))\), using also the spatial stationarity of $F_{\varepsilon^2}$ and \(\int\varphi =1\).
Now, using the observation~\eqref{eq:asymp-ElogZ-VarlogZ},
we have that \(\Var(F_{\gep^2}(0)) \leq \log \log (1/\gep) + C_{\vartheta}\). By \eqref{eq:uniform-control-L2-first} {{and the triangle inequality}}, we thus get the upper bound
\[
\Var\bigl(\scrH(\varphi_\varepsilon)\bigr) 
= \mathbb E \bigl[
\scrH(\varphi_\varepsilon) ^2 
\bigr]
\leq  \log \log (1/\gep) + O( (\log \log 1/\gep)^{1/2}) \,.
\]

For the reverse bound, the convergence~\eqref{eq:main-local-clt} and Portmanteau give
\[
\liminf_{\gep \downarrow 0} \frac{\bbE\bigl[(\scrH(\varphi_{\gep}))^2 \bigr]}{\log \log (1/\gep)} \geq \bbE[Z^2]=1 \,,
\]
where \(Z\sim\mathcal N(0,1)\).
This proves the lower bound \(\Var\bigl(\scrH(\varphi_\varepsilon)\bigr) \geq (1+o(1)) \log \log (1/\gep) \) for small $ \varepsilon$, concluding the proof of Proposition~\ref{prop:shrinking-variance}.
\end{proof}

\subsection{No local signed-measure representative}

In this section, we prove
Proposition~\ref{prop:no-local-representative}.
Again, we focus on the case \(t=1\).
Consider a nonnegative \(\varphi\in C_c^{\infty}(B(0,1))\) with \(\int\varphi=1\) and let \(\varphi_{\gep}^{(x)}(y) \coloneqq \gep^{-2} \varphi((y-x)/\gep)\).

First of all, using the
small scale Gaussianity from Theorem~\ref{thm:properties}-\ref{i:small-scale}, see~\eqref{eq:main-local-clt},  for all $x\in \mathbb R^2$ and \(k\in \mathbb N\) we get that
\(\lim_{\gep\downarrow 0} \bbP(|\scrH(\varphi_\varepsilon^{(x)})| \leq k) =0\).
We can thus choose a sequence \(\gep_k \downarrow 0\) such that
\[
\bbP\pigl(\bigl|\scrH(\varphi_{\varepsilon_k}^{(x)})\bigr| \leq k\pigr) \leq 2^{-k} \,, \qquad \forall x\in \mathbb R^2. 
\]
Thus, applying Fubini--Tonelli, we get that for \(B =B(0,1)\),
\[
\bbE\Bigl[\int_B\sum_{k\ge1}
 \one_{\{|\scrH
      (\varphi_{\varepsilon_k}^{(x)})|\le k\}}\,\dd x\Bigr]
      \leq |B| \sum_{k=1}^{\infty} 2^{-k} <+\infty \,.
\]
This shows that almost surely, for almost every \(x\in B\), we have
\(|\scrH(\varphi_{\varepsilon_k}^{(x)})|> k\) eventually.
Put differently, and using a countable union of balls, we get that
\begin{equation}
      \label{eq:infinite}
      \mathrm{Leb}\pigl( \pigl\{x\in\R^2 \,: \, \limsup_{k\to\infty} |\scrH(\varphi_{\varepsilon_k}^{(x)})| <+\infty \pigr\} \pigr) =0 \qquad \bbP\text{-a.s.}
\end{equation}

Consider now the event that there is some
locally finite signed Radon measure on a non-empty open set, namely
\[
A \coloneqq \pigl\{ \exists\, U = B(z,r)\,, \mu \in  \mathcal{M}_{\rm loc}(U) \,\text{ s.t. } \, \scrH(\varphi) = \int_{U} \varphi(y) \mu(\dd y) \quad \text{ for all } \varphi \in C_c^\infty(U) \pigr\} \,,
\]
where \(z\in \R^2\), \(r>0\).
Here, \(\mathcal{M}_{\rm loc}(U)\) denotes the set of locally finite signed Radon measures on the open ball \(U=B(z,r)\).
(Note that the choice of the ball \(U\) and of
the Radon measure~\(\mu\) may depend on the realisation.)

Consider a realisation in this event \(A\) and some \(U=B(z,r)\) given by this realization.
Now, notice that
\(\varphi_{\gep}^{(x)} \in C_c^{\infty}(B(x,\gep))\) so \( \varphi_{\gep}^{(x)} \in C_c^{\infty}(U)\)
for all \(x\in B(z,r/2)\) and \(\gep \in (0,r/2)\).
Then, for any such~\(x\) and~\(\gep\), by definition of \(A\) we have that
\begin{equation*}
      \bigl|\scrH(\varphi_{\gep}^{(x)}) \bigr|
      = \Bigl|\int_{U} \varphi_{\gep}^{(x)}(y) \mu(\dd y) \Bigr|
      \leq  \|\varphi\|_{\infty}\, \gep^{-2} \, |\mu|(B(x,\gep)) \,.
\end{equation*}
Since \(|\mu|\) is locally finite on \(U=B(z,r)\),
by the Lebesgue differentiation theorem,
we have that
\(\limsup_{\gep\downarrow 0} \gep^{-2}\, |\mu|(B(x,\gep))<\infty\)
for almost every \(x\in B(z,r/2)\).
In other words, we have proven that
\[
\text{ on the event }A \,, \quad \mathrm{Leb}\pigl( \pigl\{x\in\R^2 \,: \, \limsup_{\gep\downarrow 0} |\scrH(\varphi_{\varepsilon}^{(x)})| <+\infty \pigr\} \pigr) \geq \mathrm{Leb}(B(z,r/2)) >0 \,.
\]
(We recall that  \(z\in \R^2\) and \(r>0\) may depend on the realisation.)
In view of~\eqref{eq:infinite}, this proves that \(\bbP(A)=0\), which is the main statement of Proposition~\ref{prop:no-local-representative}.

Since a locally integrable function
(and in particular a spatially
constant distribution)
defines a locally finite Radon measure, this proves the
remaining assertion.
\qed

\subsection{The covariance kernel: existence and continuity}
\label{prop:sec-covariance}

We now prove~\eqref{eq:main-covariance} in item~\eqref{iii:covariance} of Theorem~\ref{thm:properties}.
Again, we only deal with \(t=1\) and we omit the dependence on \(\vartheta\) for simplicity.

\subsubsection{Step 1. Existence of the covariance kernel}
Let \(f,g\in C_c^\infty(\R^2)\) be real test functions.
For \(0<\delta<1\) we have
\[
\Cov\bigl(\scrH^{(\delta)}(f),\scrH^{(\delta)}(g)\bigr)  = \int_{\R^2\times \R^2} f(x) g(y) \mathsf C_{\delta}(x-y) \dd x \dd y \,,
\]
where
\(\mathsf C_{\delta}(z) \coloneqq \Cov(F_{\delta}(x),F_{\delta}(x+z)) \geq 0\) (recall the definition~\eqref{eq:F-delta}),
using also translation invariance to get that it only depends on~\(z\).
By the \(L^2\) convergence in Theorem~\ref{thm:toy}, we have
\begin{equation}
      \label{eq:limit-covariances}
      \Cov\bigl(\scrH(f),\scrH(g)\bigr) = \lim_{\delta \downarrow 0} \Cov\bigl(\scrH^{(\delta)}(f),\scrH^{(\delta)}(g)\bigr) \,.
\end{equation}

Now, thanks to Lemma~\ref{lem:cutoff-covariance}, we get
\begin{equation}
      \label{eq:uniform-boundedness}
      \sup_{\delta \in (0,\delta_0)} \mathsf C_{\delta}(z) \leq C \,
      \bigl[1+ \log \bigl( 1+ \log^+ (1/|z|) \bigr)\bigr] \,.
\end{equation}
This shows that, for any ball \(B=B(0,R)\) in \( \R^2 \),
the family of functions
\((\mathsf C_{\delta})_{\delta \in (0,\delta_0)}\) is bounded in \(L^2(B)\).
Thus, by the Banach--Alaoglu theorem, one can find
a subsequence along which \(\mathsf C_{\delta}\) converges weakly in \(L^2(B)\).
By a diagonal extraction in a sequence of growing balls,
we can thus find a subsequence \((\delta_k)_{k\geq 1}\)
and some \(\mathsf C\in L^2_{\rm loc}(\R^2)\)
such that \(\mathsf C_{\delta_k}\) converges weakly to \(\mathsf C\).
We thus have that
\[
\lim_{k\to\infty} \int_{\R^2\times \R^2} f(x) g(y) \mathsf C_{\delta_k}(x-y) \dd x \dd y = \int_{\R^2\times \R^2} f(x) g(y) \mathsf C(x-y) \dd x \dd y\,,
\]
using the weak convergence applied to \(z\mapsto\int f(y+z)g(y)\,\dd y\) which is compactly supported.

Because of~\eqref{eq:limit-covariances}, the limit on
the right-hand side is equal to \(\Cov(\scrH(f),\scrH(g))\).
Since the covariance structure is fixed,
this implies that any weakly convergent subsequence
of \((\mathsf C_{\delta})_{\delta \in (0,\delta_0)}\)
must converge to the same limit \(\mathsf C \in L^2_{\rm loc}(\R^2)\),
showing that we indeed have that \(\mathsf C_{\delta} \to \mathsf C\) weakly as \(\delta\to 0\).
This concludes the proof of~\eqref{eq:main-covariance}.

Note also that \(\mathsf C\)
is rotationally invariant thanks to
Proposition~\ref{prop:invariance},
\textit{i.e.}\ \(\mathsf C(z)\) depends only on \(|z|\),
and we get from~\eqref{eq:uniform-boundedness} that
\begin{equation}
      \label{eq:covariance-bound-weak}
      \mathsf C(z) \leq C\, \bigl[1+\log \bigl( 1+ \log^+ (1/|z|) \bigr)\bigr]\,.
\end{equation}

\subsubsection{Step 2. Spectral measure and continuity of the covariance kernel}
We now show that \(\mathsf C(z)\) is continuous outside zero, relying on a Fourier inversion formula.
Since \(\mathsf C\) can be interpreted as a tempered distribution (thanks e.g.\ to~\eqref{eq:covariance-bound-weak}) and since the covariance distribution is of positive type,
the Bochner--Schwartz theorem
\cite[Theorem~IX.10]{RS75}
provides a nonnegative tempered (i.e. locally finite and with at most polynomial mass growth 
at infinity), spectral measure \(\nu=\nu_{\mathsf C}\)
such that for any \(\phi\in \mathcal{S}(\R^2)\)
\begin{equation}
      \label{eq-spectral-convention}
      \int_{\R^2} \phi(\xi) \,\nu(\dd\xi) = \frac{1}{(2\pi)^2}\int_{\R^2} \widehat{\phi}(z) \mathsf C(z) \,\dd z \,,
\end{equation}
with the convention~\eqref{prop:eq-sobolev-convention} for the Fourier transform.
Because of rotational invariance, we also have that \(\nu\) is radially symmetric.
We now have the following claim that bounds the spectral mass of growing balls (it is proven below).  

\begin{proposition}
      \label{prop:loglog}
      We have that \(\nu (\widebar{B}(0,R)) \sim \log\log R\) as \(R\to \infty\).
\end{proposition}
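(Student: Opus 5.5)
The plan is to transfer the variance asymptotics of Proposition~\ref{prop:shrinking-variance} to the spectral side via a Tauberian-type argument. I fix a nonnegative radial $\varphi\in C_c^\infty$ with $\int\varphi=1$; it automatically satisfies \eqref{eqY: barycentre}. By \eqref{eq:main-covariance} and \eqref{eq-spectral-convention}, and since $\widehat{\varphi_\varepsilon}(\xi)=\widehat\varphi(\varepsilon\xi)$, we get
\[
\Var\bigl(\scrH(\varphi_\varepsilon)\bigr)=\int_{\R^2}|\widehat\varphi(\varepsilon\xi)|^2\,\nu(\dd\xi) \,.
\]
To justify this identity, I apply \eqref{eq-spectral-convention} to $\phi=|\widehat{\varphi_\varepsilon}|^2$, which is Schwartz and whose Fourier transform is, up to $(2\pi)^2$, the autocorrelation $\varphi_\varepsilon*\check\varphi_\varepsilon$. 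Proposition~\ref{prop:shrinking-variance} then says that this quantity is $\sim\log\log(1/\varepsilon)$.

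For the lower bound on $\nu(\bar B(0,R))$, I choose instead a test function whose Fourier transform is compactly supported in $\bar B(0,1)$, as in the Beurling--Selberg approach. Such a function cannot be compactly supported, so I would use the covariance identity for Schwartz functions, extended from $C_c^\infty$ by the $L^2_{\rm loc}$ bound \eqref{eq:covariance-bound-weak} and the fact that $\mathsf C$ grows at most $\log\log$ at $0$ and is bounded at infinity. A cleaner route avoids this: split $\int|\widehat\varphi(\varepsilon\xi)|^2\nu(\dd\xi)$ into $|\xi|\le R=\varepsilon^{-1}A$ and $|\xi|>R$. The first part is at most $\|\widehat\varphi\|_\infty^2\,\nu(\bar B(0,R))=\nu(\bar B(0,R))$, since $|\widehat\varphi|\le\int\varphi=1$. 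The second part is bounded through the rapid decay $|\widehat\varphi(\eta)|\le C_k|\eta|^{-k}$, giving
\[
\int_{|\xi|>A/\varepsilon}|\widehat\varphi(\varepsilon\xi)|^2\nu(\dd\xi)\le \sum_{j\ge0}C_k(2^jA)^{-2k}\,\nu\bigl(\bar B(0,2^{j+1}A/\varepsilon)\bigr) \,.
\]
This needs an a priori polynomial growth bound on $\nu(\bar B(0,R))$, which I first obtain crudely from the upper bound. With that in hand, the tail is $o(1)$ times $\log\log(1/\varepsilon)$ once $A$ is large. This yields $\liminf\nu(\bar B(0,R))/\log\log R\ge1-o_A(1)$.

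For the upper bound, I use $|\widehat\varphi(\eta)|^2\ge 1-c|\eta|^2$ for small $|\eta|$, by a Taylor expansion using radiality and the normalisation. Then for $|\xi|\le\eta_0/\varepsilon$ we have $|\widehat\varphi(\varepsilon\xi)|^2\ge 1-c\eta_0^2$. This gives $(1-c\eta_0^2)\,\nu(\bar B(0,\eta_0/\varepsilon))\le\Var(\scrH(\varphi_\varepsilon))\sim\log\log(1/\varepsilon)$. Since $\log\log(\eta_0/\varepsilon)\sim\log\log(1/\varepsilon)$ for fixed $\eta_0$, letting $\eta_0\downarrow0$ gives $\limsup\le1$. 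This bound holds for every $R$ with no a priori input, so it also supplies the polynomial (indeed $\log\log$) growth needed for the tail in the lower bound.

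The main obstacle is the tail control in the lower bound. It must be uniform in $\varepsilon$ and rest only on the dyadic growth estimate just established. This works because $\nu(\bar B(0,2^{j+1}A/\varepsilon))\le C\log\log(2^{j}A/\varepsilon)\le C(\log\log(1/\varepsilon)+\log(j+A))$, which is summable against $(2^jA)^{-2k}$ and makes the tail $\le C A^{-2k}\log\log(1/\varepsilon)$. Finally, I would check that \eqref{eq-spectral-convention} applies to $|\widehat{\varphi_\varepsilon}|^2$ with the paper's Fourier normalisation, so that no stray $(2\pi)^2$ factors appear.
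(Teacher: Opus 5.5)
Your argument is correct and follows the paper's proof. It uses the same spectral identity fed by Proposition~\ref{prop:shrinking-variance} and the same upper bound from $|\widehat\varphi|^2\ge 1-\eta$ near the origin. The lower bound likewise splits at a multiple of $1/\varepsilon$ and bounds the tail by Fourier decay plus the already established $\log\log$ growth of $\nu(\widebar{B}(0,R))$; you use dyadic shells and fast decay where the paper uses $|\widehat\varphi|^2\le C|\xi|^{-1}$ and a layer-cake integral. One phrasing point: a merely polynomial a priori growth bound would not make the tail small, so it is the $\log\log$ bound from your upper-bound step that you need, which is what your final paragraph in fact uses.
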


Because of the radial symmetry, 
letting \(\nu_{\rm rad}\) be the radial projection
of \(\nu\) (\textit{i.e.}\ the push-forward of \(\nu\) by \(\xi\mapsto |\xi|\)),
the Fourier inversion can then be written as follows:
\begin{equation}
\label{prop:eq-Bessel-inversion}
 \mathsf C(z) = C_0 + \int_{[0,\infty)}\bigl\{J_0( \rho |z|) -  \one_{\{\rho\leq 1\}}\bigr\}\, \nu_{\rm rad}( \dd  \rho) \,,
\end{equation}
where \(J_0\) is the Bessel function
\(J_0(u)=\frac1{2\pi}\int_0^{2\pi}\e^{iu\cos\theta}\,\dd\theta\) and \(C_0=\nu_{\rm rad}([0,1])\).

The integrability of the right-hand
side of~\eqref{prop:eq-Bessel-inversion}
can be justified by using the elementary bounds
\(|J_0(u)-1|\le Cu^2\) for \(u\leq 1\)
and \(|J_0(u)|\le C u^{-1/2}\) for \(u\geq 1\),
together with the local finiteness of $\nu$ and 
Proposition~\ref{prop:loglog},
which ensures that
\begin{equation}
	\label{eq:Fubini-nu-rad}
	\begin{split}
	\int_{(1,\infty)} \rho^{-1/2} \nu_{\rm rad}(\dd \rho) =&\frac{1}{2}\int_{(1,\infty)} a^{-3/2} \nu_{\rm rad}\bigl( (1,a]   \bigr) \dd a   \\ \leq & \frac{1}{2}\int_{(1,\infty)} a^{-3/2} \nu\bigl(   \widebar{B}(0,a)  \bigr) \dd  a  <+\infty \,,
	\end{split}
\end{equation}
where the equality follows from writing $\rho^{-1/2}=\frac{1}{2}\int_{[\rho,\infty)}a^{-3/2}\dd a$ and applying Fubini--Tonelli.
Thus the integral \eqref{prop:eq-Bessel-inversion}
converges absolutely, uniformly
for \(z\) in a compact set of \(\R^2\setminus\{0\}\):
since \(z \mapsto J_0(\rho |z|)\) is
a continuous function of \(z\),
this shows that \(z\mapsto \mathsf C(z)\)
is continuous on \(\R^2\setminus\{0\}\).

\begin{proof}[Proof of Proposition~\ref{prop:loglog}]
Consider a nonnegative
\(\varphi\in C_c^{\infty}(\R^2)\) which is radially
symmetric and of integral~\(1\),
so that \(\widehat{\varphi}\) is also radially symmetric and verifies \(\widehat{\varphi}(0)=1\) and \(|\widehat{\varphi}|\leq 1\).
Recall that
\(\varphi_{\gep}(\cdot) = \gep^{-2} \varphi(\cdot/\gep)\)
and notice that
\(\widehat \varphi_{\gep}(\cdot) = \widehat{\varphi} (\gep \cdot)\).

By the spectral representation~\eqref{eq-spectral-convention} we have that 
\begin{equation*}
 \Cov(\scrH(f),\scrH(g)) 
 = \int_{\R^2\times \R^2} f(x) g(y) \mathsf C(x-y) \dd x \dd y
 = \int_{\R^2}\widehat f(\xi) \, \overline{\widehat g(\xi)}\,\nu(\dd\xi) \,, 
\end{equation*}
so that taking \(f=g=\varphi_{1/r}\) we obtain
\begin{equation}\label{prop:eq-spectral-average}
 \int_{\R^2} |\widehat{\varphi} (\xi/r)|^2\, \nu(\dd \xi) = \Var\bigl(\scrH(\varphi_{1/r})\bigr)
 \sim\log\log r \qquad \text{ as } r\to\infty \,,
\end{equation}
where the last asymptotics follows from 
Proposition~\ref{prop:shrinking-variance}.

\smallskip
\textit{Upper bound.}
Let \(\eta\in(0,1)\) be fixed.
Since \(\widehat{\varphi}(0) =1\) and is continuous, we get that \(|\widehat{\varphi}(\xi)|^2\geq 1-\eta\) for all \(|\xi|\leq c_{\eta}\).
We thus obtain that
\[
\int_{\R^2} |\widehat{\varphi} ( c_{\eta} \xi/R)|^2\, \nu(\dd \xi) \geq (1-\eta) \nu( \widebar{B}(0,R)) \,.
\]
With~\eqref{prop:eq-spectral-average} applied with \(r= R/c_{\eta}\), this shows
\begin{equation}
      \label{prop:eq-spectral-growth}
      \nu( \widebar{B}(0,R))  \leq  (1+o(1)) (1-\eta)^{-1} \log\log(R/c_{\eta}) \,.
\end{equation}
Since \(\eta>0\) is arbitrary, this shows that \(\nu( \widebar{B}(0,R)) \leq (1+o(1)) \log \log R\) as \(R\to\infty\),
showing the upper bound in Proposition~\ref{prop:loglog}.

\smallskip
\textit{Lower bound.}
Let us fix some \(\eta \in (0,1)\). Since \(|\widehat{\varphi}|\leq 1\), we get that 
\begin{equation}
      \label{eq:spectral-upper-first}
      \int_{\R^2} \Bigl| \widehat \varphi\Bigl(\frac{\xi}{\eta R} \Bigr)\Bigr|^2 \nu (\dd \xi ) 
      \leq  \nu (\bar B(0,R) ) + \int_{|\xi|>R} \Bigl| \widehat \varphi\Bigl(\frac{\xi}{\eta R} \Bigr)\Bigr|^2\, \nu(\dd \xi) \,.
\end{equation}
Since \(\varphi\) is smooth and compactly supported, \(|\widehat \varphi(\xi)|\) decreases faster than any polynomial at \(+\infty\).
Let us simply use that \(|\widehat \varphi(\xi)|^2\leq C |\xi|^{-1}\) for all \(|\xi|\geq 1\),
so that  
\[
\int_{|\xi|>R} \Bigl| \widehat \varphi\Bigl(\frac{\xi}{\eta R} \Bigr)\Bigr|^2\, \nu(\dd \xi) \leq C \int_{(R,\infty) } (\rho/\eta R)^{-1} \nu_{\rm rad}(\dd \rho) \leq  C \eta R \int_{(R,\infty) } a^{-2} \nu(\widebar{B}(0,a) )\dd a \,,
\]
where for the last inequality we have used that $\rho^{-1}=\int_{[\rho,\infty)}a^{-2}\dd a$ and applied Fubini--Tonelli, as in~\eqref{eq:Fubini-nu-rad}.
Now, using~\eqref{prop:eq-spectral-growth}, we get that \(\nu( \widebar{B}(0,a) ) \leq C (1+ \log \log a)\) for all \(a>R\) with \(R\) large enough, so we end up with 
\[
\int_{|\xi|>R} \Bigl| \widehat \varphi\Bigl(\frac{\xi}{\eta R} \Bigr)\Bigr|^2\, \nu(\dd \xi) \leq  C \eta R \int_{ (R, \infty) } a^{-2} (1+ \log \log a) \dd a \leq C' \eta \log \log R
\]
for all \(R\) sufficiently large.
Going back to~\eqref{eq:spectral-upper-first} and using~\eqref{prop:eq-spectral-average} with \(r = \eta R\), we thus get that as \(R\to\infty\),
\[
\nu( \widebar{B}(0,R)) \geq (1+o(1)) \log\log (\eta R) - C\eta \log \log R \,.
\]
This proves the lower bound in
Proposition~\ref{prop:loglog}, since \(\eta>0\) is arbitrary.
\end{proof}

\subsubsection{Step 3. Divergence of the covariance kernel near the diagonal}

In this section, we prove the sharp asymptotics of \(\mathsf C(z)\) as \(|z|\downarrow 0\), namely~\eqref{eq:main-diagonal}.
It relies on the inversion formula~\eqref{prop:eq-Bessel-inversion}: using the notation \(R=1/|z|\)
to match with the notation
of the previous step, we need to show that 
\[
\int_{[0,\infty)} \bigl\{J_0( \rho/R)- \one_{\{\rho\leq 1\}}\bigr\} \nu_{\rm rad}( \dd  \rho) \sim \log \log R \qquad \text{ as } R \to \infty \,.
\]
Note that
\(\int_{[0,1]}(J_0( \rho/R) -  \one_{\{\rho\leq 1\}})  \nu_{\rm rad} (\dd \rho)\)
remains bounded by a constant, uniformly for \(R> 1\),
so we only need to show that
\begin{equation}
      \label{eq:asymp-integral}
      \int_{(1,\infty)} J_0( \rho/R) \nu_{\rm rad}( \dd  \rho) \sim \log \log R \qquad \text{ as } R \to \infty \,.
\end{equation}

To show this, we use that \(|J_0(u)-1| \leq C u^2\) for \(u\leq 1\) and \(|J_0(u)| \leq C u^{-1/2}\) for \(u\geq 1\), to get that 
\begin{equation}
	\label{eq:radial-asymp-1}
	\begin{split}
		\Big|\int_{(1,\infty)} J_0( \rho/R) \nu_{\rm rad}( \dd  \rho) - & \nu_{\rm rad}((1,R]) \Big|
	\\
	& \leq  C \int_{(0,R]} (\rho/R)^{2} \nu_{\rm rad} (\dd \rho) + C \int_{(R,\infty)} (\rho/R)^{-1/2}  \nu_{\rm rad} (\dd \rho) \,.
	\end{split}
\end{equation}
Since by Proposition~\ref{prop:loglog} we have that 
\[
\nu_{\rm rad}((1,R]) = \nu( \widebar{B}(0,R)) - \nu( \widebar{B}(0,1)) \sim \log \log R \qquad \text{ as } R \to \infty \,,
\]
it only remains to show that the two terms on the right-hand side of~\eqref{eq:radial-asymp-1} are \(o(\log\log R)\).

\smallskip
\noindent
\textit{First term in~\eqref{eq:radial-asymp-1}.}
Using that \(\rho^2= 2 \int_{(0,\rho]} a \dd a\) and Fubini--Tonelli, we get similarly as in \eqref{eq:Fubini-nu-rad} that 
\[
\int_{(0,R]} (\rho/R)^{2} \nu_{\rm rad} (\dd \rho) = \frac{2}{R^2} \int_{(0,R]} a\, \nu_{\rm rad}( (a,R]) \dd a = \nu(\widebar{B}(0,R)) - \frac{2}{R^2} \int_{(0,R]} a\, \nu(\widebar{B}(0,a)) \dd a \,.
\]
Since \(a\mapsto \nu(\widebar{B}(0,a))\) is slowly varying\footnote{A function \(f\) is said to be slowly varying if for any \(\alpha >0\) we have that \(\lim_{x\to\infty} f(\alpha x)/f(x) = 1\).} thanks to Proposition~\ref{prop:loglog}, thanks to \cite[Proposition~1.5.8]{BGT89} we get that \(\int_{(0,R]} a\, \nu(\widebar{B}(0,a)) \dd a \sim \frac12 R^2 \nu(\widebar{B}(0,R) )\) as \(R\to\infty\).
We thus obtain that 
\[
\int_{(0,R]} (\rho/R)^{2} \nu_{\rm rad} (\dd \rho) = \nu(\widebar{B}(0,R)) - (1+o(1)) \nu(\widebar{B}(0,R)) = o(\log \log R) \quad \text{ as } R\to\infty \,.
\]

\smallskip
\noindent
\textit{Second term in~\eqref{eq:radial-asymp-1}.}
We proceed similarly as above. 
Using that $\rho^{-1/2}=\frac{1}{2}\int_{[\rho,\infty)}a^{-3/2}\dd a$
and applying Fubini--Tonelli as in~\eqref{eq:Fubini-nu-rad}, we get
\[
\begin{split}
	\int_{(R,\infty)} (\rho/R)^{-1/2}  \nu_{\rm rad} (\dd \rho) & = \frac{R^{1/2}}{2} \int_{(R,\infty)} a^{-3/2} \nu_{\rm rad}((R,a]) \dd a \\
	& = \frac{R^{1/2}}{2} \int_{(R,\infty)} a^{-3/2} \nu(\widebar{B}(0,a)) \dd a  -  \nu(\widebar{B}(0,R)) \,.
\end{split}
\]
Again, since \(a\mapsto \nu(\widebar{B}(0,a))\) is slowly varying, we get that \(\int_{(R,\infty)} a^{-3/2} \nu(\widebar{B}(0,a)) \dd a \sim 2 R^{-1/2} \nu(\widebar{B}(0,R))\) as \(R\to\infty\), see \cite[Proposition~1.5.10]{BGT89}.
This yields
\[
\int_{(R,\infty)} (\rho/R)^{-1/2}  \nu_{\rm rad} (\dd \rho) = (1+o(1)) \nu(\widebar{B}(0,R)) -\nu(\widebar{B}(0,R))  = o(\log\log R)  \quad \text{as } R\to\infty \,.
\]

\smallskip
\noindent
We have thus shown that both terms on the right-hand side of~\eqref{eq:radial-asymp-1} are \(o(\log \log R)\).  
This concludes the proof of~\eqref{eq:asymp-integral}.
\qed

\section{Non-Gaussianity}\label{ng:section}

In this section, we prove Theorem~\ref{thm:nongaussian}, which implies that
the critical 2D KPZ process~$\scrH_t^\vartheta$ is non-Gaussian. 
Let us fix \(t>0\) and $\vartheta\in\R$.
Fix also a nonzero nonnegative
$\psi\in C_c^\infty(\R^2)$ such that $\int \psi =1$; for the general case, one simply needs to divide $\psi$ by its integral.
The proof will be
by contradiction: in a nutshell, 
we show that if $\scrH_t^\vartheta(\psi)$ were Gaussian,
this would contradict the singularity of the~SHF.

\subsection{Main steps of the proof}

Let us first outline the key steps before giving the details in the rest of this section.

\subsubsection{Step 1. (Gaussian) Rigidity with respect to the initial condition}

By Corollary~\ref{prop:spatial-identification}, we have
\begin{equation}
      \label{ng:flat-spatial-limit}
\widetilde{\scrH}_t^{(r)}(\zero,\psi)\coloneqq  \int\psi(x)\Bigl(\log\scrZ_t^\vartheta(g_r^{(x)},\one)
       -\bbE\bigl[\log\scrZ_t^\vartheta(g_r^{(x)},\one)\bigr]\Bigr)\,\dd x
 \xrightarrow[\; r\downarrow0 \;]{d}
 \scrH_t^\vartheta(\zero,\psi),
\end{equation}
where $g_r^{(x)}$ is the heat kernel at time $r$ centred at $x$
and $\zero$ denotes the flat initial condition.

A first crucial observation is that if we replace $\one$
in \(\scrZ_t^\vartheta(g_r^{(x)},\one)\)
by a bounded Lipschitz function $f$ with
$0<c\le f\le C<\infty$,
we have
\begin{equation}\label{ng:terminal-change}
 \log\scrZ_t^\vartheta(g_r^{(x)},f)
 -\log\scrZ_t^\vartheta(g_r^{(x)},\one)
 =\log\mu_{t,r}^x(f),
\end{equation}
where
\begin{equation}\label{ng:profiles}
 \mu_{t,r}^x(\dd y)
 \coloneqq \frac{\scrZ_t^\vartheta(g_r^{(x)},\dd y)}
          {\scrZ_t^\vartheta(g_r^{(x)},\one)}
\end{equation}
can be interpreted as the endpoint distribution at time \(t\) of a continuum polymer
with partition function \(\scrZ_t^\vartheta(g_r^{(x)},\one)\).
Since $c\le f\le C$, the quantity in \eqref{ng:terminal-change} lies in
$[\log c,\log C]$, uniformly in $r$ and $x$.

Replacing \(\one\) in the left hand side of \eqref{ng:flat-spatial-limit} by \(f =\e^{h}\) for some bounded Lipschitz \(h\), then setting
\begin{equation}\label{ng:F}
 \widebar{\scrH}_t^{(r)}(h,\psi)\coloneqq \int\psi(x)\Bigl(\log\scrZ_t^\vartheta(g_r^{(x)},f)
       -\bbE\bigl[\log\scrZ_t^\vartheta(g_r^{(x)},f)\bigr]\Bigr)\,\dd x \,,
\end{equation}
we have that
\begin{equation}\label{ng:bounded-difference}
 \big| \widebar{\scrH}_t^{(r)}(h,\psi) - \widebar{\scrH}_t^{(r)}(\zero,\psi)  \big| \le \log(C/c).
\end{equation}
Let us stress that contrary to the definitions~\eqref{eq:regularised-H} or~\eqref{prop:eq-direct-cutoff}, the centring in~\eqref{ng:F} depends on the initial condition \(h = \log f\). 
This way, \(\widebar{\scrH}_t^{(r)}(h,\psi)\) is a \textit{centred} random variable; note that it coincides with \(\widetilde{\scrH}_t^{(r)}(h,\psi)\) when \(h=\zero\).

\begin{remark}
      \label{rem:initial-condition}
      More generally, let us consider two bounded Lipschitz initial conditions  \(h_0\) and \(h_0'\).
      Replacing \(f,\one\) by \(\e^{h_0},\e^{h_0'}\) in \eqref{ng:terminal-change} yields that, for any \(x\in \R^2\),
      \[
      \inf \{h_0-h_0'\}\leq \log\scrZ_t^\vartheta(g_r^{(x)},\e^{h_0})
 -\log\scrZ_t^\vartheta(g_r^{(x)},\e^{h_0'}) \le \sup \{h_0-h_0'\}\,.
      \]
      When centring both terms by a common 
      \(c_t^{\vartheta,(r)}\coloneqq  \bbE[ \log \scrZ_{t}^\vartheta ( \one, g_r^{(x)})]\), recalling the definition~\eqref{eq:regularised-H}, this yields that
      for any \(\delta>0\)
      \[
      \begin{aligned}
      \big| \scrH_t^{\vartheta, (\delta)}(h_0,\psi) - \scrH_t^{\vartheta,(\delta)}(h_0',\psi) \big|
      &\leq \|\psi\|_1\, \|h_0-h_0'\|_{\infty}.
      \end{aligned}
      \]
\end{remark}

A crucial observation from Section~\ref{sec:polymer-convexity} is that
both sides of \eqref{ng:flat-spatial-limit} are of Ehrhard type,
see in particular Proposition~\ref{prop:Ehrhard}.
The same is true for~\eqref{ng:F} and its limit as \(r\downarrow 0\),
and the inequality~\eqref{ng:bounded-difference} also remains valid in the limit.
This is recorded in the following lemma, proven in Section~\ref{sec:ng-proof-Ehrhard} below.
For simplicity of notation, we let
\[
X_r \coloneqq \widebar{\scrH}_t^{(r)}(\zero,\psi)
\qquad \text{ and }\qquad
Y_r \coloneqq \widebar{\scrH}_t^{(r)}(h,\psi) \,.
\]

\begin{lemma}\label{ng:positive-sums}
For every bounded Lipschitz function $f$ with $0<c\le f\le C<\infty$,
the laws of $(X_r,Y_r)$ are tight as $r\downarrow0$.
Every joint subsequential limit $(X,Y)$ is centred and satisfies
\[
 X\stackrel{\mathrm d}=\scrH_t^\vartheta(\zero,\psi),\qquad
 |Y-X|\le\log(C/c)\quad\text{almost surely}.
\]
Both $Y$ and $X+Y$ are of Ehrhard type.
\end{lemma}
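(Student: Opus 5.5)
The plan is to derive tightness and moment control from the flat field, obtain the almost sure bound from the deterministic estimate \eqref{ng:bounded-difference}, and get the Ehrhard property from Lemma~\ref{ng:quantile-closure} via lattice approximation.

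\emph{Tightness and the law of $X$.} By \eqref{ng:flat-spatial-limit}, $X_r$ converges in distribution to $\scrH_t^\vartheta(\zero,\psi)$, so $(X_r)$ is tight. By \eqref{ng:bounded-difference}, $|Y_r-X_r|\le\log(C/c)$ deterministically, so $(Y_r)$ and hence the pair $(X_r,Y_r)$ are tight. The set $\{|y-x|\le\log(C/c)\}$ is closed in $\R^2$, so Portmanteau shows that every subsequential limit satisfies $|Y-X|\le\log(C/c)$ almost surely, and $X\stackrel{\mathrm d}=\scrH_t^\vartheta(\zero,\psi)$.

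\emph{Centring.} Both $X_r$ and $Y_r$ are centred by construction. To pass this to the limit, I would show uniform integrability, namely $\sup_{r}\bbE[X_r^2]<\infty$. I would get this from the covariance bound of Lemma~\ref{lem:cutoff-covariance} for $\widetilde{\scrH}$ (with $T=t$, after time reversal), integrated against $\psi\otimes\psi$, using that $\log(1+\log^+1/|z|)$ is locally integrable. Since $|Y_r|\le|X_r|+\log(C/c)$, the family $(Y_r)$ is uniformly integrable as well. Hence $\bbE X=\bbE Y=0$.

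\emph{Ehrhard property.} This is the main obstacle. For fixed $r$, I would show that $Y_r$ and $X_r+Y_r$ are Ehrhard type, then use that the class is closed under weak convergence (Lemma~\ref{ng:quantile-closure}); a deterministic shift preserves the class. Consider the lattice analogues
\[
\sum_z\psi^{(N)}(z)\log Z_{0,[Nt]}\bigl(q_{[Nr]}^{(z)},f_N\bigr),\qquad \sum_z\psi^{(N)}(z)\bigl[\log Z_{0,[Nt]}(q_{[Nr]}^{(z)},\one)+\log Z_{0,[Nt]}(q_{[Nr]}^{(z)},f_N)\bigr].
\]
Since $\psi\ge0$, each is a nonnegative combination of log partition functions, which are convex in the finitely many Gaussian variables they depend on. Truncating to a finite box gives a finite-dimensional vector, and Lemma~\ref{ng:quantile-closure} applies.

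By Lemma~\ref{lem:extension-convergence-delta} (time reversed, with $\delta_i=r$ and $s_i=0$, extended to two terminal weights via the joint SHF convergence), these sums converge jointly in law to the corresponding SHF expressions. Those limits differ from $Y_r$ and $X_r+Y_r$ only by deterministic centrings. One subtlety is that Lemma~\ref{lem:extension-convergence-delta} removes disorder on $[0,r]$ rather than using $\scrZ_t(g_r^{(x)},\cdot)$ directly. I would handle this either through the identity \eqref{prop:eq-shifted-cutoff-identity}, or by taking lattice initial weights $(g_r^{(x)})^{(N)}$ with the convergence \eqref{eq:convergence-initial-0}, which is also a positive combination. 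Closure under weak limits then gives the Ehrhard property at fixed $r$, and again as $r\downarrow0$ along the subsequence.
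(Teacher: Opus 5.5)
Your proposal is correct and follows essentially the same route as the paper: tightness and the almost sure bound from \eqref{ng:bounded-difference}, centring passed to the limit, and the Ehrhard property from lattice approximations by nonnegative combinations of convex log partition functions together with the weak closure in Lemma~\ref{ng:quantile-closure}. For the centring you use uniform integrability from Lemma~\ref{lem:cutoff-covariance}, whereas the paper uses the centred limit law of $X$ and bounded convergence of the mean-zero differences $Y_r-X_r$.

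The ``subtlety'' you raise does not actually occur: Lemma~\ref{lem:extension-convergence-delta} with $s_i=0$ and $\delta_i=r$ already yields $\log\scrZ_{0,t}(g_r^{(x)},\cdot)$ directly. Also, no box truncation is needed, since these lattice quantities depend on only finitely many disorder variables.
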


If we now assume that \(\scrH_t^\vartheta(\zero,\psi)\) is Gaussian,
then Lemma~\ref{ng:positive-sums} allows us
to apply the following Gaussian rigidity result.
A related rigidity result for convex functions of Gaussian vectors is given in \cite[Theorem~3.4]{FMP}.

\begin{lemma}[Gaussian rigidity]\label{ng:rigidity}
Let $X,Y$ be \emph{centred} real random variables with
$X\sim\mathcal N(0,\sigma^2)$, $\sigma\ge0$, and
$|Y-X|\le K<\infty$ almost surely.
If $Y$ and $X+Y$ are of Ehrhard type, then $Y=X$ almost surely.
\end{lemma}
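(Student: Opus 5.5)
The plan is to use the quantile characterisation of Ehrhard-type laws from Lemma~\ref{ng:quantile-closure}. That characterisation, combined with the almost sure bound $|Y-X|\le K$, should force both $Y$ and $X+Y$ to be exactly Gaussian, with prescribed variances. A variance identity then closes the argument.

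\emph{Step 1: the law of $Y$.} By Lemma~\ref{ng:quantile-closure}, $Y\stackrel{\mathrm d}= h_Y(G)$ with $G\sim\mathcal N(0,1)$ and $h_Y$ nondecreasing and convex. This covers the constant case, taking $h_Y$ constant. Up to modification at countably many points, which is harmless for a convex function since it is continuous, $h_Y(g)$ is the quantile $q_Y(\Phi(g))$ of $Y$ at level $\Phi(g)$.

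The almost sure bound $|Y-X|\le K$ implies
\[
 \bbP(X<a-K)\le\bbP(Y<a)\le\bbP(X<a+K)
\]
for every $a$. Hence the quantile functions satisfy $|q_Y(u)-q_X(u)|\le K$ for all $u\in(0,1)$. Since $q_X(\Phi(g))=\sigma g$, we get
\[
 |h_Y(g)-\sigma g|\le K \qquad\text{for all } g\in\R .
\]
The function $g\mapsto h_Y(g)-\sigma g$ is convex on $\R$ and bounded, hence constant, say equal to $c$. Therefore $Y\stackrel{\mathrm d}=\sigma G+c$. Since $\bbE Y=0$, we get $c=0$ and $Y\sim\mathcal N(0,\sigma^2)$. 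The degenerate case $\sigma=0$ is included: then $Y=0$ almost surely, $X=0$ almost surely, and we are already done.

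\emph{Step 2: the law of $X+Y$.} Run the same argument for $X+Y$, compared with $2X\sim\mathcal N(0,4\sigma^2)$. We have $|(X+Y)-2X|=|Y-X|\le K$, and $X+Y$ is of Ehrhard type and centred. So its representing convex function differs from $2\sigma g$ by a bounded amount, hence equals $2\sigma g$. This gives $X+Y\sim\mathcal N(0,4\sigma^2)$.

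\emph{Step 3: conclusion.} All variables are square integrable, since $Y$ is within $K$ of a Gaussian. From $\Var(X+Y)=4\sigma^2$ and $\Var X=\Var Y=\sigma^2$ we obtain $\Cov(X,Y)=\sigma^2$. Consequently
\[
 \Var(X-Y)=\sigma^2+\sigma^2-2\sigma^2=0 .
\]
Since $\bbE[X-Y]=0$, this gives $Y=X$ almost surely.

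The only step needing care is the passage from $|Y-X|\le K$ to the uniform quantile bound, and the identification of $h_Y$ with $q_Y\circ\Phi$. For the latter I would argue that a nondecreasing $h$ with $h(G)\stackrel{\mathrm d}= Y$ agrees with $q_Y\circ\Phi$ at all continuity points. Because $h$ is convex, it is continuous, so equality holds everywhere. The remaining steps are routine.
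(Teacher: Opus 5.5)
Your proof is correct, but it takes a different route from the paper at the key step. Both arguments start from the representation $Y\stackrel{\mathrm d}=h(G)$ with $h$ convex and nondecreasing. Both also end with the same variance identity applied to $X+Y$ versus $2X$.

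The paper shows that $h$ is affine through tail asymptotics:
\begin{itemize}
\item First, $|Y-X|\le K$ forces $z^{-2}\log\bbP(Y>z)$ and $z^{-2}\log\bbP(Y<-z)$ to converge to $-1/(2\sigma^2)$.
\item Next, take a supporting line $az+b\le h(z)$. A slope $a>\sigma$ would make the right tail too heavy, via $\bbP(Y>z)\ge\bbP(aG+b>z)$.
\item Likewise, a slope $a<\sigma$ would make the left tail too light.
\item Hence every supporting line has slope $\sigma$, and $h$ is affine.
\end{itemize}

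You argue differently. You identify $h=q_Y\circ\Phi$, which uses the continuity of convex $h$. You then turn the almost sure bound into the uniform quantile bound $|q_Y-q_X|\le K$. Finally you use that a bounded convex function on $\R$ is constant.

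What each route buys:
\begin{itemize}
\item Your argument is more elementary, avoids limits, and treats $\sigma=0$ in the same stroke.
\item It does, however, use the full strength of $|Y-X|\le K$, in effect an $L^\infty$-Wasserstein bound, together with the exact identification of $h$ with the quantile function.
\item The paper's argument needs only the comparison $h(G)\ge aG+b$ and the logarithmic tail rates of $Y$ and $X+Y$.
\item So the paper's argument would survive weakening the bounded-difference hypothesis to any perturbation that leaves those Gaussian tail rates unchanged.
\end{itemize}
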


Under the Gaussianity assumption, Lemma~\ref{ng:rigidity} implies that
\(Y=X\) almost surely for every joint subsequential limit \((X, Y)\) in
Lemma~\ref{ng:positive-sums}. 
We next deduce from this equality an unlikely
consequence for the polymer endpoint distributions.

\subsubsection{Step 2. Consequence of Gaussian rigidity on the endpoint distribution}

Applying Lemmas~\ref{ng:positive-sums} and~\ref{ng:rigidity}
to every subsequential weak limit of \((X_r, Y_r)\) gives that
$Y_r-X_r\to0$ in probability.
Recalling \eqref{ng:terminal-change}, this leads to the following consequence (details are given in Section~\ref{sec:ng-polymer-endpoint}).
If $\scrH_t^\vartheta(\zero,\psi)$ is Gaussian or a constant,
then, for every bounded Lipschitz function $f$ with
$0<c\le f\le C<\infty$ and every $1\le p<\infty$,
\begin{equation}\label{ng:vanishing-perturbation}
 \int\psi(x)\pigl(\log\mu_{t,r}^x(f)
             -\bbE\bigl[\log\mu_{t,r}^x(f)\bigr]\pigr)\,\dd x
 \xrightarrow[\;r\downarrow0\;]{}0 \qquad \text{in }L^p(\bbP).
\end{equation}

The freedom to choose $f$ in~\eqref{ng:vanishing-perturbation}
suggests that limits of the averaged polymer endpoint distribution at
time $t$ are deterministic: this is formalized by Lemma~\ref{ng:deterministic-pairings} below.
Define, for $r,t>0$,
\begin{equation}\label{ng:averages}
 \mu_{t,r}^\psi\coloneqq \int\psi(x)\mu_{t,r}^x\,\dd x\in \mathcal M_1(\R^2),
\end{equation}
where $\mathcal M_1(\R^2)$ denotes the space of probability measures on $\R^2$,
equipped with weak convergence.
By choosing $f=\one+\varepsilon\varphi$
and then differentiating w.r.t.\ $\varepsilon$, we can deduce from~\eqref{ng:vanishing-perturbation} the following result.

\begin{lemma}[Deterministic endpoint limits]\label{ng:determinism}
If \eqref{ng:vanishing-perturbation} holds for every bounded positive Lipschitz function $f$ bounded away from zero, then
for any \(\varphi\in C_c(\R^2)\)
\begin{equation}\label{ng:deterministic-pairings}
 \bbE\Bigl[\bigl|\mu_{t,r}^\psi(\varphi)-\bbE\bigl[\mu_{t,r}^\psi(\varphi)\bigr]\bigr|\Bigr]
 \longrightarrow0\qquad \text{ as } r\downarrow0\,.
\end{equation}
Additionally, every subsequential limit in distribution of $\mu_{t,r}^\psi$ (for the vague topology) is a 
deterministic finite positive measure of mass
at most one.
\end{lemma}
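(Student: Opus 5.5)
We take $f=\one+\varepsilon\varphi$ in \eqref{ng:vanishing-perturbation} and linearise in $\varepsilon$. Fix $\varphi\in C_c(\R^2)$; by density in sup norm (using $|\mu^\psi_{t,r}(\varphi)-\mu^\psi_{t,r}(\varphi')|\le\|\varphi-\varphi'\|_\infty$) we may assume $\varphi$ is Lipschitz. For $|\varepsilon|\le 1/(2\|\varphi\|_\infty)$ the function $f_\varepsilon=\one+\varepsilon\varphi$ is bounded Lipschitz with values in $[1/2,3/2]$, so \eqref{ng:vanishing-perturbation} applies to $f_\varepsilon$.

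The key observation is the elementary expansion $|\log(1+u)-u|\le u^2$ for $|u|\le 1/2$. Since $\mu^x_{t,r}(f_\varepsilon)=1+\varepsilon\mu^x_{t,r}(\varphi)$ with $|\mu^x_{t,r}(\varphi)|\le\|\varphi\|_\infty$, we get deterministically, uniformly in $x$ and $r$,
\[
\bigl|\log\mu^x_{t,r}(f_\varepsilon)-\varepsilon\,\mu^x_{t,r}(\varphi)\bigr|\le \varepsilon^2\|\varphi\|_\infty^2 .
\]
Integrating against $\psi$ (with $\int\psi=1$) and centring, we obtain
\[
\varepsilon\bigl(\mu^\psi_{t,r}(\varphi)-\bbE[\mu^\psi_{t,r}(\varphi)]\bigr)
=\int\psi(x)\bigl(\log\mu^x_{t,r}(f_\varepsilon)-\bbE\log\mu^x_{t,r}(f_\varepsilon)\bigr)\dd x+E_{r,\varepsilon},
\]
where $|E_{r,\varepsilon}|\le 2\varepsilon^2\|\varphi\|_\infty^2$. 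Here we used Fubini and the linearity $\int\psi\,\mu^x_{t,r}(\varphi)\,\dd x=\mu^\psi_{t,r}(\varphi)$, which holds since the integrands are bounded. Taking $L^1(\bbP)$ norms and letting $r\downarrow0$ with \eqref{ng:vanishing-perturbation} gives
\[
\limsup_{r\downarrow0}\bbE\bigl|\mu^\psi_{t,r}(\varphi)-\bbE\mu^\psi_{t,r}(\varphi)\bigr|\le 2\varepsilon\|\varphi\|_\infty^2 .
\]
Then $\varepsilon\downarrow0$ yields \eqref{ng:deterministic-pairings}. This step is routine. The only care needed is that the linearisation error is deterministic and uniform, so no moment bounds on $\mu$ are required.

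For the second claim, take a subsequence $r_k\downarrow0$ with $\mu^\psi_{t,r_k}\to\mu$ in law for the vague topology on measures of mass at most one, which is a compact metrisable space. For each $\varphi\in C_c$, the map $\nu\mapsto\nu(\varphi)$ is vaguely continuous and bounded. Hence $\mu^\psi_{t,r_k}(\varphi)\to\mu(\varphi)$ in law. Since $\bbE\mu^\psi_{t,r_k}(\varphi)$ is a bounded sequence, pass to a further subsequence where it converges to some $m(\varphi)$. Then \eqref{ng:deterministic-pairings} forces $\mu(\varphi)=m(\varphi)$ almost surely.

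The main (mild) obstacle is upgrading ``each pairing is a.s.\ constant'' to ``$\mu$ is a.s.\ a deterministic measure''. We handle it with a countable family $\{\varphi_j\}$ dense in $C_c(\R^2)$ for the inductive-limit topology, which exists because $C_c$ is separable in sup norm on each ball $B(0,n)$. Off a null set, $\mu(\varphi_j)=m(\varphi_j)$ for all $j$. By density and the total-mass bound this extends to all $\varphi\in C_c$, so $\mu$ almost surely equals the fixed measure $\bar\mu$ defined through the Riesz representation of the positive functional $m$. Consequently the law of $\mu$ is a Dirac mass at $\bar\mu$, which is positive and has mass at most one. This mass bound persists because the set of measures with mass at most one is vaguely closed. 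If desired, the further-subsequence extraction can be removed, since the in-law limit of $\mu(\varphi)$ is already determined along the original subsequence.
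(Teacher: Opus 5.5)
Your proof is correct and follows essentially the same route as the paper. You linearise $\log\mu^x_{t,r}(\one+\varepsilon\varphi)=\log(1+\varepsilon\mu^x_{t,r}(\varphi))$ with a deterministic error uniform in $x$ and $r$, apply \eqref{ng:vanishing-perturbation} with $p=1$ at fixed $\varepsilon$, let $r\downarrow0$ and then $\varepsilon\downarrow0$, and identify subsequential limits through their pairings against a countable determining family using the mass bound. The only differences are cosmetic: you use the bound $|\log(1+u)-u|\le u^2$ instead of the paper's $\varepsilon/[2(1-\varepsilon)^2]$ estimate, and you reduce to Lipschitz $\varphi$ where the paper reduces to smooth $\varphi$.
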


The next lemma shows that the deterministic limit points arising from Lemma
\ref{ng:determinism} cannot be the zero measure.

\begin{lemma}[Nonzero deterministic limits]\label{ng:nonzero-limit}
If $\scrH_t^\vartheta(\zero,\psi)$ is Gaussian, the laws of the family of random measures
$(\mu_{t,r}^\psi)_{r\in (0,1)}$ (equipped with the vague topology) are tight as
$r\downarrow0$. Every subsequential weak
limit $\mu_t$ is a deterministic finite positive measure with $0<\mu_t(\R^2)\le1$.
\end{lemma}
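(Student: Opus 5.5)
The proof has three parts: tightness, which is automatic; determinism, which comes from Lemma~\ref{ng:determinism}; and non-vanishing of the total mass, which carries the real content. The non-vanishing step will need a moment estimate that I do not yet know how to obtain.

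\emph{Tightness and determinism.} Each $\mu_{t,r}^\psi$ is a probability measure on $\R^2$. The set of positive measures of mass at most one is compact for the vague topology. Hence the laws of $(\mu_{t,r}^\psi)_{r\in(0,1)}$ are automatically tight, and every subsequential limit $\mu_t$ satisfies $\mu_t(\R^2)\le1$. Next assume $\scrH_t^\vartheta(\zero,\psi)$ is Gaussian. Lemmas~\ref{ng:positive-sums} and~\ref{ng:rigidity}, applied to every subsequential limit of $(X_r,Y_r)$, give $Y_r-X_r\to0$ in probability. By \eqref{ng:terminal-change} and the uniform bound $|\log\mu_{t,r}^x(f)|\le\log(C/c)$, this upgrades to \eqref{ng:vanishing-perturbation} in every $L^p$. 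Lemma~\ref{ng:determinism} then shows that every limit $\mu_t$ is deterministic. Moreover, for $\varphi\in C_c(\R^2)$, $\bbE[\mu_{t,r}^\psi(\varphi)]\to\mu_t(\varphi)$ along the subsequence, by \eqref{ng:deterministic-pairings} and boundedness.

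\emph{Reduction of non-vanishing to a tail bound.} With determinism in hand, it suffices to find $R<\infty$ such that
\[
\sup_{r\in(0,1)}\bbE\bigl[\mu_{t,r}^\psi\bigl(B(0,R)^c\bigr)\bigr]\le\tfrac12 .
\]
Indeed, take $\varphi\in C_c$ with $\one_{B(0,R)}\le\varphi\le1$. Then $\mu_t(\R^2)\ge\mu_t(\varphi)=\lim\bbE[\mu_{t,r}^\psi(\varphi)]\ge\frac12>0$.

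\emph{Proving the tail bound.} The direct Cauchy--Schwarz argument used in the proof of Proposition~\ref{prop:PF}, namely $\bbE[Z_2/Z_1]\le(\bbE[Z_2]\,\bbE[Z_1^{-1}])^{1/2}$, fails here. The denominator $Z_1=\scrZ_t^\vartheta(g_r^{(x)},\one)$ has $\bbE[Z_1^{-1}]\approx\log(1/r)$, which is unbounded as $r\downarrow0$. To avoid this degeneration I would split at a fixed macroscopic time $s=t/2$. By the Chapman--Kolmogorov/flow property of the SHF (used at the lattice level and passed to the limit via Lemma~\ref{lem:extension-convergence-delta}),
\[
\mu_{t,r}^x\bigl(B^c\bigr)=\frac{\scrZ_{s,t}(\nu,\one_{B^c})}{\scrZ_{s,t}(\nu,\one)},\qquad \nu=\mu^x_{s,r}\in\cF_{0,s}.
\]
The small-$r$ singularity now sits only inside the $\cF_{0,s}$-measurable profile $\nu$. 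Conditionally on $\cF_{0,s}$, $\log\scrZ_{s,t}(\nu,\one)$ is of Ehrhard type with mean-one exponential. So Lemma~\ref{lem:one-point}--\eqref{eq:one-point-negative} bounds its conditional negative moment by its conditional second moment, $1+\iint\nu(\dd y)\nu(\dd y')\,\mathcal K_{t-s}(y-y')$, where $\mathcal K_{t-s}$ is a kernel with a logarithmic singularity at the origin. This yields
\[
\bbE\bigl[\mu_{t,r}^x(B^c)\mid\cF_{0,s}\bigr]\le\bigl(\nu*g_{t-s}(B^c)\bigr)^{1/2}\Bigl(1+\iint\nu\otimes\nu\,\mathcal K_{t-s}\Bigr)^{1/2}.
\]
It then remains to show two things, uniformly in $r$: the spread of $\nu$ at time $s$ is tight in expectation, and the log-energy of $\nu$ has bounded expectation after averaging over $x$ with $\psi$. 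For the spread I would compare $\nu*g_{t-s}(B^c)$ with $g_t(B(0,R/2)^c)$ plus $\nu(B(0,R/2)^c)$. I would control the latter by the same conditioning at a smaller time, or better via the block expansion \eqref{eq:profile-interval-expansion}. That expansion writes the endpoint profile as the heat kernel $q_{N_s}$ plus correctors that are small in the $\cH$-seminorm on the good event $\mathcal G_{r,s}$ of Lemma~\ref{lem:profile-estimates}. I would work at the lattice level first and pass to the SHF limit by Fatou.

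\emph{Main obstacle.} I expect the hardest step to be the log-energy bound, i.e.\ a uniform-in-$r$ bound on the conditional second moment $\bbE[\one_{\mathcal G}\scrZ_{s,t}(\nu,\one)^2]$. For lattice profiles, \eqref{eq:profile-good-energy} gives only $C|\log\delta|$, which is too weak. What is needed is the version at a fixed macroscopic time $s$, with $r$ playing the role of the removed initial interval. I would first try to obtain it by adapting \eqref{eq:bound-Delta-weighted} together with \eqref{eq:rough-bound}, using $\log(N/n)=O(1)$ when $n=[sN]$.
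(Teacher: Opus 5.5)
Your tightness and determinism steps are correct and match the paper. The gap is the non-vanishing of $\mu_t$, which your proposal does not establish.

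You reduce it to the uniform tail bound $\sup_r\bbE[\mu^\psi_{t,r}(B(0,R)^c)]\le\frac12$, which is stronger than necessary. Your route to it needs two inputs that are not available. The first is a uniform-in-$r$ bound on $\bbE[\scrZ^\vartheta_{s,t}(\mu^x_{s,r},\one)^2]$. The second is control of $\bbE[(\mu^x_{s,r}*g_{t-s})(B^c)]$, which is the same spread problem again, at the earlier time $s$.

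The paper's estimate \eqref{eq:profile-good-energy} holds only on the event $\mathcal G$. Its complement has probability bounded by $C$ divided by the number of geometric blocks after the split, and that number is $O(1)$ when $s=t/2$. If you split instead at a small time $\delta t$, the second-moment factor becomes of order $|\log\delta|$, so you would need the early spread to be $o(1/|\log\delta|)$. The comparison used in the proof of \eqref{eq-weighted-PF-comparison} gives only $C(R/|\log\delta|+\e^{-cR^2})$, and multiplied by $|\log\delta|$ this is of order $R$, not small. So bounding the ratio via Cauchy--Schwarz and the negative moment does not close.

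The missing idea is to avoid ratios altogether. It suffices to show $\bbE[\mu^x_{t,r}(\varphi_0)]\ge c>0$, uniformly in $x\in\operatorname{supp}\psi$ and small $r$, for a single nonzero $\varphi_0\in C_c^\infty$ with $0\le\varphi_0\le1$. Jensen gives
\[
\log\bbE\bigl[\mu^x_{t,r}(\varphi_0)\bigr]\ge\bbE\bigl[\log\scrZ^\vartheta_t(g_r^{(x)},\varphi_0)\bigr]-\bbE\bigl[\log\scrZ^\vartheta_t(g_r^{(x)},\one)\bigr].
\]
Gaussian convexity then pins both terms to $-\frac12\log\log(1/r)+O(1)$.
\begin{itemize}
\item For the first term, with $Z=\scrZ^\vartheta_t(g_r^{(x)},\varphi_0)$, apply \eqref{eq:chen-two} to $\log(Z/\bbE Z)$. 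This gives $\bbE\log Z\ge-\frac12\log\bbE[Z^2]+2\log\bbE[Z]$, where $\bbE Z=(g_{t+r}*\varphi_0)(x)\ge c$ and $\bbE[Z^2]\le C\log(1/r)$.
\item For the second term, use $-\bbE\log Z\ge\frac12\log\bbE[Z^{-1}]$, which follows from \eqref{eq:chen-three} and \eqref{eq:chen-one}. Combine it with the negative-moment lower bound $\bbE[Z^{-1}]\ge c\log(1/r)$, as in the continuum analogue of Lemma~\ref{lem:centering}.
\end{itemize}
The two $\frac12\log\log(1/r)$ terms cancel. The $\log(1/r)$ blow-up of the negative moment that ruins your bound thus becomes an additive term that cancels exactly, and no spread or log-energy estimate is needed. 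Integrating against $\psi$ and passing to the limit using determinism gives $\mu_t(\varphi_0)\ge c>0$.
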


The proofs of Lemmas~\ref{ng:vanishing-perturbation} and~\ref{ng:nonzero-limit} are deferred to Section~\ref{sec:ng-polymer-endpoint}.

\subsubsection{Step 3. Contradiction and conclusion of the proof}

Finally we prove two contradicting statements for such a deterministic limiting measure~$\mu$
obtained from Lemma~\ref{ng:nonzero-limit}.

\begin{lemma}[Comparison with the later SHF]\label{ng:later-comparison}
Assume that $\scrH_t^\vartheta(\zero,\psi)$ is Gaussian and let $\mu_t$ be a weak limit of $(\mu_{t,r}^\psi)_{r\in (0,1)}$ as in Lemma~\ref{ng:nonzero-limit}.
With
positive probability, $\mu_t$ and
$\scrZ_{2t/3,t}^\vartheta(\one,\dd y)$ are not mutually singular.
\end{lemma}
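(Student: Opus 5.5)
The plan is to show that, on an event of positive probability, $\mu_t$ and the random measure $\scrZ_{2t/3,t}^\vartheta(\one,\dd y)$ have a common absolutely continuous part. The route is through the flow (Chapman--Kolmogorov) property of the SHF of Clark and Mian~\cite{CM}. First decompose the SHF at the intermediate time $2t/3$:
\[
 \scrZ_t^\vartheta(g_r^{(x)},\dd y)=\int \scrZ_{0,2t/3}^\vartheta(g_r^{(x)},\dd w)\,\scrZ_{2t/3,t}^\vartheta(w,\dd y).
\]
Averaging against $\psi$ and normalising then expresses $\mu_{t,r}^\psi$ as the image of the time-$2t/3$ endpoint profile $\mu^\psi_{2t/3,r}$ under the later flow $\scrZ_{2t/3,t}^\vartheta$, up to division by the random partition functions $\scrZ_t^\vartheta(g_r^{(x)},\one)$. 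Applied to a test function $\varphi\ge0$, this gives
\[
 \mu_{t,r}^\psi(\varphi)=\int\psi(x)\,\frac{\int \scrZ_{0,2t/3}(g_r^{(x)},\dd w)\,\scrZ_{2t/3,t}(w,\varphi)}{\scrZ_t(g_r^{(x)},\one)}\,\dd x .
\]

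The key step is to use the determinism from Lemmas~\ref{ng:determinism} and~\ref{ng:nonzero-limit}, together with independence of the disorder on $[0,2t/3]$ and on $[2t/3,t]$. Condition on the later $\sigma$-field $\mathcal F_{2t/3,t}$. Since $\mu_{t,r}^\psi(\varphi)\to\mu_t(\varphi)$ in $L^1$ along the subsequence and the limit is deterministic, we get $\mu_t(\varphi)=\lim\bbE[\mu_{t,r}^\psi(\varphi)\mid\mathcal F_{2t/3,t}]$. The conditional expectation only averages over the early disorder. I would then establish a lower bound of the form
\[
 \bbE[\mu_{t,r}^\psi(\varphi)\mid\mathcal F_{2t/3,t}]\ \ge\ c\int \scrZ_{2t/3,t}(w,\varphi)\,\rho_r(\dd w)\qquad\text{on a good event},
\]
with $\rho_r$ a deterministic, spread-out measure such as $\bbE$ of the early profile restricted to a compact set. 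The two ingredients are these:
\begin{itemize}
 \item Restrict to the event that $\scrZ_{2t/3,t}(w,\one)$, smoothed over $w$, stays bounded on a ball; this uses the positive and negative moment bounds of Lemma~\ref{lem:one-point} transferred to the SHF.
 \item Bound the denominator $\scrZ_t(g_r^{(x)},\one)$ from above by $\scrZ_{0,2t/3}(g_r^{(x)},\one)$ times the maximal later mass on that ball.
\end{itemize}
Letting $r\downarrow0$ yields $\mu_t(\varphi)\ge c\,\int\scrZ_{2t/3,t}(w,\varphi)\rho(\dd w)$ on an event of positive probability, for all $\varphi\ge0$ in a countable dense class simultaneously. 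Hence $\mu_t\ge c\,\rho\lact\scrZ_{2t/3,t}$ there. If $\rho$ has a density bounded below on a ball, $\rho\lact\scrZ_{2t/3,t}$ dominates a multiple of $\scrZ_{2t/3,t}(\one_B,\dd y)$. That measure has positive mass and is absolutely continuous with respect to $\scrZ_{2t/3,t}(\one,\dd y)$. The deterministic $\mu_t$ therefore charges a set where $\scrZ_{2t/3,t}(\one,\cdot)$ lives, so the two are not mutually singular.

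I expect the main obstacle to be the conditional lower bound, namely handling the ratio structure. The normalisation couples early and late disorder, and the early profile $\mu^x_{2t/3,r}$ is itself random and singular. I would first try a Jensen/convexity trick: $1/\scrZ_t$ is convex in $\scrZ_t$, and $\scrZ_t(g_r^{(x)},\one)=\scrZ_{0,2t/3}(g_r^{(x)},\dd w)\lact\scrZ_{2t/3,t}(w,\one)$. Writing $\mu_{t,r}^x(\varphi)$ as an average under the early profile of the ratio $\scrZ_{2t/3,t}(w,\varphi)/\scrZ_{2t/3,t}(\mu^x_{2t/3,r},\one)$, I would bound the denominator on a high-probability event. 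Then, using that $\bbE[\mu^x_{2t/3,r}]$ is close to a heat kernel (by the first moment and the tightness in Lemma~\ref{ng:nonzero-limit}), I would recover the deterministic measure $\rho$ after averaging over the early disorder.
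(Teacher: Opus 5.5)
There is a genuine gap, and it sits in the step you yourself flag: the conditional lower bound
\[
\bbE\bigl[\mu_{t,r}^\psi(\varphi)\mid\mathcal F_{2t/3,t}\bigr]\ \ge\ c\int\scrZ^\vartheta_{2t/3,t}(w,\varphi)\,\rho_r(\dd w)
\]
on a good event measurable with respect to the later disorder. A lower bound on a ratio needs an upper bound on its denominator. After dividing by $\scrZ^\vartheta_{0,2t/3}(g_r^{(x)},\one)$, your denominator is the pairing $\int\mu^x_{2t/3,r}(\dd w)\,\scrZ^\vartheta_{2t/3,t}(\dd w,\one)$. This pairs the random, singular early profile with the singular later marginal, and it only makes sense through the Clark--Mian flow property.

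There is no finite ``maximal later mass on a ball''. The map $w\mapsto\scrZ^\vartheta_{2t/3,t}(w,\one)$ is not a function, and its lattice analogue $\max_{|w|\le R\sqrt N}Z_{[2N/3],N}(w)$ diverges by intermittency. A bound on a spatially smoothed later mass says nothing about its pairing with an unsmoothed early profile. So no later-measurable event delivers the denominator bound you need.

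Convexity of $u\mapsto 1/u$ does not rescue this, because numerator and denominator are driven by the same early profile and cannot be decoupled. Truncating on $\{D\le L\}$, with $D=\scrZ_t^\vartheta(g_r^{(x)},\one)/\scrZ_{0,2t/3}^\vartheta(g_r^{(x)},\one)$, would need two things you do not supply. First, uniform integrability of $D$ in $r$. Second, a lower bound on the density of $\bbE[\mu^x_{2t/3,r}]$. The latter does not follow from the first moment: only the unnormalised measure has heat-kernel mean, while the normaliser $\scrZ^\vartheta_{0,2t/3}(g_r^{(x)},\one)$ tends to $0$ in probability as $r\downarrow0$.

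The paper avoids both problems with two changes. First, it integrates out only a \emph{middle} window, conditioning on $\cG=\cF_{[0,t/3]}\vee\cF_{[2t/3,t]}$. The early profile then meets the later flow only after convolution with $g_{t/3}$, and \cite[Proposition~2.4]{CM} gives
\[
\bbE\biggl[\frac{\scrZ_t^\vartheta(g_r^{(x)},\dd y)}{\scrZ_{t/3}^\vartheta(g_r^{(x)},\one)}\,\bigg|\,\cG\biggr]
=\int(\mu^x_{t/3,r}*g_{t/3})(z)\,\scrZ^\vartheta_{2t/3,t}(\dd z,\dd y)
\le\frac{3}{2\pi t}\,\scrZ^\vartheta_{2t/3,t}(\one,\dd y).
\]
Second, it reverses the direction of the inequality. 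Instead of bounding $\mu_t$ from below, it bounds from above the part $\eta_{t,r}$ of $\bbE[\mu^\psi_{t,r}\mid\cG]$ retained on $\{R_1\ge a\}$, where $R_1=\scrZ_t^\vartheta(g_r^{(x)},\one)/\scrZ_{t/3}^\vartheta(g_r^{(x)},\one)$. This needs only a \emph{lower} bound on the normaliser, which follows from $\bbE[R_1]=1$, $\bbE[\log R_1]\ge -C$ and Markov's inequality. Jensen together with \eqref{ng:log-mean}--\eqref{ng:log-mean2} gives $\bbE[\eta_{t,r}(\varphi_0)]\ge c$. A limit $\eta_t$ then satisfies $\eta_t\le\mu_t$ and $\eta_t\le\frac{3}{2\pi ta}\scrZ^\vartheta_{2t/3,t}(\one,\cdot)$, and $\eta_t\neq0$ with positive probability. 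That is exactly the non-singularity claimed.
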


\noindent
The key idea behind the proof is to integrate out disorder on the time interval $[t/3, 2t/3]$ for the SHF that defines $\mu_{t,r}^\psi$.

On the other hand, the almost sure singularity of the SHF with Lebesgue initial condition,
see \cite[Theorem~1.1]{CSZ25},
together with $\bbE\bigl[\scrZ_{2t/3,t}^\vartheta(\one,\dd y)\bigr]=\dd y$,
gives the opposite relation.

\begin{lemma}[Singularity with deterministic measures]\label{ng:singularity}
For every deterministic finite positive measure $\mu$ on $\R^2$,
we have that $\mu\perp \scrZ_{2t/3,t}^\vartheta(\one,\dd y)$ a.s., that is, $\mu$ is singular with respect to $\scrZ_{2t/3,t}^\vartheta(\one,\dd y)$.
\end{lemma}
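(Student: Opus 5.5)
The plan is to use the almost sure singularity of the SHF with Lebesgue initial condition, \cite[Theorem~1.1]{CSZ25}, together with a Fubini argument. Write $\mathscr Z(\dd y)\coloneqq\scrZ_{2t/3,t}^\vartheta(\one,\dd y)$. By \cite{CSZ25} there is almost surely a Borel set $A=A(\omega)$ with $\mathrm{Leb}(A^c)=0$ and $\mathscr Z(A)=0$. For a deterministic finite measure $\mu$, singularity with $\mathscr Z$ means that there is a Borel set $B$, possibly random, with $\mu(B^c)=0$ and $\mathscr Z(B)=0$. The difficulty is that $\mu$ may charge Lebesgue-null sets where $\mathscr Z$ could in principle live. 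I would therefore decompose $\mu=\mu_{ac}+\mu_s$ with respect to Lebesgue measure.

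\emph{Absolutely continuous part.} Since $\mu_{ac}(A^c)=0$ because $\mathrm{Leb}(A^c)=0$, and $\mathscr Z(A)=0$, the set $A$ already witnesses $\mu_{ac}\perp\mathscr Z$ almost surely.

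\emph{Singular part.} Here $\mu_s$ is concentrated on a deterministic Lebesgue-null Borel set $S$. By Fubini and the first-moment formula $\bbE[\mathscr Z(\dd y)]=\dd y$, we get $\bbE[\mathscr Z(S)]=\mathrm{Leb}(S)=0$, so $\mathscr Z(S)=0$ almost surely. Thus $S$ witnesses $\mu_s\perp\mathscr Z$.

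\emph{Combining.} Set $B\coloneqq (A\setminus S)\cup S$. Then $\mu(B^c)\le\mu_{ac}(A^c)+\mu_s(S^c)=0$, and $\mathscr Z(B)\le\mathscr Z(A)+\mathscr Z(S)=0$ almost surely, so $\mu\perp\mathscr Z$ almost surely.

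I expect the main obstacle to be measurability and local finiteness. The SHF is only a locally finite measure, so I would state the singularity from \cite{CSZ25} on each ball and take countable unions to obtain the global set $A$. I would also check that $\mathscr Z$ with initial condition $\one$ is exactly the object covered by \cite{CSZ25}, noting that the time-shift by $2t/3$ is harmless by stationarity. The key point is that $S$ is deterministic, which is exactly where the hypothesis that $\mu$ is deterministic enters, because it is what allows Fubini to be applied to $\bbE[\mathscr Z(S)]$.
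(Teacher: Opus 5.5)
Your proof is correct and follows essentially the same route as the paper: you decompose $\mu$ into Lebesgue-absolutely-continuous and singular parts, use the almost sure singularity of $\scrZ_{2t/3,t}^\vartheta(\one,\dd y)$ with respect to Lebesgue measure from \cite{CSZ25} for the first part, and use the first-moment identity $\bbE[\scrZ_{2t/3,t}^\vartheta(\one,S)]=\mathrm{Leb}(S)=0$ on a deterministic Lebesgue-null carrier $S$ for the second. Your explicit combining step with $A\cup S$ just spells out what the paper leaves implicit.
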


The proofs of these two lemmas are postponed to Section~\ref{sec:ng-singularity}.
Let us now show how these ingredients complete the argument.

\begin{proof}[Proof of Theorem~\ref{thm:nongaussian}]
Suppose that $\scrH_t^\vartheta(\zero,\psi)$ is Gaussian, including a
constant one.
Lemma~\ref{ng:nonzero-limit} gives the existence of a nonzero deterministic
measure $\mu_t$ arising as a weak subsequential limit of $\mu_{t,r}^\psi$ as \(r\downarrow0\).
By Lemma~\ref{ng:later-comparison}, this
measure \(\mu_t\) and $\scrZ_{2t/3,t}^\vartheta(\one,\dd y)$ are not mutually
singular with positive probability.
On the other hand, since \(\mu_t\) is deterministic, Lemma~\ref{ng:singularity}
says that they are mutually singular almost surely. This is a contradiction.
\end{proof}

\subsection{Ehrhard type and Gaussian rigidity: Lemmas~\ref{ng:positive-sums} and~\ref{ng:rigidity}}
\label{sec:ng-proof-Ehrhard}

We now prove the ``Gaussian rigidity'' results that are used in Step~1.

\begin{proof}[Proof of Lemma~\ref{ng:positive-sums}]
The convergence of Corollary~\ref{prop:spatial-identification}, \textit{i.e.}\ \(X_r \stackrel{\mathrm d}{\to} X\), and the bound
\eqref{ng:bounded-difference} give joint tightness of $(X_r,Y_r)$.
Every joint subsequential limit $(X,Y)$ is such that \(X \stackrel{\mathrm d}{=} \scrH_t^\vartheta(\zero,\psi)\)
and satisfies $|Y-X|\le\log(C/c)$.
The limit $X$ is centred by Corollary~\ref{prop:spatial-identification},
and the bounded differences $Y_r-X_r$ have mean zero, which passes
to the limit, so $Y$ is centred as well.

For fixed $r>0$, the variables $X_r$, $Y_r$ and $X_r+Y_r$ are of
Ehrhard type, as explained in Section~\ref{sec:polymer-convexity};
see also \cite[Lemma~3.1 and Section~4]{LX26}.
Indeed, their lattice approximations are nonnegative linear combinations
of centred log partition functions, which are convex in the same Gaussian disorder.
The weak closure in Lemma~\ref{ng:quantile-closure} therefore shows
that $Y$ and $X+Y$ are of Ehrhard type.
\end{proof}

\begin{proof}[Proof of Lemma~\ref{ng:rigidity}]
Suppose first that $\sigma>0$. 
By Lemma~\ref{ng:quantile-closure}, $Y$ has the law of $h(G)$,
where $G$ is standard Gaussian and 
$h$ is a finite, nondecreasing and convex function.
Our goal is to show that $h$ must be affine.

The bound $|Y-X|\le K$ implies that
$Y$ has the same Gaussian tails as $X$, i.e.,
\begin{equation}\label{ng:gaussian-tail-rates}
 \lim_{z\to\infty}\frac{\log\bbP(Y>z)}{z^2}
 =\lim_{z\to\infty}\frac{\log\bbP(Y<-z)}{z^2}
 =-\frac1{2\sigma^2}.
\end{equation}

Let $az+b$ be a supporting line of \(h\), so $h(z)\ge az+b$ for all
$z$, with $a\ge0$ by monotonicity. 
If $a>\sigma$, then
\[
 \bbP(Y>z)=\bbP(h(G)>z)\ge\bbP(aG+b>z),
\]
and hence $\liminf_{z\to\infty} z^{-2} \log \bbP(Y>z) \geq -1/(2a^2)>-1/(2\sigma^2)$,
contradicting~\eqref{ng:gaussian-tail-rates}.  Similarly, if $0\leq a<\sigma$, the bound $\bbP(Y<-z)\le\bbP(aG+b<-z)$ contradicts~\eqref{ng:gaussian-tail-rates}. 
Therefore every supporting line of $h$ has slope \(\sigma\) so this proves that \(h\) is affine, namely \(h(z) =\sigma z +b\).  
The fact that $Y = \sigma G +b$ is 
centred means that
$Y\sim\mathcal N(0,\sigma^2)$.

Apply the same argument to $X+Y$, which is of Ehrhard type and
differs from $2X$ by at most $K$. It follows that
$X+Y\sim\mathcal N(0,4\sigma^2)$, and hence
\[
 \bbE\bigl[(X-Y)^2\bigr]
 =2\bbE[X^2]+2\bbE[Y^2]-\bbE\bigl[(X+Y)^2\bigr]=0.
\]
This concludes the proof. When $\sigma=0$, it is easy to see that $X=Y=0$.
\end{proof}

\subsection{About the polymer endpoint distribution: Lemmas~\ref{ng:determinism} and~\ref{ng:nonzero-limit}}
\label{sec:ng-polymer-endpoint}

We now turn to the proof of the estimates in Step~2,
which highlights the consequence of Gaussian rigidity on the polymer endpoint distribution.

\begin{proof}[Proof of~\eqref{ng:vanishing-perturbation}]
By Lemma~\ref{ng:positive-sums}, every sequence $r_n\downarrow0$ has a
subsequence along which $(X_{r_n},Y_{r_n})$ converges in law to a
centred pair $(X,Y)$ satisfying the hypotheses of
Lemma~\ref{ng:rigidity}, since $X$ is Gaussian by assumption.
Thus $Y=X$ almost surely, so $Y_r-X_r\to0$ in probability.
The bound~\eqref{ng:bounded-difference} upgrades this to convergence in
every finite $L^p$, proving~\eqref{ng:vanishing-perturbation}.   
\end{proof}

\begin{proof}[Proof of Lemma~\ref{ng:determinism}]
Without loss of generality, we may take
$\varphi\in C_c^\infty(\R^2)$ with $\|\varphi\|_\infty\le1$ and
$0<\varepsilon\le1/2$. 
Since $\mu_{t,r}^x$ is a probability measure we have
$\mu_{t,r}^x(\one+\varepsilon\varphi)=1+\varepsilon\mu_{t,r}^x(\varphi)$.
Then, using that $|\varepsilon^{-1}\log(1+\varepsilon a)-a|\le \varepsilon/[2(1-\varepsilon)^2]$ for any $|a|\le1$ we otbain that 
\[
\Bigl|\frac{1}{\varepsilon}\log\mu_{t,r}^x(\one+\varepsilon\varphi) - \mu_{t,r}^x(\varphi) \Bigr| \leq \frac{\varepsilon}{2(1-\varepsilon)^2} 
\]
and hence
\begin{equation}\label{ng:terminal-derivative}
 \left|\frac1\varepsilon\int\psi(x)\log\mu_{t,r}^x(\one+\varepsilon\varphi)\,\dd x
                    -\mu_{t,r}^\psi(\varphi)\right|\le\frac{\varepsilon}{2(1-\varepsilon)^2}.
\end{equation}
Centring both terms by their mean at most doubles this error. 
Thus, applying
\eqref{ng:vanishing-perturbation} with $f=\one+\varepsilon\varphi$ and $p=1$, while~$\varepsilon$ remains fixed, we get that
\[
 \limsup_{r\downarrow0}
 \bbE\Bigl[\bigl|\mu_{t,r}^\psi(\varphi)-\bbE\bigl[\mu_{t,r}^\psi(\varphi)\bigr]\bigr|\Bigr]
 \le\frac{\varepsilon}{(1-\varepsilon)^2}.
\]
Letting $\varepsilon\downarrow0$ yields \eqref{ng:deterministic-pairings}.

We now turn to the second part of the lemma.
Every subsequential limit $\mu_t$ is a random measure (on the space of locally finite
measures equipped with the vague topology) with mass at most
one. 
This boundedness allows us to pass to the limit in \eqref{ng:deterministic-pairings}, giving that 
\[
\mu_t(\varphi)=\bbE\bigl[\mu_t(\varphi)\bigr] \  a.s., \qquad \forall\, \varphi \in C_c^\infty(\R^2) \,.
\]
Applying this to a
countable measure-determining family of $\varphi \in C_c^\infty(\R^2)$ then shows that 
$\mu_t(\cdot)=\bbE[\mu_t(\cdot)]$ a.s., proving that $\mu_t$ is deterministic.
\end{proof}

\begin{proof}[Proof of Lemma~\ref{ng:nonzero-limit}]
Tightness for the vague topology follows from the fact that
$\mu_{t,r}^\psi$ has mass one almost surely.
Let $\mu_t$ be an arbitrary deterministic subsequential limit as \(r\downarrow 0\), and fix any nonzero
$\varphi_0\in C_c^\infty(\R^2)$ with $0\le\varphi_0\le1$.
It suffices to prove that for some $c>0$ we have that 
\begin{equation}\label{911}
 \bbE\bigl[\mu(\varphi_0)\bigr]\ge c \,.
\end{equation}

Recall the definition~\eqref{ng:profiles} of \(\mu_{t,r}^x\).
By Jensen's inequality we have that 
\[
\log\bbE\bigl[\mu_{t,r}^x(\varphi_0)\bigr]
 \ge\bbE\bigl[\log\scrZ_t^\vartheta(g_r^{(x)},\varphi_0)\bigr]
    -\bbE\bigl[\log\scrZ_t^\vartheta(g_r^{(x)},\one)\bigr] \,.
\]
Recalling that $\mu_{t,r}^\psi(\varphi_0)\coloneqq \int\psi(x)\mu_{t,r}^x(\varphi_0)\,\dd x$, 
it suffices to prove the lower bound
\begin{equation}\label{ng:positive-local-mass}
\bbE\bigl[\log\scrZ_t^\vartheta(g_r^{(x)},\varphi_0)\bigr]
    -\bbE\bigl[\log\scrZ_t^\vartheta(g_r^{(x)},\one)\bigr]
 \ge-C >-\infty \,,
\end{equation}
for some $C$ that is uniform in $x\in\operatorname{supp}(\psi)$ and $r$ small. 
Indeed, integrating w.r.t.\ $\psi(x)\,\dd x$
and passing to the subsequential limit $r\downarrow 0$ gives 
\eqref{911} with $c=\e^{-C}$.

The desired bound~\eqref{ng:positive-local-mass}  follows easily from the next two bounds:
\begin{equation}\label{ng:log-mean}
 \Bigl|\bbE\bigl[\log\scrZ_t^\vartheta(g_r^{(x)},\one)\bigr]
             +\frac12\log\log \Big(\frac1r\Big)\Bigr|\le C <\infty,
 \qquad 0<r<r_0,\quad x\in\R^2,
\end{equation}
and
\begin{equation}\label{ng:log-mean2}
 \bbE\bigl[\log\scrZ_t^\vartheta(g_r^{(x)},\varphi_0)\bigr]
 \ge-\frac12\log\log \Big(\frac1r\Big)-C,
\end{equation}
where $C$ is does not depend on $x\in\operatorname{supp}(\psi)$ and $r$ is small.
Indeed, the two terms $\frac12\log\log(1/r)$ cancel in their difference in \eqref{ng:positive-local-mass}. 

It remains to establish~\eqref{ng:log-mean} and~\eqref{ng:log-mean2}. 
The first estimate \eqref{ng:log-mean} is the continuum 
analogue of Lemma \ref{lem:centering} and follows the same proof:
it requires negative-first and second
moment bounds, 
that can be found in \cite[Proposition~5.3 with $p=-1$, and (3.14)]{LX26}.
For the second estimate \eqref{ng:log-mean2}, 
we apply~\eqref{eq:chen-two} to \(\log \bigl(\scrZ_t^\vartheta(g_r^{(x)},\varphi_0)/\bbE[\scrZ_t^\vartheta(g_r^{(x)},\varphi_0)]\bigr)\)
to get that 
\[
\bbE\bigl[\log\scrZ_t^\vartheta(g_r^{(x)},\varphi_0)\bigr]
\ge
-\frac12\log\bbE\bigl[
  \scrZ_t^\vartheta(g_r^{(x)},\varphi_0)^2\bigr]
+2\log\bbE\bigl[
  \scrZ_t^\vartheta(g_r^{(x)},\varphi_0)\bigr].
\]
Since $0\le\varphi_0\le1$, we have
\[
 \bbE\bigl[\scrZ_t^\vartheta(g_r^{(x)},\varphi_0)\bigr]
 =(g_{t+r}*\varphi_0)(x)\ge c>0,
 \qquad
 \bbE\bigl[\scrZ_t^\vartheta(g_r^{(x)},\varphi_0)^2\bigr]\le C\log(1/r),
\]
uniformly for $x\in\operatorname{supp}\psi$ and small $r$, which plugged in the above proves~\eqref{ng:log-mean2}. 
This concludes the proof.
\end{proof}

\subsection{Continuity vs.\ singularity with respect to the SHF: Lemmas~\ref{ng:later-comparison} and~\ref{ng:singularity}}
\label{sec:ng-singularity}

We now prove the two inputs in Step~3.

\begin{proof}[Proof of Lemma~\ref{ng:later-comparison}]
Fix a subsequence $r_n\downarrow0$ along which
$\mu_{t,r_n}^\psi$ converges in distribution to some deterministic measure $\mu_t$. 
Our goal is to show that, with positive probability, \(\mu_t\) has a non-trivial part that is absoluetly continuous w.r.t.\ \(\scrZ_{2t/3,t}^\vartheta(\one,\dd y)\).

\smallskip
\noindent
\textit{Integrating out disorder on \([t/3,2t/3]\). }
Let
$\cG\coloneqq \cF_{[0,t/3]}\vee\cF_{[2t/3,t]}$, where $\cF_{[a, b]}$ denotes the $\sigma$-algebra generated
by the SHF increments $\scrZ_{u, v}^\vartheta$ with $[u, v]\subset [a, b]$. Note that
conditioning on $\cG$ simply integrates out the SHF on $[t/3,2t/3]$.

It follows from Lemma~\ref{ng:determinism}
that for any $\varphi\in C_c(\R^2)$ the convergence 
$\mu_{t,r_n}^\psi(\varphi) \to \mu_t(\varphi)$ also holds in $L^1(\bbP)$, because
the limiting measure $\mu_t$ is deterministic and the measures have bounded mass.
By Jensen's inequality, we also have that
the following conditional expectation converges to the same deterministic limit:
\begin{equation*}
 \bbE\bigl[\mu_{t,r_n}^\psi(\varphi)\mid\cG\bigr]
 \xrightarrow[\;n\to\infty\;]{} \mu_t(\varphi) \qquad \text{in } L^1(\bbP).
\end{equation*}
Along a further subsequence if necessary, this implies that almost surely, the conditional probability distribution \(\bbE[\mu_{t,r_n}^\psi(\cdot)\mid\cG]\) converges vaguely to \(\mu_t\),
that is
\begin{equation}
      \label{ng:conditional-endpoint-limit}
\bbE\bigl[\mu_{t,r_n}^\psi(\cdot)\mid\cG \bigr]
 \xrightarrow[\;n\to\infty\;]{v} \mu_t  \qquad \bbP\text{-a.s.}
\end{equation}

\smallskip
\noindent
\textit{Extracting an absolutely continuous part. }
Recalling the definition~\eqref{ng:profiles} of \(\mu_{t,r}^x(\dd y)\), we can write 
\begin{equation}
\label{ng:conditional-endpoint-ratio}
\bbE\left[\mu_{t,r}^x(\dd y) \mid \cG \right] =
\bbE\biggl[\frac{1}{R_1}\,
   \frac{\scrZ_t^\vartheta(g_r^{(x)},\dd y)}
        {\scrZ_{t/3}^\vartheta(g_r^{(x)},\one)}
   \;\bigg|\;\cG\biggr] \,,
\end{equation}
where \(R_1\) is a correcting factor given by
\[
 R_1= R_1(r,x)\coloneqq \frac{\scrZ_t^\vartheta(g_r^{(x)},\one)}{\scrZ_{t/3}^\vartheta(g_r^{(x)},\one)}.
\]
We prove below that 
\begin{equation}
      \label{ng:middle-conditional-mean}
\bbE\biggl[
   \frac{\scrZ_t^\vartheta(g_r^{(x)},\dd y)}
        {\scrZ_{t/3}^\vartheta(g_r^{(x)},\one)}
   \,\bigg|\,\cG\biggr]
 \le\frac{3}{2\pi t}\scrZ_{2t/3,t}^\vartheta(\one,\dd y) \,.
\end{equation}
To deal with the prefactor \(1/R_1\) inside~\eqref{ng:conditional-endpoint-ratio}, we let \(a>0\) some parameter (chosen sufficiently small below) 
and we introduce 
\begin{equation}
      \label{ng:define-eta}
\eta_{t,r}(\dd y) \coloneqq  \bbE\biggl[
   \int\psi(x)\one_{\{R_1(r,x)\ge a\}}\mu_{t,r}^x(\dd y)\,\dd x
   \;\bigg|\;\cG\biggr]  \,,
\end{equation}
so that almost surely
\begin{equation}
      \label{ng:retained-measure}
 \eta_r(\dd y) \leq \frac{1}{a} 
   \int\psi(x)\bbE\biggl[
   \frac{\scrZ_t^\vartheta(g_r^{(x)},\dd y)}
        {\scrZ_{t/3}^\vartheta(g_r^{(x)},\one)} 
   \;\bigg|\;\cG\biggr]  \dd x  \leq \frac{3}{2\pi ta} \scrZ_{2t/3,t}^\vartheta(\one,\dd y)  \,,
\end{equation}
where the last inequality follows from~\eqref{ng:middle-conditional-mean}.
We also show below that 
that for some nonzero $\varphi_0\in C_c^\infty(\R^2)$ with
$0\le\varphi_0\le1$, the parameter $a$ can be chosen sufficiently small such that, for some $c>0$,
\begin{equation}\label{ng:retained-local-mass}
 \bbE\bigl[\eta_{t,r}(\varphi_0)\bigr]\ge c \quad \mbox{uniformly in $r$ small}.
\end{equation}

The two estimates \eqref{ng:retained-measure}-\eqref{ng:retained-local-mass} allow us to conclude the proof.
Indeed, by~\eqref{ng:retained-measure}, we have that \(\eta_{t,r}\) and any of its weak limit \(\eta_t\) 
are absolutely continuous with respect to \(\scrZ_{2t/3,t}^\vartheta(\one,\dd y)\).
On the other hand, \eqref{ng:retained-local-mass} ensures that any weak limit \(\eta_t\) is non-zero with positive probability.
Additionally, we have \(\eta_{t,r}\leq \bbE[\mu_{t,r}^{\psi}\mid \mathcal{G}]\) as follows directly from~\eqref{ng:define-eta}, and this inequality also remains true by taking weak limits, 
giving \(\eta_t \leq \mu_t\) thanks to~\eqref{ng:conditional-endpoint-limit}.
In other words, we have found some measure \(\eta_t \leq \mu_t\), which is non-zero and absolutely continuous w.r.t.\ \(\scrZ_{2t/3,t}^\vartheta(\one,\dd y)\) with positive probability.

This concludes the argument and it thus only remains to show~\eqref{ng:middle-conditional-mean} and~\eqref{ng:retained-local-mass}.

\smallskip
\noindent
\textit{Proof of~\eqref{ng:middle-conditional-mean}. }
We start by noticing that 
\begin{equation*}
\bbE\biggl[
   \frac{\scrZ_t^\vartheta(g_r^{(x)},\dd y)}
        {\scrZ_{t/3}^\vartheta(g_r^{(x)},\one)}
   \,\bigg|\,\cG\biggr] = \int (\mu_{t/3,r}^x * g_{t/3})(z)\,\scrZ_{2t/3,t}^\vartheta(\dd z,\dd y),
\end{equation*}
where $\mu_{t/3,r}^x(\dd w)= \scrZ_{t/3}^\vartheta(g_r^{(x)},\dd w)/\scrZ_{t/3}^\vartheta(g_r^{(x)},\one)$ as in~\eqref{ng:profiles} and the heat kernel $g_{t/3}$ arises from integrating out the SHF on the time interval $[t/3, 2t/3]$. 
For a proof of such an identity, see e.g.\ \cite[Proposition~2.4]{CM}. 
Since $\mu_{t/3,r}^x$ is a probability measure and \(g_{t/3}(y) \leq 3/(2\pi t)\) uniformly in \(y\),
we have the uniform bound
\begin{equation*}
 (\mu_{t/3,r}^x * g_{t/3})(z)
 =\int g_{t/3}(z-w)\,\mu_{t/3,r}^x(\dd w)
 \le\frac{3}{2\pi t}.
\end{equation*}
This proves~\eqref{ng:middle-conditional-mean}.

\smallskip
\noindent
\textit{Proof of~\eqref{ng:retained-local-mass}. }
By the definition
of $\eta_{t,r}$ in~\eqref{ng:define-eta}, we have
\begin{equation}
      \label{ng:retained-local-mass-start}
 \bbE\bigl[\eta_r(\varphi_0)\bigr]
 =\int\psi(x)\bbE\bigl[\mu_{t,r}^x(\varphi_0)
                 \one_{\{R_1(r,x)\ge a\}}\bigr]\,\dd x.
\end{equation}
We want a positive lower bound on the expectation inside
this integral, 
uniformly for $x\in\operatorname{supp}\psi$
and $r$ small.
valid
uniformly for $r$ small.
For fixed $x$, let us set
\[
 R_2\coloneqq \frac{\scrZ_t^\vartheta(g_r^{(x)},\varphi_0)}
                 {\scrZ_{t/3}^\vartheta(g_r^{(x)},\one)} \,,
\]
so in particular $R_2/R_1=\mu_{t,r}^x(\varphi_0)$.
We thus have
\[
 \bbE\bigl[\mu_{t,r}^x(\varphi_0)\one_{\{R_1\ge a\}}\bigr] = \bbE\Bigr[\frac{R_2}{R_1}\Bigl] - \bbE\Bigr[\frac{R_2}{R_1}\one_{\{R_1< a\}}\Bigl]
 \geq  \bbE\Bigr[\frac{R_2}{R_1}\Bigl] - \bbP(R_1<a) \,,
\]
using also that \(R_2\leq R_1\).
First, notice that by Jensen's inequality we have that
\[
\bbE\Bigl[\frac{R_2}{R_1}\Bigr]
 \ge\exp\bigl(\bbE[\log R_2]-\bbE[\log R_1]\bigr)
 \ge \e^{-C},
\]
where we have used that \(\bbE[\log R_1] \leq \log \bbE[R_1] =0\) since \(\bbE[R_1]=1\) and also that \(\bbE[\log R_2]\ge-C\) uniformly in \(r\), by applying~\eqref{ng:positive-local-mass} at time \(t/3\) and \eqref{ng:log-mean}--\eqref{ng:log-mean2} at time $t$.
On the other hand, by Markov's inequality we have for \(a\in (0,1)\)
\[
\bbP(R_1<a) \leq \frac{\bbE[ (\log R_1)^-]}{\log (1/a)} \leq \frac{\bbE[R_1] - \bbE[\log R_1]}{\log(1/a)} \leq \frac{1+C}{\log(1/a)}
\]
where we have used first that \( (\log b)^-\leq b - \log b\) for all \(b>0\) and then, as above, that \(\bbE[R_1]=1\) and $\bbE[\log R_1]\ge-C$ (by applying \eqref{ng:log-mean} at time $t/3$ and $t$).

All together, we have that uniformly for \(r\) small
\[
\bbE\bigl[\mu_{t,r}^x(\varphi_0)\one_{\{R_1\ge a\}}\bigr]  \geq \e^{-C} - \frac{1+C}{\log (1/a)} \geq \frac12 \e^{-C} \,,
\]
the last inequality being valid for \(a\) small enough.
Plugged in~\eqref{ng:retained-local-mass-start}, this concludes the proof of~\eqref{ng:retained-local-mass}.
\end{proof}

\begin{proof}[Proof of Lemma~\ref{ng:singularity}]
Let \(\mu\) be a non-zero deterministic measure and 
let us decompose~$\mu$ into $\mu=\mu^{\rm a}+\mu^{\rm s}$, where \(\mu^{\rm a}\), \(\mu^{\rm s}\) are the absolutely continuous and
singular parts w.r.t.\ the Lebesgue measure. 
We show that both parts are a.s.\ singular
w.r.t.\ $\scrZ_{2t/3,t}^\vartheta(\one,\dd y)$. 

Indeed, since
$\scrZ_{2t/3,t}^\vartheta(\one,\dd y)$ is a.s.\ singular w.r.t.\ Lebesgue measure by \cite[Theorem~1.1]{CSZ25}, $\mu^{\rm a}$ must be singular
w.r.t.\ $\scrZ_{2t/3,t}^\vartheta(\one,\dd y)$.
On the other hand, let $B$ be a Lebesgue set with \(\mathrm{Leb}(B) =0\) and $\mu^{\rm s}(B^c)=0$.
We have $\bbE[\scrZ_{2t/3,t}^\vartheta(\one,\one_B)]= \mathrm{Leb}(B) =0$ so that $\scrZ_{2t/3,t}^\vartheta(\one,\one_B)=0$ a.s., and thus $\mu^{\rm s}$ is singular w.r.t.\ $\scrZ_{2t/3,t}^\vartheta(\one,\dd y)$.
\end{proof}

\appendix

\section{On the block expansion: proof of Proposition~\ref{prop:block-inputs-iii}}
\label{app:technical-proposition}

The goal of this section is to prove Proposition~\ref{prop:block-inputs-iii}.
The result is similar to that of \cite[Corollary 3.5]{GT26},
but it is not straightforward to adapt their proof to our setting. 
We present here a complete proof for the sake of completeness.

\subsection{Reminder of chaos expansions, second moment and renewal estimates}
We gather in this section a number of auxiliary results on local limit theorems,
second moment estimate and asymptotics.

\medskip
\noindent
{\bf Chaos expansion.}
The estimates in Proposition \ref{prop:block-inputs-iii}, will be done through the following chaos expansion of the
partition functions. More precisely, denote by
\begin{equation*}
      \label{eq:xi}
	\xi_N(n,z) \coloneqq  e^{\beta_N\omega(n,z)-\beta_N^2/2}-1
\end{equation*}
which verifies \(\bbE[\xi_N(n,z)] = 0\) and
\[
\Var[\xi_N(n,z)] =  e^{\beta_N^2}-1 =:\sigma_N^2 \,.
\]
Expanding, essentially, the exponential Gibbs
factor in the partition function, we have the
following expansion for the point-to-point
partition function $Z^{\beta_N}_{m, n}(x, y)$ (we refer to \cite[Section~2]{CSZ26} for details):
\begin{equation}
      \label{eq:expan2}
\begin{split}
Z^{\beta_N}_{m, n}(x, y)
      &= \sum_{k=0}^\infty
      \sum_{\substack{z_1, \ldots, z_{k}\in \Z^2 \\ m <n_1<\cdots <n_{k}\leq n}}
      \Bigl\{\prod_{i=1}^{k} q_{n_i-n_{i-1}}(z_i-z_{i-1}) \xi_N(n_i, z_i) \Bigr\}  q_{n-n_k} (y-z_k)  \\
      & = 
      q_{n-m}(y-x)
      +
      \sum_{k=1}^\infty
      \!\! \sum_{\substack{z_1, \ldots, z_{k}\in \Z^2 \\ m<n_1<\cdots <n_k \le n}} \!\!\!\!\!\!\!\!\!\!
      \begin{tikzpicture}[scale=0.45]
      \draw [fill] (14, 1)  circle [radius=0.1];
      \draw[thick] (0,0) to [out=70,in=160] (2,1);
      \draw [fill] (2, 1)  circle [radius=0.1];
      \draw[thick] (2,1) to [out=-50,in=110] (3,-1);
      \draw [fill] (3, -1)  circle [radius=0.1];
      \draw[thick] (3,-1) to [out=80,in=-180] (4.5,0.5);
      \draw [fill] (4.5, 0.5)  circle [radius=0.1];
      \draw[thick] (4.5,0.5) to [out=60,in=120] (6, 0.5);
      \draw [fill] (6, 0.5)  circle [radius=0.1];
      \draw[thick] (6,0.5) to [out=-80,in=160] (7, -1);
      \draw [fill] (7, -1)  circle [radius=0.1];
      \draw[thick] (7,-1) to [out=60,in=200] (8, 0);
      \node at (9,0) {$\cdots$};\node at (10.2,0) {$\cdots$};
      \draw[thick] (11,1) to [out=10,in=100] (13, 0);
      \draw [fill] (13, 0)  circle [radius=0.1];
      \draw[thick] (13,0) to [out=60,in=180] (14, 1);
      \draw [] (0, 0)  circle [radius=0.1];
      \node at (13,-0.5) {\scalebox{0.6}{$\xi_N(n_k, z_k)$}};
      \node at (2,1.7) {\scalebox{0.6}{$\xi_N(n_1,z_1)$}};
      \node at (0,-0.5)  {\scalebox{0.6}{$(m, x)$}};
      \node at (14.5,0.5)  {\scalebox{0.6}{$(n, y)$}};
      \end{tikzpicture},
\end{split}
\end{equation}
using the convention \((n_0,z_0) =(m,x)\) and \(q_0(x) = \one_{\{x=0\}}\); the term \(k=0\) simply corresponds to \(q_{n-m}(y-x)\).
In the graphical representation, the laces represent the transition kernels $q_{n_i-n_{i-1}}(z_i-z_{i-1})$,
the solid dots
represent the disorder variables $\xi_N(n_i, z_i)$.
(Note that disorder is excluded at time $m$ and included at time $n$.)

\medskip

\noindent
{\bf Second moments.} Using the orthogonality of disorder weights $\xi_N$, we can use the chaos expansion to
obtain formulas for the second moments of the variance of the partition functions.
More precisely, let us introduce the following collision renewal density: set \(U_N(n,x) = \one_{\{x=0\}}\) if \(n=0\) and
\begin{equation}\label{eq:U}
	U_N(n,x) =
	\sum_{k=1}^\infty \sum_{\substack{z_1,\ldots, z_{k-1} \in \Z^2 \\ 0 < n_1 < \ldots < n_{k-1} < n}}
	 \prod_{j=1}^{k} \sigma_N^2\, q_{n_j-n_{j-1}}(z_j-z_{j-1})^2
\end{equation}
if \(n\geq 1\), with again the convention
$(n_0,z_0)=(0,0)$, $(n_{k},z_{k})\coloneqq (n,x)$.
We also denote
\begin{align}\label{U bar}
\overline U_N(n,x) \coloneqq \sigma_N^2 U_N(n,x) \qquad
\overline U_N(n) \coloneqq \sum_{x\in \Z^2} \overline U_N(n,x) \,.
\end{align}
By orthogonality of the chaos expansion, for $n\ge1$ we have
\(\bbE[Z_{0,n}(0,x)^2]
= \frac{1+\sigma_N^2}{\sigma_N^2}
U_N(n,x)\).

Let us also introduce
\begin{equation} \label{dick-green2}
	G_{\vartheta}(t) = \int_0^\infty
	\frac{ e^{(\vartheta - \gamma) s} \,
	s \, t^{s-1}}{\Gamma(s+1)} \, \dd s \,,
	\qquad  \quad t \in (0,1] \,, \
	\vartheta \in \R \,,
\end{equation}
with \(\gamma\) the Euler-Mascheroni constant.
The function $G_\vartheta$ is related to the Dickman subordinator \cite{CSZ19a} for $t\in [0,1]$.
Note also that \(G_{\vartheta}(t) = \e^{\vartheta - \gamma} \nu'( t\, \e^{\vartheta - \gamma})\),
where \(\nu(T) \coloneqq \int_0^\infty \frac{T^s}{\Gamma(s+1)} \dd s\)
is the so-called Volterra function \cite{CM}.

\begin{proposition}
      \label{prop:renewal-UN}
 For any fixed $\delta > 0$ and uniformly in $\delta N \le n \le N$,
as $N\to\infty$, we have
\begin{equation}
      \label{eq:asU1}
	\overline U_N(n) = \frac{\pi}{N} \, G_\vartheta\big(\tfrac{n}{N}\big)
	(1+ o(1)).
\end{equation}
Furthermore, there exists $C>0$ such that uniformly for any $1\le n\le N$, we have the upper bound
\begin{align}\label{eq: U upper bound}
    \overline U_N(n)
    \le 
    \frac{C}{N} G_{\vartheta}(\frac{n}{N}).
\end{align}
Additionally, there exists a constant \(C=C_{\vartheta}>0\) such that for any \(1\leq n \leq N\),
\begin{equation}
\label{eq:uniform-bound-U}
\overline U_N (n,x) \leq C \overline U_N(n) \hat g_{n} (x) \,,
\end{equation}
where \(\hat g_n\) is a \(\Z^2\) version of a heat kernel:
\begin{equation}
      \label{def:hat-g}
\hat g_n(x) \coloneqq \frac{1}{n} \e^{- \cst |x|^2/n}  \one_{\{n+x_1+x_2\in 2\Z\}}\,,
\end{equation}
with a constant \(\cst \leq 1/2\).
By convention, we set \(\hat g_0(x) = \one_{\{x=0\}}\) in the case \(n=0\).
\end{proposition}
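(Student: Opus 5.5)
The plan is to treat $U_N$ as a renewal density and compare it with the continuum Dickman-type renewal. Set $r_N(n,x)\coloneqq \sigma_N^2 q_n(x)^2$ for $n\ge1$. Then \eqref{eq:U} says that $U_N$ is the renewal density $\sum_{k\ge1} r_N^{*k}$ in space-time, and $\overline U_N(n)=\sigma_N^2\sum_{k\ge1}\bigl(\sum_x r_N\bigr)^{*k}(n)$ is its one-dimensional marginal. The marginal step law is $\sigma_N^2 q_{2n}(0)$, since $\sum_x q_n(x)^2=q_{2n}(0)$. This is a defective renewal whose total mass $\sigma_N^2 R_N = 1+\vartheta/\log N$ sits at criticality. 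For the sharp asymptotics \eqref{eq:asU1}, I would invoke the known result \cite[Theorem~1.4 / Proposition~A.2]{CSZ19a}. That result shows that, with $\sigma_N^2 q_{2n}(0)\approx \frac{1}{\log N}\cdot\frac1n$, the rescaled renewal converges to the Dickman subordinator and the density converges to $G_\vartheta$. The only adjustments are the convention for the terminal disorder and the factor $\pi$ coming from $R_N\sim\frac1\pi\log N$. The uniformity in $\delta N\le n\le N$ follows from the local limit theorem already contained there.

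For the uniform upper bound \eqref{eq: U upper bound}, I would again follow \cite{CSZ19a}. The plan is to decompose according to the number of steps $k$, bound $\sigma_N^2 q_{2n}(0)\le \frac{C}{\log N}\frac1n$, and compare the $k$-fold convolution with that of the Dickman Lévy measure. This gives a bound of the form $\frac{C}{N}\sum_k \frac{(\ldots)^k}{k!}$, matching the series representation of $G_\vartheta$. The main obstacle is the small-$n$ regime ($n\ll N$, where $G_\vartheta(t)\sim 1/(t\log^2(1/t))$). There the discrete sum must be controlled by the continuum kernel uniformly, without losing constants through error terms of size $O(1)/\log N$. I would handle it by noting that the restricted renewal up to time $n$ is a subcritical renewal with parameter $\vartheta_N(n)\asymp\log(N/n)$, as in Lemma~\ref{lem:second-moment-hat}. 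A last-renewal decomposition then yields $\overline U_N(n)\le \frac{C}{n\,\vartheta_N(n)^2}$, which matches $\frac1N G_\vartheta(n/N)$ up to constants.

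For the spatial bound \eqref{eq:uniform-bound-U}, the plan is to use the Gaussian bound $q_m(z)^2\le \frac Cm q_m(z)\le \frac{C}{m}\hat g_m(z)$ with a small constant $\cst$ (parity included). Next, I would use the semigroup-type inequality $\sum_z \hat g_a(z)\hat g_b(x-z)\le \frac{C}{a+b}\,\frac{\max(a,b)}{\min(a,b)}\cdots$. To avoid losing constants exponential in $k$, it is better to condition on the largest increment. In any chain $0<n_1<\dots<n_k=n$, one increment $n_j-n_{j-1}$ is at least $n/k$. The cleaner route is to split at the step crossing time $n/2$. Writing $U_N(n,x)=\sum_{a<n/2\le b}\sum_{y,z}U_N(a,y)\,r_N(b-a,z-y)\,U_N(n-b,x-z)$, at least one of the three pieces has duration $\gtrsim n$. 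In the crossing step one has $r_N(b-a,\cdot)\le \frac{C\sigma_N^2}{b-a}\hat g_{b-a}$. If $b-a\ge n/4$, I would bound $\hat g_{b-a}(\cdot)\le C\hat g_n$-type Gaussian decay after averaging over $y,z$, using mass bounds on $U_N(a,\cdot)$ and $U_N(n-b,\cdot)$. Otherwise one of $a$ or $n-b$ is $\ge n/4$, and I would apply induction on $n$ (or a priori a diffusive moment bound $\sum_x |x|^2 U_N(m,x)\le Cm\,\overline U_N(m)/\sigma_N^2$) to obtain Gaussian decay for that piece.

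Summing gives $\overline U_N(n,x)\le C\hat g_n(x)\sum_{a,b}\overline U_N(a)\,\sigma_N^2 q_{2(b-a)}(0)\,\overline U_N(n-b)\cdot\frac{n}{b-a}$-type factors. The renewal equation then closes this back to $C\,\overline U_N(n)\hat g_n(x)$. I expect this Gaussian-tail propagation through the convolution, uniformly in $N$, to be the most delicate bookkeeping.
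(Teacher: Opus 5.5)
Quoting \cite{CSZ19a} for \eqref{eq:asU1} and \eqref{eq: U upper bound} is exactly what the paper does, so those two parts are fine.

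\textbf{The gap is in \eqref{eq:uniform-bound-U}.} Your crossing-time recursion does not close with a constant independent of $n$ and $N$.

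\emph{The induction loses a constant at every level.} Take the natural inductive hypothesis $\sup_y\overline U_N(a,y)\le\frac{K}{a}\,\overline U_N(a)$ for $a<n$, and consider a short crossing increment $b-a$. Both outer pieces then have length close to $n/2$, and mass/sup-norm information only yields $\frac Ka\approx\frac{2K}{n}$. The bound $\|f*h\|_\infty\le\|f\|_\infty\|h\|_1$ cannot be improved in general, and using the Gaussian form of the hypothesis on both outer pieces produces $K^2$ instead. So the constant at least doubles at every level of the recursion. To avoid this you would need to prove that crossing pairs with $b-a<\varepsilon n$ carry only an $O(\varepsilon)$ fraction of $\overline U_N(n)$, uniformly in $1\le n\le N$. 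That is a sharp renewal estimate, requiring matching upper and lower bounds on $\overline U_N$ at every scale, and your proposal does not contain it.

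\emph{The renewal identity does not absorb the extra factor.} If the factor $n/(b-a)$ is kept for all crossing pairs, set $u=n/2-a$ and $v=b-n/2$. The weight $\sum_{u\ge1,v\ge0,\,u+v\le n/4}(u+v)^{-2}$ is of order $\log n$. So for $n\asymp N$ the resulting sum exceeds $\overline U_N(n)$ by a factor of order $\log n$.

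\emph{The Gaussian factor is not obtained.} The factor $\e^{-\cst|x|^2/n}$ cannot come from mass bounds on $U_N(a,\cdot)$ and $U_N(n-b,\cdot)$. A diffusive second-moment bound only gives polynomial tails.

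\textbf{What you are missing is the product structure, which makes any recursion unnecessary.} Fix a time skeleton $0=n_0<n_1<\dots<n_k=n$ and write $m_j=n_j-n_{j-1}$. Summing \eqref{eq:U} over $z_1,\dots,z_{k-1}$ gives exactly $\prod_{j}\sigma_N^2q_{2m_j}(0)\cdot\rmP(\zeta_1+\dots+\zeta_k=x)$, where the $\zeta_j$ are independent with $\rmP(\zeta_j=z)=q_{m_j}(z)^2/q_{2m_j}(0)$. Hence \eqref{eq:uniform-bound-U} follows from the local bound $\rmP(\zeta_1+\dots+\zeta_k=x)\le\frac Cn\e^{-c|x|^2/n}$, provided it is \emph{uniform in $k$ and in $(m_j)$}; summing over skeletons then recovers $\overline U_N(n)$ exactly.

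The paper proves this local bound with tools that lose nothing over independent summands:
\begin{enumerate}
\item In the rotated coordinates $z_1\pm z_2$ the two components of $\zeta_j$ become independent, so the problem is one-dimensional with summands $X_j$.
\item The bounds $\rmE[\e^{\lambda X_j}]\le\e^{c\lambda^2m_j}$ multiply to a Chernoff bound. Unimodality converts it into the point bound $\frac{C}{|w|}\e^{-c|w|^2/n}$, which suffices for $|w|\ge\sqrt n$.
\item Log-concavity of the underlying hypergeometric laws, together with \cite{BMM20}, gives $\sup_w\rmP(X_1+\dots+X_k=w)\le C/\sqrt n$.
\end{enumerate}
A characteristic-function bound that is multiplicative in $j$ would serve equally well. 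What does not work is trying to recover uniformity in the number and sizes of the increments from a renewal equation.
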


The proof of~\eqref{eq:asU1} and \eqref{eq: U upper bound} are both given in \cite[Theorem~2.3]{CSZ19a}.
The bound~\eqref{eq:uniform-bound-U}
does not seem to appear exactly
in this form in the literature (even though it is close to  \cite[Lemma~3.5]{CSZ23}):
we provide a proof in Subsection~\ref{app:technical}.

\smallskip
In the following, we choose the constant \(\cst\)
in the definition of \(\hat g_n\) small enough
such that Lemma~\ref{lem:RW-kernel-gaussian} below holds, and also small enough such that \(q_n(x) \leq C \hat g_n(x)\),
with a constant \(C\) uniform in \(n,x\).
We stress that \(\hat g_n\) may not be of mass one, but has a bounded mass \(\sum_{x \in \Z^2} \hat g_n(x) \leq C\), uniformly in \(n\).

\subsection{Proof of~Proposition~\ref{prop:block-inputs-iii}}

We first give the details
for \eqref{eq:app-centred-string} and then we adapt the
argument to \eqref{eq:app-open-string}.
Recall that
\begin{align*}
D_i(x,y)
=\mathsf{Z}_i(x,y)- \mathsf Z_i(x, \one) q_{N_i}(y)
= {Z}_{N_{i-1},N_i}(x,y)- Z_{N_{i-1}, N_i}(x, \one) q_{N_i}(y).
\end{align*}
Our goal is to compute the second moment of \(D_{u,v}^{\rm closed}=D^{\mathrm{closed}}_{u,v}\coloneqq  q_{N_{u-1}}\lact
  D_u\circ\cdots\circ D_{v-1}
 \circ \mathsf Z_v\ract\one\)
using a chaos expansion for kernels $D_i$ and their compositions.
To better illustrate the scheme, we first present the estimate in the case of two consecutive blocks,
\textit{i.e.}\ we estimate $\|D^{\mathrm{closed}}_{u,u+1}\|_{L^2}$.
We then show how to adapt the idea of the proof to an arbitrary number of blocks,
providing an estimate on $\|D^{\mathrm{closed}}_{u,v}\|_{L^2}$.

\medskip
\noindent
{\bf Proof of~\eqref{eq:app-centred-string} in the case of one $D$-block. }
For a technical reason we first present the case \(u\geq 2\); we explain in Remark~\ref{rem:first-block} how the case \(u=1\) is treated with similar estimates.
By definition of $\|D^{\mathrm{closed}}_{u,u+1}\|_{L^2}$ and independence, we have that
\begin{equation*}
      \bbE \big[ (D^{\mathrm{closed}}_{u,u+1})^2 \big]
       =
      \bbE \big[ (
            q_{N_{u-1}}\lact
            D_u \circ \mathsf Z_{u+1}\ract\one
       )^2 \big]
        =
      q_{N_{u-1}}^{\otimes 2}\lact
      \bbE [D_u^{\otimes 2}] \circ \bbE[\mathsf Z_{u+1}^{\otimes 2}]
      \ract\one^{\otimes 2}.
\end{equation*}
By the chaos expansion \eqref{eq:expan2}, recalling \eqref{U bar},
we can check that
\begin{equation}
      \label{eq:Dclosed-first-step}
      \begin{split}
      \bbE \big[ (D^{\mathrm{closed}}_{u,u+1})^2 \big]
      & =  \,\,\,
      \sum_{\substack{N_{u-1}< a_1 \le b_1 \le N_u  \\
                                    \, \,N_u<a_2\le b_2 \le N_{u+1}}}
      \,  \sum_{x_1,y_1,x_2\in \mathbb Z^2}
      q_{a_1}(x_1)^2
      \,  \overline U_N(b_1-a_1,y_1-x_1) \\
      & \hskip 4cm \times
      \bigl(q_{a_2-b_1}(x_2-y_1) - q_{a_2}(x_2)\bigr)^2
      \, \overline U_N(b_2-a_2) \,.
      \end{split}
\end{equation}
Using the upper bound
in~\eqref{eq:uniform-bound-U} and also \(q_{a_1}(x_1) \leq C \hat g_{a_1}(x_1)\) as noted after Proposition~\ref{prop:renewal-UN},
we can then first focus on controlling the spatial sum.
For \(a_1\leq b_1 < a_2\leq b_2\) fixed
\begin{equation}
\label{eq:spatial-sum-1}
    \sum_{x_1,y_1,x_2 \in \Z^2} \hat g_{a_1}(x_1)^2  \hat g_{b_1-a_1}(y_1-x_1) \bigl(q_{a_2-b_1}(x_2-y_1) - q_{a_2}(x_2)\bigr)^2 \,.
\end{equation}
For this, we may use the following lemma, whose proof is postponed to Subsection~\ref{app:technical}.

\begin{lemma}
\label{lem:RW-kernel-gaussian}
There is a constant \(C>0\) such that, if the constant \(\cst\) is small enough in the definition~\eqref{def:hat-g} of \(\hat g_n\), then for any \(k,\ell\)
\begin{equation}
\label{eq:RW-convolution}
\sum_{x\in \Z^2} \hat g_{k}(x) \hat g_{\ell}( y-x) \leq C  \hat g_{k+\ell}(y)
\end{equation}
and for any \(k <\ell\)
\begin{equation}
\label{eq:RW-kernel-square}
    \sum_{y\in \Z^2} \hat g_{k}(y) \bigl[q_{\ell-k}(w-y) - q_{\ell}(w)\bigr]^2  \leq  C\, \frac{k}{\ell(\ell-k)} \hat g_{\ell}(w) \,.
\end{equation}
\end{lemma}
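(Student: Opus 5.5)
We prove the two inequalities separately. Both are Gaussian heat-kernel computations on the even sublattice, reduced to one-dimensional sums using the product structure $\hat g_n(x)=n^{-1}\e^{-\cst|x_1|^2/n}\e^{-\cst|x_2|^2/n}$, restricted to a parity class.

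\emph{Convolution bound \eqref{eq:RW-convolution}.} First handle the degenerate cases $k=0$ or $\ell=0$, which are trivial by the convention $\hat g_0=\one_{\{x=0\}}$; the parity constraints also match. For $k,\ell\ge1$, complete the square:
\[
\frac{|x|^2}{k}+\frac{|y-x|^2}{\ell}=\frac{|y|^2}{k+\ell}+\frac{k+\ell}{k\ell}\Bigl|x-\frac{k}{k+\ell}y\Bigr|^2 .
\]
Summing the resulting Gaussian over $x\in\Z^2$ gives at most $C(1+k\ell/(k+\ell))$. This holds because the lattice sum of $\e^{-c|x-x_0|^2/\sigma^2}$ is at most $C(1+\sigma^2)$. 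Multiplying by the prefactor $(k\ell)^{-1}$ gives
\[
\frac{C}{k\ell}+\frac{C}{k+\ell}\le \frac{C'}{k+\ell},
\]
valid since $k\ell\ge (k+\ell)/2$ when $k,\ell\ge1$. The same constant $\cst$ appears in both the hypotheses and the conclusion, so no loss in $\cst$ occurs here.

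\emph{Difference bound \eqref{eq:RW-kernel-square}.} Set $m=\ell-k$ and split according to whether $m$ is comparable to $\ell$.

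\textbf{Case $m\le \ell/2$, so $k\ge \ell/2$.} Use $(a-b)^2\le 2a^2+2b^2$. The term $q_\ell(w)^2\sum_y\hat g_k(y)$ is bounded by $C\ell^{-1}\hat g_\ell(w)$. The term $\sum_y\hat g_k(y)q_m(w-y)^2$ is bounded using $q_m\le C\hat g_m$, then $\hat g_m^2\le m^{-1}\hat g_{m}$ (with a halved Gaussian exponent absorbed by shrinking $\cst$), and finally \eqref{eq:RW-convolution}. This yields $Cm^{-1}\hat g_\ell(w)$. Since
\[
\frac{k}{\ell m}\ge \frac{1}{2m}\ge\frac{1}{\ell},
\]
this case is done.

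\textbf{Case $m>\ell/2$.} This is the main obstacle, because we need the extra factor $k/\ell$, which must come from a cancellation. Write $q_m(w-y)-q_\ell(w)$ as the sum of two pieces:
\[
\bigl[q_m(w-y)-q_m(w)\bigr]+\bigl[q_m(w)-q_\ell(w)\bigr].
\]

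For the first piece, use the local limit theorem gradient bound (e.g.\ \cite[Theorem 2.3.11]{LL10} with error terms):
\[
|q_m(w-y)-q_m(w)|\le C\,\frac{|y|}{\sqrt m}\,m^{-1}\bigl(\e^{-c|w|^2/m}+\e^{-c|w-y|^2/m}\bigr),
\]
with matching parity. Squaring and integrating against $\hat g_k(y)$ contributes $\sum_y\hat g_k(y)|y|^2\le Ck$. This gives $Ck\,m^{-3}\e^{-c|w|^2/m}$, up to the convolution term, which is handled by \eqref{eq:RW-convolution} and gives $Ck/(m\,\ell)\cdot\hat g_\ell$-type bounds.

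For the second piece, $|q_m(w)-q_\ell(w)|\le C(k/\ell)\,\ell^{-1}\e^{-c|w|^2/\ell}$, using the time derivative of the heat kernel. Its square contributes at most $C(k/\ell)^2\ell^{-2}\le Ck\,\ell^{-3}$.

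Since $m\asymp\ell$ in this case, both pieces are at most $Ck/(\ell m)\cdot\ell^{-1}\e^{-c|w|^2/\ell}$. Shrinking $\cst$ below $c$ makes this at most $C\frac{k}{\ell(\ell-k)}\hat g_\ell(w)$. The parity restriction in $\hat g_\ell$ is automatic, because $q$ vanishes off the correct parity class.

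The delicate point is obtaining the gradient local limit estimate with Gaussian (not merely polynomial) decay uniformly in $w$, including the regime $|y|\gtrsim\sqrt m$. That regime I would treat crudely: bound each term separately and use the Gaussian tail of $\hat g_k$ together with $k\le \ell$.
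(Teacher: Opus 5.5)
Your convolution bound and your first case of \eqref{eq:RW-kernel-square} follow the paper's argument; the paper splits at $\ell\le 3k$ rather than $\ell\le 2k$. Your second case, however, breaks down for odd $k$ because of parity.

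\emph{Why it fails.} Since $\hat g_k(y)\neq 0$ forces $y_1+y_2\equiv k \pmod 2$, and $q_n(x)=0$ unless $x_1+x_2\equiv n$, consider the class $w_1+w_2\equiv \ell$ where the left-hand side lives. There, for odd $k$, the points $w-y$ and $w$ lie in different parity classes, and $q_m(w)=0$ identically. You state the gradient bound ``with matching parity'', but the parity does not match. Your intermediate term $q_m(w)$ vanishes, the decomposition degenerates to $q_m(w-y)-q_\ell(w)$ itself, and both claimed estimates are false. For $k=1$, $|y|=1$, $w=0$ (with $\ell$ even):
\begin{itemize}
\item $|q_m(w-y)-q_m(w)|=q_m(-y)\asymp 1/m$, instead of $O(m^{-3/2})$;
\item $|q_m(w)-q_\ell(w)|=q_\ell(0)\asymp 1/\ell$, instead of $O(\ell^{-2})$.
\end{itemize}
Only the sum of your two pieces is small, and your argument does not capture that cancellation. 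Odd $k$ cannot be excluded, since the lemma is applied with $k=b_1$ arbitrary.

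\emph{The paper's repair.} It writes $q_\ell=q_k*q_m$ and uses Jensen:
\[
[q_m(w-y)-q_\ell(w)]^2\le\sum_z q_k(z)\,[q_m(w-y)-q_m(w-z)]^2 .
\]
Both $y$ and $z$ have the parity of $k$, so you only compare $q_m$ at two points of the same class at the same time. You then need only a Gaussian space-gradient bound and no time-derivative estimate at all. That bound is \cite[Theorem~5.1]{HS93}, applied to the two-step walk on $\Z^2_{\rm even}$. It also supplies the Gaussian decay in $w$ that you flag as delicate; note that the hard regime there is large $|w|$, not large $|y|$. Finally, $|z-y|^2\le 2|z|^2+2|y|^2$ produces the factor $k$.

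\emph{A second, smaller point.} The conclusion must carry exactly the same $\cst$ as the hypotheses, because the lemma is iterated an unbounded number of times in Appendix~\ref{app:technical-proposition}. So ``shrinking $\cst$ below $c$'' is circular for the convolution term, whose output exponent depends on $\cst$. You must check directly an inequality such as $\frac{2k}{\cst}+\frac{m}{c_1}\le\frac{\ell}{\cst}$, where $c_1$ is the constant in the squared gradient bound. With $\cst\le c_1/2$ this holds once $\ell\ge 3k$, which is why the paper splits there. With your split at $2k$ and the bound $|y|^2\hat g_k(y)\le C\e^{-\cst|y|^2/(2k)}$, it fails as $\ell\downarrow 2k$. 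You would need to move the threshold or use a less lossy bound.
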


With this lemma at hand, in~\eqref{eq:spatial-sum-1}
we can first bound \(\hat g_{a_1}(x_1)^2 \leq  \frac{1}{a_1} \hat g_{a_1}(x_1)\) 
then sum over~\(x_1\) (thanks to~\eqref{eq:RW-convolution}),
then sum over \(y_1\) (thanks to~\eqref{eq:RW-kernel-square})
and finally sum over \(x_2\) (with \(\sum_{w} \hat g_s(w) \leq C\)):
we end up with
\begin{equation}
      \label{eq:spatial-summing}
\sum_{x_1,y_1,x_2 \in \Z^2} q_{a_1}(x_1)^2  \hat g_{b_1-a_1}(y_1-x_1) \bigl(q_{a_2-b_1}(x_2-y_1) - q_{a_2}(x_2)\bigr)^2
\leq C \frac{b_1}{a_1 a_2 (a_2-b_1)} \,.
\end{equation}
Going back to~\eqref{eq:Dclosed-first-step},
this gives
\begin{equation*}
      \bbE \big[ (D^{\mathrm{closed}}_{u,u+1})^2 \big]
       \leq C  \,\,\,
      \sum_{\substack{N_{u-1}< a_1 \le b_1 \le N_u  \\
      \, \,N_u<a_2\le b_2 \le N_{u+1}}} \, \frac{b_1}{a_1}
      \, \overline U_N(b_1-a_1) \,  \frac{1}{a_2 (a_2-b_1)}
      \, \overline U_N(b_2-a_2) \,.
\end{equation*}
Then, thanks to Proposition~\ref{prop:renewal-UN}-\eqref{eq: U upper bound} and a Riemann sum approximation,
employing the shorthand notation
$\bar N_u=N_u/N$,
we obtain that
\begin{equation}
      \label{eq:bound-D-integral}
      \bbE \big[ (D^{\mathrm{closed}}_{u,u+1})^2 \big]
      \leq C
      \iint\limits_{\substack{\bar N_{u-1}< s_1 \le t_1<\bar N_u  \\
      \bar N_u<s_2\le t_2<\bar N_{u+1}}}
      \,\,
       \frac{t_1}{s_1} G_\vartheta (t_1-s_1) \, \frac{1}{s_2(s_2-t_1)} \, \,
      G_\vartheta (t_2-s_2) \,\,  \dd \vec s \,  \dd \vec t \,.
\end{equation}
It remains to estimate this time integral.
(Note that the integral is infinite in the case \(u=1\), we refer to Remark~\ref{rem:first-block} for an explanation on how to adapt the estimate in that case.)

First, using that \(\int_0^t G_\vartheta(s)\,\dd s \leq c (\log\frac{1}{t})^{-1}\)
as \(t\downarrow 0\), see \cite[Proposition~1.6]{CSZ19a},
we get that
\(\int_{s_m\leq t_m \leq \bar N_{i_m}}G_\vartheta(t_m-s_m) \dd t_m \leq C/ \log (\bar N_{i_m}^{-1})\).
Thus, we have that
\begin{equation}
      \label{eq:G-last-term}
\iint\limits_{\bar N_{u}<s_2\le t_2\le \bar N_{u+1}}
 	G_\vartheta (t_2-s_2)\frac{t_{1} \, \dd t_2 \,\dd s_2}{s_2(s_2-t_{1})}
\leq	 \frac{C}{\log(\bar N_{u+1}^{-1})}
	 \int_{\bar N_{u}}^{\bar N_{u+1}}
	 \frac{t_{1} \, \dd s_2}{s_2(s_2-t_{1})} \,.
 \end{equation}
For the remaining integral over \(s_2\), we have that
\begin{equation*}
\int_{\bar N_{u}}^{\bar N_{u+1}}
\frac{t_{1} \, \dd s_2}{s_2(s_2-t_{1})}
= \int_{\bar N_{u}}^{\bar N_{u+1}}
	 \Big( \frac{1}{s_2-t_{1}} -\frac{1}{s_2} \Big)
=
\int^{\bar N_{u+1}-t_1}_{\bar N_u-t_1} \frac{\dd s_2}{s_2}
-
\int^{\bar N_{u+1}}_{\bar N_u} \frac{\dd s_2}{s_2}
\leq \int_{\bar N_{u}-t_{1}}^{\bar N_{u}}
	  \frac{\dd s_2}{s_2} \,.
\end{equation*}
We can then distinguish the cases  $t_{1}\le \bar N_{u}/2$ and
$t_{1} > \bar N_{u}/2$.
In the former case, the integral is at most $2t_{1}/ \bar N_{u}$ by a uniform bound on the integrand,
while in the latter we can use the exact value of the integral, which is
$\log (\frac{\bar N_{u}}{\bar N_{u}-t_{1}})$.
It is not difficult to check that the two cases
can be combined to obtain the bound
\begin{equation}
\label{eq:introduce-backward-weight}
\int_{\bar N_{u}}^{\bar N_{u+1}}
\frac{t_{1} \, \dd s_2}{s_2(s_2-t_{1})} 
\leq C\,  \frac{t_1}{\bar N_u} \log \Bigl( \frac{2 \bar N_{u}}{ \bar N_{u} - t_1}\Bigr) \,.
\end{equation}
Going back to~\eqref{eq:bound-D-integral}, and using that \(t_1 \leq \bar N_u\),
it gives that
\begin{equation*}
      \bbE \big[ (D^{\mathrm{closed}}_{u,u+1})^2 \big]
      \leq  \frac{C}{\log(\bar N_{u+1}^{-1})} \iint\limits_{\bar N_{u-1}< s_1 \le t_1<\bar N_u }\frac{1}{s_1}  G_\vartheta (t_1-s_1)  \log \Bigl( \frac{2 \bar N_{u}}{ \bar N_{u} - t_1}\Bigr)  \dd s_1 \dd t_1  \,.
\end{equation*}
Now, one can prove (see Subsection~\ref{app:technical} below)
that for any
\(0<2\mathsf S\le\mathsf T<1\),
\begin{equation}
      \label{eq:backward-weight-initial}
      \iint\limits_{\mathsf S< s \le t< \mathsf T}\frac{1}{s}  G_\vartheta (t-s)  \log \Bigl( \frac{2 \mathsf T}{ \mathsf T - t}\Bigr)  \dd s \dd t \leq \frac{C}{|\log \mathsf T|} \log (\mathsf T/\mathsf S) \,.
\end{equation}
All together, we end up with
\begin{equation*}
     \bbE \big[ (D^{\mathrm{closed}}_{u,u+1})^2 \big]  \leq \frac{C}{\log(\bar N_{u+1}^{-1})} \frac{C \log(\bar N_u/\bar N_{u-1})}{ \log(\bar N_u^{-1})} \leq  \frac{C /\log \rho}{ (K-u) (K-u-1)} \,,
\end{equation*}
where we used the definition
\eqref{eq:adapted-grid} of $N_i$ and $\bar N_i=N_i/N$, which give
\[
 \log\frac{\bar N_u}{\bar N_{u-1}}\le C\log\rho,
 \qquad
 \log\frac1{\bar N_i}\ge c(K-i)\log\rho,
 \qquad 1\le i\le K-1.
\]

This completes the proof of \eqref{eq:app-centred-string}
when there is only one block of~$D_u$ in the expansion.

\begin{remark}
      \label{rem:first-block}
In the case $u=1$, the integral in \eqref{eq:bound-D-integral} diverge at $s_1=0$.
In that case, in the Riemann sum approximation one can in fact keep 
the restriction
$s_1>1/(2N)$ in the integral comparison, so that 
\[
\bbE \big[ (D^{\mathrm{closed}}_{1,2})^2 \big]
      \leq C
      \iint\limits_{\substack{1/(2N)< s_1 \le t_1<\bar N_1  \\
      \bar N_1<s_2\le t_2<\bar N_{2}}}
      \,\,
       \frac{t_1}{s_1} G_\vartheta (t_1-s_1) \, \frac{1}{s_2(s_2-t_1)} \, \,
      G_\vartheta (t_2-s_2) \,\,  \dd \vec s \,  \dd \vec t \,.
\]
Then the proof proceeds identically, and the required bound follows from
\eqref{eq:backward-weight-initial} applied with
$\mathsf S=1/(2N)$ and $\mathsf T=\bar N_1$,
since $ \log (\mathsf T /\mathsf S) =\log(2 N_1) \le C\log\rho$.
\end{remark}

\medskip
\noindent
{\bf Proof of~\eqref{eq:app-centred-string} in the general case of several \(D\)-blocks ($v\ge u+2$). }
Let us denote by $i_1<i_2<\cdots < i_m$ (with \(m\leq v-u+1\)) the indices of the blocks where disorder terms are sampled in the chaos expansion of~\(D_i\) or of $Z_v\ract \one$.
We note that necessarily $i_1=u$ and $i_m=v$.
Indeed, if we denote by $\widetilde D^{\mathrm{closed}}_{u,v}$
the contribution to $D^{\mathrm{closed}}_{u,v}$
by restricting the chaos expansion of
$Z_v \ract \one$ to just the constant $1$, then
\begin{align*}
	\widetilde D^{\mathrm{closed}}_{u,v}
\coloneqq  	q_{N_{u-1}}\lact
 	D_u\circ\cdots\circ D_{v-1}
	 \ract\one
= 0,
\end{align*}
since $D_{v-1} \ract\one = 0$ by \eqref{D-mass}.
Similarly, if there is no disorder sampled in the chaos expansion of $D_u$ and we denote
this restriction by $\widetilde D_u$, then
\begin{align*}
q_{N_{u-1}}\lact \widetilde D_u  (y)
=\sum_{x\in \Z^2} q_{N_{u-1}}(x) \big( q_{N_u-N_{u-1}}(y-x)- q_{N_u}(y) \big)
=0\,.
\end{align*}

Using the chaos expansion,
we can thus write the second moment as
\begin{equation}
      \label{eq:second-moment-D-general}
\begin{split}
      \bbE \big[ (D^{\mathrm{closed}}_{u,v})^2 \big]
      & =
      \sum_{m=2}^{v-u+1}\sum_ {u=i_1<\cdots<i_m=v} \,\,
      \sum_{\substack{  N_{i_j-1}< a_j \le b_j\le N_{i_j}  \\
            x_1,y_1,\ldots,x_m,y_m\in\mathbb Z^2 }}
      q_{a_1}(x_1)^2 \overline U_N(b_1-a_1,y_1-x_1)
      \\
      & \hskip 1.5cm \times \prod_{j=2}^m \bigl[
      q_{a_{j}-b_{j-1}}(x_j-y_{j-1}) - q_{a_j}(x_j)
      \bigr]^2  \overline U_N(b_j-a_j,y_j-x_j) .
\end{split}
\end{equation}

We now follow the same steps as in the simpler case treated above.
We use the bound
\(\overline U_N(b_j-a_j,y_j-x_j) \leq C \overline U_N(b_j-a_j) \hat g_{b_j-a_j}(y_j-x_j)\)
from Proposition~\ref{prop:renewal-UN}
and we then first focus,
for fixed \((a_i,b_i)_{i=1}^m\),
on the sum over the spatial coordinates
\[
\sum_{ x_1,y_1,\ldots,x_m,y_m\in\mathbb Z^2}
      q_{a_1}(x_1)^2
     \prod_{j=1}^{m}  \hat g_{b_{j}-a_{j}}(y_{j}-x_{j})
      \prod_{j=2}^m \bigl[q_{a_{j}-b_{j-1}}(x_{j}-y_{j-1}) - q_{a_{j}}(x_{j})
      \bigr]^2  \,.
\]
Then, bounding \(q_{a_1}(x_1)^2\leq \frac{C}{a_1} \hat g_{a_1}(x_1)\)
and applying Lemma~\ref{lem:RW-kernel-gaussian} to sum
successively over \(x_1,y_1, \ldots, x_m, y_m\)
(\textit{i.e.}\ we sum the space indices in their ``chronological order'' and we use~\eqref{eq:RW-convolution} when summing over \(x_j\) and~\eqref{eq:RW-kernel-square} when summing over \(y_i\)),
we obtain similarly to \eqref{eq:spatial-summing} that the above is bounded by
\[
C^m \frac{1}{a_1} \prod_{j=2}^{m} \frac{b_{j-1}}{a_j (a_j-b_{j-1})} \,.
\]

Plugged inside~\eqref{eq:second-moment-D-general} and
using Proposition~\ref{prop:renewal-UN}-\eqref{eq: U upper bound}
to deal with \(\overline U_N(b_j-a_j)\),
a Riemann sum approximation then yields, analogously to~\eqref{eq:bound-D-integral},
that \(\bbE \big[ (D^{\mathrm{closed}}_{u,v})^2 \big]\) is bounded by
\begin{equation}\label{sec5:2nd-mom}
\sum_{m=2}^{v-u+1} C^m \sum_{u=i_1<\cdots<i_m=v}  \;\;
\iint\limits_{\substack{\bar N_{i_j-1}<s_j\le t_j\le \bar N_{i_j}  \colon\\ 1\le j\le m }}
  \frac{1}{s_1}\prod_{j=1}^{m} G_\vartheta (t_j-s_j)
\prod_{j=2}^{m}\frac{t_{j-1}}{s_j (s_j-t_{j-1})}
\dd \vec s \, \dd \vec t  \,,
\end{equation}
where we recall the shorthand notation \(\bar N_u = N_u/N\).
Again, in the case \(u=1\), the above integral is in fact infinite: one simply needs to replace \(\bar{N}_0\) by \(1/(2N)\) in the Riemann sum approximation and proceed as in Remark~\ref{rem:first-block}.

We now integrate the time variables backwards,
fixing for the moment $m$ and the indices $u=i_1<\cdots<i_m=v$.
First, for the last integral, similarly to~\eqref{eq:G-last-term}--\eqref{eq:introduce-backward-weight},
we get that
\begin{equation}
      \label{sec5:last-block}
      \iint\limits_{\bar N_{i_m-1}<s_m\le t_m\le \bar N_{i_m}}
       	G_\vartheta (t_m-s_m)\frac{t_{m-1} \, \dd t_m \,\dd s_m}{s_m(s_m-t_{m-1})}
      \leq	 \frac{C}{\log(\bar N_{i_m}^{-1})} \times \frac{t_{m-1}}{\bar N_{i_m-1}} \mathsf{W}_{\bar N_{i_{m-1}}}(t_{m-1}) \,,
\end{equation}
where we have defined the following ``backward weight''
\begin{align}\label{eq:backward-weight}
 \mathsf{W}_\mathsf{T}(\tau)\coloneqq \log\frac{2\mathsf{T}}{\mathsf{T}-\tau}, \qquad \text{for $\mathsf{T}>0$},
\end{align}
The reason for expressing the bound~\eqref{sec5:last-block}
in this way is that it then allows us to use
the following estimate iteratively
(its proof is postponed to Subsection~\ref{app:technical}).

\begin{lemma}
      \label{lem:w-iter}
There is a constant \(C>0\) such that,
if $1\geq \mathsf{T} \geq 4 \mathsf{S} $
and $\tau \leq \mathsf{S}$,
\begin{equation}
\label{eq:backward-block-bound}
\iint_{\mathsf{S}<s<t<\mathsf{T}} G_\vartheta(t-s) \frac{\tau}{s(s-\tau)} \, \mathsf{W}_\mathsf{T}(t)  \dd s \,\dd t
\leq	\frac{C}{|\log \mathsf{T}|} \,\frac{\tau}{\mathsf{S}} \,\mathsf{W}_{\mathsf{S}}(\tau)\,,
\end{equation}
\end{lemma}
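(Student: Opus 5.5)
The plan is to integrate first in \(t\), then in \(s\), using the partial-fraction structure of the kernel \(\frac{\tau}{s(s-\tau)}=\frac1{s-\tau}-\frac1s\) and the small-time mass bound \(\int_0^x G_\vartheta(u)\,\dd u\le C/\log(1/x)\) from \cite[Proposition~1.6]{CSZ19a}. The weight \(\mathsf W_\mathsf T(t)=\log\frac{2\mathsf T}{\mathsf T-t}\) is at least \(\log 2\) and has only an integrable logarithmic singularity at \(t=\mathsf T\). So the goal is to show that integrating \(G_\vartheta(t-s)\mathsf W_\mathsf T(t)\) over \(t\in(s,\mathsf T)\) produces at most \(C/|\log\mathsf T|\), uniformly in \(s\in(\mathsf S,\mathsf T)\). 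The remaining \(s\)-integral of the kernel then reproduces the factor \(\frac{\tau}{\mathsf S}\mathsf W_\mathsf S(\tau)\), exactly as in \eqref{eq:introduce-backward-weight}.

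\emph{Step 1 (inner integral).} For fixed \(s\), split \((s,\mathsf T)\) into \(t\le(s+\mathsf T)/2\) and \(t>(s+\mathsf T)/2\).
\begin{itemize}
\item On the first piece, \(\mathsf T-t\ge(\mathsf T-s)/2\). If \(s\le\mathsf T/2\), this gives \(\mathsf W_\mathsf T(t)\le\log 8\), and the mass bound yields \(\int_s^{\mathsf T}G_\vartheta(t-s)\,\dd t\le C/\log(1/\mathsf T)\).
\item If instead \(s>\mathsf T/2\), the weight on the first piece is only bounded by \(\log\frac{4\mathsf T}{\mathsf T-s}\). This factor must be kept and integrated in \(s\) in Step 2.
\item On the second piece, \(t-s\ge(\mathsf T-s)/2\). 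Here I would use a pointwise bound \(G_\vartheta(u)\le \frac{C}{u\log^2(e/u)}\) for small \(u\), which follows from the integral representation \eqref{dick-green2} (the integrand concentrates at \(s\approx 1/\log(1/u)\)). This gives a contribution at most \(\frac{C}{(\mathsf T-s)\log^2(\mathsf T-s)}\int_{(s+\mathsf T)/2}^{\mathsf T}\mathsf W_\mathsf T(t)\,\dd t\le \frac{C}{\log^2(1/\mathsf T)}\), since \(\int\log\frac{2\mathsf T}{\mathsf T-t}\,\dd t\) over an interval of length \(L\) ending at \(\mathsf T\) is \(O(L\log(\mathsf T/L))\).
\end{itemize}
Combining, the inner integral is bounded by \(\frac{C}{|\log\mathsf T|}\bigl(1+\one_{\{s>\mathsf T/2\}}\log\frac{4\mathsf T}{\mathsf T-s}\bigr)\) up to lower-order terms.

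\emph{Step 2 (outer integral).} Integrate this against \(\frac1{s-\tau}-\frac1s\) over \((\mathsf S,\mathsf T)\).
\begin{itemize}
\item On the bounded part, \(\int_\mathsf S^\mathsf T(\frac1{s-\tau}-\frac1s)\,\dd s\le\int_{\mathsf S-\tau}^{\mathsf S}\frac{\dd s}{s}\le C\frac{\tau}{\mathsf S}\mathsf W_\mathsf S(\tau)\), by the same two-case computation used for \eqref{eq:introduce-backward-weight}.
\item On \(s>\mathsf T/2\), we have \(\tau\le\mathsf S\le\mathsf T/4\), so \(\frac{\tau}{s(s-\tau)}\le C\tau/\mathsf T^2\). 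Since \(\int_{\mathsf T/2}^{\mathsf T}\log\frac{4\mathsf T}{\mathsf T-s}\,\dd s\le C\mathsf T\), this part is at most \(C\tau/\mathsf T\le C\frac{\tau}{\mathsf S}\mathsf W_\mathsf S(\tau)\), using \(\mathsf W_\mathsf S\ge\log2\) and \(\mathsf T\ge\mathsf S\).
\end{itemize}
Summing both parts gives \eqref{eq:backward-block-bound}.

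The main obstacle is the near-diagonal region of Step 1, where \(t\) is close to \(\mathsf T\) and \(t-s\) is small simultaneously. There the singularity of \(\mathsf W_\mathsf T\) meets the growth of \(G_\vartheta\) near \(0\), and the pointwise bound on \(G_\vartheta\) is needed. If that pointwise bound is delicate, I would instead use the monotonicity of \(G_\vartheta\) near \(0\) together with a dyadic decomposition in \(\mathsf T-t\), which again sums to \(C/|\log\mathsf T|\).
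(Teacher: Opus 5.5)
Your proposal is correct and follows essentially the paper's route. The paper also splits the inner $t$-integral at $t-s=(\mathsf{T}-s)/2$. On the first half it uses the small-time mass bound on $G_\vartheta$, with weight at most $\mathsf{W}_{\mathsf{T}}(s)+\log 2$. On the second half it uses the pointwise bound $G_\vartheta(u)\le C/\bigl(u(1+\log(1/u))^2\bigr)$, which gives $\frac{C}{|\log\mathsf{T}|}\mathsf{W}_{\mathsf{T}}(s)$ (equivalent to your combined bound). It then splits the $s$-integral at $\mathsf{T}/2$ with the partial-fraction computation exactly as you do.

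One harmless slip: on the second piece of Step~1 the correct bound is $\frac{C}{(1+\log(1/\mathsf{T}))^{2}}\bigl(1+\log\frac{4\mathsf{T}}{\mathsf{T}-s}\bigr)$, not $\frac{C}{\log^2(1/\mathsf{T})}$; you dropped the $\log(\mathsf{T}/L)$ factor you had just computed. This does not matter, because your combined bound already carries the factor $\one_{\{s>\mathsf{T}/2\}}\log\frac{4\mathsf{T}}{\mathsf{T}-s}$.
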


Before we start the iteration,
let us first make some useful observations.
First of all, since $t_{j-1}\le \bar N_{i_{j-1}}$, we have
\begin{align}\label{sec5:t/N}
 \frac{t_{j-1}}{\bar N_{i_j-1}}
 \le \frac{\bar N_{i_{j-1}}}{\bar N_{i_j-1}}
 \le \Bigl( \frac{C}{\rho} \Bigr)^{i_j-i_{j-1}-1}.
\end{align}
Second, since for any fixed $\tau$
the function $\mathsf{T}\mapsto \mathsf{W}_\mathsf{T}(\tau)$ is non-increasing for $\mathsf{T}\ge\tau$,
we also have for $t_{j-1}\le \bar N_{i_{j-1}}\le \bar N_{i_j-1}$, 
\begin{align}\label{sec5:w<w}
 \mathsf{W}_{\bar N_{i_j-1}}(t_{j-1})\le \mathsf{W}_{\bar N_{i_{j-1}}}(t_{j-1}).
\end{align}

To start the iteration, we first apply \eqref{sec5:t/N} and \eqref{sec5:w<w} to \eqref{sec5:last-block}, to obtain
\begin{align*}
&	  \iint\limits_{\bar N_{i_m-1}<s_m\le t_m\le \bar N_{i_m}}
	  \hskip -5mm
 	  G_\vartheta (t_m-s_m)\frac{t_{m-1} \, \dd t_m \,\dd s_m}{s_m(s_m-t_{m-1})}
	  \notag
\, \leq \,	 \frac{C}{\log\bar N_{i_m}^{-1}}
 	 \Big(\frac{C}{\rho}\Big)^{i_m-i_{m-1}-1}
 	 \mathsf{W}_{\bar N_{i_{m-1}}}(t_{m-1}),
\end{align*}
which when plugged  into \eqref{sec5:2nd-mom} gives
\begin{align*}
  	  \bbE \big[ (D^{\mathrm{closed}}_{u,v})^2 \big]
& \le
       	  \sum_{m=2}^{v-u+1}
	  \sum_{u=i_1<\cdots<i_m=v}
	 \hskip 4mm
	 \iint\limits_{\substack{\bar N_{i_j-1}<s_j\le t_j\le \bar N_{i_j}  \colon\\ 1\le j\le m-1 }}
	 \, \frac{C^m}{s_1}\prod_{j=1}^{m-1}
	  G_\vartheta (t_j-s_j)
	\\
&\hskip 0.6cm \times
 	   \, \prod_{j=2}^{m-1}\frac{t_{j-1}}{s_j(s_j-t_{j-1})}
 	   \frac{C}{\log(\bar N_{i_m}^{-1})}
	   \, \Big(\frac{C}{\rho}\Big)^{i_m-i_{m-1}-1}
	 \mathsf{W}_{\bar N_{i_{m-1}}}(t_{m-1})  \,\dd\vec s \dd \vec t \,.
\end{align*}
We then run the recursion, applying Lemma~\ref{lem:w-iter} for
$j=m-1,\ldots,2$, with
$\mathsf{S}=\bar N_{i_j-1}$, $\mathsf{T}=\bar N_{i_j}$ and $\tau=t_{j-1}$ and using~\eqref{sec5:t/N}--\eqref{sec5:w<w};
note that the assumption $\mathsf{T} \ge 4 \mathsf{S}$ holds whenever $\rho$ is sufficiently large.
We then obtain
\begin{equation*}
      \begin{split}
          \bbE \big[ (D^{\mathrm{closed}}_{u,v})^2 \big]
          & \le
          \sum_{m=2}^{v-u+1}\sum_{u=i_1<\cdots<i_m=v}
          C^m \Big(\frac{C}{\rho}\Big)^{\sum^m_{j=2}(i_j-i_{j-1}-1)}
          \prod_{j=2}^m \frac{1}{\log (\bar N_{i_j}^{-1}) }
           \\
          & \hskip 4cm \times
          \iint_{\bar N_{u-1}<s_1\le t_1\le \bar N_u}
          \frac{ G_\vartheta (t_1-s_1)}{s_1} \, \mathsf{W}_{\bar N_u}(t_1) \,\dd s_1 \, \dd t_1
          \,,
      \end{split}
\end{equation*}
with the last integral bounded by
\(C \log(\bar N_u/\bar N_{u-1})/\log(\bar N_u^{-1})
\le C \log\rho/\log(\bar N_u^{-1})\),
thanks to \eqref{eq:backward-weight-initial}.

Using also that \(\sum_{j=2}^{m}(i_j-i_{j-1}-1)=v-u+1-m\),
we obtain that
\begin{equation*}
  \bbE \big[ (D^{\mathrm{closed}}_{u,v})^2 \big] \leq  \log \rho \sum_{m=2}^{v-u+1} C^m  \Big(\frac{C}{\rho}\Big)^{v-u+1-m} \sum_{u=i_1<\cdots<i_m=v}
    \prod_{j=1}^{m}\frac{C}{\log\bar N_{i_j}^{-1}}
    \, .
\end{equation*}

Now, we have that \(\log(\bar N_{u}^{\,-1}) \ge C (K-u) \log \rho\) and
\(\log(\bar N_v^{-1}) \geq C (K-v) \log \rho\).
For the indices $j=2,\ldots,m-1$, since \(i_j\leq v\leq K-1\) we simply use that
$\log \bar N_{i_j}^{-1} 
\ge c(K-i_j)\log\rho
\ge c\log\rho$.
On the other hand the number of choices for the indices $i_2<\cdots < i_{m-1}$ is
$\binom{v-u-1}{m-2}$.
Putting everything together, we obtain
\begin{align*}
 \bbE\big[(D^{\mathrm{closed}}_{u,v})^2\big]
 &\le\frac{C}{(K-u)(K-v)\log\rho}
 \sum_{m=2}^{v-u+1}\binom{v-u-1}{m-2}
 \bigg(\frac{C}{\log\rho}\bigg)^{m-2}
 \bigg(\frac C\rho\bigg)^{v-u+1-m}\\
 &=\frac{C}{(K-u)(K-v)\log\rho}
 \bigg(\frac{C}{\log\rho}+\frac C\rho\bigg)^{v-u-1} \leq  \frac{C  (C'/\log \rho)^{v-u} }{(K-u)(K-v)}.
\end{align*}
This concludes the proof of \eqref{eq:app-centred-string}
by taking the square root
and possibly enlarging \(\rho\)
to ensure that \(\gep_{\rho}^2\coloneqq C'/\log \rho <1\).

\medskip
\noindent
{\bf Proof of~\eqref{eq:app-open-string}.}
The proof to \eqref{eq:app-open-string} is almost identical to
that of \eqref{eq:app-centred-string}
and follows the same steps.
Let us briefly describe how they adapt here.

The second moment  of \(q_{N_{u-1}}\lact D_u\circ\cdots\circ D_{v-1} \circ Z_{N_{v-1},N}\ract\one\)
can be decomposed similarly as in~\eqref{eq:second-moment-D-general},
except that the last indices \(a_m,b_m\) are in the time-interval \(N_{v-1} < a_m \leq b_m \leq N\).

Again, we can decouple the time and space variable using
the bound from Proposition~\ref{prop:renewal-UN},
then sum over the space variables as above.
We end up with the same bound as in~\eqref{sec5:2nd-mom},
but with the last time indices verifying
\(\bar N_{v-1} < s_m \leq t_m \leq 1\).
The bound~\eqref{sec5:last-block} is then replaced by
\[
\begin{split}
      \iint\limits_{\bar N_{v-1}<s_m\le t_m\le 1}
            G_\vartheta (t_m-s_m)\frac{t_{m-1} \, \dd t_m \,\dd s_m}{s_m(s_m-t_{m-1})}
      & \leq	 C  \frac{t_{m-1}}{\bar N_{i_m-1}} \mathsf{W}_{\bar N_{i_m-1}}(t_{m-1}) \\
      & \leq C \Bigl(\frac{C}{\rho}\Bigr)^{i_{m}- i_{m-1}-1}\mathsf{W}_{\bar N_{i_{m-1}}}(t_{m-1}) \,,
\end{split}
\]
using again~\eqref{sec5:t/N}--\eqref{sec5:w<w} (here by convention \(i_m=v\)).

We can then apply the same iteration as above: thanks to Lemma~\ref{lem:w-iter} (and~\eqref{sec5:t/N}--\eqref{sec5:w<w}),
we get that the second moment of
\(q_{N_{u-1}}\lact D_u\circ\cdots\circ D_{v-1}\circ Z_{N_{v-1},N}\ract\one\) is bounded by
\begin{align*}
& C \log \rho \sum_{m=2}^{v-u+1} C^m  \Big(\frac{C} {\rho}\Big)^{v-u+1-m} \sum_{u=i_1<\cdots<i_m=v}
      \prod_{j=1}^{m-1}\frac{C}{\log\bar N_{i_j}^{-1}}   \\
&\quad \le\frac{C}{K-u}
\sum_{m=2}^{v-u+1}\binom{v-u-1}{m-2}
 \bigg(\frac{C}{\log\rho}\bigg)^{m-2}
 \bigg(\frac C\rho\bigg)^{v-u+1-m} \leq \frac{C}{K-u} \Bigl( \frac{C}{\log \rho} + \frac{C}{\rho}\Bigr)^{v-u-1} \,.
\end{align*}
This shows \eqref{eq:app-open-string}.

\subsection{Some technical estimates}
\label{app:technical}

In this section, we prove the bound~\eqref{eq:uniform-bound-U} in Proposition~\ref{prop:renewal-UN},
and also Lemmas~\ref{lem:RW-kernel-gaussian}
and~\ref{lem:w-iter}.

\begin{proof}[Proof of \eqref{eq:uniform-bound-U} in Proposition~\ref{prop:renewal-UN}]
For any fixed \(k \geq 1\) and \(m_1,\ldots, m_k\geq 1\) with \(m_1+\cdots+m_k=n\),
let \((\zeta_i)_{1\leq i\leq k}\) be independent random variables with distribution \(\rmP(\zeta_j =z) = q_{m_j}(z)^2/q_{2m_j} (0)\).
Then, the proof of~\eqref{eq:uniform-bound-U} reduces to showing that
\begin{equation}
      \label{eq:LLT-difficult}
\rmP(\zeta_1 +\cdots + \zeta_k = x) \leq \frac{C}{n} \, \e^{ -c |x|^2/n}\,,
\end{equation}
for some absolute constants \(c,C>0\) which do not depend on \(k, n_1,\ldots, n_k\) and \(x\).
Indeed, plugged in the definition~\eqref{eq:U} and summing over the space variables (and setting \(m_i=n_{i}-n_{i-1}\)), we can rewrite it as
\[
\begin{split}
U_N(n,x) & = \sum_{k=1}^{\infty} \sum_{0 < n_1 < \ldots < n_{k-1} < n}
      \prod_{j=1}^{k} \sigma_N^2\, q_{2(n_j-n_{j-1})}(0) \rmP(\zeta_1 +\cdots + \zeta_k = x)\\
      & \leq \frac{C}{n} \, \e^{ -c |x|^2/n} \sum_{k=1}^{\infty} \sum_{0 < n_1 < \ldots < n_{k-1} < n}
      \prod_{j=1}^{k} \sigma_N^2\, q_{2(n_j-n_{j-1})}(0)
      = \frac{C}{n} \, \e^{ -c |x|^2/n} U_N(n) \,,
\end{split}
\]
which is the desired inequality.
We are thus reduced to proving~\eqref{eq:LLT-difficult}.

Using that the coordinates $S_n^1+S_n^2$ and
$S_n^1-S_n^2$ are independent one-dimensional simple random walks, we can further reduce~\eqref{eq:LLT-difficult}
to a one-dimensional problem.
In particular \(q_{m}(z) = p_m(z^1+z^2)p_m(z^1-z^2)\), where \(p_m(x)\) is the transition kernel of the simple random walk on \(\Z\).
Letting \(X_i = \zeta_i^1+\zeta_i^2\) and \(Y_i=\zeta_i^1-\zeta_i^2\), we thus get that \(X_i,Y_i\) are independent, with law
\(\rmP(X_i =w)= p_{m_i}(w)^2/p_{2m_i}(0)\).
We are thus reduced to proving the following one-dimensional local limit theorem:
\begin{equation}
      \label{eq:LLT-difficult-1D}
\rmP(X_1 +\cdots X_k =w ) \leq \frac{C}{\sqrt{n}} \, \e^{ -c |w|^2/n}\,.
\end{equation}

First of all, notice that from \cite[Equation~(3.49)]{CSZ23}
we get that for any \(\lambda \geq 0\)
\[
\rmE[\e^{\lambda X_i}] \leq \e^{c \lambda^2 m_i} \,.
\]
(The proof is for \(\zeta_i\) in place of \(X_i\) but can be applied to \(X_i\) by using Cauchy--Schwarz's inequality and adjusting the constant.)
Using Chernoff's bound, this thus gives, say for \(w\geq 0\),
\[
\rmP( X_1 +\cdots X_k \geq w) \leq \inf_{\lambda \geq 0} \e^{-\lambda w +c \lambda^2 n} = 
\e^{-w^2/(4cn)}
\]
recalling first that \(m_1+\cdots+m_k =n\)
and then choosing \(\lambda = w/(2cn)\) for the second inequality.
We can turn this into a local bound as in~\eqref{eq:LLT-difficult-1D}, by observing that the law of \(X_1+\cdots +X_k\) is unimodal
(on \(\{w\in \Z, w=n \mod 2\}\)) as
the convolution of unimodal laws.
From this we deduce that, for \(w\geq 4\),
\[
\rmP( X_1 +\cdots X_k \in  [w/2,w]) \geq c w \rmP(X_1+\cdots + X_k =w)  \,,
\]
since there are at least \(cw\) elements in \([w/2,w]\) with the same parity as \(n\).
Combined with the tail bound above, this yields
\[
\rmP(X_1+\cdots X_k =w) \leq \frac{C}{|w|} \e^{- c |w|^2/n} \,,
\]
which gives the bound~\eqref{eq:LLT-difficult-1D} in the case \(|w| \geq \sqrt{n}\).

To deal with the case \(|w|\leq \sqrt{n}\),
we make the observation that
since \(p_{m_i}(w) = 2^{-m_i} \binom{m_i}{(m_i-w)/2}\) whenever \(w\) has the same parity as \(m_i\),
the random variable \(W_i\coloneqq \frac12 (m_i-X_i)\) has a hypergeometric distribution of parameters \(m_i,2m_i\).
In particular \(W_i\) has a (discrete) log-concave distribution, and so does \(W_1+\cdots +W_k\).
Thus \cite[Theorem~1.1]{BMM20} gives
that 
\[
\sup_{w} \rmP(W_1+\cdots +W_k =w) \leq \Bigl( 1+ 2 \sum_{i=1}^k\Var(W_i)\Bigr)^{-1/2} \leq \bigl(1+ n/4\bigr)^{-1/2} \,,
\]
since \(\Var(W_i)= \frac{m_i^2}{4(2m_i-1)} \geq \frac18 m_i\).
We thus conclude that
\[
\sup_{w} \rmP(X_1+\cdots +X_k =w) \leq \frac{C}{\sqrt{n}} \,,
\]
which shows~\eqref{eq:LLT-difficult-1D}
in the case \(|w|\leq \sqrt{n}\).
\end{proof}

\begin{proof}[Proof of Lemma~\ref{lem:RW-kernel-gaussian}]
The proof of the convolution bound~\eqref{eq:RW-convolution}
is a simple consequence of the following classical inequality for \(a,b\geq 1\)
\begin{equation}
      \label{eq:square-completion}
      \sum_{y}  \e^{-|y|^2/a} \e^{- |x-y|^2/b} \leq \frac{C ab}{a+b} \e^{- |x|^2/(a+b)} \,,
\end{equation}
which is proven by a completion of the square.
We thus focus on the proof of~\eqref{eq:RW-kernel-square}.

First of all, if \(k<\ell \leq 3 k\), we first use that \((a+b)^2\leq 2 a^2 +2 b^2\) and we bound \(q_{\ell-k}(w-y)^2 \leq \frac{C}{\ell-k} q_{\ell-k}(w-y)\) and similarly for \(q_{\ell}(w)^2\) to get that
\[
 \sum_y \hat g_k(y)[q_{\ell-k}(w-y)-q_\ell(w)]^2
 \le\frac{C}{\ell-k}\sum_y \hat g_k(y) q_{\ell-k}(w-y) + \frac{C}{\ell} q_{\ell}(w)\,,
\]
using also that \(\sum_y \hat g_k(y) \leq C\) for the second term.
Now, we may bound
\(q_{\ell-k}(w-y)\le C \hat g_{\ell-k}(w-y)\)
and use~\eqref{eq:RW-convolution} for the first term
and simply \(q_{\ell}(w) \leq C \hat g_{\ell}(w)\)
for the second term:
we obtain that
\[
\sum_y \hat g_k(y)[q_{\ell-k}(w-y)-q_\ell(w)]^2 \leq \frac{C}{\ell-k}  \hat g_{\ell}(w) \leq C' \frac{k}{\ell (\ell-k)} \hat g_{\ell}(w) \,.
\]

In the case where \(\ell >3k\),
using \(q_{\ell}=q_k*q_{\ell-k}\) and Jensen's
inequality, we get that
\[
[q_{\ell-k}(w-y)-q_\ell(w)]^2 \leq \sum_z q_k(z) [q_{\ell-k}(w-y)-q_{\ell-k}(w-z)]^2 \,.
\]
We now may use the following Gaussian gradient bound (for $z,z'$ in the same parity class)
\begin{equation*}
      |q_n(z)-q_n(z')|^2\le \frac{C|z-z'|^2}{n^{3}}
  \Bigl[\e^{- c_1 |z|^2/n} + \e^{-c_1 |z'|^2/n}\Bigr] \,,
\end{equation*}
which is e.g.\ a direct consequence from the gradient bound \cite[Theorem~5.1, Equation~(14)]{HS93} (applied to the two-step random walk on \(\Z^2_{\rm even}\)).
Bounding also \(q_k(z)\leq C \hat g_k(z)\), this gives that
\[
\sum_{y} \hat g_k(y)[q_{\ell-k}(w-y)-q_\ell(w)]^2
\leq \frac{C}{(\ell-k)^3} \sum_{y,z} \hat g_k(y)\hat g_k(z)\, |z-y|^2
\Bigl[\e^{-c_1 \frac{|w-y|^2}{\ell-k}} + \e^{-c_1 \frac{|w-z|^2}{\ell-k}} \Bigr] \,.
\]
By symmetry, we only have to treat one of the two terms in the sum,
say the first one (with \(\e^{-c |w-y|^2/(\ell-k)}\)).
Bounding \(|z-y|^2 \leq 2|z|^2 + 2 |y|^2\) and using that
\(\sum_{z} \hat g_k(z) |z|^2 \leq C k\),
we get that the above is bounded by
\[
\frac{C}{(\ell-k)^3} \sum_{y} \hat g_k(y) (k+|y|^2)
\, \e^{-c_1 \frac{|w-y|^2}{\ell-k}}
\leq  \frac{C'}{(\ell-k)^3} \sum_{y} \e^{- \frac{\cst}{2} \frac{|y|^2}{k}}
\e^{-c_1 \frac{|w-y|^2}{\ell-k}}  \,,
\]
where we have
used that \((k+|y|^2) \hat g_k(y) \leq C \e^{ - \cst |y|^2/2k}\)
(recall the definition~\eqref{def:hat-g} of \(\hat g_k\)).
Applying \eqref{eq:square-completion} with
$a=2k/\cst$ and $b=(\ell-k)/c_1$, we obtain
\[
\sum_{y} \e^{- \frac{\cst}{2} \frac{|y|^2}{k}}
\e^{-c_1 \frac{|w-y|^2}{\ell-k}}  \leq C \frac{k(\ell-k)}{\ell} \e^{- \cst \frac{|w|^2}{\ell}}
\]
where we have chosen \(\cst \leq c_1/2\) so
that \(\frac{2 k}{\cst} + \frac{\ell-k}{c_1} \leq \frac{\ell}{\cst}\),
uniformly for \(\ell>3k\).
All together, this proves that
\[
\sum_{y} \hat g_k(y)[q_{\ell-k}(w-y)-q_\ell(w)]^2  \leq \frac{C k}{(\ell-k)^2} \hat g_{\ell} (w)  \leq \frac{C' k}{ \ell (\ell-k)} \hat g_{\ell} (w) \,,
\]
for \(\ell >3k\).
\end{proof}

\begin{proof}[Proof of Lemma~\ref{lem:w-iter} and~\eqref{eq:backward-weight-initial}]
We use the standard estimates which are easy consequences of \cite[Proposition~1.6]{CSZ19a}:
\begin{align}\label{eq:renewal estimates}
G_\vartheta(r)
\le
\frac{C}{r(1+\log(1/r))^2},
\qquad
\int_0^h G_\vartheta(r)\,\dd r
\le
\frac{C}{1+\log(1/h)},
\qquad
0<h\le1.
\end{align}
Let us first show Lemma~\ref{lem:w-iter} and
then turn to \eqref{eq:backward-weight-initial}.
Recall that \(\mathsf W_{\mathsf T}(t) = \log(\frac{2 \mathsf T}{\mathsf T - t})\) is defined in~\eqref{eq:backward-weight}.

For Lemma~\ref{lem:w-iter}, we first estimate the integral over $t$ in the left-hand side of~\eqref{eq:backward-block-bound}:
\begin{align*}
    \int^{\mathsf T}_s G_{\vartheta}(t-s) \mathsf W_{\mathsf T}(t) \dd t
    =
    \int^{\mathsf T-s}_0 G_{\vartheta}(r)\log \frac{2\mathsf T}{\mathsf T-s-r} \dd r,
\end{align*}
where we have substituted $r\coloneqq t-s$. We split this integral at $r=\frac{\mathsf T-s}{2}$. For $r\in (0,\frac{\mathsf T-s}{2}]$, we have
\begin{align*}
\log\frac{2T}{\mathsf T-s -r}
=
\log\frac{2T}{\mathsf T-s}
+
\log\frac{\mathsf T-s}{\mathsf T-s-r}
\le
\mathsf W_{\mathsf T}(s)+\log 2
\le
2\mathsf W_{\mathsf T}(s).
\end{align*}
Hence together with \eqref{eq:renewal estimates} we have
\begin{align*}
    \int^{\frac{\mathsf T-s}{2}}_0 G_{\vartheta}(r)\log \frac{2\mathsf T}{\mathsf T-s-r} \dd r
    \le
    2 \mathsf W_{\mathsf T}(s)
    \int^{\frac{\mathsf T-s}{2}}_0
    G_{\vartheta}(r)
    \dd r
    \le
    \frac{C}{1+\log \frac{1}{\mathsf T-s}}
    \mathsf W_{\mathsf T}(s).
\end{align*}
For $r\in(\frac{\mathsf T-s}{2},\mathsf T-s)$, we observe that
\begin{equation*}
      \label{eq:int>}
      \begin{split}
          \int^{\mathsf T-s}_{\frac{\mathsf T-s}{2}}
          G_{\vartheta}(r) \, \log & \frac{2\mathsf T}{\mathsf T-s-r}
          \dd r
           \le
          \frac{C}{(\mathsf T-s) \big(1+ \log \frac{1}{\mathsf T-s}\big)^2}
          \int^{\mathsf T-s}_{\frac{\mathsf T-s}{2}}
          \log \frac{2\mathsf T}{\mathsf T-s-r}
          \dd r  \\
          & =
          \frac{C (\mathsf T-s)}{(\mathsf T-s) \big(1+ \log \frac{1}{\mathsf T-s}\big)^2}
          \Big(1+ \log \frac{4\mathsf T}{\mathsf T-s}\Big)
          \le \frac{C}{\big(1+\log \frac{1}{\mathsf T-s}\big)^2}
          \mathsf W_{\mathsf T}(s).
      \end{split}
\end{equation*}
Combining the last two estimates, we thus obtain:
\begin{equation}
      \label{eq:internal-integral}
 \int^{\mathsf T}_s G_{\vartheta}(t-s) \mathsf W_{\mathsf T}(t) \dd t \leq  \frac{C}{1+ \log \frac{1}{\mathsf T-s}} \mathsf W_{\mathsf T} (s)
    \le
    \frac{C}{|\log \mathsf T|}\mathsf W_{\mathsf T}(s).
\end{equation}
Plugged into the left-hand side of~\eqref{eq:backward-block-bound}, this gives
\begin{equation}
      \label{eq:second-step-integral}
      \iint_{\mathsf{S}<s<t<\mathsf{T}}
      G_\vartheta(t-s)
      \frac{\tau\,\mathsf{W}_{\mathsf{T}}(t)}{s(s-\tau)}\,\dd s\,\dd t
       \le
      \frac{C}{ |\log \mathsf T| }
      \int^{\mathsf T}_{\mathsf S}
      \frac{\tau}{s(s-\tau)}
      \mathsf W_{\mathsf T}(s)
      \dd s.
\end{equation}

We now split the above integral at $s = \frac{\mathsf T}{2}$;
note that this is a valid integration interval as we assumed $4\mathsf S\le \mathsf T$.
For $s\in (\mathsf S,\frac{\mathsf T}{2}]$, notice that $\mathsf W_{\mathsf T}(s) = \log \frac{2\mathsf T}{\mathsf T-s} \le \log 4$. Hence
\begin{equation*}
      \begin{split}
          \int^{\frac{\mathsf T}{2}}_{\mathsf S}
          \frac{\tau}{s(s-\tau)}
          \mathsf W_{\mathsf T}(s)
          \dd s
          & \le
          C \int^{\frac{\mathsf T}{2}}_{\mathsf S}
          \frac{\tau}{s(s-\tau)}
          \dd s \\
          & \le
           C\int^\infty_{\mathsf S}
          \Big(\frac{1}{s-\tau} - \frac{1}{s}\Big)
          \dd s
          =
          C \log \frac{1}{1-\tau/\mathsf S} \leq C \frac{\tau}{\mathsf S} \log \frac{2}{1-\tau/\mathsf S}\,,
      \end{split}
\end{equation*}
where the last inequality follows from straightforward calculations
(treating the case \(\tau/\mathsf S \in (0,\frac12]\)
and \(\tau/\mathsf S \in (\frac12,1)\) separately).

It now remains to treat the integral over $s\in (\frac{\mathsf T}{2},\mathsf T)$.
Notice that in this region, $\frac{\tau}{s(s-\tau)}\le \frac{8\tau}{\mathsf T^2}$. Therefore, we have that
\begin{align*}
    \int^{\mathsf T}_{\frac{\mathsf T}{2}}
    \frac{\tau}{s(s-\tau)}
    \mathsf W_{\mathsf T}(s)
    \dd s
    & \le
    \frac{C\tau}{\mathsf T^2}
    \int^{\mathsf T}_{\frac{\mathsf T}{2}}
    \mathsf W_{\mathsf T}(s)
    \dd s
    =
    \frac{C\tau}{\mathsf T^2}
    \int^{\mathsf T}_{\frac{\mathsf T}{2}}
    \log \frac{2\mathsf T}{\mathsf T-s}
    \dd s\notag \\
    & =
    \frac{C\tau }{\mathsf T}
    \int^{\frac{1}{2}}_{0}
    \log \frac{2}{u}
    \dd u
    \le
    \frac{C\tau}{\mathsf S}
    \le
    \frac{C\tau}{\mathsf S}\mathsf W_{\mathsf S}(\tau),
\end{align*}
where we have applied the change of variable $u\coloneqq 1-\frac{s}{\mathsf T}$ above, and in the last inequality we observe that $\mathsf W_{\mathsf S}(\tau)\ge \log 2$ for all $\tau\in (0,\mathsf S)$.
Combined with the previous estimate this shows that
\[
 \int^{\mathsf T}_{\mathsf S}
    \frac{\tau}{s(s-\tau)}
    \mathsf W_{\mathsf T}(s) \leq \frac{C\tau}{\mathsf S}\mathsf W_{\mathsf S}(\tau)
\]
so that plugged into~\eqref{eq:second-step-integral} this
finishes the proof of \eqref{eq:backward-block-bound}.

\smallskip
We now turn to~\eqref{eq:backward-weight-initial}; recall that \(\log(\frac{2\mathsf T}{\mathsf T -t})=\mathsf W_{\mathsf T}(t)\).
We again first estimate the integral over $t$ in the left-hand side thanks to~\eqref{eq:internal-integral}.
Plugged in~\eqref{eq:backward-weight-initial}, this yields
\begin{align*}
    \iint\limits_{\mathsf S< s \le t< \mathsf T}\frac{1}{s}  G_\vartheta (t-s)
      \mathsf W_{\mathsf T}(t)
      \dd s \dd t
      \le
      \frac{C}{|\log \mathsf T|}
      \int^{\mathsf T}_{\mathsf S}
      \frac{1}{s} \mathsf W_{\mathsf T}(s)
      \dd s.
\end{align*}
Similar to the above, we split the integral at $s = \frac{\mathsf T}{2}$:
\begin{align*}
    \int^{\mathsf T/2}_{\mathsf S}
      \frac{1}{s} \mathsf W_{\mathsf T}(s)
      \dd s
      \le
      \log 4 \int^{\mathsf T/2}_{\mathsf S}
      \frac{\dd s}{s}
      \le
      C\log \frac{\mathsf T}{\mathsf S},
\end{align*}
where we have used $\mathsf W_{\mathsf T}(s)\le \log 4$ for all $s\in (\mathsf S, \mathsf T/2)$. On the other hand,
\begin{align*}
    \int^{\mathsf T}_{\mathsf T/2}
      \frac{1}{s} \mathsf W_{\mathsf T}(s)
      \dd s
      \le
      \frac{2}{\mathsf T}
      \int^{\mathsf T}_{\mathsf T/2}
      \mathsf W_{\mathsf T}(s)
      \dd s
      =
      \frac{2}{\mathsf T}
      \int^{\mathsf T/2}_{0}
      \log \frac{2\mathsf T}{u}
      \dd u
      \le C,
\end{align*}
where we have applied the change of variable $u\coloneqq \mathsf T-s$.
Assembling the above inequalities therefore yield the desired bound in~\eqref{eq:backward-weight-initial}.
\end{proof}

\section{Moment estimates: proof of Lemma~\ref{lem:second-moment-hat}}
\label{app:renewal-comparison}

Let us recall that we have set
\begin{equation}
\label{eq:renewal-gap-VD}
 R_s\coloneqq \sum_{j=1}^s q_{2j}(0),
 \qquad \vartheta_N(s)\coloneqq \frac{\pi}{\sigma_N^2}-\pi R_s \,.
\end{equation}
Here $\vartheta_N(s)$ is the remaining distance from the critical
renewal value.
First, we have the following estimate (we use the same notation as in \cite{BCT25,BTZ26}).

\begin{claim}\label{claim:VN}
There exists a constant $C>0$ such that, for every integer
$s\ge0$ with $\vartheta_N(s)>0$,
i.e.\ $\sigma_N^2 R_s< 1$,
\begin{equation}\label{eq:renewal-VN}
 0\le
 \frac{\pi}{\vartheta_N(s)}
 -\sigma_N^2\bbE[Z_s(0)^2]
 \le \frac{C}{\vartheta_N(s)^2}.
\end{equation}
Moreover, given any $t_0>0$, 
there is
$C_{\vartheta,t_0}>0$ such that, for all large $N$
and all integers $1\le s\le t_0N$,
\[
 \sigma_N^2\bbE[Z_s(0)^2]
 \le \frac{C_{\vartheta,t_0}}{\log(1+N/s)}.
\]
\end{claim}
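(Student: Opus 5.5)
The plan is to use the renewal structure of the second moment. By orthogonality of the chaos expansion \eqref{eq:expan2}, summing over the endpoint gives
\[
\sigma_N^2\bbE[Z_s(0)^2]=\sigma_N^2\sum_{n=0}^{s}\overline U_N(n)\,\cdots,
\]
or more usefully, writing $p_j\coloneqq \sigma_N^2 q_{2j}(0)$ for the one-step renewal weight (summing $q_j(z)^2$ over $z$ gives $q_{2j}(0)$), we get
\[
\sigma_N^2\bbE[Z_s(0)^2]=\sigma_N^2\sum_{k\ge0}\sum_{0=n_0<n_1<\dots<n_k\le s}\prod_{i=1}^k p_{n_i-n_{i-1}}
\]
(up to the harmless factor $1+\sigma_N^2$ from the disorder convention at the terminal time). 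This is a defective renewal sum with step weights restricted to the window $[0,s]$.

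For the upper bound in \eqref{eq:renewal-VN}, drop the constraint $\sum_i (n_i-n_{i-1})\le s$ only partially. Bound each increment by $\le s$, which gives
\[
\sigma_N^2\bbE[Z_s(0)^2]\le \sigma_N^2\sum_k(\sigma_N^2R_s)^k=\frac{\sigma_N^2}{1-\sigma_N^2R_s}=\frac{\pi}{\vartheta_N(s)}.
\]
This is the first inequality $0\le\cdot$. For the lower bound, the plan is to compare with the same geometric series but subtract the configurations with $n_k>s$. The excess is
\[
\sigma_N^2\sum_k\sum_{j=1}^{k}(\sigma_N^2 R_s)^{k-1}\,\mathbb P(\text{total}>s\text{, increments}\le s)\cdot(\dots),
\]
which is controlled by a union bound: the total exceeds $s$ only if one increment exceeds $s/k$. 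Concretely, I would bound $\sum_{k}k\,x^{k-1}\sigma_N^2\sum_{j\in(s/k,s]}p_j$ with $x=\sigma_N^2R_s$. Since $\sum_{j\in(s/k,s]}q_{2j}(0)\le C\log k/\pi$, the error is
\[
\le C\sigma_N^4\sum_k k\log k\,x^{k-1}\le C\sigma_N^4(1-x)^{-2}\log\frac1{1-x}.
\]
The main obstacle is removing this log factor to reach exactly $C/\vartheta_N(s)^2=C\sigma_N^4/(\pi^2(1-x)^2)$. For that I would instead use the last-renewal decomposition: the deficit equals $\sum_{j\le s}\Pr(\text{renewal at }s-j)\,\sigma_N^2\sum_{i>j}p_i$. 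Then I would bound $\sum_{i>j,\,i\le s}q_{2i}(0)\le C\log(s/j)$ and use the Dickman-type renewal estimates of Proposition~\ref{prop:renewal-UN} and \eqref{eq:renewal estimates} for the renewal density. This replaces $k\log k$ by a bounded sum, giving the $C/\vartheta_N(s)^2$ bound. If this works only for $s\le\epsilon_0N$, that is the regime where it is needed, since $\vartheta_N(s)>0$ there.

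For the second statement, when $s\le\epsilon_0N$ we have $\vartheta_N(s)=\log(N/s)+O(1)\asymp\log(1+N/s)$ by \eqref{eq:RN}, and the first part gives the claim. For $\epsilon_0N<s\le t_0N$, we use \eqref{eq:asU1}--\eqref{eq: U upper bound}: $\sigma_N^2\bbE[Z_s(0)^2]=\sigma_N^2(1+\sum_{n\le s}\overline U_N(n))\cdot O(1)$. Since $\sum_{n\le s}\overline U_N(n)\le C\int_0^{t_0}G_\vartheta$ is finite (this is the standard bound of \cite{CSZ19a}, extended to $t_0>1$ by adjusting $\vartheta$ via scaling), this is $O(1/\log N)\le C/\log(1+N/s)$ in this range.
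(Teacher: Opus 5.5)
Your nonnegativity argument (bounding every increment by $s$ to get $\sigma_N^2\sum_k(\sigma_N^2R_s)^k=\pi/\vartheta_N(s)$) is correct and matches the paper. The upper bound $C/\vartheta_N(s)^2$, however, has a genuine gap.

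\textbf{The identity is wrong.} Your last-renewal identity omits a factor. In the geometric series every increment ranges freely over $[1,s]$. So after the partial sums first exceed $s$, the remaining increments are unconstrained and contribute an extra factor $\sum_{m\ge0}(\sigma_N^2R_s)^m=\pi/(\sigma_N^2\vartheta_N(s))$, which is of order $\log N/\vartheta_N(s)$. The correct identity is
\[
\frac{\pi}{\vartheta_N(s)}-\sigma_N^2\bbE[Z_s(0)^2]
=\frac{\pi}{\vartheta_N(s)}\sum_{t=0}^{s}u(t)\sum_{s-t<i\le s}p_i,
\qquad u(t)\coloneqq\sum_{k\ge0}\sum_{n_1+\cdots+n_k=t}\prod_{j=1}^k p_{n_j}.
\]
So what you actually need is $\sum_t u(t)\sum_{s-t<i\le s}p_i\le C/\vartheta_N(s)$.

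\textbf{The corrected estimate is true but not proved.} For $s\le\epsilon N$ it follows from $u(t)=\overline U_N(t)/\sigma_N^2$, $\overline U_N(n)\le\frac CN G_\vartheta(\frac nN)$ and $G_\vartheta(r)\le C/(r(1+\log\frac1r)^2)$. Where $\vartheta_N(s)$ is bounded it is trivial, since the deficit is at most $\pi/\vartheta_N(s)$. But you only assert this estimate, and you assert it on the basis of the wrong identity. Your remark that $s\le\epsilon_0N$ is ``the regime where it is needed'' also does not match the statement, which covers every $s$ with $\vartheta_N(s)>0$. Finally, this route only yields a $\vartheta$-dependent constant, whereas the bound holds with an absolute constant for any $\sigma_N^2$ with $\sigma_N^2R_s<1$.

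\textbf{The missing idea.} Your first attempt needs only a one-line repair. Replace the union bound over $\{\max_j n_j>s/k\}$, which costs $\log k$, by the first-moment bound $\one_{\{n_1+\cdots+n_k>s\}}\le(n_1+\cdots+n_k)/s$. By symmetry the $k$-fold sum becomes $\frac ks\bigl(\sum_{n\le s}n\,q_{2n}(0)\bigr)R_s^{k-1}\le Ck\,R_s^{k-1}$, using only $q_{2n}(0)\le C/n$. The deficit is then at most
\[
C\sigma_N^4\sum_{k\ge1}k(\sigma_N^2R_s)^{k-1}=C\sigma_N^4(1-\sigma_N^2R_s)^{-2}=C\pi^2/\vartheta_N(s)^2 .
\]
This is exactly the paper's proof.

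\textbf{Two smaller slips.}
\begin{enumerate}
\item The point-to-plane second moment is exactly $1+\sum_{k\ge1}\sigma_N^{2k}\sum_{n_1+\cdots+n_k\le s}\prod_j q_{2n_j}(0)$, with no factor $1+\sigma_N^2$. Such a factor would in fact destroy the nonnegativity.
\item In the second statement, $\sigma_N^2\bbE[Z_s(0)^2]=\sigma_N^2+\sum_{n\le s}\overline U_N(n)$, not $\sigma_N^2(1+\sum_{n\le s}\overline U_N(n))$. So for $s\asymp N$ this quantity is of order one, not $O(1/\log N)$. Your conclusion still holds, because $\log(1+N/s)$ is bounded on $\epsilon_0N<s\le t_0N$. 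The paper simply cites \cite[Propositions~A.2 and~1.6]{CSZ19a} here.
\end{enumerate}
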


\begin{proof}
The second bound follows from
\cite[Propositions~A.2 and~1.6]{CSZ19a}.
We prove~\eqref{eq:renewal-VN}.

We assume $s \ge 1$, 
since for $s=0$ 
the difference in~\eqref{eq:renewal-VN} vanishes.
The chaos expansion gives
\[
 \sigma_N^2\bbE[Z_s(0)^2]
 =
 \sigma_N^2 \bigg\{ 1 + \sum_{k=1}^{\infty}\sigma_N^{2k}
 \sum_{\substack{n_1,\ldots,n_k\ge1\\
                  n_1+\cdots+n_k\le s}}
 \prod_{j=1}^k q_{2n_j}(0) \bigg\}\,.
\]
On the other hand, since $\sigma_N^2R_s<1$,
recalling the definition of $\vartheta_N$, we can write
\[
 \frac{\pi}{\vartheta_N(s)}
 =
 \sigma_N^2\sum_{k=0}^{\infty}(\sigma_N^2R_s)^k.
\]
Comparing these two expansions, we obtain
\[
 0\le
 \frac{\pi}{\vartheta_N(s)}
 -\sigma_N^2\bbE[Z_s(0)^2]
 =
 \sigma_N^2\sum_{k=1}^{\infty}\sigma_N^{2k}
 \sum_{n_1,\ldots,n_k=1}^{s}
 \one_{\{n_1+\cdots+n_k>s\}}
 \prod_{j=1}^k q_{2n_j}(0)
\]
Bounding $\one_{\{n_1+\cdots+n_k>s\}}\le(n_1+\cdots+n_k)/s$, 
the right hand side at most
\[
\begin{aligned}
 \sigma_N^2
 \sum_{k=1}^{\infty} 
 \sigma_N^{2k}
 \, \frac{k}{s} \,
 \bigg(\sum_{n=1}^s n \, q_{2n}(0)\bigg)
 \bigg(\sum_{n=1}^s q_{2n}(0)\bigg)^{k-1}
 & \le C \sigma_N^4
 \sum_{k=1}^{\infty} k \,
 (\sigma_N^{2} \, R_s)^{k-1 } \\
& \le \frac{C \, \sigma_N^4}{(1-\sigma_N^2R_s)^2}
 \le \frac{C}{\vartheta_N(s)^2} \,,
\end{aligned}
\]
where we used $q_{2n}(0)\le C/n$
and $1-\sigma_N^2R_s=\sigma_N^2\vartheta_N(s)/\pi$.
\end{proof}

We first prove Lemma~\ref{lem:second-moment-hat}
and we then derive Lemma~\ref{lem:moments-blocks}
which gives some important moment estimates.

\begin{proof}[Proof of Lemma~\ref{lem:second-moment-hat}]
The first-collision formula~\eqref{eq:local-replica-formula}
gives
\begin{equation}\label{eq:local-replica-formula-app}
 \bbE[\widehat Z_{s,t}(0)^2]
 =
 1+\sigma_N^2\sum_{u=s+1}^{t}
 q_{2u}(0)\, \bbE[Z_{t-u}(0)^2].
\end{equation}
Since $\vartheta_N(s) = \frac{\pi}{\sigma_N^2}-\pi R_s = \vartheta_N(t) + \pi (R_t-R_s)$, we can write
\[
 \frac{\vartheta_N(s)}{\vartheta_N(t)}
 =
 1+\frac{\pi}{\vartheta_N(t)}
 \sum_{u=s+1}^{t}q_{2u}(0) \,,
\]
therefore
\[
 0\le
 \frac{\vartheta_N(s)}{\vartheta_N(t)}
 -\bbE[\widehat Z_{s,t}(0)^2]
 =
 \sum_{u=s+1}^{t}q_{2u}(0)
 \bigg(
 \frac{\pi}{\vartheta_N(t)}
 -\sigma_N^2\bbE[Z_{t-u}(0)^2]
 \bigg)\,.
\]
It is convenient to replace $\frac{\pi}{\vartheta_N(t)}$ by $\frac{\pi}{\vartheta_N(t-u)}$,
in order to apply Claim~\ref{claim:VN}.
To this purpose, we note that
\begin{equation}\label{eq:identity-vartheta}
 \frac1{\vartheta_N(t)}-\frac1{\vartheta_N(t-u)}
 =
 \frac{\pi(R_t-R_{t-u})}
 {\vartheta_N(t)\vartheta_N(t-u)}
 \le
 \frac{\pi(R_t-R_{t-u})}{\vartheta_N(t)^2} \,,
\end{equation}
because $\vartheta_N(t-u)\ge\vartheta_N(t)>0$,
therefore Claim~\ref{claim:VN} for $s=t-u$ yields
\begin{equation}\label{eq:second-moment-error}
\begin{aligned}
& \sum_{u=s+1}^{t}q_{2u}(0)
 \bigg(
 \frac{\pi}{\vartheta_N(t)}
 -\sigma_N^2\bbE[Z_{t-u}(0)^2]
 \bigg)
 \le
 \sum_{u=s+1}^{t}q_{2u}(0)
 \frac{\pi^2(R_t-R_{t-u})+C}{\vartheta_N(t)^2}\\
 &\qquad\qquad \le
 \frac{C}{\vartheta_N(t)^2}
 \Bigg\{ \sum_{u=s+1}^{t}q_{2u}(0)
 \sum_{v=t-u+1}^t q_{2v}(0)  + R_t - R_s \Bigg\} \,.
\end{aligned}
\end{equation}
Using $q_{2n}(0)\le C/n$, we see that the double sum is uniformly bounded by a multiple of
\[
 \sum_{u=1}^{t}\frac1u
       \sum_{v=t-u+1}^{t}\frac1v
 =
 \sum_{\substack{u,v \in \{1,\ldots, t\} \\ u+v > t}} \frac1{uv}
 \le \sum_{\substack{u,v \in \{1,\ldots, t\} \\ u+v > t}} \frac{1}{t}\, \frac{u+v}{uv}
 = \frac{2}{t} \sum_{u=1}^t \frac{1}{u}
 \sum_{v=t-u+1}^{t} 1
 = 2 \,.
\]
This proves
the first bound of the lemma.
Dividing by $\vartheta_N(s)/\vartheta_N(t)$ gives
\eqref{eq:consequence-lem-SM}.
Finally, if $1\le s\le t/2$, then
$1+R_t-R_s\le C\log(t/s)$, which proves the last assertion.
\end{proof}

We next derive second and fourth moment bounds for the
partition function on each geometric block.

\begin{lemma}\label{lem:moments-blocks}
Recall the definition of $\mathsf{Z}_i$ from \eqref{def:Zi},
so in particular \(\mathsf Z_i(q_{N_{i-1}}) = Z_{N_{i-1},N_i}(q_{N_{i-1}},\one)\).
Then there exists a constant \(C=C_{\rho}\) such that, for any $1\le i\le K$, we have
\begin{equation}\label{eq:app-block-moments}
 \begin{aligned}
 \Var\bigl(\mathsf Z_i(q_{N_{i-1}})\bigr)&\leq  \frac{C}{K+1-i} \,,\\
 \bbE\pigl[\bigl|\mathsf Z_i(q_{N_{i-1}})-1\bigr|^4\pigr]
 &\le\frac{C}{(K+1-i)^2} \, .
 \end{aligned}
\end{equation}
\end{lemma}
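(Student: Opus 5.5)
We need to prove Lemma~\ref{lem:moments-blocks}: variance and fourth-moment bounds for the block partition function $X_i\coloneqq\mathsf Z_i(q_{N_{i-1}})-1$.

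\emph{Variance.} Start from the first-collision formula \eqref{eq:local-replica-formula} with $(s,n)=(N_{i-1},N_i)$:
\[
\Var(\mathsf Z_i(q_{N_{i-1}}))=\sum_{r=N_{i-1}+1}^{N_i}q_{2r}(0)\,\sigma_N^2\bbE[Z_{N_i-r}(0)^2].
\]
By Claim~\ref{claim:VN}, $\sigma_N^2\bbE[Z_{N_i-r}(0)^2]\le \pi/\vartheta_N(N_i-r)\le \pi/\vartheta_N(N_i)$ whenever $\vartheta_N(N_i)>0$. Combined with \eqref{eq:estimate-vartheta-Ni}, this gives $\vartheta_N(N_i)\asymp K-i$ for $i$ with $N_i\le\epsilon_0 N$. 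The sum $\sum_{r}q_{2r}(0)=R_{N_i}-R_{N_{i-1}}\le C_\rho$, so the variance is at most $C_\rho/(K-i)$. The finitely many (in number depending only on $\rho$ and $\vartheta$) indices $i$ close to $K$, where $\vartheta_N(N_i)$ may fail to be positive, are handled by the cruder bound \eqref{eq:uniform-two-replica}, i.e.\ the last part of Claim~\ref{claim:VN}. This gives an $O_\rho(1)$ variance, which is compatible with $C/(K+1-i)$ there.

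\emph{Fourth moment.} I would use the centred fourth-moment bounds for averaged partition functions from \cite{CSZ25}, which rest on the operator estimates of \cite{CCR25}. In the quasi-critical/critical setting these give, for an averaged partition function $Z_{s,t}(\nu,\one)$, a bound of the form
\[
\bbE\bigl[(Z_{s,t}(\nu)-1)^4\bigr]\le C\,\Var\bigl(Z_{s,t}(\nu)\bigr)^2,
\]
i.e.\ hypercontractivity-type control by the second moment, provided the time window is short relative to the distance to criticality. Here the initial measure is the smooth heat kernel $q_{N_{i-1}}$, and the ratio $N_i/N_{i-1}\asymp\rho$ is bounded, which is exactly the regime where these estimates apply.

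Concretely, I would write the fourth moment via the chaos expansion \eqref{eq:expan2} as a sum over four-replica diagrams. Each diagram is a product of pairwise collision renewals $\overline U_N$ interleaved with transfer operators between different pairs. The CCR operator norm bounds, with weighted $\ell^p$ norms on initial data of heat-kernel type, then show that each switch between pairs costs a factor $O(1/\vartheta_N(N_i))$, with a constant depending on $\rho$ through $\log(N_i/N_{i-1})$. Diagrams with a single active pair vanish after centring beyond the variance term, and the leading contributions are products of two pair-variances. This yields $C_\rho/(K+1-i)^2$.

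The main obstacle is adapting the \cite{CSZ25,CCR25} fourth-moment estimate to the lattice block with a heat-kernel initial law at scale $N_{i-1}$, uniformly in $i$. In particular, one must check that the operator bounds give decay in $\vartheta_N(N_i)^{-2}$ rather than merely $O(1)$. My first attempt would be to import their Proposition giving $\bbE[|Z(\varphi)-\bbE Z(\varphi)|^4]\lesssim\|\varphi\|^4/\log^2$ directly, by rescaling time by $N_i$ and treating the effective disorder strength $\sigma_N^2$ on horizon $N_i$ as quasi-critical with parameter $\vartheta_N(N_i)$. For the first block ($i=1$, $N_0=0$) the bound is trivial, since $N_1\le2\rho$ makes all moments $1+O(\sigma_N^2)$.
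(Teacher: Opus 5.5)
Your proposal is correct and follows the paper's proof. The variance comes from the first-collision formula \eqref{eq:local-replica-formula} together with a bound on $\sigma_N^2\bbE[Z_k(0)^2]$; the paper simply uses \eqref{eq:uniform-two-replica} for every $i$, giving $C\,(R_{N_i}-R_{N_{i-1}})/\log(1+N/N_i)\le C_\rho/(K+1-i)$ with no case split. The fourth moment is exactly the paper's one-line appeal to \cite[Theorem~1.11]{CSZ25}, $\bbE|\mathsf Z_i(q_{N_{i-1}})-1|^4\le\mathfrak C_4\Var(\mathsf Z_i(q_{N_{i-1}}))^2$, applied uniformly in $1\le i\le K$.

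So your diagrammatic re-derivation and the quasi-critical rescaling with parameter $\vartheta_N(N_i)$ are unnecessary, and on their own they would not cover the blocks near $K$, where $\vartheta_N(N_i)$ may be nonpositive. Your separate treatment of $i=1$ is also unnecessary, and as written it is too loose: knowing all moments are $1+O(\sigma_N^2)$ only gives a centred fourth moment of order $\sigma_N^2$, whereas you need $O_\rho(\sigma_N^4)$.
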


Let us stress that Lemma~\ref{lem:moments-blocks}
also treats the initial and final block, which satisfy respectively
\[
\Var\bigl(\mathsf Z_1(0)\bigr)\le\frac{C_\rho}{\log N} \,,
\qquad
\bbE\bigl|\mathsf Z_1(0)-1\bigr|^4
 \le\frac{C_\rho}{(\log N)^2}
\]
and
\[
\Var\bigl(\mathsf Z_K(q_{N_{K-1}})\bigr)\le C_\rho,
\qquad
\bbE\bigl|\mathsf Z_K(q_{N_{K-1}})-1\bigr|^4\le C_\rho.
\]

\begin{proof}
By the first inequality in \eqref{eq:rough-bound},
\[
 \Var\bigl(\mathsf Z_i(q_{N_{i-1}})\bigr)
 =
 \bbE\bigl[\widehat Z_{N_{i-1},N_i}(0)^2\bigr]-1
 \le
 C\frac{R_{N_i}-R_{N_{i-1}}}{\log(1+N/N_i)},
 \qquad 1\le i\le K.
\]
We bound the numerator and denominator separately.
By the grid definition \eqref{eq:adapted-grid},
\[
 N\rho^{i-K}\le N_i\le 3N\rho^{i-K},
 \qquad 1\le i\le K.
\]
Consequently, for $i\ge2$, we have
$N_i/N_{i-1}\le3\rho$ and hence, using $q_{2r}(0)\le C/r$,
\[
 R_{N_i}-R_{N_{i-1}}
 \le C\log\frac{N_i}{N_{i-1}}
 \le C\log(3\rho).
\]
For $i=1$, the numerator is also bounded by some constant $C_\rho$,
since $N_0=0$ and $N_1\le2\rho$.
Bounding the denominator by
\(
 \log(1+N/N_i)
 \ge \log(1+\rho^{K-i}/3)
 \ge c_\rho(K+1-i)
\),
we conclude that
\[
 \Var\bigl(\mathsf Z_i(q_{N_{i-1}})\bigr)
 \le \frac{C_\rho}{K+1-i},
 \qquad 1\le i\le K.
\]

As far as the fourth-moment is concerned, the bound follows from applying \cite[Theorem~1.11]{CSZ25},
which precisely shows that
\[
\bbE\bigl[ |Z_{N_{i-1},N_i}(q_{N_{i-1}}) - 1|^4\bigr]
\leq \mathfrak{C}_4 \Var \bigl(Z_{N_{i-1},N_i}(q_{N_{i-1}})\bigr)^{2} \leq \frac{C}{(K+1-i)^2}\,,
\]
where the last inequality follows from the bound
on \(\Var(\mathsf Z_i(q_{N_{i-1}}))\) obtained above.
\end{proof}

\section{Declaration on the use of AI}
\label{app:AI-declaration}

The results presented here have been developed through extensive
interactions between the authors and AI (ChatGPT~5.5,~5.6 and~6), which
contributed to the tightness argument, the proof of convergence to
a unique limit, and its non-Gaussianity. We describe below some of
the main mathematical contributions and changes of strategy,
without attempting to give a complete account of these interactions.

\begin{enumerate}
\item \textbf{Tightness and the initial input.}
We began by asking whether centring and diffusive rescaling,
without an additional amplitude normalisation, suffice for a
nontrivial limit. We suggested Gu--Tsai's geometric block
factorisation~\cite{GT26} as a starting point and also supplied
Chen's Gaussian convexity theorem~\cite{Chen26}, which had already
been used in~\cite{LX26}. From this
theorem, AI derived a covariance bound and combined it with
negative-moment and second-moment estimates to obtain tightness
of the spatial logarithmic field. Further exchanges, prompted by
our questions and requests for checks, led to revisions and
simplifications.

\item \textbf{Developing the convergence strategy.}
Using tightness as an input, we then sought convergence of the
discrete field and a construction of the limit directly from
the Stochastic Heat Flow (SHF). We suggested revisiting the
local-remainder decomposition in~\cite{CSZ20}, which helped refine
a decomposition at two intermediate times. The first time $m=[\delta N]$ defines a regularisation by
removing the disorder on the initial interval $(0,m]$.
The second separates a local correction from a comparison
of two polymer endpoint distributions evolved through
the same future environment.

\item \textbf{An unsuccessful switch-off construction.}
To pass from the comparison of partition functions to their
logarithms, we still needed suitable moment bounds. We suggested
introducing auxiliary partition functions with disorder removed
on selected geometric time intervals, adapting a construction
from~\cite{CSZ25}, so that conditional Gaussian
convexity~\cite{Chen26} could be applied. AI developed this
suggestion into a sparse switch-off construction and explored
several refinements. This initially appeared to close the
remaining estimate, but a subsequent audit identified a gap
in a contraction argument. We abandoned this auxiliary
construction.

\item \textbf{Completion and refinement of convergence.}
When we asked AI to reconsider the available approaches
and look for simpler arguments, it proposed a simpler use of
conditional Gaussian convexity. Further exchanges developed
this into the logarithmic moment bounds needed to complete
the convergence proof, without the additional switch-off
construction. The regularisation remained the one introduced
above, with disorder removed only on $(0,m]$, and the proof
retained the decomposition at two intermediate times and the
supporting geometric block estimates. Subsequent exchanges
led to simplifications and to the extension to arbitrary
regularisation parameters tending to zero. 
The resulting
argument proves Theorem~\ref{thm:main}: the discrete centred
log-partition functions converge to a unique limit, constructed
as a measurable functional of the SHF by proving that its
regularised logarithmic observables are Cauchy.

\item \textbf{Non-Gaussianity.}
We initiated the investigation of non-Gaussianity of subsequential limits
(before having the proof of convergence to a unique limit) while keeping
open the possibility of a Gaussian limit. After several attempts,
we suggested revisiting earlier unsuccessful approaches.
AI then developed a proof using Gaussian rigidity under
bounded perturbations in the terminal condition;
a related rigidity result for convex functions of Gaussian vectors is given in \cite[Theorem~3.4]{FMP}.
Exchanges led to clarifications and simplifications of the proof.
\end{enumerate}

We supplied structural ideas, assessed the proposed arguments,
and guided the choice and revision of approaches. AI
developed these suggestions and generated further proof ideas that were not contained
in the original prompts. Some proposals were fruitful; others contained
errors or introduced unnecessary complications. Checking and
revising these arguments was an essential part of the work.
We take responsibility for the mathematical claims and the
final manuscript.

\end{document}